\documentclass[11pt,letterpaper]{amsart}
\usepackage{amssymb,amsmath,amsthm,mathrsfs,color,upgreek,ulem}
\usepackage{mathtools}
\definecolor{links}{rgb}{0.5,0.2,0}
\definecolor{back}{rgb}{0.8,0.8,0.7}
\definecolor{dgreen}{rgb}{0,0.4,0.15}
\definecolor{indigo}{rgb}{0.4,0,0.9}
\usepackage[colorlinks=true,allcolors=links]{hyperref}
\usepackage[nobysame,initials,non-compressed-cites]{amsrefs}

\usepackage{txfonts}
\usepackage[T1]{fontenc}

\usepackage{color}

\newcommand{\norm}[1]{\left\Vert #1 \right\Vert}
  
\newcommand{\avg}[1]{\left\langle #1 \right\rangle}
 
\newcommand{\lb}{(}  
\newcommand{\rb}{)}   
  
\newcommand{\Sh} {{\mathsf{Sh}}}
\newcommand{\SH} {{\mathsf{SH}}}
\newcommand{\PM} {\mathsf{P}}

\newcommand{\bb} {{\mathfrak{b}}}
\renewcommand{\aa} {{\mathfrak{a}}}

\newcommand{\DW}{{\mathcal D(\mathcal W)}}
\newcommand{\W}{\mathcal W}
\newcommand{\I}{{\mathfrak I}}
\newcommand{\CL}{\mathcal{U}(\Omega)}

\newcommand{\Laa}{{\partial^\alpha\Lambda}}
\newcommand{\fL}{{\mathfrak{L} }}

\newcommand{\Sy}{\mathsf{Sy}}
\newcommand{\Tr}{\mathsf{Tr}}

\newcommand{\M}{\mathrm{M}}
\newcommand{\A}{\mathrm{A}}
\newcommand{\RH}{\mathrm{RH}}
\newcommand{\ip}[2]{\left\langle {#1},{#2} \right\rangle}
\newcommand{\abs}[1]{\left\vert {#1}\right\vert}

\newcommand{\R}{\mathbb R}
\newcommand{\Sc}{\mathcal S}

\newcommand{\C}{\mathcal C} 
\renewcommand{\l}{\langle}
\renewcommand{\r}{\rangle}
\newcommand{\ep}{\varepsilon}
\DeclareMathOperator{\BMO}{{BMO}}
\DeclareMathOperator{\e}{{e}}
\DeclareMathOperator{\supp}{{supp}}
\DeclareMathOperator{\diam}{{diam}}

\newcommand{\B}{\mathsf B}
 
\newcounter{thms}

\newcounter{other}
\numberwithin{other}{section}

\newtheorem{proposition}[other]{Proposition}
\newtheorem{theorem}[thms]{Theorem}
\newtheorem*{theorem*}{Theorem}
\newtheorem*{proposition*}{Proposition}
\newtheorem{cor}{Corollary}
\newtheorem*{corollary*}{Corollary}
\numberwithin{cor}{thms}
\newtheorem{lemma}[other]{Lemma}
\theoremstyle{definition}
\newtheorem{remark}[other]{Remark}
\newtheorem{definition}[other]{Definition}

\numberwithin{equation}{section}

\title[Sobolev regularity of Calder\'on-Zygmund operators]{Wavelet resolution and Sobolev regularity of Calder\'on-Zygmund operators on domains}

\author[F. Di Plinio]{Francesco Di Plinio}
\thanks{F. Di Plinio was partially supported by the National Science Foundation under the grant NSF-DMS-2000510. This material is based upon work supported by the National Science Foundation under Grant No. DMS-1929284 while this author was in residence at the Institute for Computational and Experimental Research in Mathematics in Providence, RI, during the \textit{Harmonic Analysis and Convexity} program of Fall 2022.}

\address[F. Di Plinio]{Dipartimento di Matematica e Applicazioni, Universit\`a di Napoli \\ \newline \indent Via Cintia, Monte S.\ Angelo 80126 Napoli, Italy}
\email{\href{mailto:francesco.diplinio@unina.it}{\textnormal{francesco.diplinio@unina.it}}}

\author[A. W. Green]{A. Walton Green}
\thanks{A. W. Green's research supported by NSF grant NSF-DMS-2202813. }
\author[B. D. Wick]{Brett D. Wick}
\thanks{B. D. Wick's research partially supported in part by NSF grant NSF-DMS-1800057, NSF-DMS-2000510, NSF-DMS-2054863 as well as ARC DP 220100285.}

\address[A. W. Green, B. D. Wick]{Department of Mathematics, Washington University in Saint Louis\\ \newline \indent 1 Brookings Drive, Saint Louis, Mo 63130, USA}
\email{\href{mailto:bwick@wustl.edu}{\textnormal{bwick@wustl.edu}}, \href{mailto:awgreen@wustl.edu}{\textnormal{awgreen@wustl.edu}}}

\subjclass[2010]{Primary: 42B20. Secondary: 42B25,30C62}
\keywords{Wavelet representation theorem,  compression of singular integrals, $T\mathbf{1}$-theorems, Triebel-Lizorkin spaces, sharp weighted bounds, paraproducts, Hardy operator, Beurling-Ahlfors operator}

\begin{document}

\mathtoolsset{showonlyrefs}

\begin{abstract}
Given a uniform domain $\Omega \subset \R^d$, we resolve each element of a suitably defined class of Calder\`on-Zygmund (CZ) singular integrals on $\Omega$ as the linear combination of Triebel wavelet operators and paraproduct terms. Our resolution formula entails a testing type characterization, loosely in the vein of the David-Journ\'e theorem, of  weighted Sobolev  space bounds in terms of Triebel-Lizorkin and tree Carleson measure norms of the paraproduct symbols, which is new already in the case $\Omega=\R^d$ with Lebesgue measure. Our characterization covers the case of compressions to $\Omega$ of global CZ operators, extending and sharpening past results of Prats and Tolsa for the convolution case. The  weighted estimates we obtain, particularized to the Beurling operator on a Lipschitz domain with normal to the boundary in the corresponding sharp Besov class, may be used to deduce quantitative estimates for quasiregular mappings with dilatation in the Sobolev space $W^{1,p}(\Omega)$, $p>2$.\end{abstract}
\maketitle
\setcounter{tocdepth}{1}

\section{Introduction}

The landmark result by David and Journ\'e \cite{david84} provides necessary and sufficient conditions for $L^2(\R^d)$-boundedness of  a singular integral kernel operator $T$ in terms of the $\mathrm{BMO}$ membership of the \textit{paraproduct symbols} $T\mathbf{1}, T^*\mathbf{1}$. Testing-type boundedness results of this type have since played a prominent role in applications of singular integrals to elliptic regularity, geometric measure theory, theoretical fluid mechanics, quasiconformal geometry.

This article contains the first sharp testing type theorems for singular integrals \textit{on smoothness spaces}.  More precisely, given a domain $\Omega\subset \R^d$ satisfying minimal structural  requirements, a natural definition for smoothness $k$ singular integral operator $T$ on $\Omega$ may be given by prescribing suitable  derivative kernel estimates and weak boundedness: see Definition \ref{def:si}. In loose analogy with the David-Journ\'e theorem, this article  establishes  necessary and sufficient conditions for Lebesgue and weighted $W^{k,p}(\Omega)$-mapping properties of $T$ in terms of suitable  \textit{paraproduct symbol norms}.
  Relationship with past literature, techniques, and motivation, in particular coming from quantitative estimates for quasiregular maps, follows beyond the forthcoming summary of our main results.

\addtocounter{other}{1}
\subsection{Summary of the main results} We work in the large class of possibly unbounded uniform domains $\Omega\subset \R^d$. These can be roughly described as those domains admitting a resolution by Whitney cubes $W$ with the property that any two Whitney cubes can be connected by a chain with length controlled by a suitably defined long distance.  The spark for our analysis is an orthonormal multiresolution of  $L^2(\Omega)$ into wavelet projections due to Triebel. Loosely speaking, this decomposition associates to each Whitney cube $W$ of $\Omega$ a noncancellative wavelet, while a $k$-th order cancellative wavelet is associated to each dyadic subcube of $W$. 
Our first main result, Theorem \ref{thm:main}, \S\ref{sec:rep}, is a resolution formula for the $k$-th derivatives of a Calder\'on-Zygmund singular integral on $\Omega$. A schematic description of Theorem \ref{thm:main} is that $\nabla^k T f $ is the linear combination of a \textit{wavelet operator $V$} applied to $\nabla^k f$ and \textit{paraproduct operators} with symbols $\{\vec 
\bb^{\gamma} : |\gamma| \le k  \}$, analogous to $\vec \bb^{0}=(T\mathbf 1,T^*\mathbf 1)$,   arising  from   local testing on translated monomials, namely
	\begin{equation}\label{e:intro-bb} \left \langle \vec \bb^{\gamma}, \varphi_Q \right \rangle = \left \langle \nabla^k T(x\mapsto (x-c(Q))^\gamma), \varphi_Q \right \rangle, \end{equation}
where $\varphi_Q$ is the wavelet associated to the  multiresolution cube $Q$ with center $c(Q)$. Wavelet operators are essentially almost diagonal perturbations of the Triebel decompositions and admit $L^p$ and sparse bounds, cf. \S\ref{s:sparse}. In \S\ref{sec:pp}, we obtain necessary and sufficient conditions for $W^{k,p}(\Omega)$-boundedness of the paraproducts. Unlike the classical case, these  conditions depend on the exponent $p$ and are   formulated in terms of localized Triebel-Lizorkin norms, cf. \S\ref{ss:tl-norms}, generalizing $\mathrm{BMO}$ membership, for the cancellative portion and tree Carleson measures in the vein of the work by Arcozzi, Rochberg and Sawyer \cite{arcozzi2002carleson}, cf.\ \S\ref{ss:pp-norms},  for the noncancellative portion of the symbol. In combination with Theorem \ref{thm:main}, the estimates of \S\ref{s:sparse} and \S\ref{sec:pp} lead to Lebesgue and weighted testing type theorems. These are summarized in Corollaries \ref{cor:sob} and \ref{cor:weight}, \S\ref{sec:rep}.

\addtocounter{other}{1}
\subsection{Techniques and comparison with past literature} Next, the main novelties of this article are contrasted with past work on the subject of smooth Calder\'on-Zygmund operators, and extrinsic motivation in relation to the Beltrami equation is presented.

\subsubsection*{Representation theorems, rough and smooth}
In common subject parlance, the fact that the class of normalized Calder\'on-Zygmund operators may be realized as the closed convex hull of band-limited operators in the Haar basis of $L^2(\R^d)$, usually termed \textit{dyadic shifts}, is referred to as \textit{dyadic representation}.  Dyadic representation theorems find their ancestors in the  martingale proof of the $T\mathbf{1}$ theorem by Figiel \cite{figiel90}, and in the work by Nazarov-Treil-Volberg on nonhomogeneous $Tb$-theorems \cite{NTV2}. Their first actual occurrence may be identified with
the work of Petermichl for the Hilbert transform \cite{PetAJM}, culminating with Hyt\"onen's proof of the $A_2$ conjecture \cite{hytonen12} for general Calder\'on-Zygmund operators. See \cites{li2019bilinear,martikainen2012representation,m2022,CDPOMRL,HL2022} and references therein for more recent extensions of this principle. 

Dyadic resolutions, at least in their current form, are not able to capture additional  regularity of the singular  kernel and smoothness space boundedness with it, as, for instance, dyadic shifts are in general not bounded on Sobolev spaces.
 Recent work by  the authors \cites{diplinio22wrt,diplinio23bilin} kickstarted an alternative approach based on   \textit{wavelet resolution}---or \textit{representation}, and these words will be used interchangeably---of Calder\'on-Zygmund operators which, among other results, leads to sharp weighted   \textit{homogeneous} Sobolev space bounds. The representation Theorem \ref{thm:main} of this paper, while building on the partial progress obtained with \cites{diplinio22wrt,diplinio23bilin}, represents a full step forward. First, the construction of the paraproduct symbols is direct, as in \eqref{e:intro-bb}, and not iterative as in  \cites{diplinio22wrt,diplinio23bilin}, leading to transparent testing conditions. Second, the domain setting and the presence of noncancellative \textit{top} wavelets introduces nontrivial technical complications in both the construction and the estimation of the paraproduct terms. The former leverages a sort of Poincar\'e \textit{equality}, see Lemma \ref{lemma:tele}. The latter combines this lemma with a sparse domination argument to efficiently estimate the noncancellative contributions by derivative averages.

\subsubsection*{Smooth testing theorems on $\R^d$}  For the purpose of this discussion,  $X^{s,p}$ and $\dot X^{s,p}$ with $1<p<\infty$ and $s>0$,  respectively stand for  generic $L^p$-scaling nonhomogeneous and homogeneous smoothness $s$ spaces. Examples are the fractional Sobolev spaces $W^{s,p}(\R^d)$, the Besov    and Triebel-Lizorkin scales $B_{p,q}^s(\R^d),F_{p,q}^s(\R^d)$ and their homogeneous counterparts.
Classical past literature on  singular integrals on smoothness spaces \cite{meyer85,lemarie,frazier87,frazier88,torres91}  focused on the $\dot X^{s,p}$ case,  under the strong cancellation assumptions
	\begin{equation}\label{e:strong} \nabla^{|\gamma|}T(x\mapsto x^\gamma)=0, \qquad |\gamma| \le \lfloor s \rfloor, \end{equation}
corresponding to the case $\vec\bb^{\gamma}=0$ in \eqref{e:intro-bb}. The same assumptions led to weighted bounds, see  \cite{han-hofmann}, the more recent synoptic work \cite{CHO2020}, and   \cite{diplinio22wrt} for sharp quantification.

 Y.-S.~Han and S.~Hofmann  explicitly note in \cite{han-hofmann} that the strong cancellation conditions \eqref{e:strong} might be removed with the use of paraproducts, with the caveat that ``however, it is an open problem to determine necessary and sufficient conditions on [the symbol]   so that [the corresponding paraproduct] is bounded on [smoothness spaces]''. For the $\dot X^{s,p}$ case,   \eqref{e:strong} is shown to be necessary in the  \textit{supercritical} range$(s-|\gamma|)p > d$. In the \textit{subcritical} range $s- \frac{d}{p} < |\gamma| < s$  embeddings of Sobolev, Besov, and Triebel-Lizorkin spaces may be invoked to deduce sufficient conditions. If $s=k$ is an integer, then the condition on the highest order symbol $|\gamma|=k$ is of $\BMO$ nature. Finally, in the Hilbert case $p=2$,   a capacitary-type characterization may be found in \cite{coifman-murai}. The authors are not aware of similar results when $p\neq 2$.

In contrast with the homogeneous case,  \eqref{e:strong} is not a necessary testing condition for $X^{s,p}$ for any $\gamma$. In addition, unlike  the $s=0$ case as well, necessary  testing conditions for the spaces $X^{s,p}$ depend on $s$ \textit{and on $p$}. Therefore, it is not possible, in general, to obtain sharp conditions via interpolation. Further,  extrapolation  poses restrictions on the allowed  Muckenhoupt  weight classes.  
Our Proposition \ref{paraproduct-main} thus appears to be the first characterization of strong-type (supercritical) and restricted strong-type (critical and subcritical) bounds for paraproducts on the Sobolev space $W^{n,p}$ for all $n \in \mathbb N$ and any $1<p<\infty$. While we have focused on the integer Sobolev space for simplicity, the proof techniques may be easily adapted to other smoothness scales.

\subsubsection*{Testing theorems for compressions} Our work also encompasses smoothness space estimates for
the  class of singular integrals 
\[
T_\Omega f(x) \coloneqq T({\mathbf{1}}_{\overline \Omega}  f)(x), \qquad x\in \Omega
\]
 obtained from the \textit{compression to $\Omega$} of a  smooth Calder\'on-Zygmund operator $T$ on $\R^d$. These are formally introduced in Definition \ref{def:comp}.  Note that  $T_\Omega f$ need not be smooth even when $T,f$ are smooth, unless geometric constraints are placed on $\Omega$. Building on past work of several authors \cites{cruz2013beltrami,mateu-extra,anikonov,bertozzi} Prats and Tolsa \cite{prats-tolsa} studied Sobolev bounds for compressions of a class of smooth convolution-type singular integrals, in particular satisfying the strong cancellation assumption \eqref{e:strong}. Though not explicitly using paraproducts, Prats and Tolsa deduce sufficient conditions for $W^{k,p}(\Omega)$-boundedness of $T_\Omega$ in terms of tree Carleson measure norms of $\nabla^kT_\Omega P$ for polynomials $P$ of degree less than $k$. These norms are akin to those on analytic Besov spaces introduced by Arcozzi, Rochberg, and Sawyer \cite{arcozzi2002carleson}.  Even within the class of compressions of globally defined convolution operators, the conditions of \cite[Thms.\ 1.1, 1.2]{prats-tolsa} are only necessary for the case $k=1$ or, if the domain is bounded, the purely supercritical case $p>d$. Their approach has also been applied to other smoothness spaces \cites{doubtsov,vasin19t1,vasin20singular-russian,prats-saksman}
 
 Our Proposition \ref{paraproduct-main} yields $W^{k,p}(\Omega)$-boundedness of compressions in terms of  a more general family of tree Carleson measure norms encoding the order of the polynomial $P$ in  $T_\Omega P $. In combination with Theorem \ref{thm:main}, we then obtain a completely new testing type result for compressions, Theorem \ref{thm:compress} in \S\ref{sec:compress}, extending the scope of \cite{prats-tolsa} in several different directions. 
First, our compression theorem applies to global CZ operators $T$ whose paraproduct symbols belong to the necessary and sufficient Triebel-Lizorkin class, thus dropping any form of strong cancellation assumption \eqref{e:strong}. Second, it involves necessary and sufficient tree Carleson measure norms for all $k\geq 1$. Further, unlike those of \cite{prats-tolsa} our testing conditions do not subsume that the domain $\Omega$ be bounded. Lastly, it extends to the weighted Sobolev spaces, for weights in the Muckenhoupt-type class $[w]_{\A_p(\Omega)}$. We send to Lemma \ref{l:worse} for the formal relationship of  \cite[Thms.\ 1.1, 1.2]{prats-tolsa} with our main results.

\addtocounter{other}{1}
\subsection{Weighted estimates on domains and Beltrami regularity}

Beyond the intrinsic interest for a sharp $T{\mathbf 1}$-type theorem on $X^{s,p}$-type spaces on domains, our work, and  in particular its weighted component,  is strongly motivated by prospective applications to partial differential equations. We next expand on the specific connection with the Beltrami equation.

Weighted $L^p$-norm inequalities for the Ahlfors-Beurling operator $B$ on $\mathbb C\equiv \mathbb R^2$ may be used to obtain resolvent estimates  for the Beltrami equation \cite{astala01} with dilatation coefficient $\mu$.   
Furthermore, sharp quantification of the weighted norm of $B$  in terms of the Muckenhoupt characteristic, originally due to Petermichl and Volberg \cite{petermichl02} has been used to deduce injectivity at the lower endpoint of the critical interval. The connection is rooted in the deep result, due to Astala \cite{astala94}, that suitable powers of Jacobians of homeomorphic solutions to the Beltrami equations belong to appropriate Muckenhoupt classes.  See \cite[Ch.\ 13,14]{astala-book} for a detailed account of these landmark results and more.

In the same spirit of  \cite{astala01}, the novel Sobolev weighted boundedness results for singular integrals obtained in this paper, particularized to the compression $B_\Omega$ of the Ahlfors-Beurling operator, may be used to obtain quantitative Sobolev resolvent estimates for the Beltrami equation when the dilatation   belongs to some smoothness space $X^s(\Omega)$ with $s>0$.  This is carried out by the authors in \cite{DGW23beltrami}.   This approach via weighted theory is, at least in part, antithetical to previous literature on the subject \cite{clop-et-al,mateu-extra,cruz2013beltrami,prats19} whose common blueprint   is the Iwaniec compactness method \cite{iwaniec}.
The latter is a suitable replacement for the Astala-Iwaniec-Saksman strategy \cite{astala01} insofar as weighted estimates for $B_\Omega$ are unavailable. However, quantitative estimates for the resolvent will not follow from this kind of compactness arguments. 

Our Corollary \ref{cor:compress} below applies to fully characterize the \textit{weighted} $W^{k,p}(\Omega)$- boundedness of compressions in terms of weighted tree Carleson measure conditions. In the supercritical range, for simplicity when $k=1$, these conditions  can be deduced  from Sobolev estimates on $\nabla B(\mathbf 1_{\Omega})$, see Lemma \ref{l:Apd}. In turn, the latter are known  to be explicitly related to Besov regularity of $\partial\Omega$ \cite{cruz-tolsa,tolsa2013regularity,vasin16regularity-russian}. Combining these observations   with Corollary \ref{cor:compress}  leads to the following result: see \S\ref{pf:belt} for a formal proof.
 
\setcounter{thms}{2}
\begin{theorem} \label{t:belt}Let $\Omega\subset \mathbb{C}$ be a bounded Lipschitz domain and $2<p<\infty$. For each  $2<r<p$ there is a positive increasing function $\mathcal G$ such that, for each weight $w$
\[ \left \Vert B_{ \Omega} f \right \Vert_{W^{1,p}(\Omega,w)} \lesssim \mathcal G\left( [w]_{\mathrm A_{\frac{r}{2}}(\Omega)}\right) \left( 1 + \frac{\norm{ B_\Omega \mathbf 1 }_{W^{1,p}(\Omega,w)}}{w(\Omega)^{\frac 1p}} \right)\left \Vert f \right \Vert_{W^{1,p}(\Omega,w)}.\] 
Furthermore, for each $q>p$ there is a function $\tilde{\mathcal G}$ such that if the boundary normal vector $N_\Omega$ to $\Omega$ lies in the Besov space $B^{1-\frac{1}{q}}_{q,q}(\partial \Omega)$, then
\[ \left \Vert B_{ \Omega} f \right \Vert_{W^{1,p}(\Omega,w)} \lesssim \tilde{\mathcal G}\left( [w]_{\mathrm A_{\frac{r}{2}}(\Omega)}, [w]_{\mathrm{RH}_{\frac{q}{q-p}}(\Omega)}\right) \left( 1 + \norm{ N_\Omega }_{B^{1-\frac{1}{q}}_{q,q}(\partial \Omega)} \right) \left \Vert f \right \Vert_{W^{1,p}(\Omega,w)}.\] 
 \end{theorem}
\setcounter{thms}{0}
In \cite{DGW23beltrami}, the authors apply Theorem \ref{t:belt} in order to implement the Astala-Iwaniec-Saksman strategy \cite{astala01} to obtain quantitative $W^{1,p}(\Omega)$ estimates on the Beltrami resolvent $(I-\mu B_\Omega)^{-1}$ 
in terms of the Sobolev regularity of $\mu$ and the Besov regularity of $\Omega$ and $f(\Omega)$ where $f$ is the principal $\mu$-quasiconformal solution.

\subsection*{Notational conventions} Throughout this article, $d\geq 2$ denotes the dimension of the ambient Euclidean space, and functions on  $\R^{nd}\coloneqq \bigtimes_{j=1}^n \R^d$ will be considered, for $n\geq 1$. The parameter $n$ stands for the order of linearity of the forms appearing in our treatment. Unless indicated otherwise, these forms act  on $n$-tuples of complex or vector-valued functions in $L^\infty(\R^d) \cap L^1(\R^d)$.
Multi-indices on $\R^d$ are usually indicated with Greek letters such as e.g.\ $ \alpha=(\alpha_1,\ldots, \alpha_d)$. The suggestive notation
\[
\mathsf{x}^\alpha:\R^d \to \R, \qquad 
\mathsf{x}^\alpha (x) = \prod_{j=1}^d x_j^{\alpha_j } 
\]
is adopted for the corresponding monomials.
The bracket  $(f,g)\mapsto\langle f,g \rangle$ always stands for the appropriate  duality pairing of $f$ and $g$ and is linear in both arguments separately. For instance, if $\Omega$ is a domain in $\R^d$ and $f,g\in L^2(\Omega)$, then
\[
\langle f,g\rangle_\Omega =\int_\Omega fg \,\mathrm{d} x.
\]
The subscript $\Omega$ in the duality product is omitted when the domain is clear from context.
Cartesian products of $d$  left-closed, half-open intervals of equal length are referred to as \textit {cubes}. A distinguished role is played by the cubes of  the standard dyadic grid on $\R^d$, which is denoted by $\mathcal D$ throughout. The notations
\[
\mathcal D(Q)\coloneqq\{R\in \mathcal D: R\subsetneq Q\}, \qquad  \mathcal D_+(Q)\coloneqq\{R\in \mathcal D: R\subset Q\}, \qquad
Q\in \mathcal D\] 
are also used.
For any cube $Q\subset \R^d$, indicate by $c(Q)$ its center and $\ell(Q)$ its common  sidelength. As usual, $aQ$ stands for the cube with the same center as $Q$, but with sidelength $a \ell(Q)$. The long distance between any two cubes $Q,S\subset \R^d$ is defined as	\[ {\mathfrak d}(Q,S)=\max\{|c(Q)-c(S)|,\ell(Q),\ell(S)\}.\]
We will need the two transformations
\[\begin{split}
&\Tr_Q f(x)\coloneqq f\left( x-\big(c(Q), \ldots, c(Q)\big)\right), \\
&\Sy_Q f(x)\coloneqq f\left(\frac{ x-\big(c(Q), \ldots, c(Q)\big)}{\ell(Q)}  \right), \qquad x  \in \R^{dn}
\end{split}
\]
 acting on functions $f$ on $\R^{dn}$ and parametrized by cubes $Q\subset \R^d$.
As customary, local  norms of $f\in L^1_{\mathrm{loc}}(\R^d)$ on cubes $Q$ are indicated by
\[
\| f\|_{L^p(Q)} \coloneqq
 \|\mathbf{1}_Q f\|_{L^p(\R^d)}, \qquad 
 \langle f \rangle_{p,Q} \coloneqq  |Q|^{-\frac1p} \| f\|_{L^p(Q)},   \quad 0\leq p<\infty.
\]
The simplified notation $\langle f \rangle_{Q}\coloneqq  \langle f \rangle_{1,Q}$ is also used.
Our main results are stated in the scale of    Lorentz-Sobolev spaces. First, for $0<p,q\leq \infty$, the local Lorentz norm on $Q$ is given by
\[
\begin{split}
 &\langle f \rangle_{(p,q),Q} \coloneqq \left\| t^{\frac1p} f^{*,Q}(t) \right\|_{L^q\left( (0,\infty),\frac{\mathrm{d} t}{t} \right)}, \\ &
f^{*,Q}:(0,\infty) \to [0,\infty), \quad f^{*,Q}(t)\coloneqq \inf\left\{s>0: \left| \left\{x\in Q: |f(x)|>s\right\}\right| \leq t|Q|\right\}.
\end{split}
\]
The Lorentz-Sobolev space norms on  $\Omega\subset \R^d$ are defined by
\[
\|f\|_{W^{k,(p,q)}(\Omega)} \coloneqq \sum_{j=0}^{k-1} \| \mathbf{1}_{\Omega} \nabla^j f\|_{L^p(\R^d)} +  \| \mathbf{1}_{\Omega}\nabla^k f\|_{L^{p,q}(\R^d)}.\] 
The standard notation  $W^{k,p}(\Omega)$ is used when $p=q$. A few of the corollaries involve weighted Sobolev spaces. In this article, a \textit{weight} $w$ is a positive, locally integrable function on $\R^d$. If $w$ is a weight, we identify $w$ with the corresponding absolutely continuous measure and write  $w(A)=\int_A w$. Throughout, for $1<p<\infty$, the $p$-th dual of a weight $w$ is the weight $\sigma\coloneqq w^{-\frac{1}{p-1}}$.
The weighted Sobolev norms are then defined by
\[
\|f\|_{W^{k,p}(\Omega,w)} \coloneqq \sum_{j=0}^{k} \left\| w^{\frac1p}\mathbf{1}_{\Omega} \nabla^j f\right\|_{L^p(\R^d)}. \] 
\subsubsection*{A note on constants and increasing functions} The symbol $C$ stands for a generic large positive constant whose value may change at each occurrence. The usage of the almost inequality $A\lesssim B$ follows the same convention, in the sense that $A\lesssim B$ if and only if $A\leq CB$. We write $A\sim B$ when $A\lesssim B$ and $B\lesssim A$. 
The symbol $\mathcal G$ appearing in the statement of certain theorems stands for a positive increasing polynomial function of its arguments, which also may change from statement to statement. The explicit form of each occurrence of $\mathcal G$  is usually specified  within the proofs.

\section{The basic toolbox of singular integrals on domains}\label{sec:wavelets}

\addtocounter{other}{1} 
\subsection{CZ localization on domains} Let $\Omega \subset \mathbb R^d$ be a nonempty open set. To each  $\mathsf{w} \gg 1$, associate   a Whitney decomposition $\mathcal W(\Omega)$ of $\Omega$ with parameter $\mathsf{w} $. Namely,   $\mathcal W(\Omega)\subset \mathcal D$ has the properties	\begin{itemize}	
	\item[(W1)] $\Omega = \bigcup\{Q :Q \in \mathcal W(\Omega) \}$ and the union is pairwise disjoint;
	\item[(W2)] $\mathrm{dist}(Q,\Omega^c) \ge 2^4\mathsf{w} \ell(Q)$ for all $Q\in \mathcal W(\Omega)$.
	\end{itemize}
As a consequence of (W1)-(W2), the additional properties\begin{itemize}	
	\item[(W3)] if $Q,Q' \in \mathcal W(\Omega)$ and $\overline{Q} \cap \overline{Q'} \ne \varnothing$, then $ {\ell(Q)}\le C {\ell(Q')}$;
	\item[(W4)] $\displaystyle \sum_{Q \in \mathcal W} 1_{2^5\mathsf{w}Q} \le C$
	\end{itemize}
follow with a suitable choice of positive constant $C=C(\mathsf w,d)$. Having fixed the Whitney decomposition $\mathcal W$, a multiscale resolution $\mathcal M$ of $\Omega$ is then obtained by 
\[
\mathcal M \coloneqq \mathcal W \cup \mathcal S, \qquad \mathcal S\coloneqq  \bigcup_{Q\in \mathcal W} \mathcal D(Q).
\]
To each  $S\in \mathcal S$, associate the partition of $\mathcal M$ given by		\begin{equation}\label{e:ASW0}
	A(S) \coloneqq \{ P \in \mathcal M : \ell(P) \gtrsim \ell(S), \ |c(P)-c(S)| \lesssim \ell(P)\},  \qquad I(S) \coloneqq \mathcal M \setminus A(S).
\end{equation}
The constants in $\lesssim$  in the definition of $A(S)$ are picked to ensure
	\begin{equation}\label{e:ASW} \{ W \in \mathcal W \cap A(S) : \mathsf w W \cap \mathsf w S \ne \varnothing \} = \{ W \in \mathcal W : \mathsf w W \cap \mathsf w S \ne \varnothing \}.\end{equation}
\begin{remark}\label{rem:global} The case $\Omega=\R^d$ may be included in our framework by setting $\mathcal W$ to be the empty collection and $\mathcal S\coloneqq \mathcal D $. For instance, the resolution formula of Proposition \ref{prop:triebel} below holds with a void first summation. This will be used to discuss the relationship between representations of global singular integral forms on $\R^d$ with their compression on $\Omega\subsetneq \R^d$. \end{remark}
A fundamental result of Triebel \cite{triebel2008}, building on the work of Daubechies \cite{daubechies-lectures}, yields a smooth, compactly supported orthonormal basis of $L^2(\Omega)$ subordinated to the multiresolution $\mathcal M$. To describe this result within our more general framework, it is convenient to introduce the general notion of localized wavelets as follows. 
{{For $n\geq 1$, $0 \le \delta < 1$ and $\rho \ge 0$,   introduce the norm \[	\|\varphi\|_{n,\delta,\rho} := \sup_{x \in \R^{nd}} \left(\prod_{j=1}^n\left|1+ x_j\right|\right)^{d+\rho}  \left[|\varphi(x)| + \sup_{0 \le |h| \le 1} \dfrac{|\varphi(x+h)-\varphi(x)|}{|h|^{\delta}} \right]. \] 
Now let $\sigma \geq 0$ be a smoothness parameter, and $\{\sigma\}=\sigma- \lfloor \sigma \rfloor$ be its fractional part.
 For each cube $R$ of $\R^d$, define the $L^1$-normalized class 
	\[ \Phi^{\sigma,\rho}_{n} (R)\coloneqq  \left\{ |R|^{-n} \Sy_R\varphi: \varphi \in W^{\lfloor \sigma \rfloor,\infty}(\R^d): \sup_{0 \le |\alpha| \le \lfloor \sigma\rfloor }\left\|\partial^\alpha \varphi\right\|_{n,\{\sigma\},\sigma+\rho} \leq 1 \right\}.
	   \]
The  simplified notation $\Phi^{\sigma,\rho} (R)$ is used in place of $\Phi^{\sigma,\rho}_1 (R)$. In the case $n=1$,  suitably chosen cancellative subclasses are needed. When $\{\sigma\} + \rho>0$, define 
\begin{equation} 
\label{e:cancelcond}
\Psi^{\sigma,\rho} (R)\coloneqq  \left\{  \varphi\in \Phi^{\sigma,\rho} (R):\int_{\R^d} x^\alpha \varphi(x)\,  \mathrm{d}x =0 \quad  \forall    0 \le |\alpha| \le \lfloor \sigma \rfloor \right\}.
\end{equation}
Notice that the restriction $\{\sigma\}+\rho>0$ ensures that all integrals in \eqref{e:cancelcond} are absolutely convergent.
  Finally, we  replace the decay parameter $\rho$ by the superscript   $\Subset$  to indicate the subset of the corresponding class of functions having compact support in $\bigtimes_{j=1}^n \mathsf{w}R$. For instance
$\Phi^{\sigma,\Subset}_n (R)\coloneqq\{\varphi\in \Phi^{\sigma,n}_{n} (R): \mathrm{supp} \,\varphi \Subset \bigtimes_{j=1}^n\mathsf{w} R \}$.
To each $\varphi_R\in \Phi^{\sigma,\rho}_{n} (R) $, we associate naturally the $n$-linear form
\begin{equation}\label{eq:rank1}
\varphi_R(f_1,\ldots, f_n) \coloneqq \int_{\R^{nd}}  \varphi_R(x_1,\ldots,x_n) \left(\prod_{j=1}^n f_j(x_j) \right) \, \mathrm{d} x.
\end{equation}
Overloading the same notation, the  definition of each class is  extended to include vectors $\varphi_R\coloneqq (\varphi_R^{(j)}:1\leq j\leq m)$ where each $\varphi^{(j)}_R$ belongs to the same scalar class. Given a vector function $f = (f^{(j)}:1\leq j\leq m)$, instead of writing $\sum_{j=1}^m \varphi_R^{(j)}(f^{(j)})$,
we will simply write $\varphi_R(f)$. This  allows for vector function inputs in wavelet forms.}

At this point, we are ready to state Triebel's multiresolution theorem in the following form. See \cite{triebel2008}*{Theorem 2.33} for the original statement and \cite{triebelIII}*{\S4.2} for further details. 
\begin{proposition}\label{prop:triebel}  Let $k\geq 1$ be a fixed integer. Then there exists a positive integer $\jmath$, depending on $k$ and on the geometric parameter $\mathsf w$ of the multiresolution $\mathcal M$ of $\Omega$, and an $L^1$-normalized family
\begin{equation}
\label{e:triebelref}  \mathfrak B(\Omega,k)\coloneqq
\left\{\chi_{{W},j}\in c\Phi^{k+1,\Subset}({W}):\begin{array}{l}  {W}\in \mathcal W\\ 1\leq j \leq \jmath \end{array}\right\}\cup
\left\{\varphi_{S}\in c\Psi^{k+1,\Subset}({S}): {S}\in \mathcal S \right\}
\end{equation}
with the property that \[
\mathfrak B^2(\Omega,k)\coloneqq
\left\{\sqrt{|{W}|}\chi_{{W},j}: {W}\in \mathcal W, 1\leq j \leq \jmath \right\}\cup \left\{\sqrt{|{S}|}\varphi_{S} : {S}\in \mathcal S \right\} \] is an orthonormal basis of $L^2(\Omega)$, and in particular there holds
\begin{equation}
\label{e:triebel} 
\langle f_1,f_2 \rangle= \sum_{{W}\in \mathcal W } \sum_{j=1}^{\jmath}|{W}| \chi_{{W},j}(f_1) \overline{\chi_{{W},j}}(f_2) + \sum_{{S}\in \mathcal S } |{S}|\varphi_{S}(f_1) \overline{\varphi_{S}}(f_2) 
\end{equation}
for all $ f_1,f_2\in L^2(\Omega).$
\end{proposition}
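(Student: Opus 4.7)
The proposition is an essentially direct restatement of Triebel's multiresolution theorem \cite{triebel2008}*{Theorem 2.33} in the specific language of the wavelet classes $\Phi^{k+1,\Subset}$, $\Psi^{k+1,\Subset}$ introduced above, so my plan is to recall its construction and verify the class memberships. The starting ingredient is a Daubechies pair $(\psi_0,\psi_1)$ of sufficient order: $\psi_0 \in C^{k+1}(\R)$ is a scaling function, $\psi_1$ is a mother wavelet with $k+1$ vanishing moments, and both are compactly supported in some $[-N,N]$, with $N=N(k)$. The tensor product construction yields $2^d$ generators on $\R^d$, one of scaling type and $2^d-1$ of cancellative type with $k+1$ vanishing moments, whose $L^2$-normalized dilations and translates form an orthonormal basis of $L^2(\R^d)$.

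Following Triebel, for each Whitney cube $W$ I would introduce $\chi_{W,j}$, $j=1,\ldots,\jmath$, as the $L^1$-normalized scaling-generator translates at scale $\ell(W)$ whose supports intersect $W$; the index range $\jmath$ is finite and controlled by $N$ and $d$, since each such support has diameter $\sim N\ell(W)$. Property (W2) guarantees that, once the Whitney parameter $\mathsf w$ is taken at least of order $N$, $\supp \chi_{W,j}\Subset \mathsf w W\subset\Omega$. The class membership $\chi_{W,j}\in c\Phi^{k+1,\Subset}(W)$ then follows by observing that $|W|\chi_{W,j}=\Sy_W\psi_0^{\otimes d}$, whose seminorm $\|\cdot\|_{1,\{k+1\},k+1}$ is bounded by a constant depending only on the Daubechies system; the superscript $\Subset$ renders the polynomial decay factor immaterial. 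For each $S\in\mathcal S$, the wavelet $\varphi_S$ is assembled analogously from a mother generator at scale $\ell(S)$ centered near $c(S)$, with the $2^d-1$ different mother types encoded by a finite duplication of $\mathcal S$. The cancellation property $\int \mathsf{x}^{\alpha}\varphi_S = 0$ for $|\alpha|\le k$ is inherited from $\psi_1$, placing $\varphi_S\in c\Psi^{k+1,\Subset}(S)$.

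Completeness of the collection in $L^2(\Omega)$, equivalently the reproducing formula \eqref{e:triebel}, is the substantive step and the one I would cite directly from Triebel. Orthonormality is automatic from the ambient Daubechies ONB once $\mathsf w$ is large enough that supports of different $\chi_{W,j},\varphi_S$ interact correctly. The underlying mechanism for completeness is to expand $f\in L^2(\Omega)$, extended by zero to $\R^d$, in the ambient Daubechies basis, and to show that wavelets whose supports are not contained in $\Omega$ contribute vanishingly in the limit. This is in turn a consequence of the interplay between the Whitney decomposition and the H\"older uniform assumption on $\partial\Omega$ from \S\ref{ss:ts}: fine-scale basis elements crossing $\partial\Omega$ occupy a boundary tubular neighborhood of vanishing measure, while coarse-scale ones are absorbed into the finitely many $\chi_{W,j}$'s attached to each Whitney cube.

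The main obstacle I would anticipate, in a genuinely self-contained proof, is precisely this boundary-layer cancellation, which requires the lower-Ahlfors-regularity $d-\varepsilon$ content of the H\"older uniform hypothesis to control the measure of $\{x\in\Omega:\mathrm{dist}(x,\partial\Omega)<t\}$ as $t\to 0$. By deferring the completeness argument to \cite{triebel2008}*{Theorem 2.33}, my proof reduces to the normalization bookkeeping and the verifications of class membership carried out in the preceding two paragraphs, along with the check that (W2) together with the choice of $\mathsf w$ enforces the support condition compatible with \eqref{e:ASW}.
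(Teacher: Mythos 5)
The paper itself does not prove Proposition~\ref{prop:triebel}; it cites \cite{triebel2008}*{Theorem 2.33} and \cite{triebelIII}*{\S4.2} and leaves the reduction to a remark. Your proposal takes the same route at the top level — defer completeness to Triebel, then do the bookkeeping to place the wavelets into the classes $\Phi^{k+1,\Subset}$ and $\Psi^{k+1,\Subset}$ — and that bookkeeping is correct in spirit: choose a Daubechies system of sufficiently high order so that the generators are $C^{k+1}$ with $k+1$ vanishing moments, scale to $\ell(W)$ (resp. $\ell(S)$), normalize in $L^1$, and note that (W2) with $\mathsf w$ large enough gives $\supp\chi_{W,j},\supp\varphi_S\Subset\mathsf{w}W\subset\Omega$.

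However, your account of the completeness mechanism is off in two substantive ways, and if you tried to make it self-contained it would lead you astray. First, you invoke the H\"older uniform hypothesis from \S\ref{ss:ts}, but this proposition is stated for an arbitrary nonempty open set $\Omega$ (look at the standing hypothesis at the start of \S 2.1); Triebel's Theorem 2.33 requires no boundary regularity beyond the existence of a Whitney decomposition. The $\ep$-H\"older uniformity enters this paper only later, in the tree-structure arguments of \S\ref{ss:ts} and Lemma~\ref{lemma:whitney}, not in the construction of the basis. Second, the claimed mechanism — expand $f\mathbf{1}_\Omega$ in the ambient $\R^d$ Daubechies basis and show boundary-crossing wavelets contribute vanishingly — is not how the theorem is proved and would actually fail: boundary-crossing wavelets carry a nontrivial fraction of the mass of a generic $f\in L^2(\Omega)$ and they are not supported in $\Omega$, so they are not candidates for basis elements. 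Triebel's construction is genuinely internal: each Whitney cube $W$ carries its own finite set of ``top'' scaling functions $\chi_{W,j}$ plus the full tower of cancellative wavelets $\{\varphi_S:S\in\mathcal D(W)\}$, and completeness follows from a scale-by-scale multiresolution argument inside $W$ together with the disjoint cover (W1) — no boundary-measure estimate is ever needed. Relatedly, orthonormality is not ``automatic from the ambient ONB''; it is a designed feature of which scaling functions and wavelets are retained. Since you ultimately defer to Triebel, the proposal reaches the right conclusion, but the explanatory paragraph would mislead a reader trying to reconstruct the argument and should be replaced by the correct internal-MRA description.
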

\begin{remark}\label{rem:vec}
For consistency with the multilinear case, the orthogonal expansion \eqref{e:triebel} is performed on  the bilinear pointwise product form instead of the customary skew-linear scalar product on $L^2(\Omega)$. The latter may be recovered exchanging $f_2$ with its conjugate.
\end{remark}
\begin{remark}[Projections]  \label{rem:proj}  In accordance with the reproducing formula \eqref{e:triebel}, the space $L^2(\Omega)$ is the orthogonal sum of the range of the projection operators
\begin{equation}
\label{e:proj}
\langle   \W f_1,f_2 \rangle\coloneqq \sum_{{W}\in \mathcal W } \sum_{j=1}^{\jmath}|{W}| \chi_{{W},j}(f_1) \overline{\chi_{{W},j}}(f_2) ,\qquad \langle   \Sc f_1,f_2 \rangle\coloneqq \sum_{{S}\in \mathcal S } |{S}|\varphi_{S}(f_1) \overline{\varphi_{S}}(f_2).\end{equation}
\end{remark}

\begin{remark}[Dense class] \label{rem:dense}    To ensure  absolute convergence whenever the reproducing formula  \eqref{e:triebel} is applied, it is convenient to work with a dense class defined with respect to the wavelet coefficients occurring therein. Namely, define $\mathcal U(\Omega)$ to be the subspace of those  $f\in L^2(\Omega)$ with the property that 
\begin{equation}
\label{e:qualass}
\sup_{\substack{W\in \mathcal W,  1\leq j \leq \jmath}}\left(1+\ell(W)\right)^{-b}\left|\chi_{W,j}(f)\right|+\sup_{S\in \mathcal S} \left(1+\ell(S)\right)^{-b}\left|\varphi_S( f)\right| 
<\infty\qquad \forall b\geq 0. 
\end{equation}
It is not difficult to  see that $\mathcal U(\Omega)$ contains $\mathcal C^\infty_0(\Omega)$, and  is dense in $W^{k,2}(\Omega)$ and thus in $W^{m,p}(\Omega)$ for all $0\leq m \leq k$ and $1\leq p<\infty$. 
\end{remark}
Our analysis  relies in first instance on the following main building blocks, which we term \textit {wavelet forms}. While this article is concerned with linear operators, paraproducts arise naturally in the representation of the latter, leading us to considering general linearities. 
{
\begin{definition}[Wavelet forms] \label{def:forms}
Let $n\geq 2$ and $\eta>0$. To collections
 \begin{equation} \label{e:collwave}
 \begin{split}
& \{\phi_W\in \Phi^{0,\eta}_{n} (W) : W\in \mathcal W \}, \\  &\{\psi^{j}_S\in \Psi^{\eta,0} (S) : S\in \mathcal S \}, \; j=1,2,\qquad 
 \;\{\phi^{j}_S\in \Phi^{\eta,0} (S) : S\in \mathcal S \}, \; 2< j \leq n
 \end{split}
 \end{equation} associate the $n$-linear $\Omega$-\textit{wavelet form}
 \begin{equation}
\label{e:triebel2}
 \Lambda(f_1,\ldots, f_n) \coloneqq \sum_{W\in \mathcal W }|W|\phi_W(f_1,\ldots, f_n)+ \sum_{S\in \mathcal S }|S|  \left( \bigotimes_{j=1}^2 \psi^{j}_S \bigotimes_{2<j \leq n}\phi^{j}_S\right) (f_1,\ldots, f_n).
 \end{equation}{The class of  $\Omega$-\textit{wavelet forms} thus depends on $\Omega$ via the multiresolution $\mathcal M$. However, the action of $\Lambda$ is well defined, with in particular absolute convergence of the sums, e.g.\ for tuples $f_j\in L^\infty_0(\R^d), 1\leq j \leq n$. This is apparent from estimate \eqref{e:sparzglob} below. When $\Omega$ is clear from context, we simply refer to the class \eqref{e:triebel2} as \textit{wavelet forms}.}
  Wavelet forms naturally split into pieces localized by each Whitney cube $W\in \mathcal W$, as in 
  \begin{equation}
\label{e:lambdaW}
\Lambda=\sum_{W\in \mathcal W}|W| \Lambda_W, \qquad 
 \Lambda_W  \coloneqq    \phi_W  + \sum_{S \in \mathcal{D}(W)} \frac{|S|}{|W|} \left( \bigotimes_{j=1}^2 \psi^{j}_S \bigotimes_{2<j \leq n}\phi^{j}_S\right) .
 \end{equation}
When $\Phi^{0,\eta}_{n} (W), \Psi^{\eta,0} (S) ,\Phi^{\eta,0} (S) $ may be respectively replaced by $\Phi^{0,\Subset}_{n} (W), \Psi^{\eta,\Subset} (S) $ and $\Phi^{\eta,\Subset} (S) $ in \eqref{e:collwave}, the form $\Lambda$ defined in \eqref{e:triebel2} is termed  $n$-linear \textit{fully localized wavelet form}. Note that the form $\Lambda\coloneqq \sum_{W\in \mathcal W}|W|\Psi_W$, where $\Psi_W$ is defined in  \eqref{e:PsiZ} below, is an example of fully localized wavelet form, up to a suitable choice of signs for the wavelets $\varphi_S$ appearing therein.
 \end{definition}
 \begin{remark}
The right hand side of \eqref{e:triebel} is, up to a multiplicative factor,  a fully localized wavelet form corresponding to \[
\phi_W \coloneqq \sum_{j=1}^{\jmath} \chi_{W,j} \otimes \overline{ \chi_{W,j} }, \quad W\in \mathcal W, \qquad
\psi^1_S \coloneqq \varphi_S, \;\psi^2_S \coloneqq\overline{ \varphi_S },\quad  S\in \mathcal S.
\]
\end{remark}
 It is immediate to see that wavelet forms are $n$-linear Calder\'on-Zygmund singular integral forms on $\R^d$, e.g. as defined in \cite[Def.\ 3.2]{diplinio23bilin}. As such, they do admit $L^p$ and sparse domination estimates. These are recalled in the \S\ref{s:sparse}, together with more precise estimates that rely on their localization to the domain $\Omega$.}
{
\addtocounter{other}{1} 
\subsection{Triebel-Lizorkin norms}\label{ss:tl-norms} The analysis in \S\ref{sec:pp} relies on a  family of Triebel-Lizorkin norms adapted to the multiscale resolution $\mathcal M$ of $\Omega$. They are formulated efficiently using a family of local  $q$-functions associated to  the basis \eqref{e:triebelref}. 
For  $n\in \mathbb \R$, $n\leq k$,  $1\leq q\leq \infty,$ set
\begin{equation}
\label{e:sfon}
\mathrm{S}^{n}_{q,R} f(x)= \left\Vert \frac{|\varphi_Z(f)|}{\ell(Z)^n} \mathbf{1}_Z(x)\right\Vert_{\ell^q(Z\in \mathcal D(R))}, \qquad R\in \mathcal M.
\end{equation}
The parameter $k$ and the choice of basis $\mathfrak B(\Omega,k)$ are implicit in the notation. Of course, a different choice of basis will produce equivalent norms. 
In our context, the above defined $q$-functions are motivated by the estimate
\begin{equation}
\label{e:PsiZ}
 \Psi_Z(g,h)  \coloneqq \frac{1}{|Z|} \sum_{S \in \mathcal{D}(Z)} |S| \left|\varphi_S(g)\right|\left|\varphi_S(h)\right| \leq \left\langle    \mathrm{S}^{n}_{q,Z}g\right\rangle_{p,Z} \left\langle \mathrm{S}^{-n}_{q',Z} h \right\rangle_{p',Z}, \qquad 
\end{equation} holding for $1\leq p,q \leq \infty, \, |n| \le k$, 
which will be used in connection with  \eqref{e:lsfo}.
When $q=2$, $\mathrm{S}^n_{2,R}$ is essentially the local square function of $|\nabla|^n f$ and is denoted simply by $\mathrm{S}^n_R$. 
As each $\{\varphi_Z:Z\in \mathcal S\}$ has $k+1$ vanishing moments, integration by parts and Littlewood-Paley theory reveal that 
\begin{equation}
\label{e:lsfo}  
\max\{p,p'\}^{-\frac12} \left \langle \mathrm{S}^{n}_R f \right\rangle_{p,R}  \lesssim  \inf_{\mathsf{P} \in \mathbb P^{k-n}}  \left\langle \nabla^{n} f - \mathsf P  \right\rangle_{p,R} \lesssim 
\max\{p,p'\}^{\frac12}\left\langle \mathrm{S}^{n}_R f \right\rangle_{p,R}\end{equation}
where $1<p<\infty,$ $ \mathbb P^{m}$ stands for the ring of (vector) polynomials of degree at most $m$, and the implied constants are absolute.
The scales of respectively non-homogeneous and homogeneous  Triebel-Lizorkin norms  associated to a subset $\mathcal R \subset \mathcal M$ are then defined as follows. For $n,m \in \mathbb R, n,m\leq k$ and $1 \le p,q \le \infty$,
	\begin{align} \label{e:F} \|f\|_{F^{n,m}_{p,q}(\mathcal R)} &\coloneqq \sup_{\substack{W \in \mathcal W \cap \mathcal R \\ 1 \le j \le \jmath}} \ell(W)^{-n-m} \left\vert \chi_{W,j}(f) \right\vert + \sup_{S \in \mathcal R} \ell(S)^{-m} \left\langle \mathrm{S}^{n}_{q,S} f \right\rangle_{p,S} , \\
	\label{e:Fdot}\|f\|_{\dot F^{n,m}_{p,q}(\mathcal R)} &\coloneqq \|\mathcal Sf\|_{F^{n,m}_{p,q}(\mathcal R)}.
	\end{align}
When $\Omega=\R^d$ and $\mathcal R=\mathcal D$ these norms are equivalent to the unified Morrey-Campanato-Besov-Triebel-Lizorkin norms introduced in \cite{sickel05morrey}. More precisely, the corresponding space $F^{n,m}_{p,q}(\mathcal D) $ coincides with the $F^{n,\frac md + \frac 1p}_{p,q}$ space of \cite{sickel05morrey}. In the context of $T\mathbf{1}$-theorems for singular integrals, the spaces  $F^{n,0}_{1,2}(\mathcal M)$ play a familiar role. They coincide with the space of those $f\in L^1_{\mathrm{loc}}(\Omega)$ with $|\nabla|^n f\in \mathrm{bmo}(\Omega)$ \cite{CDS} and are usually indicated by $\mathrm{bmo}^n(\Omega)$; we follow this convention below, see  for instance Proposition \ref{p:sparse2}. The John-Nirenberg inequality  tells us that the norms  $F^{n,0}_{1,2}(\mathcal M)$  and $ F^{n,0}_{p,2}(\mathcal M)$ are equivalent for each $1<p<\infty$. A similar identification of the homogeneous space $\dot F^{n,0}_{p,2}(\mathcal M)$ with  $\mathrm{BMO}^n(\Omega)$ may be performed.

\begin{remark}
The  embeddings
\begin{align}\label{e:Femb}
 \|f\|_{F^{n,m}_{p,q}(\mathcal R)} & \le \|f\|_{F^{n+u,m-u}_{r,s}(\mathcal R)}, \qquad u \ge 0, \ r \ge p, \ s \le q,
\\ 
\|f\|_{F^{n,m}_{p,q}(\mathcal R)} &\leq \|f\|_{\dot F^{n,m}_{p,q}(\mathcal R)} + \|f\|_{F^{u,v}_{1,\infty}(\mathcal R)}, \qquad u+v =m+n 
\label{e:Fembdot} 
\end{align}
 follow immediately  from the definition.  Furthermore, an application of \eqref{e:lsfo} and  simple inspection of the exponents leads to the equivalence
 \begin{equation}\label{e:Fsup} \|f\|_{F^{n,m}_{p,2}(\mathcal M)} \sim \sup_{W\in \mathcal W} \ell(W)^{-m} \langle [1+|\nabla|^n ]f \rangle_{p,\mathsf{w}W},   \qquad -mp \geq d.
 \end{equation}
\end{remark}

}

\addtocounter{other}{1} 
\subsection{Tree Structure}  \label{ss:ts} Analyzing the effect of the boundary $\partial \Omega$ on the smooth boundedness properties of CZ operators requires a tree structure on the  Whitney decomposition of $\Omega$. Hereby follows an axiomatic definition and some examples   satisfying our requirements. This is a slight modification of the general framework of uniform domains described in \cite{prats-thesis}.

\begin{definition}
An ordered list of cubes $\{Q_1,\ldots,Q_m\} \subset \mathcal W$ is an \textit{chain} from $Q=Q_1$ to $W=Q_m$ if each $\bar Q_j \cap \bar Q_{j+1} \ne \varnothing$, $j=1,\ldots,m-1$. If so, we write
	\[ [Q,W] = \{Q_1,\ldots,Q_m\}.\]
Furthermore, for $L\ge 1$,  a chain $[Q,W]$ is \textit{$L$-admissible} if 
\begin{enumerate}
	\renewcommand{\labelenumi}{(U\theenumi)}
	\item \label{adm1} $\displaystyle \sum_{P \in [Q,W]} \ell(P) \le L {\mathfrak d}(Q,W)$;
	\item \label{adm2} there exists $Q_W \in [Q,W]$ such that
	\begin{enumerate}
	\item \label{qs1}for all $P \in [Q,Q_W]$, $\ell(P) \ge L^{-1} {\mathfrak d}(Q,P)$,
	\item \label{qs2}for all $P \in [Q_W,W]$, $\ell(P) \ge L^{-1} {\mathfrak d}(P,W)$.
	\end{enumerate}
\end{enumerate}
Call $[Q,W]$ an \textit{oriented chain} if $Q_W=W$. Notice that $[Q,Q_W]$ and $[W,Q_W]$ are both oriented chains.\end{definition}

\begin{lemma}\label{lemma:adm}
If $[Q,W]$ is $L$-admissible and $Q_W \in [Q,W]$ satisfies \textnormal{(U\ref{qs1})} and \textnormal{(U\ref{qs2})} above, then for any $P \in [Q,Q_W]$ and $P' \in [Q_W,W]$,
	\[ {\mathfrak d}(Q,W) \sim {\mathfrak d}(P,P').\]
\end{lemma}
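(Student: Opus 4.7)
The plan is to establish the two bounds $\mathfrak{d}(P,P') \lesssim \mathfrak{d}(Q,W)$ and $\mathfrak{d}(Q,W) \lesssim \mathfrak{d}(P,P')$ separately, each by unpacking the relevant piece of admissibility.

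For the upper bound $\mathfrak{d}(P,P') \lesssim \mathfrak{d}(Q,W)$, I would appeal to condition \textnormal{(U\ref{adm1})}. Since $P, P'$ both appear in the chain $[Q,W]$, one gets immediately $\ell(P), \ell(P') \le L\mathfrak{d}(Q,W)$. For the distance between centers, the subchain $[P,P'] \subset [Q,W]$ is itself a chain of Whitney cubes with consecutive cubes touching; the triangle inequality together with $|c(Q_j)-c(Q_{j+1})| \le \tfrac{\sqrt d}{2}(\ell(Q_j)+\ell(Q_{j+1}))$ telescopes to
\[ |c(P)-c(P')| \le \sqrt d\sum_{R\in[Q,W]} \ell(R) \le \sqrt d\, L\,\mathfrak{d}(Q,W).\]
Taking the maximum of these three bounds yields $\mathfrak{d}(P,P') \le C(d)L\,\mathfrak{d}(Q,W)$.

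For the lower bound, I would exploit conditions \textnormal{(U\ref{qs1})} and \textnormal{(U\ref{qs2})}. Since $P \in [Q,Q_W]$, \textnormal{(U\ref{qs1})} gives $\mathfrak{d}(Q,P) \le L\,\ell(P) \le L\,\mathfrak{d}(P,P')$; in particular $\ell(Q) \le \mathfrak{d}(Q,P) \le L\,\mathfrak{d}(P,P')$. Analogously \textnormal{(U\ref{qs2})} supplies $\ell(W) \le \mathfrak{d}(P',W) \le L\,\mathfrak{d}(P,P')$. Finally the triangle inequality for centers gives
\[ |c(Q)-c(W)| \le |c(Q)-c(P)| + |c(P)-c(P')| + |c(P')-c(W)| \le (2L+1)\mathfrak{d}(P,P').\]
Taking the maximum of the three terms in $\mathfrak{d}(Q,W)$ and absorbing constants gives $\mathfrak{d}(Q,W) \le (2L+1)\mathfrak{d}(P,P')$, completing the equivalence.

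There is really no main obstacle here: the statement is a clean quantitative consequence of admissibility, and the only care required is to keep track of the fact that \textnormal{(U\ref{qs1})}--\textnormal{(U\ref{qs2})} control \emph{both} sidelengths and relative center distances between any cube in the chain and the endpoints $Q$ or $W$, which is exactly what is needed to recover $\mathfrak{d}(Q,W)$ from $\mathfrak{d}(P,P')$.
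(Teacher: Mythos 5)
Your proof is correct and follows essentially the same route as the paper's: both directions of the comparability are obtained from the long-distance triangle inequality together with (U1)--(U2). The only minor variation is in the direction $\mathfrak{d}(P,P')\lesssim\mathfrak{d}(Q,W)$, where you bound $\ell(P),\ell(P')$ and $|c(P)-c(P')|$ directly from (U1) and the Whitney-cube telescoping, whereas the paper passes through $\mathfrak{d}(P,Q)\le L\ell(P)$ via (U2) and then applies (U1); both yield the same conclusion with comparable constants.
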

\begin{proof}
The upper estimate follows from the triangle inequality and the definition of $Q_W$:
	\[ {\mathfrak d}(Q,W) \le {\mathfrak d}(Q,P) + {\mathfrak d}(P,P') + {\mathfrak d}(P',W) \le L \ell(P) + {\mathfrak d}(P,P') + L \ell(P') \le (2L+1) {\mathfrak d}(P,P').\]
The lower estimate relies additionally on (U\ref{adm1}) so that
	\[ {\mathfrak d}(P,P') \le {\mathfrak d}(P,Q) + {\mathfrak d}(Q,W) + {\mathfrak d}(W,P') \le L \ell(P) + {\mathfrak d}(Q,W) + L \ell(P') \le (2L^2 +1){\mathfrak d}(Q,W)\] as claimed.
\end{proof}

\begin{definition} \label{def:uniform-domain}
A Whitney decomposition $\W$ of $\Omega$ is said to be \textit{$L$-uniform} if for every $Q,W \in \mathcal W$, there is an $L$-admissible chain $[Q,W]$ for which every subchain is also $L$-admissible.
The domain $\Omega$ is said to be a \textit{uniform domain} if for each $\mathsf w \ge 1$ there exists $L \ge 1$ such that every Whitney decomposition with parameter $\mathsf w$ is $L$-uniform. If such an $L$ exists, label it $L(\Omega,\mathsf w)$.
\end{definition}

Uniform domains have a prominent role in geometric analysis and function theory. They were introduced by F.~John in \cite{John61} and formalized by O.~Martio and J.~Sarvas in \cite{MS79} in their study of injectivity properties of functions. Since then, many equivalent characterizations have been given \cites{jones80,GO79}, which connect to the extension problem for various function classes \cite{jones80,jones81,holden,BD21}.
\addtocounter{other}{1}
\subsection{Packing estimates for uniform Whitney coverings}
Throughout this section, let $\Omega$ be a uniform domain and $\W$ a uniform Whitney decomposition of $\Omega$.
\begin{lemma}\label{l:reg}
	\[ \inf_{\substack{ x \in \partial \Omega \\ 0<r\lesssim \diam(\Omega)}} \frac{|B(x,r) \cap \Omega|}{r^d} > 0\]
\end{lemma}
\begin{proof}
We recall the simple argument from \cite{jones81}*{Lemma 2.3}. Let $0<a<b<\frac 12$ to be chosen later. Let $x \in \Omega$ and $r<\frac {1}{2b} \operatorname{diam}\Omega$. Since $\Omega$ is connected, there exists $y \in \Omega$ with $ar <|x-y|< br$. Let $Q,W$ be Whitney cubes satisfying
	\[ Q \subset B(x,ar), \quad y \in W.\]
By construction, $\mathfrak d(Q,W) \sim r$ and notice that $Q_W$ from (3) satisfies $\ell(Q_W) \gtrsim r$. By choosing $a,b$ small enough, one also has $Q_W \subset B(x,r)$ which proves the claim.
\end{proof}
Lemma \eqref{l:reg} implies that the boundary of a uniform domain has Lebesgue measure zero. We can say something more, that in fact it has dimension strictly less than $d$.

\begin{lemma}\label{l:pack}
There exists $\ep>0$ such that the estimate
	\begin{equation}\label{e:pack}\sum_{\substack{Q \in \W:\\ \mathfrak d(Q,W) \le A \ell(W), \\  \ell(Q) \le t \ell(W)}} |Q| \lesssim A^{d-\ep} t^\ep |W|\end{equation}
holds uniformly over $W \in \W$, $A \ge 1$, and $t >0$.
\end{lemma}
\begin{proof}
For ease of notation, define $\B_A(W) = \{Q \in \W : \mathfrak d(Q,W) \le A \ell(W)\}$, and notice that if $Q \in \B_A(W)$, there is a chain $[Q,W]=\{Q_1,Q_2,\ldots,Q_m\}$ which satisfies
	\begin{equation}\label{e:near-chain} Q_i \in \B_D(Q_j), \quad i \le j, \quad D \sim A. \end{equation}
Indeed, for $i=1$ or $j=m$ \eqref{e:near-chain} is immediate from (U\ref{adm1}) and (U\ref{adm2}). Then the general case follows since every subchain can also be chosen to be admissible.

\vskip1mm \noindent\textit{Step 1: Logarithmic packing condition.}  We claim that
	\begin{equation}\label{e:log} \sum_{Q \in \B_A(W)} |Q| \log \left( \frac{|W|}{|Q|} \right) \lesssim_A |W|,\end{equation}
Let $k \ge 0$, and $Q \in \B_A(W)$ with $\ell(Q) \sim 2^k \ell(W)$. Construct a Whitney chain going from $Q$ back to $W$ and enumerate the elements $Q=Q_1,\ldots,Q_n=W$. By virtue of the Whitney decomposition, the neighboring cubes $Q_i$ and $Q_{i+1}$ satisfy
	\[ \frac{1}{\mathsf w} \le \frac{\ell(Q_i)}{\ell(Q_{i+1})} \le \mathsf w\]
for some $B \ge 1$. Therefore,
	\[ 2^k \lesssim \frac{\ell(Q)}{\ell(W)} = \prod_{i=1}^{n-1} \frac{\ell(Q_i)}{\ell(Q_{i+1})} \le \mathsf w^{n-1},\]
i.e. $n \gtrsim k$. Furthermore, from \eqref{e:near-chain}, we get that for each $i=1,\ldots,n$, 
	\[ Q \in \B_{D}(Q_i), \quad Q_i \in \B_{D}(W), \quad D \sim A.\]
In this way,
	\[ \sum_{Q \in \B_A(W)} |Q| \log \left( \frac{|W|}{|Q|} \right) \lesssim \sum_{k=0}^\infty \sum_{\substack{Q \in \B_A(W) \\ \ell(Q) \sim 2^k \ell(W)}} |Q| (k+1) \lesssim \sum_{P \in \B_{D}(W)} \sum_{Q \in \B_{D}(P)} |Q| \lesssim |W|,\]
establishing \eqref{e:log}

\vskip1mm \noindent\textit{Step 2: Proof of \eqref{e:pack} for $A \lesssim 1$.}
Given \eqref{e:log}, it is evident that there exists $t_0 \in (0,\frac 12)$ such that for all $W \in \W$ and $D \lesssim 1$,
	\begin{equation}\label{e:geo}\sum_{\substack{Q \in \Sh_D(W) \\ \ell(Q) \le t_0 \ell(W) }} |Q| \le \frac 12 |W|.\end{equation}
Then, let $t \in (0,1)$ and select a positive integer $k$ such that $t \in [t_0^{k+1},t_0^k)$. Suppose we can construct a collection $\{Q_i\}_{i=0}^n \subset \W$ with the following properties
	\begin{itemize}
	\item $Q=Q_n$ and $W=Q_0$,
	\item $Q_j \in \B_{D}(Q_{j-1})$, $j=1,\ldots,n$
	\item $\ell(Q_j) \le t_0 \ell(Q_{j-1})$. $j=1,\ldots,n$
	\item $k \lesssim n$.
	\end{itemize} 
With such a collection in hand, we can iterate \eqref{e:geo} to obtain
	\[\sum_{\substack{Q \in \B_A(W) \\ \ell(Q) \le t \ell(W)}} |Q| \le  \sum_{\substack{Q_{1} \in \B_{D}(Q_0) \\ \ell(Q_{1}) \le t_0 \ell(Q_0)}} \cdots \sum_{\substack{Q_{n} \in \B_{D}(Q_{n-1}) \\ \ell(Q_n) \le t_0\ell(Q_{n-1}) }} |Q_n| \le \left( \frac 12\right)^n |W|. \]
However, $(\frac 12 )^{n} \lesssim (t_0^k)^\ep  \lesssim t^\ep$ for $\ep^{-1} \sim -\log t_0$. It remains to construct the desired collection. Let $[Q,W]=\{P_1,\ldots,P_m\}$ satisfying \eqref{e:near-chain}. Begin with $Q_0=W$ and choose the rest inductively from $[Q,W]$. Given $Q_i=P_{j_0}$, let $Q_{i+1}=P_j$ where
	\[ j = \max\{ n \le j_0 : \ell(P_n) \le t_0 \ell(P_{j_0}) \}.\] 
If no maximal element exists then terminate the algorithm and reset $Q_i=Q$. The first and third properties are satisfied by construction, and the second property follows from \eqref{e:near-chain}. To verify the fourth property, first notice that
	\[ t_0^{k} \ge t \gtrsim \frac{\ell(Q)}{\ell(W)} = \prod_{i=0}^{n-1} \frac{\ell(Q_{i+1})}{\ell(Q_{i})}.\]
However, since $Q_{i+1}$ was the first cube in the chain to satisfy $\ell(Q_{i+1}) \le t_0 \ell(Q_i)$, we have, by the Whitney property
	\[ \ell(Q_{i+1})=\ell(P_j) \ge \frac{1}{\mathsf w} \ell(P_{j+1}) \ge \frac{t_0}{\mathsf w} \ell(Q_i),\]
Combing the two above displays and recalling that $t_0 <1$ establishes that $k \lesssim n$.

\vskip1mm \noindent\textit{Step 3: General $A$.}
To prove the case of general $A$, fix $W\in\W$, $A \ge 1$, and $t\ge 0$. There exists $\tilde W \in \W$ with
	\[ \mathfrak d(W,\tilde W)\lesssim A \ell(W), \quad \ell(\tilde W) \sim \sup_{Q,P \in \B_A(W)} \mathfrak d(Q,P).\]
Indeed, choose $W' = Q_P$ from (U\ref{adm2}) taken from an admissible chain connecting $Q$ and $P$ which are near maximizers of the supremum above. The second property above guarantees that $\B_A(W) \subset \B_B(\tilde W)$ where $B \lesssim 1$. Therefore, setting $s = \frac{t \ell(W)}{\ell(\tilde W)}$
	\[ \sum_{\substack{Q \in \B_A(W) \\ \ell(Q) \le t \ell(W)}} |Q| \le \sum_{\substack{ Q \in \B_B(\tilde W) \\ \ell(Q) \le s \ell(\tilde W)}} |Q| \lesssim |\tilde W|t^\ep \left( \frac{\ell(W)}{\ell(\tilde W)} \right)^\ep \lesssim A^{d-\ep} t^\ep |W|.\]
\end{proof}

\begin{lemma}\label{lemma:whitney}
The estimate 
\begin{equation}
 \label{eq:est1}
 \sum_{\substack{ P \in \mathcal W \\ \ell(Q) \le t \\ W \in [P,Q]}} \frac{|P| {\mathfrak d}(W,P)^{k-1}}{{\mathfrak d}(P,R)^{d+k}} \lesssim  \frac{|W
 |\ell(W)^{k-1} \min\{1,\frac{t}{\ell(W)}\}^\ep}{{\mathfrak d}(R,W)^{d+k}}
\end{equation}
holds uniformly over $t\geq 0$, $W,Q
\in \W, R\subset Q$ and $k\geq 1$.
\end{lemma}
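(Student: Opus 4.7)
The plan is to partition the sum according to where the fixed cube $W$ lies on the admissible chain $[P,Q]$, then dyadically decompose and invoke the H\"older packing hypothesis. For each $P$ contributing to the sum, axiom (U\ref{adm2}) produces a peak cube $P_{P,Q}\in[P,Q]$, and we split the sum into $\Sigma_{\mathrm A}+\Sigma_{\mathrm B}$ according to whether $W\in[P,P_{P,Q}]$ (Case A) or $W\in[P_{P,Q},Q]$ (Case B). By admissibility of the oriented subchains (guaranteed to be subchains of an admissible chain by Definition \ref{def:uniform-domain}), in Case A one has $\mathfrak d(P,W)\le L\ell(W)$, while in Case B one has $\mathfrak d(W,Q)\le L\ell(W)$. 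Lemma \ref{lemma:adm} applied in Case A with $P'=W$ and $P''=Q$, and in Case B with $P'=P$ and $P''=W$, delivers $\mathfrak d(P,Q)\sim\mathfrak d(W,Q)$ and $\mathfrak d(P,Q)\sim\mathfrak d(P,W)$, respectively. Since $R\subset Q$, the definition of the long distance immediately yields $\mathfrak d(P,R)\sim\mathfrak d(P,Q)$ and $\mathfrak d(R,W)\sim\mathfrak d(Q,W)$ in both cases.

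With these replacements in place, parameterize $P$ by dyadic indices $i\in\mathbb Z$, $j\in\mathbb N$ with $\ell(P)=2^{-i}\ell(W)$ and $\mathfrak d(P,W)\sim 2^{j}\ell(W)$. The H\"older uniformity \eqref{eq:d-ep} bounds the count of admissible $P$ at each $(i,j)$ by $C 2^{(j+i_+)(d-\ep)}$, where $i_+=\max(0,i)$. In Case A, the structural constraints force $j=O(1)$ and $i\ge -O(1)$, so, using $\mathfrak d(W,P)^{k-1}\lesssim\ell(W)^{k-1}$, a single scale contributes at most
\[
\frac{|W|\ell(W)^{k-1}}{\mathfrak d(R,W)^{d+k}}\,2^{-i\ep},
\]
and the constraint $\ell(P)\le t$ restricts $i\ge\max\{0,\log_2(\ell(W)/t)\}$; the resulting geometric series in $i$ evaluates to the desired $\min\{1,t/\ell(W)\}^\ep$. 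In Case B, $\mathfrak d(R,W)\sim\ell(W)$ reduces the target right-hand side to $\ell(W)^{-1}\min\{1,t/\ell(W)\}^\ep$, and the summand simplifies to $|P|\mathfrak d(P,W)^{-(d+1)}\sim\ell(W)^{-1}2^{-id-j(d+1)}$; pairing with the packing count yields a bi-geometric sum $\sum_{i,j}2^{-i\ep-j(1+\ep)}$, augmented by the constraint $-i\lesssim j$ (coming from $\ell(P)\lesssim\mathfrak d(P,W)$) when $i<0$. Both sub-series converge, and imposing $\ell(P)\le t$ again produces the required $\min\{1,t/\ell(W)\}^\ep$ saving.

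The main obstacle is the geometric Step 1: reducing the three-point quantity $\mathfrak d(P,R)$ to the clean separation $\mathfrak d(P,R)\sim\mathfrak d(W,R)$ (Case A) or $\mathfrak d(P,R)\sim\mathfrak d(P,W)$ (Case B) via the peak-cube dichotomy, since this is what decouples the geometric sum over $P$ from the fixed data $(W,R)$ in a way compatible with the packing estimate. Once this structural reduction is secured, the H\"older exponent $\ep>0$ supplied by \eqref{eq:d-ep} converts by routine geometric-series bookkeeping into the decay factor $\min\{1,t/\ell(W)\}^\ep$ on the right-hand side.
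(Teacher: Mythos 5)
Your proof is correct and follows essentially the same path as the paper's: split according to the peak-cube dichotomy from (U2), apply Lemma~\ref{lemma:adm} to reduce $\mathfrak d(P,Q)$ to the single relevant distance ($\mathfrak d(W,Q)$ when $W$ lies in the first half of the chain, $\mathfrak d(P,W)$ when it lies in the second), and then run a bi-geometric sum against the H\"older packing \eqref{eq:d-ep}. The only organizational difference is that the paper isolates the intermediate estimate \eqref{eq:est2} and reuses it in both cases, whereas you unpack the double sum over $(i,j)$ directly each time; these are interchangeable.

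Two small points to tighten. First, the reduction from $Q$ to $R\subset Q$ is not purely a formal consequence of the definition of $\mathfrak d$: the upper bound $\mathfrak d(R,W)\lesssim\mathfrak d(Q,W)$ is trivial, but the lower bound needs $\mathfrak d(R,W)\gtrsim\ell(Q)$, which the paper obtains from the Whitney property (W3) in a short closing paragraph; the same care is needed for $\mathfrak d(P,R)\sim\mathfrak d(P,Q)$. Second, in Case B your stated packing count $C\,2^{(j+i_+)(d-\ep)}$ does not actually produce the series $\sum_{i,j}2^{-i\ep-j(1+\ep)}$ when $i<0$: that count would give $\sum_{i<0}2^{i(1+\ep-d)}$ under the constraint $j\gtrsim|i|$, which fails to converge at the endpoint $d=2$, $\ep=1$. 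The intended reading of \eqref{eq:d-ep} is with exponent $(j+i)(d-\ep)$ for all $i\in\mathbb Z$ (the $m$ in that display is a typo for $i$; this is also what the paper's proof of \eqref{eq:est2} uses), and with that count you do obtain exactly your series, whose $i<0$ block is $\sum_{i<0,\,j\gtrsim|i|}2^{-i\ep-j(1+\ep)}\lesssim\sum_{i<0}2^{i}<\infty$ as you assert.
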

\begin{proof}  In this proof $\vee,\wedge$ are used in place of $\max,\min$ respectively.
First, note the intermediate estimate
\begin{equation} \label{eq:est2}
\sum_{\substack{P \in \mathcal W \\ \ell(P) \le t }} \frac{|P|}{{\mathfrak d}(W,P)^{d+1}} \lesssim \frac{\left( 1 \wedge\frac{t}{\ell(W)} \right)^\ep }{\ell(W)}.
\end{equation}
To prove \eqref{eq:est2}, decompose 
	\[ \sum_{\substack{P \in \mathcal W \\ \ell(P) \le t }} \frac{|P|}{{\mathfrak d}(W,P)^{d+1}} \sim \ell(W)^{-d-1} \sum_{j=0}^\infty 2^{-j(d+1)} \sum_{\substack{P \in \mathcal W \\ {\mathfrak d}(W,P) \sim 2^j \ell(W) \\ \ell(P) \le t}} |P|. \]
 
By Lemma \ref{l:pack}, the innnermost sum (over $P \in \W$) is controlled by 
	\[ |W| 2^{j(d-\ep)} \left(\frac{t}{\ell(W)} \wedge 2^j \right)^\ep. \] 
 
Summing in $j$ yields \eqref{eq:est2}.
We return to   \eqref{eq:est1} and first prove it  under the assumption that $R=Q$. The general situation will be later reduced to this case. Let $P_{Q} \in [P,Q]$ satisfying (U\ref{adm2}). Split the sum in \eqref{eq:est1} into two pieces, say $I$ and $II$ such that $W \in [P,P_{Q}]$ and $W \in [P_{Q},Q]$, respectively. Estimate $II$ first,  directly relying on \eqref{eq:est2}. In fact  using ${\mathfrak d}(Q,P) \sim {\mathfrak d}(W,P)$ by Lemma \ref{lemma:adm}, the intermediate estimate \eqref{eq:est2}, and $\ell(W) \sim {\mathfrak d}(W,Q)$ from (U\ref{adm2}),
	\[ \begin{split}
	II &= \sum_{\substack{ P \in \mathcal W \\ \ell(P) \le t \\ W \in [P_{Q},Q]}} \frac{|P|  {\mathfrak d}(W,P)^{k-1}}{ {\mathfrak d}(P,Q)^{d+k}} \lesssim \sum_{\substack{P \in \mathcal W \\ \ell(P) \le t }} \frac{|P|}{{\mathfrak d}(W,P)^{d+1}} \lesssim \frac{\left( 1 \wedge\frac{t}{\ell(W)} \right)^\ep}{\ell(W)} \\ &\lesssim \frac{|W|\ell(W)^{k-1}\left( 1 \wedge\frac{t}{\ell(W)} \right)^\ep}{{\mathfrak d}(W,Q)^{d+k}} .\end{split}\]
To estimate $I$, notice that as consequence of (U\ref{adm2}) and \eqref{eq:est2},
	\begin{equation}\label{eq:est3} \sum_{\substack{P \in \mathcal W \\ \ell(P)\le t \\ W \in [P,P_Q] }} |P| \lesssim \ell(W)^{d+1}\sum_{\substack{P \in \W \\ \ell(P)\le t}} \frac{|P|}{\mathfrak d(P,W)^{d+1}} \lesssim |W| \left( 1 \wedge\frac{t}{\ell(W)} \right)^\ep.\end{equation}
So, for $W \in [P,P_{Q}]$, (U\ref{adm2}) implies ${\mathfrak d}(W,P) \sim \ell(W)$ and Lemma \ref{lemma:adm} implies ${\mathfrak d}(P,Q) \sim {\mathfrak d}(W,Q)$. Therefore, on applying \eqref{eq:est3},
	\[ I = \sum_{\substack{ P \in \mathcal W \\ \ell(P) \le t \\ W \in [P,P_{Q}]}} \frac{|P|{\mathfrak d}(W,P)^{k-1}}{{\mathfrak d}(P,Q)^{d+k}} \lesssim \frac{\ell(W)^{k-1}}{ {\mathfrak d}(Q,W)^{d+k}} \sum_{\substack{ P \in \mathcal W \\ \ell(P) \le t \\ W \in [P,P_{Q}]}} |P| \lesssim \frac{|W|\ell(W)^{k-1} \left( 1 \wedge\frac{t}{\ell(W)} \right)^\ep}{{\mathfrak d}(Q,W)^{d+k}}.\]
The estimate \eqref{eq:est1} is  thus established when $R=Q$. To show \eqref{eq:est1} holds for any $R \subset Q$, we only need to establish ${\mathfrak d}(R,W) \sim {\mathfrak d}(Q,W)$ for any Whitney cube $W$. The triangle inequality shows ${\mathfrak d}(R,W) \le 3 {\mathfrak d}(Q,W)$. For the other inequality, if we can show ${\mathfrak d}(R,W) \gtrsim \ell(Q)$ then again the triangle inequality will yield ${\mathfrak d}(Q,W) \lesssim {\mathfrak d}(R,W)$. If $\bar W \cap \bar{Q} \ne \varnothing$, then indeed ${\mathfrak d}(R,W) \ge \ell(W) \gtrsim \ell(Q)$ holds by the Whitney decomposition. Otherwise, $W \cap aQ = \varnothing$ for some $a>1$ so $|c(R)-c(W)| \gtrsim \ell(Q)$.
\end{proof}

\addtocounter{other}{1}
\subsection{Boundary windows}\label{ss:windows}
For a cube $W \in \W$, define the shadow of $W$ and its realization by
	\[ \SH(W) \coloneqq \{P \in \mathcal W : [P,W] \mbox{ is an oriented chain}\}, \qquad  \Sh(W) \coloneqq \bigcup_{P \in\SH(W)} \mathsf{w} P.\] 
Subsequently, introduce the extended shadow $\SH_1$ and its realization $\Sh_1$,
	\[ \SH_1({W}) = \{ P \in \W : P \in [Q,{W}] \mbox{ for some } Q \in \SH({W})\}, \quad \Sh_1({W}) = \bigcup_{P \in \SH_1({W})} \mathsf{w} P.\]
Our analysis relies on  a tree structure on $\Omega$ whose roots are constructed in the next lemma. With reference to the statement below, each $\SH({W}_j)$ is termed a \textit {boundary window}, while the cubes of the residual collection   
$ \W_0 = \W  \backslash (\cup_j \SH({W}_j)) $
are termed \textit {central}.
\begin{lemma}\label{l:windows}
Let $\W$ be an $L$-uniform Whitney covering with parameter $\mathsf{w}$ and $\ell$ be the side length of some fixed cube ${W}_0$ in $\W$. Then, there exists a collection of cubes $\{{W}_j\}_{j \ge 1} \subset \W$ satisfying
\begin{itemize}
	\item[(S1)] $\ell({W}_j) \sim \ell$;
	\item[(S2)] If ${W} \in \mathcal W$ but ${W}$ does not belong to $\cup_j \SH({W}_j)$, then $\ell({W}) \gtrsim \ell$;
	\item[(S3)] For each ${W}_j$, there exists ${W}_j^*$ with $\ell({W}_j^*) \sim \ell$, ${W}_j \in \SH({W}_j^*)$, and $2^2 \mathsf{w}{W}_j^* \cap \Sh({W}_j) = \varnothing$;
	\item[(S4)] The collections $\{\Sh({W}_j)\}$, $\{\Sh({W}_j^*)\}$, and $\{\Sh_1({W}_j^*)\}$ have finite overlap.
\end{itemize}
\end{lemma}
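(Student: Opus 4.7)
My plan is to take $\{W_j\}$ to be, essentially, all Whitney cubes of sidelength comparable to $\ell$. Specifically, set $\W^\ell \coloneqq \{W \in \W : c_1 \ell \leq \ell(W) \leq c_2 \ell\}$ with $c_1 = L^{-1}$ and $c_2 = L^{-1} C_W$, where $C_W$ is the constant encoded in (W3). Condition (S1) is then built in. For the first half of (S4), each $P \in \SH(W_j)$ satisfies $\mathfrak{d}(P, W_j) \leq L\ell(W_j) \leq L c_2 \ell$ upon taking $P' = W_j$ in condition (U\ref{qs1}) for the oriented chain $[P, W_j]$, and in particular $\ell(P) \leq L\ell(W_j)$ since $\mathfrak{d}(P,W_j) \geq \ell(P)$. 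Hence $\mathsf{w} P \subset B_\infty(c(W_j), R\ell)$ for a constant $R = R(L, \mathsf{w})$, so $\Sh(W_j) \subset B_\infty(c(W_j), R\ell)$. A given point is covered by $\Sh(W_j)$ only if $c(W_j)$ lies within $R\ell$ of it, and the H\"older estimate \eqref{eq:d-ep} bounds the number of $W_j \in \W^\ell$ meeting this condition uniformly. The same argument applies, with slightly enlarged constants, to $\{\Sh(W_j^*)\}$ and $\{\Sh_1(W_j^*)\}$ after $W_j^*$ is constructed.

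For (S2), I argue by contrapositive and show that if $W \in \W$ satisfies $\ell(W) < c_1 \ell$, then $W \in \SH(W')$ for some $W' \in \W^\ell$. By $L$-uniformity, fix an $L$-admissible chain $[W, W_0]$ all of whose subchains are $L$-admissible, and split it at $Q_{W_0}$ into the oriented pieces $[W, Q_{W_0}]$ and $[Q_{W_0}, W_0]$. Applying (U\ref{qs2}) to $P' = Q_{W_0}$ gives $\ell(Q_{W_0}) \geq L^{-1}\mathfrak{d}(Q_{W_0}, W_0) \geq L^{-1}\ell = c_1 \ell$. Since $\ell(W) < c_1 \ell \leq \ell(Q_{W_0})$, walking from $W$ along $[W, Q_{W_0}]$, there is a first cube $W'$ with $\ell(W') \geq c_1 \ell$; its predecessor $W''$ has $\ell(W'') < c_1 \ell$, so by (W3), $\ell(W') \leq C_W \ell(W'') < c_2 \ell$, placing $W' \in \W^\ell$. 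The subchain $[W, W']$ of the oriented chain $[W, Q_{W_0}]$ is itself oriented and $L$-admissible, giving $W \in \SH(W')$ as required.

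The key difficulty is (S3): for each $W_j$ I must produce a lateral partner $W_j^*$ of comparable sidelength with $W_j \in \SH(W_j^*)$ yet $2^2 \mathsf{w} W_j^* \cap \Sh(W_j) = \varnothing$. My approach is to extend $W_j$ along an $L$-admissible chain $[W_j, W_\infty]$ toward some deep cube $W_\infty \in \W$ of sidelength much larger than $L \mathsf{w}\ell$. Along the initial oriented subchain $[W_j, Q_{W_\infty}]$, condition (U\ref{qs1}) forces $\ell(P) \geq L^{-1} \mathfrak{d}(W_j, P)$, while (W3) caps the per-step multiplicative growth of sidelengths along the chain. I pick $W_j^*$ to be the first cube along $[W_j, Q_{W_\infty}]$ for which $\mathfrak{d}(W_j, W_j^*) \geq 4 \mathsf{w} \ell(W_j^*) + R\ell$. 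Balancing the upper bound $\ell(W_j^*) \leq C_W \cdot (\text{predecessor sidelength})$ against the lower bound from (U\ref{qs1}) yields $\ell(W_j^*) \sim \ell$, provided $L = L(\Omega, \mathsf{w})$ is taken sufficiently large compared to $\mathsf{w}$, which is permissible. Then $[W_j, W_j^*]$ is an oriented $L$-admissible subchain, so $W_j \in \SH(W_j^*)$, and the defining lower bound on $\mathfrak{d}(W_j, W_j^*)$ combined with $\Sh(W_j) \subset B_\infty(c(W_j), R\ell)$ and $2^2 \mathsf{w} W_j^* \subset B_\infty(c(W_j^*), 2\mathsf{w}\ell(W_j^*))$ secures the disjointness. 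The degenerate situation in which $\Omega$ contains no Whitney cube of sidelength $\gtrsim L\mathsf{w}\ell$ forces $\W^\ell$ to be a finite collection, and the required properties can then be verified by direct inspection on each of the finitely many cubes.
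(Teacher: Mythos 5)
Your treatment of (S1), (S2), and the finite-overlap claim (S4) is essentially sound; note only that (S4) does not require the H\"older estimate \eqref{eq:d-ep}, which is not among the hypotheses of this lemma --- disjointness of Whitney cubes of sidelength $\gtrsim\ell$ inside a ball of radius $\lesssim\ell$ already gives the bounded overlap by a volume count, and the paper argues exactly that way.

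The construction of $W_j^*$ for (S3) has a genuine gap, and it is not a fixable detail but a failure of the approach. You search for $W_j^*$ by walking outward from $W_j$ along the oriented half of an admissible chain, stopping at the first cube with $\mathfrak{d}(W_j,W_j^*) \ge 4\mathsf{w}\ell(W_j^*)+R\ell$. Three things go wrong. First, the stopping rule need not ever trigger: (U\ref{qs1}) gives $\mathfrak{d}(W_j,P)\le L\ell(P)$ along the oriented chain but nothing prevents $\mathfrak{d}(W_j,P)\sim\ell(P)$ (a chain heading transversally into the domain), and then $\mathfrak{d}(W_j,P)-4\mathsf{w}\ell(P)\le (1-4\mathsf{w})\ell(P)<0$ for every $P$, so the threshold is never crossed. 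Enlarging $L$ only relaxes the admissibility constraint; it does not change the actual chain, so it cannot repair this. Second, even when the stop does occur there is no upper bound on $\ell(W_j^*)$: the predecessor $P$ satisfies $\mathfrak{d}(W_j,P)<4\mathsf{w}\ell(P)+R\ell$, which combined with $\mathfrak{d}(W_j,P)\ge\ell(P)$ yields only $(1-4\mathsf{w})\ell(P)<R\ell$, vacuous for $\mathsf{w}\ge 1$; so $\ell(W_j^*)\sim\ell$ is not established. Third, the fallback for the case where no sufficiently large $W_\infty$ exists is not a proof: finiteness of the collection does not produce the $W_j^*$, and the existence claimed in (S3) still needs to be exhibited.

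The paper's proof goes in the opposite direction. It fixes the parents $W_j^*$ first, as the cubes $\mathcal E_{r_0}$ of sidelength $\sim L^{-1}\ell$, and then takes the roots $W_j$ from the much smaller collection $\mathcal E_{r_1}$ with $r_1=(CDL)^{-1}\ell$, where $D$ is the constant (depending only on $\mathsf{w},d$ via (W3)--(W4)) such that intersecting $O(\mathsf{w})$-dilates of Whitney cubes have comparable sidelength. With $W_j$ this much smaller, every $P\in\SH(W_j)$ has $\ell(P)\le L\ell(W_j)\ll\ell(W_j^*)$, so $2^2\mathsf{w}W_j^*$ and $\mathsf{w}P$ cannot intersect without violating that size comparison. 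The separation in (S3) is a scale-gap argument, not a distance argument; walking outward at a comparable scale cannot replicate it, because then $\Sh(W_j)$ and $2^2\mathsf{w}W_j^*$ both occupy regions of the same order $L\ell$.
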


\begin{proof}
Let us construct such a collection. Let ${W}_0$ be an arbitrary cube in $\W$ with length $r_0=\ell({W}_0)$. For any ${W}$ in $\mathcal W$ with $\ell({W}) \le r_0$, let ${W}_{{W}_0}$ be chosen according to assumption (U\ref{adm2}). By (U\ref{adm2}b), $\ell({W}_{{W}_0}) \ge L^{-1} \ell({W}_0) = L^{-1} r_0$. Take $C$ to be the constant from the Whitney decomposition (W3). There exists $S \in [{W},{W}_{{W}_0}]$ such that $L^{-1}r_0 < \ell(S) \le C L^{-1} r_0$. In particular, this shows that for every ${W}$ with $\ell({W})\le r_0$, there exists $S$ satisfying ${W} \in \SH(S)$ and $L^{-1} r_0<\ell(S)<CL^{-1}r_0$. Define, for any $r>0$,
	\[ \mathcal E_{r} = \{S \in \W : L^{-1} r<\ell(S)<CL^{-1}r\}, \quad \mathcal F_r = \{ {W} \in \W : \ell({W}) \le r \}.\]
We have shown
	\begin{equation}\label{eq:fro} \mathcal F_{r_0} \subset \bigcup_{S \in \mathcal E_{r_0}} \SH(S).\end{equation}
${W}_j^*$ will be chosen from such $\mathcal E_{r_0}$. We will now choose $r_1$ so that for ${W}$ in $\mathcal E_{r_1}$ and $S$ in $\mathcal E_{r_0}$,
	\begin{equation}\label{eq:pqs}\mbox{if} \ P \in \SH({W}) \ \mbox{then} \ 2^2 \mathsf{w}S \cap 2\mathsf{w}P = \varnothing.\end{equation}
Indeed, if $2^2 \mathsf{w}S$ and $2\mathsf{w}P$ had nonempty intersection, then by properties (W3) and (W4) of the Whitney decomposition, $\ell(S) \le D \ell(P)$ for some $D>0$ depending only on $\mathsf{w}$ and $d$. Therefore, choosing $r_1 = (CDL)^{-1}r_0$, we arrive at the contradiction
	\[ D \ell(P) \le DL \ell({W}) \le CDr_1 = L^{-1} r_0 < \ell(S).\]
Therefore \eqref{eq:pqs} is established.
The same argument that led to \eqref{eq:fro} also shows
	\begin{equation}\label{eq:froo} \mathcal F_{r_1} \subset \bigcup_{\substack{P \in \mathcal E_{r_1}}} \SH(P).\end{equation}
Now we take $\{{W}_j\} = \mathcal E_{r_1} \subset \mathcal F_{r_0}$ and take each ${W}_j^*$ to be some $S \in \mathcal E_{r_0}$ such that ${W}_j \in \SH(S)$. Such an $S$ exists by \eqref{eq:fro} and (S3) is satisfied as a consequence of \eqref{eq:pqs}. Then (S1) is satisfied by the definition of $\mathcal E_{r_0}$ and (S2) follows from \eqref{eq:froo}. Finally (S4) is an elementary consequence of the fact that if $\ell(S) \sim \ell(S') \sim r$ and $P \in \SH(S) \cap \SH(S')$ (or $\SH_1(S) \cap \SH_1(S')$) then
$ {\mathfrak d}(S,S') \le {\mathfrak d}(S,P) + {\mathfrak d}(P,S') \lesssim r.$
\end{proof}

\addtocounter{other}{1} 
\subsection{Averaging Wavelets}
Our representation process involves certain averages of elements from $\Phi^{\sigma,\Subset}(Q)$ weighted by coefficients that decay with respect to  separation in scale and position of two cubes. The latter is measured by the quantities
	\[  \lb P,Q \rb_{\delta,\rho} \coloneqq \frac{   \min\{ \ell(P),\ell(Q)\}^\delta}{{\mathfrak d}(P,Q)^{d+\delta+\rho}} \] where $P,Q$ are any two cubes and  $\delta,\rho \ge 0$.
For a point $x \in \Omega$ and a cube $S \in \mathcal M$, the notation $x \sim S$ indicates that $x$ belongs to a fixed dilate of $S$, say $2^4 \mathsf w S$.

\begin{lemma}\label{lemma:avg-1}
Let $\W$ be a Whitney decomposition with parameter $\mathsf w$ and $\delta,\rho \ge 0$. If $\delta>0$, let $0<\eta<\delta$. There exists $C=C(\rho,\eta,\delta,\mathsf w)>0$ such that for any $z:\mathcal M \times \mathcal M \to \mathbb C$ satisfying
	\[ |z(R,S)| \le \lb R,S \rb_{\delta,\rho} \qquad \forall (R,S) \in \mathcal M \times \mathcal M\]
the following holds. If $\delta>0$ and $\varphi_{R} \in \Psi^{\delta,\Subset}(R)$, then
	\begin{equation}\label{eq:avg-psi}\psi_{S} \coloneqq \sum_{\substack{R \in \mathcal S}} |R| z(R,S) \varphi_{R} \in C \Psi^{\eta,\rho}(S).\end{equation}
If $\delta=0$, $\rho>0$, and $\chi_{P} \in \Phi^{0,\Subset}(P)$, then
	\begin{equation}\label{eq:avg-phi1} \phi_Q \coloneqq \sum_{\substack{P \in \W \\ R \in \mathcal D(Q)}} \frac{|R| |P|}{|Q|} z(R,P) \ell(R)^{\rho} \varphi_R \otimes \chi_{P} \in C \Phi^{0,\rho}_{2}(Q).\end{equation}
\end{lemma}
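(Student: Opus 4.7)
My plan is to prove both parts by translating the bound on $z$ and the compact support of the input wavelets into pointwise estimates on $\partial^\alpha$ of the averaged object, organized scale-by-scale relative to the target cube. For Part (i), I first record the scaling identity coming from $\varphi_R = |R|^{-1}\Sy_R \phi_R$, which gives $|R||\partial^\alpha \varphi_R(x)|\lesssim \ell(R)^{-|\alpha|}\mathbf{1}_{\mathsf{w} R}(x)$ for $|\alpha|\le \lfloor\delta\rfloor$. Second, the vanishing moments demanded by $\Psi^{\eta,\rho}(S)$ up to order $\lfloor\eta\rfloor\le\lfloor\delta\rfloor$ are inherited from those of each $\varphi_R$ by linearity, once absolute convergence of the sum is secured -- which will be a by-product of the quantitative estimates below. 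It therefore remains to bound $\partial^\alpha\psi_S$ pointwise for $|\alpha|\le\lfloor\eta\rfloor$.

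With $y=(x-c(S))/\ell(S)$, the compact support of each $\varphi_R$ forces $|c(R)-x|\lesssim \ell(R)$, so at each dyadic scale $\ell(R)=2^j\ell(S)$ only $O_{\mathsf{w}}(1)$ cubes $R\in\mathcal S$ contribute and, for these, $\mathfrak d(R,S)\gtrsim \ell(S)\max(1,2^j,|y|)$. Grouping by scale yields a pointwise bound of the schematic form
\[
|\partial^\alpha\psi_S(x)|\lesssim \sum_{j\in\mathbb Z}\frac{\ell(S)^\delta\min(2^j,1)^\delta\,(2^j\ell(S))^{-|\alpha|}}{[\ell(S)\max(1,2^j,|y|)]^{d+\delta+\rho}},
\]
whose natural subsums in $j\le 0$, $0<2^j\le 1+|y|$, and $2^j>1+|y|$ combine to $(1+|y|)^{-d-\delta-\rho}$ times a logarithmic factor $\log(2+|y|)$ collected in the borderline middle regime. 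That logarithm is absorbed into $(1+|y|)^{\delta-\eta}$ via the hypothesis $\eta<\delta$, delivering the $\Psi^{\eta,\rho}(S)$ profile. The Hölder modulus of order $\{\eta\}$ at top-order derivative $|\alpha|=\lfloor\eta\rfloor$ is obtained by the same scale sum, invoked together with either the mean value theorem (when $\lfloor\eta\rfloor<\lfloor\delta\rfloor$, with one extra derivative available) or the intrinsic $\{\delta\}$-Hölder modulus of $\phi_R$ combined with $|h|^{\{\delta\}}\le|h|^{\{\eta\}}$ for $|h|\le 1$ otherwise.

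For Part (ii), with $\delta=0$ there is no cancellation to preserve; the scaling identities for $\varphi_R$ and $\chi_P$ reduce the task to the pointwise bound
\[
|\partial^\alpha_{x_1}\partial^\beta_{x_2}\phi_Q(x_1,x_2)|\lesssim \frac{1}{|Q|}\sum_{\substack{R\in\mathcal D(Q),\, P\in\mathcal W\\ x_1\in \mathsf{w} R,\, x_2\in\mathsf{w} P}}\frac{\ell(R)^{\rho-|\alpha|}\,\ell(P)^{-|\beta|}}{\mathfrak d(R,P)^{d+\rho}},
\]
where the explicit $\ell(R)^\rho$ weight present in the definition of $\phi_Q$ is exactly what balances the dimensional count and enforces geometric convergence in the $R$-scale index $j\ge 0$ (imposed by $R\subsetneq Q$). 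Running the scale-by-scale summation of Part (i) separately in the $R$-scale and the $P$-scale then produces decay $(1+|y_i|)^{-d-\rho}$ in each variable around $c(Q)$, yielding $\phi_Q\in C\Phi^{0,\rho}_2(Q)$. The principal technical obstacle throughout is the bookkeeping at the borderline dyadic scale $2^j\sim 1+|y|$, where the logarithmic accumulation is intrinsic and forces the loss $\eta<\delta$ in Part (i); the analogous difficulty in Part (ii) is sidestepped precisely because $\delta=0$ makes the sum strictly geometric.
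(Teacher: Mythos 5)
Your approach for both parts is the standard scale-by-scale decomposition that the paper intends: for Part (i) the paper simply cites its companion paper, and your sketch (group by scale, $O(1)$ cubes per scale from compact support, $\mathfrak d(R,S)\gtrsim\ell(S)\max(1,2^j,|y|)$, absorb the logarithm from the middle regime into $(1+|y|)^{\delta-\eta}$ via $\eta<\delta$) is the argument one expects behind that reference. Your pointwise bound for Part (ii), namely
\[
|\phi_Q(x_1,x_2)|\lesssim\frac{1}{|Q|}\sum_{\substack{R\in\mathcal D(Q),\,P\in\W\\ x_1\in\mathsf w R,\,x_2\in\mathsf w P}}\frac{\ell(R)^\rho}{\mathfrak d(R,P)^{d+\rho}},
\]
also agrees with the paper's.

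Where the sketch has a genuine gap is the concluding step of Part (ii). You say that the $\ell(R)^\rho$ weight ``enforces geometric convergence in the $R$-scale index'' and that ``the analogous difficulty \dots is sidestepped precisely because $\delta=0$ makes the sum strictly geometric.'' Neither claim, as stated, delivers the target profile $\ell(Q)^{-2d}\,(1+|x_2-c(Q)|/\ell(Q))^{-(d+\rho)}$. Summing $\ell(R)^\rho$ over $R\in\mathcal D(Q)$ with $x_1\in\mathsf w R$ does give $\lesssim\ell(Q)^\rho$, but only \emph{after} one knows that $\mathfrak d(R,P)$ is bounded below \emph{uniformly over $R$}; and that lower bound must be comparable to $|x_2-c(Q)|+\ell(Q)$, not merely to $\ell(P)$ (which could be tiny if $x_2$ is near $\partial\Omega$ far from $Q$). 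The paper explicitly flags this as ``the key'': since $R\subset Q$ and $P$ is a \emph{Whitney} cube with $x_2\sim P$, one has $\mathfrak d(R,P)\sim |x_2-c(Q)|+\ell(Q)$ uniformly — either $P$ is far from $Q$, in which case the centers dominate, or $P$ is adjacent to $Q$, in which case property (W3) forces $\ell(P)\sim\ell(Q)$ and $|x_2-c(Q)|\lesssim\ell(Q)$. Only after this decoupling does the $R$-sum become a genuine geometric series giving $\ell(Q)^\rho$, and the $P$-contribution collapses to $O(1)$ cubes at a single scale (there is no ``$P$-scale summation'' in the sense of Part (i)). Please insert this observation; without it the sum over $R$ is not controlled by the claimed quantity, and the argument as written does not close.

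A minor remark: for $\Phi^{0,\rho}_2(Q)$ only the zeroth derivative together with the $\{0\}$-modulus is required, so the $\partial^\alpha_{x_1}\partial^\beta_{x_2}$ bookkeeping you set up is unnecessary here; similarly, $\phi_Q$ is compactly supported in $x_1$, so the decay ``in each variable'' is vacuous in the first slot.
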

\begin{proof}
Estimate \eqref{eq:avg-psi} is a discrete version of \cite[Lemma 3.2, Eq. (3.4)]{diplinio22wrt} and may be proved along the same lines. To prove \eqref{eq:avg-phi1}, notice that $\phi_Q$ is supported near $Q$  in the first variable. Therefore, for $x \sim Q, y \in \Omega$,
	\[ |\phi_Q(x,y)| \lesssim \frac{1}{|Q|} \sum_{\substack{P\in \mathcal W \\ y \sim P}} \sum_{\substack{R \in \mathcal D(Q) \\ x \sim R}} \frac{\ell(R)^\rho}{\mathfrak d(R,P)^{d+\rho}}.\]
The key is that since $R \in \mathcal D(Q)$ and $P$ is a Whitney cube with $y \sim P$, $\mathfrak d(R,P) \sim |y-c(Q)| + \ell(Q)$. Therefore, summing the geometric series
	\[ \sum_{\substack{R \in \mathcal D(Q) \\ x \sim R}} \ell(R)^\rho \lesssim \ell(Q)^\rho\]
proves \eqref{eq:avg-phi1}.
\end{proof}

The second lemma is more involved and relies on the tree structure imposed on $\Omega$.
\begin{lemma}\label{lemma:avg-2}
Let $\ep>0$, $\mathsf w \ge 1$, $\Omega$ be a uniform domain, and $\W$ a Whitney decomposition of $\Omega$ with parameter $\mathsf w$. Let $\delta \ge 0$, $k \ge 1$, and $0<\rho < k$. Then, there exists $C=C(k,\rho,\ep,L(\Omega,\mathsf w),\mathsf w)>0$ such that for any $z:\mathcal W \times \mathcal W \times \mathcal M \to \mathbb C$ satisfying
	\[ |z({P},{W},{Q})| \le \left\{ \begin{array}{cc} \lb {P},{Q}\rb_{0,k} &\ell({Q}) \ge \ell({P}) \\ \lb {P},{Q} \rb_{\delta,k} & \ell({Q}) \le \ell({P}), \end{array} \right.\]
any $\chi_{{P},{W}} \in \Phi^{0,\Subset}({W})$, $\chi_{{Q}} \in \Phi^{0,\Subset}({Q})$, the following holds. If $\delta=0$, then
	\begin{equation} \label{eq:avg-theta}\theta_{W} \coloneqq \sum_{\substack{{P},{Q} \in \W \\ {W} \in [{P},{Q}]}}  \frac{|{P}| |{Q}|}{|{W}|} \ell({W}) {\mathfrak d}({W},{P})^{k-1}z({P},{W},{Q})\chi_{{P},{W}} \otimes \chi_{{Q}} \in C \Phi^{0,k}_2({W}).\end{equation}
If $\delta>0$, then
	\begin{equation} \label{eq:avg-phi2} \phi_{{W}} \coloneqq \sum_{\substack{{P} \in \mathcal W\\ R \in \mathcal {S} \\ {W} \in [{P},W(R)]}} \frac{|{P}| |{R}|}{|{W}|} \ell({W}) {\mathfrak d}({W},{P})^{k-1}z({P},{W},R) \chi_{{P},{W}} \otimes \chi_{R} \in \Phi^{0,\rho}_{2} ({W}). \end{equation}
\end{lemma}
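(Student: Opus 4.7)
My plan is to establish both \eqref{eq:avg-theta} and \eqref{eq:avg-phi2} by direct pointwise estimates, using Lemma \ref{lemma:whitney} to sum the chain contributions. Because each $\chi_{P,W}\in\Phi^{0,\Subset}(W)$ is supported in $\mathsf w W$, both $\theta_W$ and $\phi_W$ are supported in $\mathsf w W$ in the first variable, so the $(1+|x_1-c(W)|/\ell(W))^{-d-\rho}$ (or $-d-k$) factor in the $\Phi^{0,\cdot}_2(W)$ norm is automatic. Moreover, since $\sigma=0$ is an integer we have $\{\sigma\}=0$, and the $|h|^{\{\sigma\}}$ Hölder term in $\|\cdot\|_{2,0,\cdot}$ reduces, via the triangle inequality, to the pointwise size of $\theta_W$, $\phi_W$. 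Thus only the decay in the second variable $y$ remains to be proved.

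For \eqref{eq:avg-theta}, fix $y$ and let $Q_y\in\W$ be the (essentially unique) Whitney cube with $y\in\mathsf w Q_y$. Using $\|\chi_{P,W}\|_\infty\lesssim|W|^{-1}$, $\|\chi_{Q_y}\|_\infty\lesssim|Q_y|^{-1}$ and $|z(P,W,Q_y)|\le\mathfrak d(P,Q_y)^{-d-k}$, one obtains the pointwise domination
\[
|\theta_W(x,y)|\lesssim\frac{\ell(W)}{|W|^2}\sum_{\substack{P\in\W\\W\in[P,Q_y]}}\frac{|P|\,\mathfrak d(W,P)^{k-1}}{\mathfrak d(P,Q_y)^{d+k}}.
\]
Lemma \ref{lemma:whitney} with $t=\infty$ and $Q=R=Q_y$ bounds this by $\ell(W)^k/(|W|\mathfrak d(Q_y,W)^{d+k})$, and combining with $\mathfrak d(Q_y,W)\gtrsim\ell(W)+|y-c(W)|$ produces the required $|\theta_W(x,y)|\lesssim|W|^{-2}(1+|y-c(W)|/\ell(W))^{-d-k}$.

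For \eqref{eq:avg-phi2}, the cubes $R\in\mathcal S$ range over arbitrarily small scales $s=\ell(R)\le\ell(W(y))$, with $O(1)$ such $R$ per scale containing $y$ in $\mathsf w R$. For each $R$, split the $P$-sum into part (I) $\ell(P)\le s$, on which $|z|\le\mathfrak d(P,R)^{-d-k}$, and part (II) $\ell(P)>s$, on which $|z|\le s^{\delta}\mathfrak d(P,R)^{-d-k-\delta}$. Part (I) is controlled directly by Lemma \ref{lemma:whitney} with $Q=W(R)$ and $t=s$, producing $|W|\ell(W)^{k-1}\min\{1,s/\ell(W)\}^{\ep}/\mathfrak d(R,W)^{d+k}$. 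For part (II), the chain hypothesis $W\in[P,W(R)]$ with (U\ref{adm1}) yields $\mathfrak d(W,P)\le L\,\mathfrak d(P,W(R))\sim\mathfrak d(P,R)$ (using $R\subset W(R)$), while a case analysis via (U\ref{qs1})--(U\ref{qs2}) on whether $W$ lies in the first or second half of $[P,W(R)]$ forces $\ell(W)\gtrsim s$ throughout (II). Decomposing the $P$-sum into dyadic shells $\ell(P)\sim 2^i s$, using $\mathfrak d(P,R)\ge\ell(P)$ to absorb $s^\delta/\mathfrak d(P,R)^\delta\le 2^{-i\delta}$, and applying Lemma \ref{lemma:whitney} shell-by-shell with $t=2^i s$ yields a geometric series in $2^{-i\delta}\min\{1,2^i s/\ell(W)\}^\ep$ summing to $(s/\ell(W))^{\min\{\ep,\delta\}}\,|W|\ell(W)^{k-1}/\mathfrak d(R,W)^{d+k}$, up to a logarithmic factor when $\ep=\delta$.

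Combining (I)+(II) and summing over the $O(1)$ cubes $R$ at each scale $s$ produces a bound of order $(1+\log_+(\ell(W(y))/\ell(W)))\,|W|\ell(W)^{k-1}/\mathfrak d(R,W)^{d+k}$: the geometric series in $(s/\ell(W))^{\min\{\ep,\delta\}}$ converges at scales $s<\ell(W)$ but contributes $1$ per scale on the intermediate range $\ell(W)\le s\le\ell(W(y))$. Multiplying by $\ell(W)/|W|^2$ and comparing with the target $|W|^{-2}(1+|y-c(W)|/\ell(W))^{-d-\rho}$ leaves a slack of $(\mathfrak d(R,W)/\ell(W))^{k-\rho}$ that absorbs the residual logarithm via $\log r\lesssim r^{k-\rho}$ for $r\ge 1$, since $\rho<k$. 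The main obstacle is precisely this coordination between the chain-sum over $P$ and the scale-sum over $s=\ell(R)$: the $\ep$-gain coming from the $\ep$-Hölder uniform hypothesis is available only at scales $s<\ell(W)$, intermediate scales $\ell(W)\le s\le\ell(W(y))$ must be summed logarithmically, and the strict inequality $\rho<k$ in the statement is exactly the slack needed to absorb this logarithm into the target decay.
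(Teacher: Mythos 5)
Your proof is correct and rests on the same backbone as the paper's: support in the first variable is automatic, the $\sigma=0$ H\"older quotient reduces to a pointwise bound, and the decay in $y$ comes from fixing the cube near $y$ and invoking the packing estimate \eqref{eq:est1} from Lemma~\ref{lemma:whitney}. The two proofs treat \eqref{eq:avg-theta} identically, and your part~(I) of \eqref{eq:avg-phi2} (the range $\ell(R)\ge\ell(P)$) coincides with the paper's part~$II$. The divergence is in the remaining range $\ell(R)<\ell(P)$: you decompose the $P$-sum into dyadic shells $\ell(P)\sim 2^{i}\ell(R)$ for each fixed $R$, trading the $\ell(R)^{\delta}$ penalty for a $2^{-i\delta}$ gain shell by shell, and then sum a geometric series over $i$ and over the $R$-scales. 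That works, but it is the heavier option. The paper instead sums over $R$ \emph{first}: since $R\subset W(R)=Q$ forces $\mathfrak d(R,P)\sim\mathfrak d(Q,P)$, the inner sum
$\sum_{\ell(R)\le\ell(P),\ y\sim R}\ell(R)^{\delta}/\mathfrak d(R,P)^{d+k+\delta}$
collapses to $\min\{\ell(P),\ell(Q)\}^{\delta}/\mathfrak d(Q,P)^{d+k+\delta}\le\mathfrak d(Q,P)^{-d-k}$, after which this whole piece is dominated by the already-established $\theta_W$ bound and contributes \emph{no} logarithm; the logarithm then enters only from the $\ell(R)\ge\ell(P)$ piece, exactly as in your part~(I). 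Your route accumulates a logarithm from both halves, but that is harmless since the $\rho<k$ slack absorbs it via $\log r\lesssim r^{k-\rho}$, just as you say. Two minor notes: the per-scale factor you write as $(\ell(R)/\ell(W))^{\min\{\ep,\delta\}}$ is only the bound for $\ell(R)<\ell(W)$ and should be capped at $O(1)$ when $\ell(R)\ge\ell(W)$, which you implicitly use one sentence later; and your observations that $\ell(W)\gtrsim\ell(R)$ along the chain and that $\mathfrak d(W,P)\lesssim\mathfrak d(P,W(R))$ are correct but play no role in the shell computation you actually carry out.
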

\begin{proof}
Begin with proving \eqref{eq:avg-theta}. {S}ince $\chi_{{P},{W}}$ is supported near ${W}$, we fix $x$ near ${W}$ and $y \in \Omega$. We sum over all ${P}$ in the specified ranges, but only over ${Q}$ with $y\sim {Q}$. Therefore, ${\mathfrak d}({Q},{W}) \sim |y-c({W})|+\ell({W})$ so by \eqref{eq:est1} with $t = \infty$,
	\[ |\theta_{W}(x,y)| \lesssim \frac{\ell({W})}{\ell({W})^{2d}} \sum_{y \sim {Q}} \sum_{\substack{ {P} \in \mathcal W \\ {W} \in [{P},{Q}]}} \frac{\ell({P})^{d} {\mathfrak d}({W},{P})^{k-1}}{{\mathfrak d}({P},{Q})^{d+k}} \lesssim \frac{1}{\ell({W})^{2d}(1+\frac{|y-c({W})|}{\ell({W})})^{d+k}}. \]
The proof of \eqref{eq:avg-phi2} begins similarly, by letting $x,y \in \Omega$ with $x$ close to ${W}$. Write $\phi_{W}(x,y) = I + II$ where the summations in $I$ and $II$ are taken over the regions $\ell({R}) \le \ell({P})$ and $\ell({R}) \ge \ell({P})$, respectively. Then, relabelling $W({R})={Q}$,
\[ |I| \lesssim \frac{\ell({W})}{\ell({W})^{2d}}\sum_{y \sim {Q}} \sum_{\substack{{P} \in \W\\ {W} \in [{P},{Q}]}} \sum_{\substack{{R} \in \mathcal D( {Q})\\ y \sim {R} \\ \ell({R}) \le \ell({P})}} \frac{\ell({P})^d \ell({R})^\delta {\mathfrak d}({W},{P})^{k-1}}{ {\mathfrak d}({R},{P})^{d+k+\delta}}.\]
Summing first over ${R}$, we obtain
	\[ \sum_{\substack{{R} \in \mathcal D(Q) y \sim {R} \\ \ell({R}) \le \ell({P})}} \frac{\ell({R})^\delta}{{\mathfrak d}({R},{P})^{d+k+\delta}} \lesssim \frac{\min\{\ell({P}),\ell({Q})\}^\delta}{{\mathfrak d}({Q},{P})^{d+k+\delta}} \le \frac{1}{\mathfrak d({Q},{P})^{d+k}}.\]
But this simplifies the bound on $I$ to be the same as $\theta_{W}$ above, which has already been shown to belong to $\Phi^{0,k}_2({W})$. 
$II$ is estimated by applying \eqref{eq:est1} with ${Q}=W({R})$ and $t = \ell({R})$;
	\[ |II| \le \frac{\ell({W})}{\ell({W})^{2d}} \sum_{\substack{{R} \in \mathcal {S} \\ y \sim {R}}} \sum_{\substack{{P} \in \W \\ \ell({P}) \le \ell({R}) \\ {W} \in [W({R}),{P}] }} \frac{\ell({P})^{d} {\mathfrak d}({W},{P})^{k-1}}{\mathfrak d({P},{R})^{d+k}} \lesssim \frac{\ell({W})^k}{\ell({W})^d} \sum_{\substack{{R} \in \mathcal {S} \\ y \sim {R}}} \frac{ \min\{1,\frac{\ell({R})}{\ell({W})} \}^\ep}{ \mathfrak d({R},{W})^{d+k}} . \]
Notice that the $R$ which contribute to the sum satisfy $\ell(R) \le \mathfrak{d}(R,W) \sim \ell(W) + |y-c(W)|$. Therefore, splitting the sum into the regions $\ell(R) \lesssim \ell(W)$ and $ \ell(W) \lesssim \ell(R) \lesssim \ell(W) + |y-c(W)|$, one obtains
	\[ \sum_{y \sim R} \min \left\{ 1, \frac{\ell(R)}{\ell(W)} \right\}^\ep \lesssim 1 + \log\left(1+\frac{|y-c(W)|}{\ell(W)} \right).\]
So, \eqref{eq:avg-phi2} follows from the fact that $\log s \le \eta^{-1} s^\eta$ for any $s,\eta>0$.
\end{proof}

\addtocounter{other}{1} 
\subsection{Anti-Integration by Parts}
Throughout this paragraph,  $Q\in \mathcal W$ and $R\in \mathcal S$ with reference to the previously defined Whitney decomposition of $\Omega$. Fixing   a positive integer $k$ and $\delta>0$, we adopt the standing conventions that  $\chi_Q \in \Phi^{k,\Subset}(Q)$, and $\varphi_R \in \Psi^{k+\delta,\Subset}(R)$.

We will frequently need to convert wavelet coefficients of functions into wavelet coefficients of their higher order derivatives. For this reason, it will be helpful to subtract off a polynomial $\PM^{k}_Q f$ (of degree $k-1$) suitably adapted to $f$ on $Q$. If we were only interested in estimation, then by the Poincar\'e inequality,
	\[ |\chi_Q( f-\PM^{k}_Qf)| \lesssim \ell(Q)^{k} \langle \nabla^k f\rangle_Q. \]
Clever adjustments can be made, see \cite{prats-tolsa}, to telescope and handle the case when $\chi_Q$ is replaced by $\chi_{P}$ for $P \ne Q$. We will do something similar later in this section (Lemma \ref{lemma:tele} below).
However, since we seek equality in our representation, we must be slightly more precise. Fortunately, Poincar\'e inequalities follow from integral representations of Sobolev functions (consult any PDE textbook, for example \cite{mazya-book}*{\S1.1.10}). 
Let ${\theta}$ be a fixed smooth function on $\R^d$ with $\int_{\R^d} {\theta} =1$ and support in the unit cube, and set 
\[{\theta}_Q \coloneqq |Q|^{-1}\mathsf{Sy}_Q{\theta}.\] If $s\geq 1$ and  $f$ is sufficiently smooth, then using the Taylor formula, and a few changes of variable, the equality 
	\begin{equation}\label{eq:sob-func-rep} \begin{aligned} f(x) - \PM^{k}_Q f(x) &= \sum_{|\alpha|=k} \int_{sQ}k_{\alpha,Q}(x,y) \frac{\partial^\alpha f(y)}{|x-y|^{d-k}} \, {\mathrm{d} y}, \\
	\PM^{m}_Q f(x) &\coloneq \sum_{|\alpha| \le m-1} \frac{1}{\alpha ! } \int_Q {\theta}_Q(y) \partial^\alpha f(y) (x-y)^\alpha \, {\mathrm{d} y}, \end{aligned} \end{equation} holds for holds for all $x$ for which $f$ is defined.
The kernel, $k_{\alpha,Q}(x,y)$, has the explicit formula
	\[ k_{\alpha, Q}(x,y) = \mathbf 1_{sQ}(y) \frac{(-1)^k }{\alpha !  } v^\alpha \int_{|x-y|}^\infty {\theta}_Q(x+rv) r^{d-1} \, \mathrm{d}r, \quad v = \frac{y-x}{|y-x|}, \quad x,y \in \mathbb R^d,\]
but all we will need to know is that $|k_{\alpha,Q}(x,y)| \lesssim \left( 1 + \frac{|x-c(Q)|}{\ell(Q)} \right)^{d-1}$. Defining 
	\begin{equation}\label{eq:anti} \chi^{-\alpha}_{Q}(y) = \ell(Q)^{-|\alpha|}\int_{\mathsf{w} Q}k_{\alpha,Q}(x,y)\frac{\chi_Q(x)}{|x-y|^{d-|\alpha|}} \, \mathrm{d} x, \quad \chi^{-k}_Q = ( \chi^{-\alpha}_Q )_{|\alpha|=k},\end{equation}
one has $\chi^{-k}_Q \in C\Phi^{0,\Subset}(Q)$, and we arrive at  the anti-integration by parts formula
	\begin{equation}\label{eq:pot-q} \chi_Q( f-\PM^{k}_Q f) = \sum_{|\alpha|=k}\chi_{Q}^{-\alpha}(\partial^\alpha f) \ell(Q)^k = \ell(Q)^{k}\chi^{-k}_Q(\nabla^k f).\end{equation}
Equation \eqref{eq:pot-q} is our replacement for the Poincar\'e inequality. This discussion also makes it easy to see that if $f$ is paired with a cancellative wavelet, $\psi_R \in \Psi^{k+\delta,\Subset}(R)$, then the anti-integration can be done without the polynomial. Indeed, defining $\psi^{-\alpha}_R$ and $\psi^{-k}_R$ analogously to \eqref{eq:anti} 
	\begin{equation}\label{eq:ibp-psi} \psi_{R}(f) = \psi_R( f - \PM^{k}_{R}) = \sum_{|\alpha|=k}\psi_R^{-\alpha}( \partial^\alpha f) \ell(R)^k = \ell(R)^k\psi^{-k}_R(\nabla^k f).\end{equation}
As for $\chi_Q^{-k}$ above, $\psi^{-k}_R \in C\Psi^{\delta,\Subset}(R)$. For our later purposes though, we need to go one step further and consider the situation where the polynomial and the wavelet in \eqref{eq:pot-q} are adapted to \textit{different} cubes. 
\begin{lemma}\label{lemma:tele} Let ${P},{Q} \in \mathcal W$. For each $\chi_{P} \in \Phi^{k,\Subset}({P})$ and ${W} \in [{P},{Q}]$, there exists $\chi^{-k}_{{P},{W}} \in \Phi^{0,\Subset}({W})$ such that
\begin{equation}\label{eq:tele} \chi_{P}(f -\PM^{k}_{{Q}}f) =  \sum_{{W} \in [{P},{Q}]} \ell({W})\mathfrak d({W},{P})^{k-1}\chi^{-k}_{{P},{W}}(\nabla^k f).\end{equation}
\end{lemma}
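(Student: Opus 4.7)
The plan is to establish \eqref{eq:tele} by telescoping along the chain, with the endpoint handled by the anti-integration formula \eqref{eq:pot-q} and each consecutive difference absorbed into a single cube via a local Taylor representation. Enumerate the chain as $[P,Q]=\{W_1,\ldots,W_m\}$ with $W_1=P$ and $W_m=Q$, and write
\[
\chi_P(f-\PM^{k}_Q f) \;=\; \chi_P(f-\PM^{k}_P f) \;+\; \sum_{j=1}^{m-1}\chi_P\bigl(\PM^{k}_{W_{j+1}}f-\PM^{k}_{W_j}f\bigr).
\]
By \eqref{eq:pot-q}, the first summand equals $\ell(P)^k\chi^{-k}_P(\nabla^k f)$; since $\mathfrak d(P,P)=\ell(P)$, setting $\chi^{-k}_{P,P}\coloneqq\chi^{-k}_P\in\Phi^{0,\Subset}(P)$ identifies this with the $W=P$ contribution to \eqref{eq:tele}.

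For each $j\in\{1,\ldots,m-1\}$ I assign the $j$-th difference to the cube $W_{j+1}$ via the algebraic identity
\[
\PM^{k}_{W_{j+1}}f-\PM^{k}_{W_j}f \;=\; -\PM^{k}_{W_j}\bigl(f-\PM^{k}_{W_{j+1}}f\bigr),
\]
which holds because $\PM^{k}_{W_j}$ reproduces polynomials of degree $\le k-1$ and $\PM^{k}_{W_{j+1}}f$ is such a polynomial. Expanding the outer $\PM^{k}_{W_j}$ via its defining integral and using the commutation $\partial^\alpha\PM^{k}_{W_{j+1}}=\PM^{k-|\alpha|}_{W_{j+1}}\partial^\alpha$ for $|\alpha|\le k-1$, substitute the Taylor representation \eqref{eq:sob-func-rep} of order $k-|\alpha|$ applied to $\partial^\alpha f$ on $W_{j+1}$. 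A Fubini swap then yields
\[
\chi_P\bigl(\PM^{k}_{W_{j+1}}f-\PM^{k}_{W_j}f\bigr)\;=\;\sum_{|\gamma|=k}\int_{sW_{j+1}}\partial^\gamma f(\tilde y)\,K_{\gamma,j}(\tilde y)\,\mathrm d\tilde y,
\]
where $K_{\gamma,j}$ is supported in $sW_{j+1}\Subset\mathsf w W_{j+1}$ and is an explicit triple integral in $x\in\mathsf wP$ and $\tilde x\in W_j$, summed over multi-indices $\alpha\le\gamma$ with $|\alpha|\le k-1$, of the integrand $\chi_P(x)\theta_{W_j}(\tilde x)(x-\tilde x)^\alpha k_{\gamma-\alpha,W_{j+1}}(\tilde x,\tilde y)|\tilde x-\tilde y|^{k-|\alpha|-d}$.

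The remaining step is the size estimate $|K_{\gamma,j}(\tilde y)|\lesssim|W_{j+1}|^{-1}\ell(W_{j+1})\mathfrak d(W_{j+1},P)^{k-1}$, which turns $-K_{\cdot,j}/(\ell(W_{j+1})\mathfrak d(W_{j+1},P)^{k-1})$ into an element of $C\Phi^{0,\Subset}(W_{j+1})$ and thereby identifies the contribution with the $W=W_{j+1}$ term of \eqref{eq:tele}. Using the bounds $|\chi_P|\lesssim|P|^{-1}\mathbf 1_{\mathsf wP}$, $|\theta_{W_j}|\lesssim|W_j|^{-1}\mathbf 1_{W_j}$, the polynomial factor $|x-\tilde x|^{|\alpha|}\lesssim\mathfrak d(P,W_j)^{|\alpha|}$, the uniform bound $|k_{\gamma-\alpha,W_{j+1}}|\lesssim 1$ on $W_j\times sW_{j+1}$, and the local integrability $\int_{W_j}|\tilde x-\tilde y|^{k-|\alpha|-d}\,\mathrm d\tilde x\lesssim\ell(W_j)^{k-|\alpha|}$ (guaranteed by $|\alpha|\le k-1$), one obtains
\[
|K_{\gamma,j}(\tilde y)|\;\lesssim\;\sum_{|\alpha|\le k-1}\mathfrak d(P,W_j)^{|\alpha|}\,|W_j|^{-1}\ell(W_j)^{k-|\alpha|}\;\lesssim\;|W_j|^{-1}\ell(W_j)\,\mathfrak d(P,W_j)^{k-1},
\]
where the last step absorbs the sum into its largest term via $\mathfrak d(P,W_j)\gtrsim\ell(W_j)$. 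The adjacency $\bar W_j\cap\bar W_{j+1}\ne\varnothing$ combined with Whitney property (W3) yields $\mathfrak d(P,W_j)\sim\mathfrak d(P,W_{j+1})$ and $\ell(W_j)\sim\ell(W_{j+1})$, converting the bound into the desired form and completing \eqref{eq:tele} upon reindexing the sum by $W\in[P,Q]$. The chief technical obstacle is the careful bookkeeping of the nested triple integral defining $K_{\gamma,j}$; the decisive observation is that restricting to $|\alpha|\le k-1$ keeps $k-|\alpha|\ge 1$, which is exactly what makes the singularity $|\tilde x-\tilde y|^{k-|\alpha|-d}$ locally integrable and lets the estimate proceed uniformly.
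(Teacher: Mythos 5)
Your proof is correct, and its overall architecture — telescope along the chain, identify the first term with the $W=P$ contribution via \eqref{eq:pot-q}, and convert each consecutive difference into a single-cube contribution — is exactly the paper's. Where you diverge is in how the consecutive differences are converted. The paper cites Lemma \ref{l:poly}, specifically \eqref{e:neighbor}, which packages this step abstractly: it first averages $\PM^k_P f - \PM^k_Q f$ by $\PM^k_{CQ}$ to absorb both cubes, expresses the result through the auxiliary wavelets $\upsilon_{CQ,x,k}$ of \eqref{e:ibp3}, and then applies \eqref{eq:pot-q}. You instead unwind the definitions directly: you exploit the algebraic identity $\PM^{k}_{W_{j+1}}f-\PM^{k}_{W_j}f=-\PM^{k}_{W_j}(f-\PM^{k}_{W_{j+1}}f)$ (which is just the reproducing property \eqref{e:poly-rep}), commute derivatives through $\PM^k$, substitute the Sobolev representation \eqref{eq:sob-func-rep}, and then estimate the resulting explicit kernel $K_{\gamma,j}$ by hand. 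Your size estimate and the normalization factor $\ell(W_{j+1})\mathfrak d(W_{j+1},P)^{k-1}$ come out right, and since $\Phi^{0,\Subset}(W)$ only requires an $L^\infty$ bound plus compact support (the $\delta=0$ Hölder condition is automatic), the pointwise bound on $K_{\gamma,j}$ is indeed sufficient. The paper's packaging via Lemma \ref{l:poly} is the more economical choice in context because \eqref{e:neighbor} and the $\upsilon_{Q,x,m}$ objects are reused elsewhere (e.g.\ in \eqref{eq:tele3} and Lemma \ref{l:pp-avg}), whereas your in-line computation is more self-contained and makes the origin of the coefficient $\ell(W)\mathfrak d(W,P)^{k-1}$ completely transparent. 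Both are valid.
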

To prove this, we need the following ingredients.
\begin{lemma}\label{l:poly}
Let $Q \in \mathcal M$, $x \in \Omega$, and $0 < m \le k+1$. There exists $\upsilon_{Q,x,m} \in C \Phi^{k+1,\Subset}(Q)$, depending on $x$, such that
	\begin{equation}\label{e:ibp3} \PM^{m}_Q(f)(x) = \left(1+\frac{|x-c(Q)|}{\ell(Q)} \right)^{m-1} \upsilon_{Q,x,m} (f), \quad \forall f \in \mathcal U(\Omega).\end{equation}
Let $P,Q \in \mathcal M$ such that $P \subset CQ$. Then, for any $S \in \mathcal M$, there exists $\chi^{-k}_{S,Q} \in C\Phi^{0,\Subset}(Q)$ such that
	\begin{equation}\label{e:neighbor} \chi_S(\PM_P^k f - \PM_Q^k f ) =  \ell(Q) \mathfrak{d}(Q,S)^{k-1} \chi^{-k}_{S,Q}(\nabla^k f), \quad \forall f \in \mathcal U(\Omega).\end{equation}
\end{lemma}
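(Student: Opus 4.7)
My plan is to handle the two identities separately, in each case by transferring derivatives from $f$ onto the test function via integration by parts in the $y$-variable.

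For \eqref{e:ibp3}, I start from the defining formula of $\PM^m_Q f(x)$ and integrate by parts $|\alpha|$ times in $y$ against the compactly supported smooth weight $\theta_Q$; since $\supp\theta_Q\subset Q$, no boundary terms appear. This exhibits $\PM^m_Q f(x)$ as a pairing of $f$ with
\[
\widetilde\upsilon_{Q,x,m}(y)=\sum_{|\alpha|\le m-1}\frac{(-1)^{|\alpha|}}{\alpha!}\,\partial^\alpha_y\bigl[\theta_Q(y)(x-y)^\alpha\bigr],
\]
compactly supported in $Q\subset \mathsf w Q$. A routine Leibniz expansion, combined with $|\partial^\eta\theta_Q|\lesssim |Q|^{-1}\ell(Q)^{-|\eta|}$ and $|x-y|\lesssim \ell(Q)(1+|x-c(Q)|/\ell(Q))$ on $\supp\theta_Q$, yields for every $|\beta|\le k+1$ the uniform bound $|\partial^\beta_y\widetilde\upsilon_{Q,x,m}(y)|\lesssim |Q|^{-1}\ell(Q)^{-|\beta|}(1+|x-c(Q)|/\ell(Q))^{m-1}$. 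Dividing by the right-hand $x$-factor places $\upsilon_{Q,x,m}\coloneqq \widetilde\upsilon_{Q,x,m}/(1+|x-c(Q)|/\ell(Q))^{m-1}$ in $C\Phi^{k+1,\Subset}(Q)$.

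For \eqref{e:neighbor}, the central observation is that $\PM^k_R$ acts as the identity on polynomials of degree $\le k-1$ for every cube $R$, so $\PM^k_P(\PM^k_Q f)=\PM^k_Q f$, yielding the clean identity
\[
(\PM^k_P-\PM^k_Q)f=\PM^k_P(f-\PM^k_Q f).
\]
I substitute the integral representation \eqref{eq:sob-func-rep} for $f-\PM^k_Q f$ with reference cube $Q$, expand $\PM^k_P$ via its definition, and integrate by parts in $y$ to move the $|\beta|\le k-1$ derivatives off the argument of $\PM^k_P$ and onto the smooth factor $\theta_P(y)(x-y)^\beta$. After pairing with $\chi_S$ and swapping integrations, the expression takes the form $\sum_{|\alpha|=k}T_\alpha(\partial^\alpha f)$ with
\[
T_\alpha(z)=\sum_{|\beta|\le k-1}\frac{(-1)^{|\beta|}}{\beta!}\int_P \partial^\beta_y\bigl[\theta_P(y)M_\beta(y)\bigr]\,\frac{k_{\alpha,Q}(y,z)}{|y-z|^{d-k}}\,\mathrm d y,\quad M_\beta(y)\coloneqq \int \chi_S(x)(x-y)^\beta\,\mathrm d x.
\]
The indicator $\mathbf 1_{sQ}$ built into $k_{\alpha,Q}$ confines $\supp T_\alpha$ to $sQ\subset \mathsf w Q$ in $z$; and for $y\in P\subset CQ$, $x\in \mathsf w S$, one has $|x-y|\lesssim \mathfrak d(Q,S)$, giving $|M_\beta(y)|\lesssim \mathfrak d(Q,S)^{|\beta|}$ and, by Leibniz, $|\partial^\beta_y[\theta_P M_\beta](y)|\lesssim |P|^{-1}(\mathfrak d(Q,S)/\ell(P))^{|\beta|}$. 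Combined with $|k_{\alpha,Q}(y,z)|\lesssim 1$ on $y\in CQ$ and the elementary $\int_P |y-z|^{k-d}\,\mathrm d y\lesssim \ell(P)^k$, the sum is dominated by its $|\beta|=k-1$ term, producing $|T_\alpha(z)|\lesssim \ell(Q)\mathfrak d(Q,S)^{k-1}/|Q|$, which in turn yields $\chi^{-\alpha}_{S,Q}\coloneqq T_\alpha/[\ell(Q)\mathfrak d(Q,S)^{k-1}]\in C\Phi^{0,\Subset}(Q)$.

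The principal technical point, and the reason for routing the argument through $\PM^k_P(f-\PM^k_Q f)$ rather than the naive splitting $\chi_S(\PM^k_P f -\PM^k_Q f)=\chi_S(f-\PM^k_Q f)-\chi_S(f-\PM^k_P f)$, is to extract the $\mathfrak d(Q,S)^{k-1}$ gain: when $\ell(S)\ll \ell(Q)$ the two pieces of the naive splitting individually exceed the target size. The identity exploited above automatically encodes the degree-$(k-1)$ polynomial cancellation and produces a kernel paired only with $\nabla^k f$, supported in $P\cup sQ\subset \mathsf w Q$, as required.
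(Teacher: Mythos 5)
Your argument for \eqref{e:ibp3} coincides with the paper's: both integrate $\PM^m_Q f(x)$ by parts against $\theta_Q$ and read off the membership in $C\Phi^{k+1,\Subset}(Q)$ from a Leibniz expansion. For \eqref{e:neighbor}, however, you take a genuinely different route. The paper applies the reproducing property \eqref{e:poly-rep} from the \emph{outside}, writing $\PM^k_P f - \PM^k_Q f = \PM^k_{CQ}(\PM^k_P f - \PM^k_Q f)$ so that the already-proved \eqref{e:ibp3} (at $CQ$) extracts the $(1+|x-c(Q)|/\ell(Q))^{k-1}$ factor; it then adds and subtracts $f$ inside $\upsilon_{CQ,x,k}(\cdot)$, anti-integrates both halves via \eqref{eq:pot-q}, and finally pairs the polynomial weight with $\chi_S$ to produce $\mathfrak d(Q,S)^{k-1}$. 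You instead use the reproducing property on the \emph{inside}, $\PM^k_P(\PM^k_Q f)=\PM^k_Q f$, which gives the identity $\PM^k_P(f-\PM^k_Q f)$, and then unpack everything by hand: substitute \eqref{eq:sob-func-rep}, integrate by parts onto $\theta_P$, swap with $\chi_S$, and bound the explicit kernel $T_\alpha$ directly. The paper's proof is more modular (it reuses \eqref{e:ibp3} and \eqref{eq:pot-q} and is consequently very short); yours is longer but more self-contained and lays bare the mechanism, and your observation about why the naive splitting $\chi_S(f-\PM^k_Q f)-\chi_S(f-\PM^k_P f)$ fails — the individual anti-derivative kernels concentrate at scale $\ell(S)$ rather than $\ell(Q)$ — is exactly the obstruction the paper is implicitly circumventing by routing through $\PM^k_{CQ}$. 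One small caveat worth making explicit: your closing size estimate (with $\int_P |y-z|^{k-d}\,\mathrm dy\lesssim\ell(P)^k$ and the $|\beta|=k-1$ term dominant) matches the target $\ell(Q)\mathfrak d(Q,S)^{k-1}/|Q|$ only when $\ell(P)\gtrsim\ell(Q)$, so both proofs tacitly use $\ell(P)\sim\ell(Q)$; this comparability holds in every application of the lemma (adjacent Whitney cubes in Lemma \ref{lemma:tele}, sibling dyadic cubes in \eqref{eq:tele3}) but is not visibly enforced by the hypothesis $P\subset CQ$ alone.
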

\begin{proof}
Fix $x\in \Omega$.  
Notice that
	\[ \upsilon_{Q,x,m}  := \frac{1}{\left(1+\frac{|x-c(Q)|}{\ell(Q)} \right)^{m-1}} \sum_{|\alpha| \le m-1} \frac{(-1)^\alpha}{\alpha!}  \partial^{\alpha} \left[ (x-\cdot)^\alpha\theta_Q \right] 
				\in C\Phi^{k+1,\Subset}(Q). \]
Then \eqref{e:ibp3} follows integrating by parts.
To prove the second statement, notice that the polynomials $\PM$ have the following reproducing property. For $n \ge m$, and any cubes $R,Q$,
	\begin{equation}\label{e:poly-rep} \PM^n_R(\PM^m_Q f) = \PM^m_Q f.\end{equation}
Indeed,
	\begin{equation}\label{e:poly-rep2} \PM^n_R(\PM^m_Q f)(x) = \sum_{\substack{|\alpha| \le n-1 \\ |\beta| \le m-1} } \frac{1}{\alpha ! \beta !} \int_R \int_Q \theta_R(y) \theta_Q(z) \partial^\beta f(z) \partial^\alpha_y\left[ (y-z)^\beta \right](x-y)^\alpha \, \mathrm{d} z \, \mathrm{d} y. \end{equation}
However, since $|\beta| \le m-1 \le n-1$, by Taylor expansion,
	\[ \sum_{|\alpha| \le n-1} \frac{1}{\alpha !} \partial^\alpha_y \left[ (y-z)^\beta \right] (x-y)^\alpha = (x-z)^\beta \]
and the $y$ variable  in \eqref{e:poly-rep2} may be integrated out using $\int \theta_R = 1$. Therefore, for each $x \in \mathbb R$
	\[ \PM_P^k f(x) - \PM_Q^k f(x) = \PM_{CQ}^k ( \PM_P^k f - \PM_Q^k f )(x) = \left(1+ \frac{|x-c(Q)|}{\ell(Q)} \right)^{k-1}\upsilon_{CQ,x,k}( \PM_P^k f - \PM_Q^k f ).\]
Adding and subtracting $f$ inside the argument of $\upsilon_{CQ,x,k}$ and applying \eqref{eq:pot-q}, 
	\[ \upsilon_{CQ,x,k}( \PM_P^k f - \PM_Q^k f ) = \ell(Q)^{k} \upsilon_{CQ,x,k}^{-k}(\nabla^k f).\]
Finally, setting
	\[ \chi_{S,Q}^{-k} := \frac{1}{\mathfrak{d}(Q,S)^{k-1}} \int_S  (\ell(Q)+|x-c(Q)|)^{k-1}\upsilon_{CQ,x,k} \chi_S(x) \, \mathrm{d} x \in C \Phi^{k,\Subset}(Q)\]
establishes \eqref{e:neighbor}.
\end{proof}
Equipped with Lemma \ref{l:poly}, the proof of Lemma \ref{lemma:tele} is immediate.
\begin{proof}[Proof of Lemma \ref{lemma:tele}]
Enumerate the elements of the chain so that $[{P},{Q}]$ equals  $\{{P}_0,{P}_1,\ldots,{P}_N\}$ with ${P}={P}_0$ and ${Q}={P}_N$. As a telescoping sum,
	\[	\chi_{P}( f -\PM^{k}_{{Q}}f) = \chi_{P}( f-\PM^{k}_{{P}}f) + \sum_{i=1}^{N} \chi_{P}( \PM^{k}_{{P}_i}-\PM^{k}_{{P}_{i-1}}f).\]
The first term is handled using \eqref{eq:pot-q} above. Then apply \eqref{e:neighbor} to each summand. \end{proof}

The final lemma of this section constructs a few technical identities related to $\PM$ needed in the proof of our main theorems.

\begin{lemma}\label{l:pp-avg}
For any $R \subset Q$, $x \in R$, and $|\gamma| < m \le k+1$,
	\begin{equation}\label{e:poly-basis} \sum_{\substack{P \in \mathcal W \\ 1 \le \ell \le \jmath }} |P| \PM^{m-|\gamma|}_R( \partial^\gamma \chi_{P,\ell})(x) \chi_{P,\ell} (\PM^{m-1}_{Q}f) = \PM^{m-1-|\gamma|}_Q(\partial^\gamma f)(x),\end{equation}
and there exists $q_{R,m} \in C\Phi^{k+1,\Subset}(R)$ such that
	\begin{equation}
	\label{e:q-avg}\begin{split}&\quad  \sum_{\substack{ P \in \mathcal W \\ 1 \le \ell \le \jmath}} \PM^{m-|\gamma|}_R(\partial^\gamma \chi_{P,\ell})(c(R)) \chi_{P,\ell}(f) \\ &+  \sum_{\substack{S \in \mathcal S \\ S \in A(R)}} \PM^{m-|\gamma|}_R(\partial^\gamma \varphi_S)(c(R)) \varphi_S(f) = q_{R,m}(\partial^\gamma f). \end{split}\end{equation}
\end{lemma}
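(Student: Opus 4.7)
My plan is to handle the two identities separately, since they are structurally quite different.

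\textbf{Plan for \eqref{e:poly-basis}.} The crux is that $\PM^{m-1}_Q f$ is a polynomial of degree at most $m-2\leq k-1$. Since each wavelet $\varphi_S\in c\Psi^{k+1,\Subset}(S)$ possesses $k+1$ vanishing moments by \eqref{e:cancelcond}, it annihilates this polynomial: $\varphi_S(\PM^{m-1}_Q f)=0$. Triebel's reproducing formula \eqref{e:triebel} therefore collapses to the locally finite identity (each $\chi_{P,\ell}$ has compact support)
\[
\PM^{m-1}_Q f(y) = \sum_{P\in\mathcal W, \ell} |P|\, \chi_{P,\ell}(\PM^{m-1}_Q f)\, \chi_{P,\ell}(y).
\]
Differentiating in $y$ by $\partial^\gamma$ and then applying the linear operator $\PM^{m-|\gamma|}_R(\,\cdot\,)(x)$, both of which commute with the locally finite sum, rewrites the left-hand side of \eqref{e:poly-basis} as $\PM^{m-|\gamma|}_R(\partial^\gamma\PM^{m-1}_Q f)(x)$. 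The commutation identity $\partial^\gamma\PM^{m-1}_Q = \PM^{m-1-|\gamma|}_Q\partial^\gamma$, verified directly from the defining formula \eqref{eq:sob-func-rep}, produces a polynomial of degree at most $m-2-|\gamma|$. Since $m-|\gamma|\geq m-1-|\gamma|$, the reproducing property \eqref{e:poly-rep} of Lemma \ref{l:poly} collapses $\PM^{m-|\gamma|}_R$ to the identity on this polynomial, yielding the right-hand side.

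\textbf{Plan for \eqref{e:q-avg}.} Construct $q_{R,m}$ explicitly via integration by parts. Starting from
\[
\PM^{m-|\gamma|}_R(\partial^\gamma g)(c(R)) = \sum_{|\alpha|\leq m-|\gamma|-1}\frac{1}{\alpha!}\int_R \theta_R(y)(c(R)-y)^\alpha\partial^{\gamma+\alpha}g(y)\,dy,
\]
integrate by parts the $\partial^\alpha$ derivatives onto the smooth compactly supported kernel $\theta_R(y)(c(R)-y)^\alpha$; this is justified by the compact support of $\theta_R$ in $R$. The resulting linear functional takes the form $L(g):=\PM^{m-|\gamma|}_R(\partial^\gamma g)(c(R)) = \int \partial^\gamma g\cdot q_{R,m}$, with the natural candidate
\[
q_{R,m}(y) := \theta_R(y)\sum_{|\alpha|\leq m-|\gamma|-1}\frac{(c(R)-y)^\alpha}{\alpha!}.
\]
The membership $q_{R,m}\in C\Phi^{k+1,\Subset}(R)$ is immediate from $\theta_R=|R|^{-1}\Sy_R\theta$ combined with the bounded renormalization of the monomial factors $(c(R)-y)^\alpha\mathbf 1_R(y)$ under $\Sy_R$. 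The identity \eqref{e:q-avg} then follows by applying $L$ term-by-term to the Triebel reproducing expansion of $f$, and noting that the restriction of the second sum to $S\in A(R)$ reflects the structure of the interaction: for $S\in I(R)$ the wavelet $\varphi_S$ is either supported disjointly from $R$ (so $L(\varphi_S)=0$ by virtue of \eqref{e:ASW0}) or sits at scale finer than $R$, where the vanishing moment cancellation of $\varphi_S$ against the smooth kernel $q_{R,m}$ accounts for these contributions in combination with the polynomial-absorbing property used in \eqref{e:poly-basis}.

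\textbf{Main obstacle.} The most delicate step is the careful bookkeeping of normalization factors and of the support–scale interactions dictated by the tree structure in the second part. Specifically, showing that only wavelets $\varphi_S$ with $S\in A(R)$ contribute nontrivially requires exploiting the admissibility of the multiresolution via \eqref{e:ASW} and Lemma \ref{l:windows}, together with the exact cancellation properties that isolate the functional $L$ on the relevant localization.
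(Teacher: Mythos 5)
Your argument for \eqref{e:poly-basis} is correct and matches the paper's proof: expand $\PM^{m-1}_Q f$ in the Triebel basis, use the vanishing moments of $\varphi_S$ to discard the cancellative part, commute $\partial^\gamma$ with $\PM$, and apply the reproducing property \eqref{e:poly-rep}.

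Your treatment of \eqref{e:q-avg}, however, has two genuine gaps. First, the ``natural candidate'' you write down is not the result of the integration by parts you describe. Moving $\partial^\alpha$ off $\partial^{\alpha+\gamma}g$ produces
\[
\sum_{|\alpha|\le m-|\gamma|-1}\frac{(-1)^{|\alpha|}}{\alpha!}\,\partial^\alpha_y\!\bigl[\theta_R(y)(c(R)-y)^\alpha\bigr],
\]
which is precisely the function $\upsilon_{R,c(R),m-|\gamma|}$ constructed in Lemma \ref{l:poly}, \emph{not} $\theta_R(y)\sum_\alpha(c(R)-y)^\alpha/\alpha!$. Your candidate does not represent the functional $g\mapsto \PM^{m-|\gamma|}_R(\partial^\gamma g)(c(R))$, so the alleged identity $L(g)=\int\partial^\gamma g\cdot q_{R,m}$ already fails at this step.

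Second, and more importantly, even with the corrected kernel $\upsilon_{R,c(R),m-|\gamma|}$, applying $L$ to the Triebel expansion of $f$ produces a sum over \emph{all} $S\in\mathcal S$, while \eqref{e:q-avg} restricts the cancellative sum to $S\in A(R)$. You assert that the $I(R)$ contributions either vanish (by disjoint supports) or are ``accounted for'' by vanishing moments; neither is true in general. For $S\in I(R)$ with $\ell(S)\ll\ell(R)$ and $S$ close to $R$, the coefficients $L(\varphi_S)$ are small but nonzero. The resolution in the paper is not that these terms disappear, but that they are \emph{absorbed into the definition of $q_{R,m}$}: one sets $q_{R,m}$ equal to the restricted Triebel-coefficient sum, then shows by adding and subtracting the full reproducing formula \eqref{e:triebel} that $q_{R,m}=\upsilon_{R,c(R),m}-\sum_{S\in I(R)}|S|\,\varphi_S(\upsilon_{R,c(R),m})\,\overline{\varphi_S}$. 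The first term is in $C\Phi^{k+1,\Subset}(R)$ by Lemma \ref{l:poly}, and the membership of the $I(R)$ correction in the appropriate class is a nontrivial averaging estimate which the paper takes from \cite{diplinio22wrt}*{Lemma 3.3}. Consequently the membership $q_{R,m}\in C\Phi^{k+1,\Subset}(R)$ is not ``immediate'' as you claim; your argument omits precisely the part that carries the content of the lemma.
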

\begin{proof} 

Using \eqref{e:triebel} and the fact that $\varphi_S(\mathsf x^\beta) =0$ for $|\beta| \le k+1$,
	\begin{equation}\label{e:poly-id3} \sum_{\substack{P \in \mathcal W \\ 1 \le \ell \le \jmath }} |P| \PM^{m-|\gamma|}_R( \partial^\gamma \chi_{P,\ell})(x) \chi_{P,\ell} (\PM^{m-1}_{Q}f) = \PM^{m-|\gamma|}_R ( \partial^\gamma \PM^{m-1}_Q f)(x).\end{equation}
Applying \eqref{e:poly-rep} into \eqref{e:poly-id3}, 
	\[ \PM^{m-|\gamma|}_R ( \partial^\gamma \PM^{m-1}_Q f)(x) = \PM^{m-|\gamma|}_R ( \PM^{m-1-|\gamma|}_{Q} \partial^\gamma f)(x) = \PM^{m-1-|\gamma|}_{Q} \partial^\gamma f(x),\]
which, referring back to \eqref{e:poly-id3}, establishes \eqref{e:poly-basis}.
Finally, defining
	\[ q_{R,m} \coloneqq \sum_{\substack{P \in \mathcal W \\ 1 \le \ell \le \jmath }} |P| \chi_{P,\ell}\left(\upsilon_{R,c(R),m} \right) \overline{\chi_{P,\ell}} + \sum_{S \in \mathcal S \cap A(R)} \varphi_S\left(\upsilon_{R,c(R),m}\right) \overline{ \varphi_S},\]
the identity \eqref{e:q-avg} immediately follows from \eqref{e:ibp3}. It remains to establish $q_{R,m} \in C \Phi^{k+1,\Subset}(R)$. To do so, notice that by \eqref{e:triebel}, $q_{R,m}$ can be rewritten as
	\[ q_{R,m} = \upsilon_{R,c(R),m }- \sum_{S \in I(R)} \varphi_S\left(\upsilon_{R,c(R),m}\right) \overline{ \varphi_S},\]
where $I(R) = \{ S \in \mathcal S : \ell(S) \lesssim \ell(R), \ |c(S) - c(R)| \le \ell(R)\}$. Note that $\upsilon_{R,c(R),m}$ already belongs to the desired class. By \cite{diplinio22wrt}*{Lemma 3.3}, the second term belongs to $C\Psi^{k+1,\Subset}(S)$, which completes the proof of the corresponding claim.
\end{proof}

\section{Sparse and weighted domination of wavelet forms}\label{s:sparse}

\addtocounter{other}{1}

\subsection{Sparse operators and multilinear maximal functions}Let $\eta>0$. A  collection  $\mathcal Q$  of  cubes of $\R^d$ is \textit{$\eta$-sparse} if
\[
\inf_{Q\in \mathcal Q }\frac{|E_Q|}{|Q|} \geq \eta, \qquad\textnormal{ where }  E_Q\coloneqq Q \setminus \bigcup_{Q'\in \mathcal Q\setminus \{Q\} } Q'.
\]
Below, the parameter $\eta$ may vary at each occurrence without explicit mention, and $\eta$-sparse collections are simply referred to as \textit{sparse collections}. Let $m\geq 1$. Associated to a sparse collection $\mathcal Q$ and an exponent vector $\vec{q}\in (0,\infty)^{m}$ is the $m$-linear \textit{sparse operator}
\[
\mathcal Q^{\vec q}(f_1,\ldots, f_{m}) (x) \coloneqq \sum_{Q\in \mathcal Q}  \mathbf{1}_Q \prod_{j=1}^{m} \langle f_j \rangle_{q_j,Q}, \qquad x\in \R^d
\]
acting on $m$-tuples of functions in $L^\infty_0(\R^d)$. Sparse operators, originating in the work of Lerner \cite{lerner-elementary,lerner-a2-13} on the $A_2$ conjecture, dominate Calder\'on-Zygmund operators on $\R^d$, in norm, pointwise or dual form sense. Furthermore, when $q_j=1$ for all $1\leq j\leq m$, each $\mathcal Q^{\vec q}$ coincides with a standard Calder\'on-Zygmund operator on the cone of positive functions.
The treatise  \cite{lerner2019intuitive} provides a thorough introduction to the subject. See  \cite{BFP,conde-alonso17,CDPOMRL,HRT, lacey-a2-17,Ler2016,BCp,culiuc2018domination,CDPV,LSph} for applications within and beyond Calder\'on-Zygmund theory.
The arguments of this section involve two different    multilinear maximal operators associated to an exponent vector $\vec{p}\in (0,\infty)^n$. The first is
the standard multilinear maximal operator 
\begin{equation}
\label{e:stmul}
\mathrm{M}^{\vec p}(f_1,\ldots, f_n) \coloneqq 
\sup_{Q \textrm{ cube of } \R^d} \mathbf{1}_Q \prod_{j=1}^n \langle f_j \rangle_{p_j,Q}. 
\end{equation}
The second is   built upon  the collection  of cubes well inside $\Omega$,
namely
\begin{equation}
\label{e:locmul}
\mathrm{M}^{\vec p}_{{\Omega^\circ}}(f_1,\ldots, f_n) \coloneqq 
\sup_{Q \in {{\mathcal Q}_{\Omega}^\circ}} \mathbf{1}_Q \prod_{j=1}^n \langle f_j \rangle_{p_j,Q}, \qquad {{\mathcal Q}_{\Omega}^\circ}\coloneqq\{Q \textrm{ cube of }\R^d: 3Q\subset \Omega\}.
\end{equation}
 In either case we remove the superscript $\vec p $ when $\vec p = (1,\ldots,1)$. Observe here that the \textit{compression} $\mathrm{M}^{\vec p}_\Omega $  satisfies
\begin{equation}
\label{e:compdefM}
\mathrm{M}^{\vec p}_\Omega (f_1,\ldots, f_n) \coloneqq \mathbf{1}_\Omega\mathrm{M}^{\vec p}(f_1\mathbf{1}_\Omega,\ldots, f_n\mathbf{1}_\Omega) \geq \mathrm{M}^{\vec p}_{{\Omega^\circ}} (f_1,\ldots, f_n) 
\end{equation} pointwise.
 The connection between \eqref{e:stmul}, \eqref{e:locmul} and sparse operators is  summarized  in
\begin{align}
\label{e:equival1} &
\sup_{\mathcal Q \textrm{ sparse}}\langle  \mathcal Q^{\vec q}(f_1,\ldots, f_{n-1}), |f_n|\rangle  \sim \|\mathrm{M}^{(q_1,\ldots, q_{n-1},1)}(f_1,\ldots, f_n) \|_{L^1(\R^d)}, \\ 
& \label{e:equival2}\sup_{\substack{\mathcal Q \textrm{ sparse}\\ \mathcal Q \subset {{\mathcal Q}_{\Omega}^\circ}}}  \langle  \mathcal Q^{\vec q}(f_1,\ldots, f_{n-1}), |f_n|\rangle \sim \|\mathrm{M}^{(q_1,\ldots, q_{n-1},1)}_{{\Omega^\circ}}(f_1,\ldots, f_n) \|_{L^1(\Omega)}.
\end{align}
Equivalence \eqref{e:equival1} is proved in \cite{CDPOBP}, and \eqref{e:equival2} follows with obvious modifications.

\addtocounter{other}{1}

\subsection{Sparse domination of wavelet forms}

The equivalences \eqref{e:equival1}, \eqref{e:equival2} and the customs in current literature motivate us to refer to the inequalities below as \textit{sparse domination principles} for wavelet forms.
First, fully localized wavelet forms on $\Omega$ admit sparse domination with cubes well inside $\Omega$. 
\begin{proposition} \label{p:sparse2} Let  $\Lambda$ be a  fully localized  wavelet form, as in Definition \ref{def:forms}. Then
\[
 \left|\Lambda (f_1,\ldots,f_n)\right| \lesssim \begin{cases}   \|\mathrm{M}_{{\Omega^\circ}}(f_1,\ldots, f_n) \|_{L^1(\Omega)}  &n\geq 2\\ 
 \|f_1\|_{\mathrm{bmo}(\Omega)}
 \|\mathrm{M}_{{\Omega^\circ}}(f_2,\ldots, f_n) \|_{L^1(\Omega)} & n \geq 3. \end{cases}
  \]
\end{proposition}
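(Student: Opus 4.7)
The plan is to perform a Lerner-style stopping-time sparse domination on each Whitney cube, and then invoke the equivalence \eqref{e:equival2} to convert the sparse bound into the claimed $L^1(\Omega)$-norm of $\mathrm M_{\Omega^\circ}$. I would begin by splitting $\Lambda=\sum_{W\in\mathcal W}|W|\Lambda_W$ through \eqref{e:lambdaW}. Since all wavelets are \emph{fully} localized ($\Subset$), each $\Lambda_W$ is supported in $(\mathsf{w}W)^n$, and by (W2) a fixed dilate of $\mathsf{w}W$ remains inside $\Omega$, so $\mathsf{w}W\in\mathcal Q^\circ_\Omega$.

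For the noncancellative top piece, the $L^\infty$-normalization of $\phi_W\in c\Phi^{0,\eta}_n(W)$ yields $\|\phi_W\|_\infty\lesssim|W|^{-n}$, hence
\[
|\phi_W(f_1,\dots,f_n)|\lesssim\prod_{j=1}^n\langle|f_j|\rangle_{\mathsf{w}W},
\]
and the finite-overlap property (W4) combined with the pointwise bound $\prod_j\langle|f_j|\rangle_{\mathsf{w}W}\le\mathrm M_{\Omega^\circ}(f_1,\dots,f_n)(x)$ for $x\in W$ gives the first inequality for this part. The $\mathrm{bmo}$ variant of this portion is obtained by factoring out the uniform estimate $\sup_W|\chi_{W,j}(f_1)|\lesssim\|f_1\|_{\mathrm{bmo}(\Omega)}$ afforded by the identification $\mathrm{bmo}(\Omega)\simeq F^{0,0}_{1,2}(\mathcal M)$ of \S\ref{ss:tl-norms}.

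For the cancellative tail $\Lambda^{\mathrm{canc}}_W\coloneqq\sum_{S\in\mathcal D(W)}\tfrac{|S|}{|W|}\bigl(\bigotimes_{j\le 2}\psi^j_S\otimes\bigotimes_{j>2}\phi^j_S\bigr)(f_1,\dots,f_n)$, I would run the standard stopping-time construction on $\mathcal D(W)$: let $\mathcal T_W$ be the collection of maximal $Q\in\mathcal D(W)$ for which $\langle|f_j|\rangle_{\mathsf{w}Q}>A\langle|f_j|\rangle_{\mathsf{w}W}$ for some $j$; taking $A$ large enough, $|\bigcup\mathcal T_W|\le\tfrac12|W|$, so $\{\mathsf{w}W\}\cup\mathcal T_W$ is sparse. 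On the ``good'' scales $S\in\mathcal D(W)$ not strictly below any $Q\in\mathcal T_W$, the noncancellative factors satisfy $|\phi^j_S(f_j)|\lesssim\langle|f_j|\rangle_{\mathsf{w}W}$, while the local square-function inequality \eqref{e:PsiZ} (with $n=0$, $q=2$) and \eqref{e:lsfo} together control the cancellative pair:
\[
\Bigl|\sum_{\substack{S\in\mathcal D(W)\\ S\not\subset\bigcup\mathcal T_W}}\tfrac{|S|}{|W|}\bigl(\bigotimes_{j\le 2}\psi^j_S\otimes\bigotimes_{j>2}\phi^j_S\bigr)(f_1,\dots,f_n)\Bigr|\lesssim|W|\prod_{j=1}^n\langle|f_j|\rangle_{\mathsf{w}W}.
\]
Iterating this construction inside each $Q\in\mathcal T_W$ assembles a sparse subfamily $\mathcal F_W\subset\mathcal Q^\circ_\Omega$ with $|\Lambda^{\mathrm{canc}}_W(f_1,\dots,f_n)|\lesssim\sum_{Q\in\mathcal F_W}|Q|\prod_j\langle|f_j|\rangle_{\mathsf{w}Q}$.

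The union $\mathcal F\coloneqq\bigcup_W(\mathcal F_W\cup\{\mathsf{w}W\})$ is a sparse collection contained in $\mathcal Q^\circ_\Omega$, and \eqref{e:equival2} translates the resulting sparse bound into the first claimed inequality. The $\mathrm{bmo}$ variant is then handled by running the stopping time only on the $n-1$ factors $(f_2,\dots,f_n)$ and exploiting the cancellation $\psi^1_S(f_1)=\psi^1_S(f_1-c)$ for any constant $c$: this replaces the $f_1$-average on $\mathsf{w}W$ by a local mean-oscillation, which via \eqref{e:lsfo} with $p=2$ and the John--Nirenberg identification of $F^{0,0}_{p,2}$ with $F^{0,0}_{1,2}$ is controlled by $\|f_1\|_{\mathrm{bmo}(\Omega)}$. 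I expect the principal obstacle to be precisely this Cauchy--Schwarz-plus-square-function step: it must absorb the $\sum_S\tfrac{|S|}{|W|}$-scaling, which is divergent in the absence of cancellation, while simultaneously keeping the stopping-time control on the $n-2$ noncancellative factors.
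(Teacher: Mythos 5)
Your overall architecture is exactly what the paper has in mind: split $\Lambda=\sum_W|W|\Lambda_W$ via \eqref{e:lambdaW}, observe that full localization forces $\operatorname{supp}\Lambda_W\subset(\mathsf{w}W)^n$ with $\mathsf{w}W\in\mathcal Q^\circ_\Omega$ by (W2), run a Lerner-type stopping time on each $\mathcal D_+(W)$, and then pass through \eqref{e:equival2}. This is the content of the paper's one-line proof, which defers to \cite{diplinio23bilin}*{Prop.~5.1} for the per-cube argument.

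The gap is precisely the step you flag at the end, and it is not merely a nuisance to chase. The statement demands $L^1$-averages in \emph{every} slot of $\mathrm M_{\Omega^\circ}$, i.e.\ $\vec p=(1,\ldots,1)$ in \eqref{e:equival2}. Your control of the cancellative pair proceeds via \eqref{e:PsiZ} ($n=0$, $q=2$, giving an $\ell^2$ Cauchy--Schwarz in the $S$-sum) followed by \eqref{e:lsfo}. That chain yields
\[
\Psi_Z(f_1,f_2)\lesssim \left\langle \mathrm S^0_Z f_1\right\rangle_{p,Z}\left\langle \mathrm S^0_Z f_2\right\rangle_{p',Z}\lesssim p'^{1/2}p^{1/2}\inf_{c_1,c_2}\left\langle f_1-c_1\right\rangle_{p,Z}\left\langle f_2-c_2\right\rangle_{p',Z},
\]
which is an $L^p\times L^{p'}$ product for \emph{some} $1<p<\infty$; there is no choice of $p$ producing $L^1$ in both cancellative slots, and the prefactor blows up as $p\to 1$. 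The stopping time you describe (stop when some $\langle|f_j|\rangle_{\mathsf{w}Q}$ is large) does not rectify this, because it controls the $L^1$-averages of the $f_j$, not the $L^p$-average of the oscillation. To genuinely land on $L^1$-averages one needs the local-mean-oscillation/median machinery (or a good-$\lambda$ argument exploiting the weak-(1,1) of the local square function) used in \cite{diplinio23bilin}*{Prop.~5.1}, where the oscillation of $\Lambda_W(f_1,\cdot,\ldots)$ is controlled, not that of the input functions. The same issue reappears in your $\mathrm{bmo}$-variant: after factoring $|\psi^1_S(f_1)|\lesssim\|f_1\|_{\mathrm{bmo}(\Omega)}$ the remaining $(n-1)$-linear form has a single cancellative factor, so the $\sum_S|S|/|W|$-scaling is no longer absorbable by Cauchy--Schwarz at all, and the argument must be rearranged (one keeps the cancellative pair $\psi^1_S\otimes\psi^2_S$ together and uses John--Nirenberg on the $f_1$-side only). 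Minor aside: the display bounding the good-scales sum carries a stray factor $|W|$ — since the left-hand side already has $|S|/|W|$, the bound should read $\lesssim\prod_j\langle|f_j|\rangle_{\mathsf{w}W}$.
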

The arguments leading to Proposition \ref{p:sparse2} are by now standard in the literature. In particular, by virtue of the fully localized assumption,  each  component $\Lambda_W$ of $\Lambda$, cf.\ \eqref{e:lambdaW}, is supported   in a moderate dilate of $W\in \mathcal W$. One may thus repeat step by step the proof of \cite[Prop. 5.1]{diplinio23bilin} for each  component $\Lambda_W$ and obtain a sparse collection $\mathcal Q$ contained in ${{\mathcal Q}_{\Omega}^\circ}$ as claimed. 

Outside of the fully localized case, we may obtain a sparse domination estimate by looking at a wavelet form as a Calder\'on-Zygmund singular integral form on all of $\R^d$.  
\begin{proposition} \label{p:sparse} Let  $\Lambda$ be a  wavelet form as in \eqref{e:triebel2}. Then 
\begin{align}
 \label{e:sparzglob} 
 &\left|\Lambda (f_1,\ldots, f_n) \right| \lesssim   \left\|\mathrm{M}(f_1,\ldots, f_n) \right\|_{L^1(\R^d)}.
\end{align}
\end{proposition}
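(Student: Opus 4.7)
}
The strategy is to verify that $\Lambda$ fits the framework of $n$-linear Calder\'on-Zygmund singular integral forms on $\R^d$, as defined for instance in \cite{diplinio23bilin}*{Def.\ 3.2}, and then invoke the known $n$-linear sparse domination theorem for such forms. Combined with \eqref{e:equival1}, this yields the claim. Unlike Proposition \ref{p:sparse2}, the sparse collection produced here need not be supported in ${\mathcal Q}_{\Omega}^{\circ}$ since $\Lambda$ is no longer fully localized; we simply regard $\Lambda$ as acting on $\R^d$ and work with the global multilinear maximal function $\mathrm M$.

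First I would record  kernel bounds for $\Lambda$. The kernel is formally
\[
K(x_1,\ldots,x_n) = \sum_{W\in\mathcal W} |W|\,\phi_W(x_1,\ldots,x_n) + \sum_{S\in\mathcal S} |S|\, \psi^1_S(x_1) \psi^2_S(x_2)\prod_{j=3}^n \phi^j_S(x_j).
\]
Using that $\phi_W\in \Phi^{0,\eta}_n(W)$ has $L^1$-normalized envelope with decay rate $\eta$ relative to $W$, and that the wavelets in the $\mathcal S$-sum have decay rate $\eta$ relative to $S$ (with two vanishing moments of order $\eta$ in the first two variables), the standard geometric-series summation over scales and positions yields the size and $\eta$-H\"older smoothness estimates
\[
|K(\vec x)|\lesssim \bigl(\max_{i\neq j}|x_i-x_j|\bigr)^{-d(n-1)},
\qquad
|K(\vec x+h_\ell e_\ell)-K(\vec x)|\lesssim \bigl(\max_{i\neq j}|x_i-x_j|\bigr)^{-d(n-1)-\eta}|h_\ell|^\eta,
\]
whenever $|h_\ell|\le \tfrac12 \max_{i\neq j}|x_i-x_j|$. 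The cancellation of the $\psi^j_S$ in the $\mathcal S$-sum gives the crucial off-diagonal decay; the $\mathcal W$-sum is handled directly from the spatial decay of $\phi_W$ and the bounded-overlap property (W4).

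Next I would establish the requisite weak boundedness. Using the double cancellation in $j=1,2$ for the $\mathcal S$-sum, a standard Littlewood-Paley / almost-orthogonality argument (or direct invocation of Proposition \ref{prop:triebel} together with sup-norm bounds on the remaining noncancellative entries) gives an $n$-linear $L^2\times\cdots\times L^2\to L^{2/n}$ estimate for the $\mathcal S$-portion. The $\mathcal W$-portion admits a direct $\prod_j L^\infty$ bound multiplied by the $L^1$ mass on compact sets, which suffices for the weak-type testing needed to trigger sparse domination.

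With $\Lambda$ thus recognized as a genuine $n$-linear CZ form, the general sparse domination principle from \cite{CDPOBP} (see also \cite{diplinio23bilin}*{Prop.\ 5.1}, whose proof goes through verbatim once the fully-localized hypothesis is removed and sparse cubes in ${\mathcal Q}_\Omega^\circ$ are replaced by arbitrary sparse cubes in $\R^d$) provides, for each admissible $n$-tuple, a sparse collection $\mathcal Q\subset \mathcal D$ with
\[
|\Lambda(f_1,\ldots,f_n)| \lesssim \langle \mathcal Q^{(1,\ldots,1)}(f_1,\ldots,f_{n-1}),|f_n|\rangle.
\]
Taking the supremum over sparse $\mathcal Q$ and applying \eqref{e:equival1} with $\vec q=(1,\ldots,1)$ delivers \eqref{e:sparzglob}.

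The main technical obstacle is the first step, namely producing the H\"older-type kernel bounds uniformly in the parameters: one must carefully balance the $\eta$-decay of $\phi_W$ and of $\phi^j_S$ (which lack cancellation) against the cancellation of $\psi^{1,2}_S$ to obtain off-diagonal regularity in \emph{every} variable, not only the first two. The Whitney packing \eqref{eq:d-ep} (via Lemma \ref{lemma:whitney} with $t=\infty$) is the key quantitative input for summing the noncancellative tails.
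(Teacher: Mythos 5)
Your proposal takes the same route as the paper: recognize $\Lambda$ as an $n$-linear Calder\'on--Zygmund form on all of $\R^d$ (forgetting the $\Omega$-localization) and then invoke the known sparse domination principle for such forms, converting the sparse bound into $\|\mathrm{M}(f_1,\ldots,f_n)\|_{L^1}$ via \eqref{e:equival1}. The paper's proof is exactly this observation (citing \cite{lerner2019intuitive} and noting that a direct proof goes along the lines of \cite[Prop.~5.1]{diplinio23bilin}), and your verification of the kernel size/H\"older bounds and the weak boundedness via almost-orthogonality is the content that the paper deems immediate.

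One inaccuracy worth flagging: the claim that the Whitney packing estimate \eqref{eq:d-ep} (via Lemma \ref{lemma:whitney} with $t=\infty$) is ``the key quantitative input for summing the noncancellative tails'' is not quite right for this proposition. That packing bound encodes the $\ep$-H\"older uniformity of $\partial\Omega$ and is needed later for the tree-averaging Lemma \ref{lemma:avg-2} and the paraproduct analysis; it is not required to verify that $\Lambda$ is a global CZ form. For the $\mathcal W$-sum here, the pairwise disjointness (W1), the bounded-overlap property (W4), and the polynomial decay rate $\eta>0$ of $\phi_W\in\Phi^{0,\eta}_n(W)$ are already enough to sum the kernel and its differences by scale, exactly as in the almost-orthogonality estimates of \cite[Prop.~2.5]{diplinio22wrt}. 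Invoking Lemma \ref{lemma:whitney} here would also introduce an extraneous dependence on the H\"older uniformity hypothesis, which Proposition \ref{p:sparse} does not require.
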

\begin{proof} Estimate \eqref{e:sparzglob} follows by observing that $\Lambda$ is a $n$-linear Calder\'on-Zyg\-mund form on $\R^d$, and the latter class admits sparse domination, see e.g.\ \cite{lerner2019intuitive} and references therein. A direct proof may be obtained along the lines of \cite[Prop. 5.1]{diplinio23bilin}.
\end{proof}

\begin{remark} \label{r:Lp} The sparse domination of Proposition  \ref{p:sparse}
together with the estimate  \cite{lerner2009new}	\[ \left \Vert \M(f_1,\ldots,f_n ) \right\Vert_{L^1(\R^d)} \lesssim \prod_{j=1}^n \left\Vert f_j \right\Vert_{L^{p_j}(\R^d)}, \qquad 1 < p_j < \infty, \quad \sum_{j=1}^n \frac 1{p_j} =1\]
imply the corresponding full range of  Lebesgue space bounds for the wavelet forms  	\begin{equation}
\label{e:Lpbds} \left| \Lambda(f_1 ,\ldots,f_n) \right| \lesssim \prod_{j=1}^n \left\Vert f_j \right\Vert_{L^{p_j}(\R^d)}, \qquad 1 < p_j < \infty, \quad \sum_{j=1}^n \frac 1{p_j} =1.\end{equation}
\end{remark}

\addtocounter{other}{1}
\subsection{Weighted Estimates for Sparse Forms} The $n$-linear sparse forms on $\R^d$ obey a sharply quantified range of weighted norm inequalities, see e.g.\ \cite{lerner2019intuitive}. In this article, we are interested in weighted bounds for the compressions of sparse forms, and of the corresponding multilinear maximal operators to $\Omega$,
leading to the definition of several related weight constants which only depend on $w\mathbf{1}_\Omega$.
The first family of constants is obviously related to $\M_{{\Omega^\circ}}$ and defined by
	\begin{equation}\label{e:type3w} [w]_{\A_{p}(\Omega^\circ)} \coloneqq \sup_{Q \in {{\mathcal Q}_{\Omega}^\circ}} \langle w \rangle_{Q} \left\langle w^{-1} \right\rangle_{p'-1,Q} \qquad  [w]_{\RH_{s} (\Omega^\circ)} \coloneqq \sup_{Q \in {{\mathcal Q}_{\Omega}^\circ}} \langle  w \rangle_{Q}^{-1} \left\langle w \right\rangle_{s,Q}. \end{equation}
	When $\Omega=\R^d$,  the constants defined by \eqref{e:type3w} are the usual Muckenhoupt and reverse H\"older constants, and therefore are simply indicated by $[w]_{\A_p}, [w]_{\RH_s}$ respectively. It is of interest to note that the sharp form of the reverse Holder inequality
\begin{equation}
\label{e:rhiom} [w]_{\RH_s(\Omega^\circ)} \lesssim 1, \qquad s= \frac{1}{2^{d+1} [w]_{\A_p(\Omega^\circ)}-1}
\end{equation}	continues to hold, as it can be seen by repeating the arguments of
\cite[Thm.\ 2.3]{HPR}.	
The constants \eqref{e:type3w} appear in the following  weighted estimates for $\M_{\Omega^\circ}.$
\begin{proposition}\label{p:type3w} Let $1<p<r<\infty$, $\sigma$ be the $p$-th dual of $w$, and $s=\left(\frac{r}{p} \right)'$. Then
	\[\begin{split}
	 \left\Vert \M_{{\Omega^\circ}}^{\vec p}(f_1,f_2) \right \Vert_{L^1(\Omega)}& \lesssim  \left\{ \begin{array}{ll} ~ [w]_{\A_{p}(\Omega^\circ)}^{\max\left\{1,\frac{1}{p-1} \right\} } & \vec p=(1,1) \\ ~
\left( [w]_{\mathrm{A}_p({\Omega^\circ})} [w]_{\mathrm{RH}_s({\Omega^\circ})} \right)^{\max\left\{\frac{r-1}{r-p},\frac{1}{p-1}\right\}} & \vec p =(1,r')  \end{array} \right\} \\ &\times \left\Vert f_1 \right\Vert_{L^p(\Omega,w)} \left\Vert f_2 \right\Vert_{L^{p'}(\Omega,\sigma )}.
\end{split}\]
Furthermore,
	\[ \inf_{r > 2} r^{\frac 12} \left\Vert\M_{{\Omega^\circ}}^{(1,r')}(f_1,f_2) \right \Vert_{L^1(\Omega)} \lesssim 
[w]_{\A_{p}(\Omega^\circ)}^{2\max\left\{1,\frac{1}{p-1} \right\} } \left\Vert f_1 \right\Vert_{L^p(\Omega,w)} \left\Vert f_2 \right\Vert_{L^{p'}(\Omega,\sigma)}.\]
\end{proposition}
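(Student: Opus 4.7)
The plan is to combine the sparse representation \eqref{e:equival2} with sharp weighted bounds for sparse operators supported in $\mathcal Q_\Omega^\circ$. For the $\vec p = (1,1)$ case, \eqref{e:equival2} with $q_1=1$ yields
\[
\|\M^{(1,1)}_{\Omega^\circ}(f_1,f_2)\|_{L^1(\Omega)} \lesssim \sup_{\substack{\mathcal Q \text{ sparse}\\\mathcal Q \subset \mathcal Q_\Omega^\circ}}\langle \mathcal Q f_1, |f_2|\rangle,\qquad \mathcal Q f \coloneqq \sum_{Q \in \mathcal Q}\mathbf{1}_Q\langle f\rangle_Q.
\]
H\"older's inequality, based on the identity $w^{1/p}\sigma^{1/p'}=1$, reduces this to the sharp $L^p(w)$ bound $\|\mathcal Q f\|_{L^p(w)} \lesssim [w]_{\A_p(\Omega^\circ)}^{\max\{1,1/(p-1)\}}\|f\|_{L^p(w)}$, which is the Buckley--Lerner--Moen inequality. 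Its classical principal-cube proof uses $w$-averages only on cubes of $\mathcal Q \subset \mathcal Q_\Omega^\circ$, so the localized characteristic is precisely what enters.

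For $\vec p = (1,r')$, swapping the roles of $f_1, f_2$ in \eqref{e:equival2} and repeating the H\"older argument reduces matters to
\[
\|\mathcal Q^{(r')}f\|_{L^{p'}(\sigma)} \lesssim \bigl([w]_{\A_p(\Omega^\circ)}[w]_{\RH_s(\Omega^\circ)}\bigr)^{\max\{(r-1)/(r-p),\,1/(p-1)\}}\|f\|_{L^{p'}(\sigma)},
\]
where $\mathcal Q^{(r')}f = \sum_{Q\in\mathcal Q} \mathbf{1}_Q \langle f\rangle_{r', Q}$ and $s = (r/p)'$. This is a sharp two-weight estimate for fractional-average sparse operators in the spirit of Cruz-Uribe--Moen--P\'erez: the stopping-time argument is augmented by an internal H\"older application at exponent $s$, which is exactly where the reverse H\"older characteristic enters. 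Once again, no property of $w$ outside $\mathcal Q_\Omega^\circ$ is needed.

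The final estimate is obtained by optimizing over $r$. Using the sharp reverse H\"older inequality \eqref{e:rhiom}, one selects $s - 1 \sim 1/[w]_{\A_p(\Omega^\circ)}$ so that $[w]_{\RH_s(\Omega^\circ)} \lesssim 1$, which forces $r \sim [w]_{\A_p(\Omega^\circ)}$. With this choice $(r-1)/(r-p) \to 1$ and $r^{1/2} \sim [w]_{\A_p(\Omega^\circ)}^{1/2}$, so the product $r^{1/2}[w]_{\A_p(\Omega^\circ)}^{\max\{1,1/(p-1)\}}$ is comfortably absorbed into the claimed $[w]_{\A_p(\Omega^\circ)}^{2\max\{1,1/(p-1)\}}$. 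The main technical hurdle is the sharp mixed $\A_p$--$\RH_s$ bound for $\mathcal Q^{(r')}$: the exponent $\max\{(r-1)/(r-p), 1/(p-1)\}$ arises from a delicate interplay of the principal-cube enumeration with the auxiliary H\"older exponent $s$, and requires running the Lerner--Moen stopping-time machinery against both $w$ and the reverse H\"older factor simultaneously.
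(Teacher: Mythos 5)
Your proposal tracks the paper's proof closely: both reduce via the localized sparse equivalence \eqref{e:equival2} to known sharp weighted bounds for sparse operators (the $[w]_{\A_p}^{\max\{1,1/(p-1)\}}$ bound for the standard sparse operator, and the mixed $\A_p$--$\RH_s$ bound for the $r'$-average sparse operator as in Bernicot--Frey--Petermichl), and both obtain the final infimum estimate from the sharp reverse H\"older inequality \eqref{e:rhiom} by tuning $s$ so that $[w]_{\RH_s(\Omega^\circ)}\lesssim 1$ with $r$ of size $[w]_{\A_p(\Omega^\circ)}$. The paper merely cites these ingredients (Moen, BFP, Conde--Alonso) while you unfold them via a duality/H\"older reduction to one-weight sparse-operator norms, but the strategy --- and the crucial observation that all sparse cubes lie inside ${\mathcal Q}_{\Omega}^\circ$, so only localized weight characteristics ever enter --- is identical.
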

\begin{proof}
The first estimate follows from the same  well-known arguments of the case $\Omega=\R^d$. Refer to \cite[Thm 3.1]{moen2012} for the first case and \cite[Proposition 6.4]{BFP} for the second case. In view of \eqref{e:rhiom}, the second estimate can be proved by arguing as in \cite[Proof of Corollary A.1]{conde-alonso17}.
\end{proof}

The $\A_p(\Omega^\circ)$ constants are too small however to control compressions of general singular operators. In fact, standard techniques \cite[Thm.\ IV.5.5]{gc-book} show that the weak-type estimate for the compression of the maximal function, defined in \eqref{e:compdefM}, can be characterized by
	\begin{equation}\label{e:type1weak} 
\left\| \M_{\Omega}: L^{p}(\Omega,w) \mapsto L^{p,\infty}(\Omega,w)  \right\| \sim 		[w]_{\A_p(\Omega)}^{\frac1p}, \qquad 1<p<\infty
\end{equation}
where  
	\begin{equation}\label{e:type1w} 
		[w]_{\A_p(\Omega)} \coloneqq \sup_{\substack{   |Q \cap \Omega| >0}} \langle \mathbf 1_\Omega w \rangle_{Q} \left\langle \mathbf 1_\Omega w^{-1} \right\rangle_{p'-1,Q} \end{equation}
In general, reverse H\"older inequalities of the type \eqref{e:rhiom} fail for the families \eqref{e:type1w} and the corresponding reverse H\"older classes
	\begin{equation}\label{e:type1rh}
		[w]_{\RH_s(\Omega)} \coloneqq \sup_{\substack{   |Q \cap \Omega| >0}} \langle \mathbf 1_\Omega w \rangle_{Q}^{-1} \left\langle \mathbf 1_\Omega w \right\rangle_{s,Q}.
	\end{equation} 
In particular, it is not true for general $\Omega$ that 
	\begin{equation}\label{e:open} \A_p(\Omega) = \bigcup_{1<q<p} \A_q(\Omega).\end{equation}
For this reason, it is not possible to interpolate the weak-type constants into strong type as in \cite{Bu}, and Buckley's estimate  $\|\M\|_{L^p(\R^d, w)}^{p-1}\sim [w]_{\A_p}$ from \cite{Bu} fails to extend to  the compression $\M_\Omega$ in terms of  $[w]_{\A_p(\Omega)} $. Furthermore, an unpublished result of Wolff \cite{wolff-Ap} states that the identity \eqref{e:open} is equivalent to the extension problem for $w \in \A_p(\Omega)$, namely whether there exists $v \in \A_p(\R^d)$ such that $\mathbf 1_\Omega v=\mathbf 1_\Omega w$. See \cite{gc-book}*{Theorem IV.5.5} for a proof and \cite{kurki22,holden} for extensions. Notice also that if such an extension exists, then weighted estimates for the compression immediately follow from weighted estimates for the uncompressed operator.

The main result of this section is now to show that \textit{on uniform domains} $\Omega$, both the compression of maximal functions and sparse forms have sharp quantitative bounds in terms of $[w]_{\A_p(\Omega)}$. We will avoid the identity \eqref{e:open}; in fact, we wonder whether \eqref{e:open} holds for $\Omega$ uniform.

\begin{proposition}\label{p:sp-comp}
Let $\Omega$ be a uniform domain. Then, for any sparse operator $\mathcal Z$, $1<p<\infty$, and $w\in \A_p(\Omega)$,
	\begin{equation}\label{e:sp-comp} \norm{\mathcal Z_\Omega f}_{L^p(\Omega,w)} \lesssim [w]_{\A_p(\Omega)}^{\max\{1,\frac{1}{p-1} \}} \left\Vert f\right\Vert_{L^p(\Omega,w)}.\end{equation}
Furthermore,
	\begin{equation}\label{e:max-comp} \left \Vert \M_\Omega f \right\Vert_{L^p(\Omega,w)} \lesssim [w]_{\A_p(\Omega)}^{\frac{1}{p-1}} \left\Vert f \right\Vert_{L^p(\Omega,w)}.\end{equation}
\end{proposition}
\begin{proof}
  For this proof, let $\mathcal Q $ be the collection of cubes of $\R^d$ which intersect $\Omega$ on a set of positive measure.  For each  $Q\in \mathcal Q$, denote by $\widehat Q$ the cube concentric to $Q$ and whose sidelength is $3 \ell(Q)$. If $Q\in \mathcal Q$ and $Q\cap \partial \Omega \neq \varnothing$, the cube $\widehat Q$ contains a ball of radius $\ell(Q)$ centered at a point $x\in \partial \Omega$. Indeed, let $x \in Q \cap \partial\Omega$ and $y \in \mathbb R^d$ such that $|y-x| \le \ell(Q)$. Then, for each  coordinate $|y_j-c(Q)_j| \le |y-x| + |x_j-c(Q)_j| \le \frac 32 \ell(Q)$, which places $y \in \widehat Q$. Applying Lemma \ref{l:reg} ensures the existence of a  constant $\rho>0$, depending only on the uniform character of $\Omega$, such that $|\widehat Q \cap  \Omega| > \rho |Q|$. This estimate is trivially true, with $\rho= 1$, if $Q\in \mathcal Q$ and $ Q\cap \partial \Omega =\varnothing$. Below we may thus use that 
	\begin{equation}\label{e:Qhat} |\widehat Q \cap  \Omega| > \rho |Q| , \quad  \forall Q\in \mathcal Q. \end{equation}

 Further, recall from \cite{lerner2019intuitive} that there exist $3^d$ shifted dyadic grids $ \mathcal D_{t}, t  \in \left\{0,\pm1 \right\}^d\ $ with the following property: for each cube $L\subset \mathbb \R^d$ there exists $t(L)\in\left\{0,\pm1 \right\}^d $ and a cube  $R (L) \in \mathcal D_{t(L)}$, the \textit{regularized of $L$}, with
$
 L \subset R(L) $ and $ \ell(R( L)) \leq 3 \ell(L).
$
With this in hand, for any $Q \in \mathcal Q$ we may denote by $\widetilde Q \coloneqq R({\widehat Q})$ the regularized  of  ${\widehat Q}$ and
\[
\mathcal Q_t\coloneqq\{Q\in \mathcal Q:t(\widehat Q) = t\}.
\]
Replacing $\rho $ by ${3}^{-d}\rho$, we have that 
$|\widetilde  Q \cap \Omega|> \rho |Q|$ for all $Q\in \mathcal Q_t$ and $t\in \{0,\pm1\}^d$. 
and define
	\[ \mathsf R_t = \{\widetilde Q \cap \Omega : Q\in \mathcal Q_t\}, \qquad t \in  \{0,\pm1\}^d.\]
By construction, for each $Q\in \mathcal Q_t$ there exists $R=\widetilde Q \cap \Omega \in \mathsf R_t$ such that
	\begin{equation}\label{e:appR} Q \cap \Omega \subset \widehat Q \cap \Omega \subset \widetilde Q\cap \Omega ,\qquad  |Q|\gtrsim |R|\geq \rho |Q|.\end{equation}
Introduce the maximal function associated to $\mathsf R_t$ by
	\[ \M_{\mathsf R_t} f = \sup_{R \in \mathsf R_t} \mathbf 1_{R} \avg{f}_{R}.\]
Standard arguments \cite{lerner-elementary} based on the dyadic nature of the collection $\mathsf R_t$ show that \[\norm{\M_{\mathsf R} f}_{L^p(w)} \lesssim [w]_{\A_p(\mathsf R)}^{\frac{1}{p-1}} \norm{f}_{L^p(w)}, \qquad   
	[w]_{\A_p(\mathsf R)} \coloneqq \sup_{t} \sup_{R \in \mathsf R_t} \avg{w}_R \avg{\sigma}_R^{p-1}.\]
However, \eqref{e:appR} shows that $\M_\Omega f \lesssim \sup_t \M_{\mathsf R_t} f$ and $[w]_{\A_p(\mathsf R)} \lesssim [w]_{\A_p(\Omega)}$, and \eqref{e:max-comp} is proved.

A similar, though more involved line of reasoning will be used to establish the estimate for the sparse operators \eqref{e:sp-comp}. Before we begin, let $\mathsf R\coloneqq \bigcup_{t\in \{0,\pm1\}^d}\mathsf{R}_t$.
In analogy with the definition for cubes, say  $\mathcal R \subset \mathsf R$ is $\eta$-\textit{sparse} if for each $R \in \mathcal R$, there is an associated subset $E_R \subset R$ such that
	\[ |E_R | \geq \eta |R|, \quad \{ E_R: R \in \mathcal R\} \mbox{ is pairwise disjoint.}\]
If $t\in \{0,\pm1\}^d $ and $\mathcal R \subset \mathsf R_t$ is $\eta$-\textit{sparse} for some $\eta>0$, well-known arguments \cite{moen2012} give the bound
	\begin{equation}\label{e:RSPbounds} \norm{\mathcal R f\coloneqq  \sum_{R \in \mathcal R} \avg{f}_R \mathbf 1_R }_{L^p(\Omega,w)} \lesssim_\eta [w]_{\A_p(\mathsf R)}^{\max\{1,\frac{1}{p-1} \}} .\end{equation}
	Notice that just as for the maximal function, we can dominate the averages $\avg{f_j \mathbf 1_\Omega}_{Q}$ with $\avg{f_j \mathbf 1_\Omega}_{\widetilde{Q}}$
and therefore
	\[ \left\langle \mathcal Z_\Omega f_1,f_2 \right\rangle \lesssim \sum_{t \in \{0,\pm 1\}^d} \left\langle [\mathcal Z_t]_{\Omega}f_1,f_2 \right\rangle, \quad \mathcal Z_t = \left\{ \widetilde{Q} : Q \in \mathcal Z, \ t(Q)=t\right\}. \] 
In this way, the crucial property, is that for each $t \in \{0,\pm 1\}^d$,  
	\begin{equation}\label{e:ZQdoubling} Q \in \mathcal Z_t \implies Q \cap \Omega \in \mathsf R_t, \quad |Q \cap \Omega| \sim |Q|, \end{equation}
where the final property is simply a restatement of \eqref{e:appR}.
We would be finished if the exceptional sets $E_Q$ were contained in $\Omega$ but this may not be the case. So we will rehash the standard sparse algorithm with this end in view. Let us make two final reductions. Standard limiting arguments allow us to assume $\mathcal Z_t$ is a finite collection and thus is contained in some large cube. Second, by splitting into the subcubes of maximal elements of $\mathcal Z_t$, we may restrict our attention to the case 
	\begin{equation}\label{e:ZQ} \mathcal Z_t = \mathcal Z_t(Q_0) \coloneqq \{ Q \in \mathcal Z_t : Q \subset Q_0\}, \quad Q_0 \in \mathcal Z_t.\end{equation}
Taking all this into account, the remaining estimate \eqref{e:sp-comp} in Proposition \ref{p:sp-comp} will be an immediate consequence of \eqref{e:RSPbounds} and the bound
	\begin{equation}
		\label{e:sparse34}
\left\vert \left\langle [\mathcal Z_t(Q_0)]_\Omega f_1,f_2 \right\rangle \right\vert \lesssim \left\vert \left\langle \mathcal R f_1,f_2 \right\rangle \right\vert, 	\end{equation} 
for a suitably constructed sparse collection $\mathcal R \subset \mathsf R_t$ possibly depending on $f_1,f_2\in L^\infty(\R^d)$, nonnnegative and supported on $Q_0 \cap \Omega$.
Let us fix $t,f_1,f_2$ and henceforth we simply write $\mathcal Z$ for $\mathcal Z_t$. Let $\Theta> 1$ stand for a large constant which will be chosen at the end of the proof, and define
	\begin{equation}
		\label{e:badcube} F=F(Q_0)\coloneqq \bigcup_{j=1,2}
\left\{ x\in Q_0: \mathrm{M}(f_j\mathbf{1}_{Q_0})(x) > \Theta \langle f_j \rangle_{Q_0}   \right\}.
	\end{equation} 
The maximal theorem entails the estimate
\begin{equation}
	\label{e:Festimate}|F| < \frac{6^{d}}{\Theta} |Q_0|.
\end{equation}
Let $\mathcal S(Q_0)$ be maximal (disjoint, since $\mathcal Z$ is all from one dyadic grid) cubes $S$ from $\mathcal Z$ which are contained in $F$.
Introduce at this point 
	\[ \mathcal S = \mathcal S(Q_0), \quad \mathcal N \coloneqq \mathcal Z(Q_0) \setminus \bigcup_{S\in \mathcal S} \mathcal Z(S),\]
recalling $\mathcal Z(S)$ from \eqref{e:ZQ}.
Now rewrite  
	\begin{equation}
		\label{e:befpack}
\begin{split} 
		\left\langle \left[\mathcal Z(Q_0)\right]_\Omega f_1,f_2 \right \rangle  = \sum_{Q \in \mathcal N} \avg{f_1}_Q  \avg{f_2}_Q |Q|		+ \sum_{S \in \mathcal S}\left\langle [\mathcal Z(S)]_\Omega f_1,f_2 \right \rangle .  	\end{split}		\end{equation}
Let $Q \in \mathcal N$. If $Q \subset F$, then by the maximality of $\mathcal S$, $Q \subset S$ for some $S \in \mathcal S$, but this contradicts the definition of $\mathcal N$. Therefore, there exists $x \in (Q_0 \backslash F) \cap Q$ and so by the definition of $F$ in \eqref{e:badcube}, $\avg{f_j}_Q \le \M(\mathbf 1_{Q_0} f)(x) \le \Theta \avg{f_j}_{Q_0}$. Therefore,
	\begin{equation}\label{e:nonest} \sum_{Q \in \mathcal N} \avg{f_1}_Q  \avg{f_2}_Q |Q| \lesssim \avg{f_1}_{Q_0}  \avg{f_2}_{Q_0} \sum_{Q \in \mathcal N}  |Q| \lesssim \avg{f_1}_{Q_0}  \avg{f_2}_{Q_0} |Q_0|,\end{equation}
where the last inequality follows from sparseness of $\mathcal N$. Set $\mathcal R_0 = \{ Q_0 \}$, and we will construct the $\mathsf R$-sparse collection inductively. Suppose we have $\mathcal R_k$. Then set
	\[ \mathcal R_{k+1} = \bigcup_{Q \in \mathcal R_k} \mathcal S(Q).\]
This induction process terminates after finitely many steps, once $\mathcal R_{k+1} = \varnothing$, since $\mathcal Z$ was finite and $Q \not\in\mathcal S(Q)$ for $\Theta>6^d$. Therefore, iterating \eqref{e:befpack} and \eqref{e:nonest}, we have
	\[ \left\langle [\mathcal Z(Q_0)]_\Omega f_1,f_1 \right\rangle \lesssim \sum_{k} \sum_{Q \in \mathcal R_k} \avg{f_1}_Q \avg{f_2}_Q |Q| \lesssim \sum_{k} \sum_{Q \in \mathcal R_k} \avg{f_1}_{Q \cap \Omega} \avg{f_2}_{Q \cap \Omega} |Q \cap \Omega|,\]
where the last inequality relied on $f_1,f_2$ being supported in $\Omega$. 
The $\mathsf R$-sparse collection is now $\mathcal R = \cup_k \{Q \cap \Omega : Q \in \mathcal R_k\}$ and it remains to verify that the sets 
	\[ E_Q \coloneqq (Q  \cap \Omega) \setminus \bigcup_{S \in \mathcal S(Q)} S \]
are disjoint and make up a positive portion of $Q \cap \Omega$. They are disjoint since given any two distinct cubes $Q,P \in \cup_k \mathcal R_k \subset \mathcal D_t$, they are either disjoint (in which case $E_Q \subset Q$ and $E_P \subset P$ are disjoint) or one is contained in the other. Say $Q \subset P$, then $Q \subset S$ for some $S \in \mathcal S(P)$ and therefore $Q \cap E_P = \varnothing$ whence $E_Q$ and $E_P$ are disjoint. To establish the estimate of the Lebesgue measure of $E_Q$, first notice that by construction of $\mathcal S(Q)$ and \eqref{e:Festimate},
	\[ \abs{ \bigcup_{S \in \mathcal S(Q)} S } \le |F(Q)| \le \frac{6^d}{\Theta} |Q| \le \frac{1}{2}|Q \cap \Omega|, \]
for $\Theta$ large enough depending on the constant in \eqref{e:ZQdoubling}. Therefore,
	\[ |E_Q| \ge |Q \cap \Omega| - \abs{ \bigcup_{S \in \mathcal S(Q)} S } \ge \frac{1}{2} |Q \cap \Omega|.\]
\end{proof}

Weighted estimates for general wavelet forms are now an immediate consequence of \eqref{e:sparzglob}, \eqref{e:equival1}, and \eqref{e:sp-comp}. 
\begin{proposition}\label{p:WFw}
 Let $\Omega$ be a uniform domain, $\Lambda$ be a wavelet form as in \eqref{e:triebel2} with $n=2$, $1<p<\infty$, $w$ a weight, and $\sigma=w^{-\frac{1}{p-1}}$ be its $p$-th dual. Then 
\begin{equation}
\label{e:formeraw}
 \left|\Lambda (f_1\mathbf{1}_\Omega,f_2\mathbf{1}_\Omega)\right| \lesssim  { [w]_{\mathrm{A}_p(\Omega)}^{\max\left\{1,\frac{1}{p-1} \right\} } } \left \Vert f_1 \right \Vert_{L^p(\Omega,w)}\left \Vert f_2 \right \Vert_{L^{p'}(\Omega,\sigma)}.
 \end{equation} 
\end{proposition}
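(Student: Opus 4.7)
The plan is to chain together the sparse domination for wavelet forms on $\R^d$ of Proposition \ref{p:sparse}, the sparse$\leftrightarrow$multilinear maximal equivalence \eqref{e:equival1}, and the very definition \eqref{e:weight} of the weight constant $[w]_{\mathrm{A}_p(\Omega^{\mathsf{sp}})}$, whose role is precisely to absorb sparse operators acting on functions supported in $\Omega$. No tree structure, no Whitney geometry, and no fully localized assumption will be needed; the estimate is essentially a tautology once the right definitions are lined up.

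First, apply Proposition \ref{p:sparse} with $n=2$ to the tuple $(f_1\mathbf{1}_\Omega,f_2\mathbf{1}_\Omega)$, which gives
\[
\bigl|\Lambda(f_1\mathbf{1}_\Omega,f_2\mathbf{1}_\Omega)\bigr|
\lesssim \bigl\|\M(f_1\mathbf{1}_\Omega,f_2\mathbf{1}_\Omega)\bigr\|_{L^1(\R^d)}.
\]
Invoking \eqref{e:equival1} with $\vec q=(1,1)$, the right-hand side is comparable to $\sup_{\mathcal Q}\langle \mathcal Q(f_1\mathbf{1}_\Omega),|f_2|\mathbf{1}_\Omega\rangle$ where the supremum is over sparse collections $\mathcal Q$ in $\R^d$. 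Since both arguments are supported on $\Omega$, the integration reduces to $\Omega$ and we may write
\[
\bigl|\Lambda(f_1\mathbf{1}_\Omega,f_2\mathbf{1}_\Omega)\bigr|
\lesssim \sup_{\mathcal Q \text{ sparse}}\bigl\langle \mathbf{1}_\Omega\mathcal Q(f_1\mathbf{1}_\Omega),\,|f_2|\bigr\rangle_\Omega.
\]

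Next, for any fixed sparse $\mathcal Q$, I apply H\"older's inequality on $\Omega$ with the pair $(L^p(\Omega,w),L^{p'}(\Omega,\sigma))$, using that $w^{1/p}\sigma^{1/p'}=1$ since $\sigma=w^{-1/(p-1)}$. This yields
\[
\bigl\langle \mathbf{1}_\Omega\mathcal Q(f_1\mathbf{1}_\Omega),|f_2|\bigr\rangle_\Omega
\le \bigl\|\mathbf{1}_\Omega\mathcal Q(f_1\mathbf{1}_\Omega)\bigr\|_{L^p(\Omega,w)}\,\|f_2\|_{L^{p'}(\Omega,\sigma)}.
\]
Taking the supremum over $\mathcal Q$, the definition \eqref{e:weight} of $[w]_{\mathrm{A}_p(\Omega^{\mathsf{sp}})}$ controls the first factor by $[w]_{\mathrm{A}_p(\Omega^{\mathsf{sp}})}^{\max\{1,1/(p-1)\}}\|f_1\|_{L^p(\Omega,w)}$, since raising to the reciprocal of $\min\{1,p-1\}$ converts the weight constant back to the sparse operator norm on $L^p(\Omega,w)$. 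Combining the three displays gives \eqref{e:formeraw}.

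There is no genuine obstacle: the bookkeeping of the exponent $\max\{1,1/(p-1)\}$ versus $\min\{1,p-1\}$ is the only point that requires care, and it is resolved by inspection of \eqref{e:weight}. The whole argument is essentially a transcription of the standard weighted sparse paradigm, adapted by the fact that the relevant sparse operators are tested against inputs supported in $\Omega$, which is exactly the data encoded in $[w]_{\mathrm{A}_p(\Omega^{\mathsf{sp}})}$.
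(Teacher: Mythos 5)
Your proof is correct and follows essentially the same route as the paper: Proposition \ref{p:sparse} to get from $\Lambda$ to $\|\M(f_1\mathbf{1}_\Omega,f_2\mathbf{1}_\Omega)\|_{L^1(\R^d)}$, then the sparse/maximal equivalence to pass to a sparse form, then H\"older and the definition \eqref{e:weight}; you have merely spelled out the H\"older step that the paper compresses into ``follows directly from the definition.'' One small remark: the paper cites \eqref{e:equival2} at the second step, but since the middle quantity is the global $\|\M(\cdot)\|_{L^1(\R^d)}$ and the resulting sparse collection $\mathcal Q$ is not constrained to lie in $\mathcal Q_\Omega^\circ$, the relevant equivalence is \eqref{e:equival1}, which is exactly what you invoked.
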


\addtocounter{other}{1}
\section{Boundedness of paraproduct forms}\label{sec:pp}
This section introduces appropriate paraproduct forms on domains.
Sobolev space bounds for paraproduct forms are characterized by  suitable conditions on the symbol $\bb$. This characterization is formulated, respectively in the unweighted and weighted cases, in Propositions  \ref{paraproduct-main} and \ref{p:pp-weight}. 

\begin{definition}[Paraproduct forms] \label{def:pp}
Hereafter, \[\bb=\{b_{W,j}: W\in \W, 1\leq j \leq \jmath\}\cup\{b_{S}: S\in\Sc\}\] is a sequence of complex numbers indexed by the same parameters of $\mathcal B(\Omega,k)$ from \eqref{e:triebelref}. The   qualitative assumption that the coefficient sequence $\bb$ decays as in \eqref{e:qualass} is made so that   $\bb$ may be identified with the function
\begin{equation}\label{e:seq-id}
\bb\coloneqq {\W} \bb + {\Sc} \bb\in \CL , 
\quad
{\W} \bb \coloneqq \sum_{\substack{W \in \W\\ 1\leq j \leq \jmath}} |W|b_{W,j} \overline{\chi_{W,j}},\quad {\Sc} \bb \coloneqq \sum_{S\in \Sc} |S|b_{S}\overline{ \varphi_{S}}.
\end{equation}
The \textit{paraproduct} of $\bb$
is then defined by
	\begin{equation}
\label{e:defpp}
 \Pi_{\bb}(f_1,f_2) \coloneqq \sum_{\substack{W \in \W\\ 1\leq j \leq \jmath}}  |W| b_{W,j} \zeta_{W}(f_1)\chi_{W,j}( {f_2}) 
 + \sum_{ S\in \Sc}  |S| b_{S} \zeta_{S}(f_1)\varphi_{S}( {f_2})
\end{equation} 
where $\{\zeta_R \in C\Phi^{k,\Subset}(R):R\in \mathcal M\}$ is a generic fixed  family.  Paraproducts are immediately related to wavelet forms. Namely,
	\begin{equation}\label{e:lambdap}
	\begin{split} &
	\Pi_{\mathfrak{b}}(f_1,f_2)=\Lambda (\mathfrak b,f_2,f_1),\\ 
	&\Lambda  \coloneqq \sum_{W \in \mathcal W} |W|
\Lambda_W, \; \,\Lambda_W\coloneqq    \phi_W \otimes \zeta_W + \sum_{S \in \mathcal{D}(W)} \frac{|S|}{|W|} \varphi_S \otimes \overline{\varphi_S} \otimes \zeta_S, \end{split}
	 \end{equation}
	having set for brevity
	$\phi_W =\sum_{1\leq j \leq \jmath} \chi_{W,j}\otimes \overline{\chi_{W,j}} $. It is immediately seen that
 $\Lambda$ is a fully localized wavelet form.
\end{definition}

\addtocounter{other}{1}
\subsection{Carleson measure norms}\label{ss:pp-norms} Paraproduct forms will be controlled by means of a combination of   Triebel-Lizorkin norms of the type  introduced in \S\ref{ss:tl-norms} and suitable Carleson measure norms. To define the latter, with  reference to Lemma \ref{l:windows}, call $   \mathcal W_{\mathsf{bd}}\coloneqq \W\setminus \mathcal W_0$ the \textit {boundary cubes} of $\Omega$, let $\SH(W_j)$ stand for the boundary windows of a uniform domain and define  the following partial ordering on cubes $W,P \in \SH_1(W_j^*)\subset \mathcal \mathcal W_{\mathsf{bd}}$. Say $P \le W$ if for each admissible chain $[P,W_j^*]$, there exists an admissible chain $[W,W_j^*]$ such that
	$ [W,W_j^*] \subset [P,W_j^*].$
Furthermore, define the ordered shadow $\SH_0(W)$ and its realization $\Sh_0(W)$ by
	\[ \SH_0(W) = \{ P \in \SH_1(W_j^*): P \le W \}, \qquad \Sh_0(W) = \bigcup_{P \in \SH_0(W)} \mathsf{w}P.\]
 Let $n \in \mathbb N,$ $1<p<\infty$. Say $  f \in L^1_{\mathrm{loc}}(\Omega)$ is an $(n,p)$-\textit{Carleson measure} for $\Omega$ if there exists $K>0$ such that for all $j \ge 1$,
	\begin{equation} \label{eq:carleson-measure}
	\begin{aligned} 
		\forall W \in \SH_1(W_j^*), 
			\ \sum_{P \le W} \|   f \|_{L^p(\Sh_0(P))}^{p p'} \ell(P)^{(n p-d)(p'-1)} 
			&\le K^{p'} \|  f\|_{L^p(\Sh_0(W))}^p, & n p \le d; \\
		\|  f \|_{L^p(\Sh(W_j))} 
			&\le K, & n p > d. 
	\end{aligned}
	\end{equation}	
Let $\| f\|_{\mathrm{Carl}^{n,p}(\Omega)}$ denote the smallest $K$ such that \eqref{eq:carleson-measure} holds. 
The next lemma contains some elementary properties of the $(n,p)$-Carleson measure norms. The proofs are easy and therefore left to the reader. 
\begin{lemma}\label{l:carl-prop}
Let $1 \le m \le n$, $1 < q \le p < \infty$, $f \in \mathrm{Carl}^{n,p}(\Omega)$, and $g \in L^1_{\mathrm{loc}}(\Omega)$.
\begin{itemize}
	\item[(C1)] $\|f\|_{\mathrm{Carl}^{m,q}(\Omega)} \lesssim \|f\|_{\mathrm{Carl}^{n,p}(\Omega)}$.
	{  \item[(C2)] $\|f\|_{F^{0,-n}_{p,2}(\mathcal W_{\mathrm {bd}})} \lesssim \|f\|_{\mathrm{Carl}^{n,p}(\Omega)}$.}
	\item[(C3)] If $\|g\|_{L^p(sP)} \lesssim \|f\|_{L^p(rP)}$ for some  $ 1\leq r,s \leq \mathsf{w}^2$ and all $P \in \mathcal W_{\mathsf {bd}}$, then
		\[ \|g\|_{\mathrm{Carl}^{n,p}(\Omega)}\lesssim \|f\|_{\mathrm{Carl}^{n,p}(\Omega)}.\]
\end{itemize}
\end{lemma}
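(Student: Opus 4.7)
The plan is to dispatch (C2) first, as it isolates the elementary per-cube bound underlying the definition; then (C3), which follows from finite overlap of dilated Whitney cubes; and finally (C1), which requires the most care when comparing the sub- and supercritical regimes.

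For (C2), I would test the defining inequality \eqref{eq:carleson-measure} at the single index $P=W$. In the subcritical range $np\le d$, the summation reduces to the self-estimate
\[
\|f\|_{L^p(\Sh_0(W))}^{pp'}\ell(W)^{(np-d)(p'-1)}\le K^{p'}\|f\|_{L^p(\Sh_0(W))}^{p},
\]
rearranging to $\|f\|_{L^p(\Sh_0(W))}\le K\ell(W)^{d/p-n}$; in the supercritical range, the same bound is a direct consequence of $\|f\|_{L^p(\Sh(W_j))}\le K$ together with $\mathsf{w} W\subset \Sh_0(W)\subset \Sh(W_j^*)$. Since each $\chi_{W,j}\in c\Phi^{k+1,\Subset}(W)$ is $L^\infty$-bounded by $|W|^{-1}$ and supported in $\mathsf{w}W$, this implies $\ell(W)^{n}|\chi_{W,j}(f)|\lesssim \ell(W)^n\langle f\rangle_{p,\mathsf{w}W}\lesssim K$. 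Applying \eqref{e:lsfo} with $n=0$ and polynomial $\mathsf{P}=0$ bounds $\langle \mathrm{S}^{0}_{2,S}f\rangle_{p,S}\lesssim \langle f\rangle_{p,\mathsf{w}S}$ for any $S\in\mathcal W_{\mathsf{bd}}$, delivering the second summand of the $F^{0,-n}_{p,2}(\mathcal W_{\mathsf{bd}})$ norm with the same constant.

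For (C3), I would exploit the finite overlap of the dilates $\{rP:P\in\mathcal W\}$, which holds for every fixed $r>0$ by (W2)--(W4). Writing $\Sh_0(P)=\bigcup_{Q\in\SH_0(P)}\mathsf{w}Q$ and covering each $\mathsf{w}Q$ by $sQ$ (possible after replacing $s$ with $\max\{s,\mathsf{w}\}\le \mathsf{w}^2$, still in the allowed range), the hypothesis gives
\[
\|g\|_{L^p(\Sh_0(P))}^{p}\le \sum_{Q\in\SH_0(P)}\|g\|_{L^p(sQ)}^{p}\lesssim \sum_{Q\in\SH_0(P)}\|f\|_{L^p(rQ)}^{p}\lesssim \|f\|_{L^p(\Sh_0^{\sharp}(P))}^{p},
\]
where $\Sh_0^{\sharp}(P)$ differs from $\Sh_0(P)$ only by a uniform dilation. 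Conversely, $\|f\|_{L^p(\Sh_0^{\sharp}(W))}\lesssim \|f\|_{L^p(\Sh_0(W))}$ by another application of finite overlap. Substituting into either form of \eqref{eq:carleson-measure} transfers the Carleson condition from $f$ to $g$ at the cost of an absolute multiplicative constant.

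For (C1), I would split according to the criticality of the pairs, noting $mq\le np$ so that if $(n,p)$ is subcritical then so is $(m,q)$. If both pairs are supercritical, then $|\Sh(W_j)|\lesssim \ell^{d}$ is uniformly bounded and H\"older gives $\|f\|_{L^q(\Sh(W_j))}\lesssim \|f\|_{L^p(\Sh(W_j))}\le K$. If both are subcritical, apply H\"older on each $\Sh_0(P)$ with $|\Sh_0(P)|\lesssim \ell(P)^d$ and compare the arising weight $\ell(P)^{(mq-d)(q'-1)}$ to $\ell(P)^{(np-d)(p'-1)}$ via H\"older in the summation index using $mq\le np$. The main obstacle is the mixed case $np>d$, $mq\le d$, where only the endpoint bound $\|f\|_{L^p(\Sh(W_j))}\le K$ is available; here the subcritical sum must be reconstructed by local H\"older, yielding $\|f\|_{L^q(\Sh_0(P))}\lesssim \ell(P)^{d/q-d/p}K$, followed by summation of the geometric factor $\ell(P)^{(mq-d)(q'-1)+(d/q-d/p)qq'}$ over $P\le W$ using the packing estimate of Lemma \ref{lemma:whitney}. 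The crux is keeping track of these exponents and verifying convergence of the resulting geometric series, which uses $mq<d$ strictly and the H\"older uniform assumption on $\Omega$.
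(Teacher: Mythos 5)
Your (C2) argument is correct: testing the defining inequality at the single index $P=W$ yields the per--cube bound $\|f\|_{L^p(\Sh_0(W))}\lesssim K\,\ell(W)^{d/p-n}$, and this together with \eqref{e:lsfo} at order $0$ controls both summands of the $F^{0,-n}_{p,2}(\mathcal W_{\mathsf{bd}})$ norm.

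For (C3), however, there is a genuine gap, and it is symptomatic of an issue that also infects your treatment of (C1) in the subcritical regime. You write ``Conversely, $\|f\|_{L^p(\Sh_0^{\sharp}(W))}\lesssim \|f\|_{L^p(\Sh_0(W))}$ by another application of finite overlap'': this is false. Since $\Sh_0^\sharp(W)\supset\Sh_0(W)$, the inequality can only run the other way, and $f$ may well be concentrated on $\Sh_0^\sharp(W)\setminus\Sh_0(W)$. (The maneuver of ``replacing $s$ with $\max\{s,\mathsf w\}$'' has a similar flaw: enlarging $s$ weakens, not strengthens, the hypothesis $\|g\|_{L^p(sP)}\lesssim\|f\|_{L^p(rP)}$.) More fundamentally, the strategy of substituting the local domination directly into \eqref{eq:carleson-measure} cannot work because the Carleson condition is a \emph{two-weight} inequality: the right-hand side carries $\|g\|_{L^p(\Sh_0(W))}^p$, and the hypothesis gives only $\|g\|_{L^p}\lesssim\|f\|_{L^p}$, i.e., it makes the left side smaller \emph{and} the right side smaller, so nothing is gained. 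The same obstruction arises in your (C1) argument in the subcritical cases, where after the local H\"older steps the right-hand side $\|f\|_{L^q(\Sh_0(W))}^q$ is smaller than $\|f\|_{L^p(\Sh_0(W))}^p$ in the wrong direction.

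What makes (C1) and (C3) true (and the reason they are ``easy'' given the surrounding machinery) is that $\|f\|_{\mathrm{Carl}^{n,p}(\Omega)}$ is equivalent, up to harmless dilations absorbed by finite overlap, to the operator norm of a Hardy paraproduct, i.e. to $\sup\,|\mathfrak I^n(f,f_2,f_3)|$ over normalized $f_2\in L^{p'}$ and $f_3\in L^p$. Lemma~\ref{lemma:carl-emb} gives one direction and a duality argument (testing $F_2(W)=\mu(W)^{p-1}$ in \eqref{eq:tree}, and its localized variants, in the spirit of \cite{arcozzi2002carleson}) gives the other. Since this bilinear quantity is \emph{positive and linear} in the local averages $\mu(W)=\|f\|_{L^p(\mathsf{w}W)}$, the implications $\nu(W)\lesssim\mu(W')$ (summed over $O(1)$-many nearby $W'$) $\Rightarrow$ $\|\nu\|_{\mathrm{Carl}}\lesssim\|\mu\|_{\mathrm{Carl}}$ become trivial; this is exactly what (C3) and (C1) assert. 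I would recast the argument along these lines rather than working with \eqref{eq:carleson-measure} directly.
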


\addtocounter{other}{1}
\subsection{Testing conditions for paraproducts}\label{ss:pp-test}
The main results of this section are  the following testing conditions for boundedness of the paraproduct forms. The equivalences in \eqref{e:pplorenz} and \eqref{e:ppstrongsc}--\eqref{e:ppadj}  should be interpreted in the sense that the testing condition based on the right hand side  is necessary and sufficient.
\begin{proposition} 
\label{paraproduct-main} 
If $1\leq n\leq k$, then
\begin{align}
\label{e:pplorenz}
  & \left\|\Pi_\bb : W^{n,(p,1)}(\Omega) \times L^{p'}(\Omega) \right\|  \sim   \left\| \bb  \right\|_{  F^{0,-n}_{p,2}(\mathcal M)} + \left\| \bb  \right\|_{\mathrm{Carl}^{n,p}(\Omega)}, \quad 1<p\leq \frac dn;\\ &
    \label{e:ppstrong}
   \left\|\Pi_{\bb} : W^{n,p}(\Omega) \times L^{p'}(\Omega) \right\|  \lesssim  \left\| \bb  \right\|_{  F^{0,-n}_{q,2}(\mathcal M)} + \left\| \bb  \right\|_{\mathrm{Carl}^{n,q} (\Omega)}, \quad  1<p<q\leq \frac dn;
   \\
     \label{e:ppstrongsc} &
   \left\|\Pi_{\bb} : W^{n,p}(\Omega) \times L^{p'}(\Omega) \right\|  \sim   \left\| \bb  \right\|_{F^{0,-n}_{p,2}(\mathcal M)}+ \left\| \bb  \right\|_{\mathrm{Carl}^{n,p}(\Omega)}, \quad p>\frac dn.
 \end{align}
Furthermore, if ${\W} \bb =0 $ and $0 \le n \le k$, then
\begin{equation}
\label{e:ppadj}
   \left\|\Pi_{\bb} :   L^{p}(\Omega) \times W^{n,p'}(\Omega)  \right\|   \sim \|\bb\|_{\dot F^{-n,0}_{1,2}(\mathcal M)}, \qquad 1<p<\infty.
\end{equation}
\end{proposition}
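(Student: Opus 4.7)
The plan is to split $\Pi_\bb=\Pi^{\mathcal S}_\bb+\Pi^{\mathcal W}_\bb$ into its cancellative part (indexed by $S\in\Sc$) and noncancellative part (indexed by $W\in\W$), and match each piece with one of the two testing norms: $\|\bb\|_{F^{0,-n}_{p,2}(\mathcal M)}$ will control $\Pi^{\mathcal S}_\bb$, while $\|\bb\|_{\mathrm{Carl}^{n,p}(\Omega)}$ will control $\Pi^{\mathcal W}_\bb$. For the cancellative piece I use the Poincar\'e replacement $\zeta_S(f_1)=\zeta_S(f_1-\PM^n_S f_1)+\zeta_S(\PM^n_S f_1)$: the first summand has size $\lesssim \ell(S)^n \langle\nabla^n f_1\rangle_{\mathsf{w} S}$, so the resulting bilinear sum in $(\nabla^n f_1,f_2)$ becomes a standard cancellative paraproduct whose boundedness follows from Cauchy--Schwarz against the local square functions \eqref{e:PsiZ} combined with the Littlewood--Paley identification \eqref{e:lsfo}; the polynomial remainder $\zeta_S(\PM^n_S f_1)$ is absorbed via the reproducing identity \eqref{e:poly-basis} into a Whitney-scale noncancellative sum that joins $\Pi^{\mathcal W}_\bb$.

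For $\Pi^{\mathcal W}_\bb$, I apply the chain telescoping Lemma \ref{lemma:tele} with reference cube $Q=W_j^\ast$ provided by Lemma \ref{l:windows}, converting each factor $\zeta_W(f_1)$ into a sum over admissible chains of wavelet-averaged $\nabla^n f_1$ contributions. The resulting double sum is organized along the ordered shadow $\SH_0(W)$ on the boundary subfamily $\W_{\mathsf{bd}}$ and controlled by the packing estimate \eqref{eq:est1} of Lemma \ref{lemma:whitney}; dualizing against $f_2$ reproduces precisely the tree-Carleson testing condition \eqref{eq:carleson-measure}. On the central cubes $\W_0$, standard Poincar\'e and the sparse domination of Proposition \ref{p:sparse2} close the argument, since the testing norm embeds the central scales via \eqref{e:Fsup}. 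In the supercritical range $p>d/n$, Sobolev embedding $W^{n,p}(\Omega)\hookrightarrow L^\infty$ on each window reduces \eqref{eq:carleson-measure} to a single $L^p$ bound, giving \eqref{e:ppstrongsc}; at critical and subcritical exponents the summed $\ell^{p'}$ form is essential, and \eqref{e:pplorenz} exploits the Lorentz endpoint improvement $W^{n,(p,1)}\hookrightarrow L^\infty$ at $p=d/n$.

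Necessity in each regime follows by testing. For the $F^{0,-n}_{p,2}(\mathcal M)$ norm I fix $R\in\mathcal M$ and choose $f_2$ an $L^{p'}$-normalized lacunary combination of $\varphi_S$ with $S\in \mathcal D(R)$, isolating the square-function component by biorthogonality, paired with a smooth plateau $f_1$ of $W^{n,p}$ norm comparable to $|R|^{1/p}\ell(R)^{-n}$. For the Carleson condition on a boundary window $\SH(W_j)$, I test with $f_1$ equal to a smoothly truncated polynomial of degree $n-1$ supported in $\Sh_0(W)$: since $\nabla^n f_1\equiv 0$, the $W^{n,p}$ norm is driven by lower-order derivatives, and iterating the resulting inequality along nested $P\le W$ yields exactly the $\ell^{p'}$ sum of \eqref{eq:carleson-measure} by duality. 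The final statement \eqref{e:ppadj} is the cleanest case: since $\W\bb=0$, the anti-integration formula \eqref{eq:ibp-psi} rewrites $\Pi_\bb(f_1,f_2)$ as a classical cancellative paraproduct in $(f_1,\nabla^n f_2)$ with renormalized symbol $c_S=\ell(S)^n b_S$, and its $L^p\times L^{p'}$ boundedness is equivalent, by classical BMO theory and John--Nirenberg, to $\|\bb\|_{\dot F^{-n,0}_{1,2}(\mathcal M)}$.

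I expect the main obstacle to be the precise alignment between the telescoping chains produced by Lemma \ref{lemma:tele} and the tree ordering $P\le W$ on each boundary window, together with verifying that the packing estimate \eqref{eq:est1} delivers exactly the power of $\ell(W)$ needed to reproduce the exponent $(np-d)(p'-1)$ in the Carleson summation \eqref{eq:carleson-measure}, particularly at the critical scale $np=d$ where a potential logarithmic loss must be absorbed by the Lorentz endpoint $W^{n,(p,1)}$.
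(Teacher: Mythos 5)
Your proposal follows a genuinely different decomposition from the paper: you split $\Pi_\bb=\Pi^{\mathcal S}_\bb+\Pi^{\mathcal W}_\bb$ by wavelet type and subtract the polynomial $\PM^n_S f_1$ at the scale of each $S$, whereas the paper first rewrites $\Pi_\bb(f_1,f_2)=\Lambda(\bb,f_2,f_1)$ with $\Lambda=\sum_W|W|\Lambda_W$ as in \eqref{e:lambdap} and then adds/subtracts the \emph{Whitney-scale} polynomial $\PM^n_W f_1$ uniformly over all $S\in\mathcal D(W)$. The paper's choice makes the polynomial remainder of the cancellative piece collapse (via \eqref{e:upsWR}) into a single $W$-scale average $\langle f_1\rangle_{\mathsf{w}W}$ that telescopes cleanly across boundary chains. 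Your choice of $\PM^n_S$ produces an $S$-dependent remainder; the reduction of this remainder ``via \eqref{e:poly-basis} into a Whitney-scale noncancellative sum'' is announced but never executed, and it is precisely here that the paper's Whitney-scale normalization does the work for you.

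Beyond that structural mismatch, there are two more substantive gaps. First, for the genuinely cancellative piece the claim ``boundedness follows from Cauchy--Schwarz against the local square functions \eqref{e:PsiZ} combined with \eqref{e:lsfo}'' is not sufficient to reach the $p$-dependent norm $\|\bb\|_{F^{0,-n}_{p,2}(\mathcal M)}$: after anti-integration by parts the wavelet $\zeta_S^{-n}$ acting on $\nabla^n f_1$ is still \emph{non}cancellative, so a straight square-function/H\"older estimate only yields the $\mathrm{bmo}$-type condition. The paper's Lemma \ref{l:sparsepp2} instead runs an iterative sparse stopping-time argument \eqref{e:sparsesublime}, which is what produces the local $L^p$ testing norm and the Lorentz endpoint refinement that you invoke for the case $p=d/n$.

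Second, and most critically, the packing estimate \eqref{eq:est1} of Lemma \ref{lemma:whitney} is the wrong tool for the boundary terms. That estimate controls sums of chain-weighted coefficients that decay polynomially in $\mathfrak{d}(P,R)$, and in the paper it is used only inside Lemma \ref{lemma:avg-2} for averaging wavelets (i.e. in the proof of Theorem \ref{thm:main}). The sums you obtain after telescoping along boundary chains are Hardy-type paraproducts $\sum_W \Psi_W(\bb,f_2)\,\I^n_W(\nabla^t f_1)$ in which no such kernel decay is available; what governs them is the tree-Carleson embedding of Lemma \ref{lemma:carl-emb} (the Arcozzi--Rochberg--Sawyer mechanism), invoked through Lemmata \ref{l:decohardy} and \ref{l:hardy-carl}. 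Without that embedding you cannot pass from the chain sums to the dual $\ell^{p'}$ Carleson condition \eqref{eq:carleson-measure}; the packing lemma simply does not produce the exponent $(np-d)(p'-1)$ or the two-weight structure, and there is no ``logarithmic loss to absorb'' once you have the correct lemma in hand. Your treatment of \eqref{e:ppadj} and the general strategy for necessity are fine, but as written the sufficiency argument for \eqref{e:pplorenz}--\eqref{e:ppstrongsc} does not close.
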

By virtue of \eqref{eq:ibp-psi}, \eqref{e:ppadj} follows \textit {a fortiori} from Proposition \ref{p:sparse2} and an integration by parts. 
The main line of proof of Proposition \ref{paraproduct-main} is devoted to \eqref{e:pplorenz}--\eqref{e:ppstrongsc} and occupies \S\ref{ss4:mainline}. At the end of that subsection, we also record a  sparse domination estimate which is obtained along the way and may be used to obtain weighted norm versions of \eqref{e:pplorenz}--\eqref{e:ppstrongsc}.

\begin{remark} \label{rem:weakests}  
Proposition \ref{p:sparse2} may also be used to immediately deduce  sparse bounds for \eqref{e:defpp}  under  stronger  conditions on the symbol $\bb$,  in general not necessary, for Sobolev space bounds. More precisely, 
\begin{align}
\label{e:sparsepp}    \left|\Pi_{\bb}  (f_1,f_2) \right| &\lesssim \min \left\{ \left\|\mathrm{M}( f_1,f_2,\bb)\right\|_{1}, 
\| \bb\|_{\mathrm{bmo}(\mathcal M)} \left\|\mathrm{M}( f_1,f_2)\right\|_{1} \right\}.
\end{align}
As a consequence of   \eqref{e:sparsepp} and Sobolev embeddings, when $p \ne \frac{d}{n}$,
\begin{equation}
\label{e:sparseppLp}
 \left|\Pi_{\bb}  (f_1,f_2) \right| \lesssim \|\bb\|_{L^{p^\star}(\Omega)} \|f_1\|_{W^{n,p}(\Omega)} \|f_2\|_{p'},\quad p^\star\coloneqq \textstyle \max\{p,\frac{d}{n}\}. 
\end{equation}
If  $\Omega$ is a bounded domain,  
$ \bb\in L^p(\Omega)$ is a necessary condition for 
 boundedness of $\Pi^{\gamma}_{\bb} $ on $W^{k,p}(\Omega)\times L^{p'}(\Omega)$. Thus, estimate \eqref{e:sparseppLp} is a characterization when   $np>d$ and $\Omega $ is a bounded domain. 
 In contrast, the second part of \eqref{e:sparsepp}  is stronger but relies on the finiteness of $\|  \bb\|_{\mathrm{bmo}(\mathcal M)}$, which is not necessary when $n>0.$
\end{remark}

\addtocounter{other}{1}
\subsection{Main line of proof of Proposition \ref{paraproduct-main}} \label{ss4:mainline}
The components $\Lambda_W $ from \eqref{e:lambdap} will be dealt with in two separate steps by adding and subtracting $\PM_W^{n} f_3$ to the third (noncancellative) entry. The proof of the next lemma involves several steps and is postponed to \S\ref{ss:sparsepp2}.
\begin{lemma} Let $W\in \mathcal W$ and  $1<p<\infty$. Then
\label{l:sparsepp2} 
\begin{equation}
\label{e:necsufpweak}  \sup_{  \substack{ \l  f_2\r_{p',\mathsf{w}W}  = 1 \\ \l\nabla^n f_3\r_{(p,1),\mathsf{w}W}  = 1}}
 \left|\Lambda_{W}(f_1,f_2, f_3-\PM^{n}_W f_3) \right|
\sim
\|f_1\|_{ F^{0,-n}_{p,2}(\mathcal D_+(W))}.
   \end{equation}
Furthermore, 
 \begin{equation}
\label{e:sufp}  \sup_{  \substack{  \l  f_2\r_{q',\mathsf{w}W}  = 1 \\ \l\nabla^n f_3\r_{q,\mathsf{w}W}  = 1 }} 
\left\vert \Lambda_{W}(f_1,f_2, f_3-\PM^{n}_W f_3) \right\vert
\begin{cases} \lesssim 
{ \|f_1\|_{ F^{0,-n}_{p,2}(\mathcal D_+(W))},} & 1<q\leq \frac dn, p>q,
\\ \lesssim
{  \ell(W)^n\langle      f_1 \rangle_{q,  \mathsf{w}W} } & q>  \frac dn.
      \end{cases}
    \end{equation}
\end{lemma}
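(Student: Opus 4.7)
The plan is to convert $\Lambda_W(f_1,f_2,f_3-\PM_W^n f_3)$ into three pieces in which the noncancellative wavelets $\zeta_R$ are paired against $\nabla^n f_3$ rather than $f_3$, by means of the anti-integration by parts identities \eqref{eq:pot-q} and \eqref{e:neighbor}. The top wavelet pairing is handled directly by \eqref{eq:pot-q}, giving $\zeta_W(f_3-\PM_W^n f_3)=\ell(W)^n\zeta_W^{-n}(\nabla^n f_3)$. For $S\in\mathcal D(W)$ I split
\[
\zeta_S(f_3-\PM_W^n f_3)=\zeta_S(f_3-\PM_S^n f_3)+\zeta_S(\PM_S^n f_3-\PM_W^n f_3),
\]
applying \eqref{eq:pot-q} to the first summand and \eqref{e:neighbor} to the second, noting that $\mathfrak d(W,S)=\ell(W)$ since $S\subset W$. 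This yields a splitting
$\Lambda_W(f_1,f_2,f_3-\PM_W^n f_3)=T_W+T_{S,W}+T_S$,
where $T_W$ and $T_{S,W}$ carry the scale factor $\ell(W)^n$ together with noncancellative wavelets at the top scale, while $T_S$ carries the fractional factor $\ell(S)^n$ together with noncancellative wavelets $\zeta_S^{-n}\in C\Phi^{0,\Subset}(S)$ at each subscale.

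The pieces $T_W$ and $T_{S,W}$ reduce to the definition of the $F$-norm and to the square function pairing \eqref{e:PsiZ}. For $T_W$ the factor $\ell(W)^n|\chi_{W,j}(f_1)|$ is bounded by $\|f_1\|_{F^{0,-n}_{p,2}(\mathcal D_+(W))}$ by definition, while $|\chi_{W,j}(f_2)|\lesssim\langle f_2\rangle_{\mathsf w W}$ and $|\zeta_W^{-n}(\nabla^n f_3)|\lesssim\langle\nabla^n f_3\rangle_{\mathsf w W}$. For $T_{S,W}$ the uniform bound $|\zeta_{S,W}^{-n}(\nabla^n f_3)|\lesssim\langle\nabla^n f_3\rangle_{\mathsf w W}$ factors out, and the remaining bilinear sum over $S$ is precisely $\Psi_W(|f_1|,|f_2|)$; applying \eqref{e:PsiZ} with parameters $(n,q)=(0,2)$ and then \eqref{e:lsfo} at $R=W$ produces $\langle \mathrm S^0_W f_1\rangle_{p,W}\langle f_2\rangle_{p',\mathsf w W}$, with the first factor bounded by $\ell(W)^{-n}\|f_1\|_{F^{0,-n}_{p,2}(\mathcal D_+(W))}$, so that the target estimate is obtained after multiplication by the prefactor $\ell(W)^n$.

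The heart of the proof is the estimation of $T_S=\sum_{S\in\mathcal D(W)}\frac{|S|}{|W|}\ell(S)^n\varphi_S(f_1)\overline{\varphi_S}(f_2)\zeta_S^{-n}(\nabla^n f_3)$, whose treatment depends on the regime. In the supercritical case $q>d/n$ of \eqref{e:sufp}, I would invoke Morrey's embedding to obtain $\|f_3-\PM_W^n f_3\|_{L^\infty(\mathsf w W)}\lesssim\ell(W)^n\langle\nabla^n f_3\rangle_{q,\mathsf w W}$, factor this uniform bound out of each $\zeta_R(f_3-\PM_W^n f_3)$, and apply Proposition \ref{p:sparse2} to the remaining bilinear fully localized wavelet form in $(f_1,f_2)$ to recover the product $\langle f_1\rangle_{q,\mathsf w W}\langle f_2\rangle_{q',\mathsf w W}$. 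In the critical case $p=d/n$ of \eqref{e:necsufpweak} the same argument applies, with Stein's sharp Lorentz--Sobolev embedding $W^{n,(p,1)}\hookrightarrow L^\infty$ replacing Morrey. In the subcritical case $p<d/n$ I would use the Lorentz--Sobolev embedding $\|f_3-\PM_S^n f_3\|_{L^{p^*,1}(\mathsf w S)}\lesssim\|\nabla^n f_3\|_{L^{p,1}(\mathsf w S)}$ with $1/p^*=1/p-n/d$ scale by scale, combined with the Lorentz H\"older inequality $|\zeta_S(h)|\le\|\zeta_S\|_{L^{(p^*)',\infty}}\|h\|_{L^{p^*,1}(\mathsf w S)}$ and the size bound $\|\zeta_S\|_{L^{(p^*)',\infty}}\lesssim|S|^{-1/p^*}$. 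The scalar sum $\sum_S|S|^{1-1/p^*}|\varphi_S(f_1)||\varphi_S(f_2)|$ that remains is then handled by integrating in $x$, applying pointwise Cauchy--Schwarz in $S$ to express it as an integral of two square functions of $(f_1,f_2)$ with fractional weights, and closing with H\"older's inequality and \eqref{e:lsfo}, making essential use of the bound $\mathrm S^{-n}_{2,W}(f_1)\le\ell(W)^n\mathrm S^0_{2,W}(f_1)$ to match the exponents of the $F$-norm and the $L^{p'}$-average.

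The reverse inequality in \eqref{e:necsufpweak} is then obtained by testing against localized $(f_2,f_3)$: to extract $\ell(W)^n|\chi_{W,j}(f_1)|$ one couples $f_2\sim\overline{\chi_{W,j}}$ with $f_3$ chosen so that $\zeta_W^{-n}(\nabla^n f_3)\sim 1$, while to extract $\ell(S_0)^n\langle\mathrm S^0_{S_0}f_1\rangle_{p,S_0}$ for a prescribed $S_0\in\mathcal D_+(W)$ one picks $f_2,f_3$ supported near $S_0$ using the appropriate dual extremal functions for the local $L^p$ square function. The main technical obstacle I expect is the subcritical estimate on $T_S$: distributing the fractional factor $\ell(S)^n$ between the two Cauchy--Schwarz factors so that each resulting square function remains summable at scale $W$ and is controlled by the prescribed norms on the right-hand side of \eqref{e:necsufpweak}, while keeping the Lorentz H\"older pairing sharp enough to capture the $L^{p,1}$ norm of $\nabla^n f_3$.
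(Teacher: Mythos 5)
Your decomposition into $T_W$, $T_{S,W}$, $T_S$ via anti-integration by parts, and your treatment of the first two pieces via \eqref{e:PsiZ} and \eqref{e:lsfo}, is parallel to the paper's use of \eqref{e:pf45a} and \eqref{e:upsWR}. Your argument for the supercritical case $q>d/n$ (Morrey embedding plus a localized sparse estimate for the remaining $(f_1,f_2)$-form) also works, although the paper proceeds instead by the geometric-series bound in \eqref{e:recycle}; both give $\ell(W)^n\langle f_1\rangle_{q,\mathsf w W}$.

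The genuine gap is in the treatment of $T_S$ in the subcritical range $np<d$. Your plan is to apply the Lorentz--Sobolev embedding scale by scale, factor out $\sup_{S}\|\nabla^n f_3\|_{L^{p,1}(\mathsf w S)}\le\|\nabla^n f_3\|_{L^{p,1}(\mathsf w W)}$ (which normalizes to $|W|^{1/p}$), and then reduce to the scalar sum $\sum_S|S|^{1-1/p^*}|\varphi_S(f_1)||\varphi_S(f_2)|$ treated by pointwise Cauchy--Schwarz in $S$ and H\"older. This does not close. After writing $|S|^{-1/p^*}=\ell(S)^{n}|S|^{-1/p}$, any split $|S|^{-1/p^*}=|S|^{-a}|S|^{-b}$ yields a square function in $f_1$ with weight $\ell(S)^{-2ad}$ and a square function in $f_2$ with weight $\ell(S)^{-2bd}$. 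Choosing $a=-n/d$ so that the $f_1$-factor becomes $\mathrm S^{-n}_{2,W}f_1$ (the only case in which the $F^{0,-n}_{p,2}$ norm controls it via $\mathrm S^{-n}_{2,W}\le\ell(W)^n\mathrm S^0_{2,W}$) forces $b=1/p$, so the $f_2$-factor is $\bigl(\sum_{S\ni x}\ell(S)^{-2d/p}|\varphi_S(f_2)|^2\bigr)^{1/2}$, which needs $d/p$ derivatives of $f_2$ in $L^{p'}$---information not available. Conversely, choosing $b\le 0$ so that the $f_2$-factor is controlled by $\langle f_2\rangle_{p',\mathsf w W}$ forces $a\ge 1/p^*>n/d$, and the resulting weight $\ell(S)^{-2ad}$ on $f_1$ decays strictly faster than $\ell(S)^{2n}$, so membership in $F^{0,-n}_{p,2}$ no longer guarantees pointwise convergence of the square function, let alone a bound by the prescribed norm. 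The obstruction is not a matter of cleverer bookkeeping: the single factor $|S|^{-1/p^*}$ cannot be absorbed by two square functions at the prescribed regularity and integrability levels.

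What the paper does instead is retain the local averages $\langle\nabla^n f_3\rangle_{\mathsf w Z}$ by running a Calder\'on--Zygmund stopping time driven by $\mathrm M(\mathbf 1_{\mathsf w W}\nabla^n f_3)$. The key telescoping estimate \eqref{eq:tele3} is applied at stopping scales $P$, not at every single $S$, and the recursion \eqref{e:sparsesublime} iterates this to produce the sparse bound \eqref{e:sparserefer}, which exhibits $\mathrm V_W(f_1,f_2,f_3)$ as a trilinear sparse form. After pulling out the uniform bound $\ell(Z)^n\langle\mathrm S_Z f_1\rangle_{p,Z}\le\|f_1\|_{\dot F^{0,-n}_{p,2}(\mathcal D(W))}$, the remaining $(f_2,\nabla^n f_3)$-bilinear sparse form is estimated by the Lorentz refinement of the multilinear maximal bound, giving $\langle\nabla^n f_3\rangle_{(p,1),\mathsf w W}\langle f_2\rangle_{p',\mathsf w W}$ (and, in the $q<p$ case of \eqref{e:sufp}, by the plain $L^q\times L^{q'}$ bound). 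Your plan also does not address \eqref{e:sufp} in the $1<q\le d/n$, $p>q$ case, which in the paper falls out of the same sparse form with different exponents. To repair your proposal you would essentially have to reintroduce the stopping construction, at which point you are back to the paper's proof.
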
 
Fix $W\in \mathcal W$ and $S\in \mathcal D_+(W).$ Applying \eqref{e:ibp3} for $n=m$, $Q=W$ and $x\in\mathrm{supp}\, \zeta_S \subset \mathsf{w}W$ yields
\[\zeta_S(\PM^{n}_{W} f) =\int \zeta_S(x)  \upsilon_{W,x}^\gamma (f) \, \mathrm{d} x \eqqcolon \upsilon_{W,S} (f), \qquad \upsilon_{W,S} \in C\Phi^0_{\Subset }(W). 
\]
Therefore, referring to \eqref{e:PsiZ} for the definition of $\Psi_W$,
\begin{equation}
\label{e:upsWR}
\begin{split} &\quad
\left|\Lambda_{W}( f_1,f_2, \PM_W^{n} f_3) \right| \lesssim \abs{ \phi_W(f_1,f_2) \upsilon_{W,W}(f_3) }+ \sum_{S \in \mathcal D(W)} \frac{|S|}{|W|} \abs{ \varphi_S(f_1)\varphi_S(f_2) \upsilon_{W,S}(f_3) } \\
 &\lesssim  \left[ \left|\phi_W(f_1,f_2)\right| +\Psi_W(f_1,f_2)\right]  \l f_3 \r_{\mathsf{w}W} .
\end{split}
\end{equation}
The lemma below, which will only be  used for central cubes $W\in \mathcal W_0$, follows immediately  by \eqref{e:PsiZ}, \eqref{e:lsfo} and \eqref{e:upsWR}. 
\begin{lemma}

\label{l:central} For $1<p<\infty$, $\displaystyle \left|\Lambda_{W}(f_1,f_2,\PM^{n}_W  f_3)\right|\lesssim  \left\langle f_1\right\rangle_{p,\mathsf{w} W}\left\langle f_2\right\rangle_{p',\mathsf{w} W} \left\langle f_3\right\rangle_{\mathsf{w}W}  $.
\end{lemma}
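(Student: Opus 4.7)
The starting point is the estimate \eqref{e:upsWR} derived immediately above, which reduces the claim to the two separate bounds
\begin{equation*}
|\phi_W(f_1,f_2)| \lesssim \langle f_1\rangle_{p,\mathsf{w}W}\langle f_2\rangle_{p',\mathsf{w}W}, \qquad \Psi_W(f_1,f_2) \lesssim \langle f_1\rangle_{p,\mathsf{w}W}\langle f_2\rangle_{p',\mathsf{w}W}.
\end{equation*}
The first bound is a one-line computation. By \eqref{e:lambdap}, $\phi_W = \sum_{j=1}^{\jmath}\chi_{W,j}\otimes\overline{\chi_{W,j}}$ is a finite sum of tensor products of functions in $C\Phi^{k+1,\Subset}(W)$; each factor is $L^1$-normalized and supported in $\mathsf{w}W$, so a direct application of H\"older's inequality on $\mathsf{w}W$ delivers the claimed estimate, with no cancellation needed.

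The second bound is the content of the local square function machinery set up in \S\ref{ss:tl-norms}. Apply the pointwise inequality \eqref{e:PsiZ} at the choice $n=0$, $q=2$ (with the same exponent $p$ appearing in the statement) to obtain
\begin{equation*}
\Psi_W(f_1,f_2) \le \left\langle \mathrm{S}^{0}_{2,W}f_1\right\rangle_{p,W}\left\langle \mathrm{S}^{0}_{2,W}f_2\right\rangle_{p',W}.
\end{equation*}
Then invoke the Littlewood--Paley equivalence \eqref{e:lsfo}, again with $n=0$, to each factor; the polynomial subtraction in \eqref{e:lsfo} is harmless because we may then drop it via the trivial inequality $\inf_{\mathsf P\in\mathbb P^k}\langle f-\mathsf P\rangle_{p,W}\le\langle f\rangle_{p,W}$, and the support of $\varphi_S$ for $S\in\mathcal D(W)$ lies inside $\mathsf{w}W$, so the integration region is naturally $\mathsf{w}W$. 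The resulting estimate is $\Psi_W(f_1,f_2)\lesssim \langle f_1\rangle_{p,\mathsf{w}W}\langle f_2\rangle_{p',\mathsf{w}W}$, with implicit constant depending on $p$ through the factor $\max\{p,p'\}^{1/2}\cdot\max\{p',p\}^{1/2}$ absorbed into $\lesssim$.

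Combining the two estimates with \eqref{e:upsWR} completes the proof. There is no real obstacle here: the lemma is designed as a bookkeeping consequence of the two key inputs (the tensor-product structure of $\phi_W$ and the Littlewood--Paley equivalence for $\mathrm S^0_{2,W}$) already in place. The only subtlety worth flagging is that the bound is stated only for \emph{central} Whitney cubes $W\in\mathcal W_0$ in the main argument, but the proof itself needs no such restriction; the restriction surfaces only when this lemma is later combined with boundary estimates.
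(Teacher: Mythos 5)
Your proof is correct and follows exactly the route the paper intends, namely the chain \eqref{e:upsWR} $\to$ \eqref{e:PsiZ} (with $n=0$, $q=2$) $\to$ \eqref{e:lsfo} for $\Psi_W$, together with the trivial H\"older bound for the noncancellative tensor term $\phi_W$. The paper compresses this to one sentence ("follows immediately by \eqref{e:PsiZ}, \eqref{e:lsfo} and \eqref{e:upsWR}"), and your write-up simply expands those citations into the intended computation, including the correct observation that the restriction to central cubes is a downstream usage constraint rather than a hypothesis of the lemma.
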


The next definition will lead us to  a better estimate for \eqref{e:upsWR} when $\ell(W)\ll 1$, namely for boundary cubes $W\in \mathcal W_{\mathsf{bd}}\coloneqq \W\setminus \mathcal W_0$ with reference to Lemma \ref{l:windows}.

\begin{definition}[Hardy paraproduct form]
Let $n \ge 1$, $\SH(W_j)$ be the boundary windows of a uniform domain $\Omega$. 
For $\Psi_W$ defined by \eqref{e:PsiZ} and $\phi_W \in \Phi^{0,\Subset}_{2}(W)$, define the Hardy paraproduct of order $n$ by
	\[
	\begin{split}&\I^{n}(f_1,f_2,f_3) \coloneqq \sum_{j \ge 1} \sum_{{W} \in \SH({W}_j)} \left[ \phi_W(f_1,f_2) +|W| \Psi_W(f_1,f_2) \right]\I^{n}_W(f_3), \\ &
	 \I^{n}_W(f)\coloneqq \sum_{P \in [{W},{W}_j^*]} \ell(P)^{n} \l f \r_{\mathsf{w}P} .
	\end{split}
	\]
Hardy paraproduct forms are related to \eqref{e:upsWR} via the next lemma.
\end{definition}

\begin{lemma} \label{l:decohardy}
There holds
	\[
	\sum_{W\in \mathcal W_{\mathsf{bd}}} {|W|}
		\left|\Lambda_{W}( f_1,f_2, \PM^{n}_W f_3) \right|
		\lesssim    \sum_{t=0}^{n}\I^{{n}}(f_1,f_2, \nabla^t  f_3)
	\]
where $\I^{n}$ is a Hardy paraproduct of order $n$ and the implied constant is absolute.
\end{lemma}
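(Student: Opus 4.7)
My plan is to start from the pointwise estimate \eqref{e:upsWR} and the decomposition $\mathcal W_{\mathsf{bd}} = \bigcup_j \SH(W_j)$, which reduces the desired inequality to showing that, uniformly in $j$ and in every $W\in\SH(W_j)$,
\[
\langle f_3\rangle_{\mathsf{w}W} \lesssim \sum_{t=0}^{n} \I^{n}_W(\nabla^t f_3) = \sum_{t=0}^{n}\sum_{P\in[W,W_j^*]}\ell(P)^{n}\langle \nabla^t f_3\rangle_{\mathsf{w}P}.
\]
Multiplying this scalar bound by the prefactor $[\,|\phi_W(f_1,f_2)|+\Psi_W(f_1,f_2)\,]$ and summing against $|W|$ over $W$ and $j$ produces exactly the Hardy paraproduct on the right of the claim.

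To produce the scalar bound, I will split $f_3=\PM_{W_j^*}^{n}f_3+r$ with $r=f_3-\PM_{W_j^*}^{n}f_3$, and handle each piece separately. The polynomial piece I would control via \eqref{e:ibp3} with $m=n$ and $Q=W_j^*$: since (S3)--(S4) of Lemma \ref{l:windows} confine $\mathsf{w}W\subset\Sh(W_j)$ inside a bounded dilate of $W_j^*$, this yields $\langle|\PM_{W_j^*}^{n}f_3|\rangle_{\mathsf{w}W}\lesssim \langle|f_3|\rangle_{\mathsf{w}W_j^*}$, and since $W_j^*\in[W,W_j^*]$ with $\ell(W_j^*)\sim \ell$, we may rewrite $\langle|f_3|\rangle_{\mathsf{w}W_j^*}\lesssim \ell^{-n}\,\ell(W_j^*)^{n}\langle|f_3|\rangle_{\mathsf{w}W_j^*}\lesssim \I^{n}_W(f_3)$, accounting for the $t=0$ contribution.

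The heart of the argument is the remainder $r$, which I unfold by the telescoping
\[
r = (f_3-\PM_W^n f_3) + \sum_{i=0}^{N-1}\bigl(\PM_{P_i}^{n}f_3-\PM_{P_{i+1}}^{n}f_3\bigr),\qquad [W,W_j^*]=\{P_0=W,\dots,P_N=W_j^*\}.
\]
The standard Poincar\'e inequality bounds the initial term by $\ell(W)^n\langle|\nabla^n f_3|\rangle_{\mathsf{w}W}$, which is a single term of $\I^n_W(\nabla^n f_3)$. Each successive difference is then treated by an $L^1$-averaged analogue of \eqref{e:neighbor}: adjacent Whitney cubes satisfy $P_i\subset CP_{i+1}$ by (W3), and the same computation that gives \eqref{e:neighbor} yields
\[
\bigl|\langle \PM_{P_i}^{n}f_3-\PM_{P_{i+1}}^{n}f_3\rangle_{\mathsf{w}W}\bigr| \lesssim \ell(P_{i+1})\,\mathfrak{d}(P_{i+1},W)^{n-1}\langle|\nabla^n f_3|\rangle_{C\mathsf{w}P_{i+1}}.
\]
The decisive geometric input is now that, because $[W,W_j^*]$ is oriented, condition (U\ref{qs1}) forces $\mathfrak{d}(P,W)\sim\ell(P)$ uniformly for $P\in[W,W_j^*]$, collapsing the product above into $\ell(P_{i+1})^n$. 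Summing in $i$ gives $\langle|r|\rangle_{\mathsf{w}W}\lesssim \I^{n}_W(\nabla^n f_3)$ (the enlargement from $\mathsf{w}P$ to $C\mathsf{w}P$ being harmless by (W4)), supplying the $t=n$ term; the remaining $t$'s in the target bound are obtained for free by positivity.

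The main obstacle I anticipate is promoting \eqref{e:neighbor}, stated for a wavelet pairing $\chi_S(\cdot)$, to the $L^1$-average $\langle\,\cdot\,\rangle_{\mathsf{w}W}$ with the correct quantitative dependence on $\mathfrak{d}(P,W)$. The remedy is to repeat the proof of Lemma \ref{l:poly} with a smooth $L^1$-normalized bump supported on $\mathsf{w}W$ in place of $\chi_S$; since the auxiliary polynomial $\upsilon_{Q,x,m}$ in \eqref{e:ibp3} already encodes the correct size through $(1+|x-c(Q)|/\ell(Q))^{m-1}$, the substitution is clean and the orientation of the chain converts the polynomial-separation weight into the $\ell(P)^n$ scale factor that defines the Hardy paraproduct.
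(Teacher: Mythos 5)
Your proof is correct, and it reaches the conclusion by a genuinely different mechanism than the paper. The paper introduces a cutoff $\varpi_j\in\mathcal C^\infty_0(\Sh(W_j^*))$ with $\varpi_j\equiv 1$ on $\Sh(W_j)$ and $\varpi_j\equiv 0$ on $W_j^*$; since the wavelet $\upsilon_{W,S}$ from \eqref{e:upsWR} is supported in $\mathsf{w}W\subset\Sh(W_j)$, one has $\upsilon_{W,S}(f_3)=\upsilon_{W,S}(\varpi_j f_3)$, and since $\varpi_j\equiv 0$ on the support of $\theta_{W_j^*}$ the polynomial $\PM^n_{W_j^*}(\varpi_j f_3)$ vanishes identically. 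This lets the paper apply Lemma \ref{lemma:tele} verbatim to $\upsilon_{W,S}\bigl(\varpi_j f_3-\PM^n_{W_j^*}(\varpi_j f_3)\bigr)$, and the sum over $t$ then emerges from the Leibniz expansion of $\nabla^n(\varpi_j f_3)$ rather than from a separate polynomial estimate. You instead bypass the cutoff entirely: you handle $\PM^n_{W_j^*}f_3$ directly via the pointwise bound from \eqref{e:ibp3}, and telescope the remainder, ending up with only the $t=0$ and $t=n$ contributions (which is a slightly sharper bound than the full $\sum_{t=0}^n$). The two routes use the same telescoping engine (Lemmata \ref{l:poly} and \ref{lemma:tele}) and the same orientation fact $\mathfrak d(P,W)\sim\ell(P)$ to collapse $\ell(P)\mathfrak d(P,W)^{n-1}$ into $\ell(P)^n$. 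Your version trades the cutoff/Leibniz mechanism for a direct estimate on the polynomial piece; the cost is the small lemma-promotion you already flag (turning \eqref{e:neighbor} from a wavelet-pairing identity into a pointwise/averaged bound, which works cleanly exactly as you describe), and the hidden normalization $\ell(W_j^*)\sim 1$ in the $t=0$ step, which the paper uses at the corresponding spot in the proof of Lemma \ref{l:hardy-carl} as well.
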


\begin{proof}  
Fix $W \in\mathcal W_\mathsf{bd}$. There exists  $j$ such that $W \in \SH(W_j)$. Referring to \eqref{e:upsWR}, we aim at decomposing the corresponding $\upsilon_{W,S}(f_3)$ coefficient for $S \in \mathcal D_+(W)$. By property (S3) in Lemma \ref{l:windows},  find $\varpi_j \in \C^\infty_0(\Sh(W_j^*))$ with $\varpi_j=1$ on $\Sh(W_j)$ and $\varpi_j=0$ on $W_j^*$.   By Lemma \ref{lemma:tele}, for each $P \in [W,W_j^*]$, there exists $\upsilon_{W,S,P}^{-n} \in C \Phi^{0,\Subset}(P)$ such that
	\[ \begin{split}
	 \upsilon_{W,S}\left(f_3\right)& =\upsilon_{W,S}\left(\varpi_jf_3\right) = \upsilon_{W,S}\left(\varpi_j f_3 - \PM^{n}_{W_j^*}(\varpi_j f_3)\right) \\ &= \sum_{P \in [W,W_j^*]} \ell(P)^{n} \upsilon_{W,S,P}^{-n}\left(\nabla^{n}(\varpi_j f_3)\right).\end{split}\]
By (S1) and (S3), $\|\nabla^t \varpi_j\|_\infty \lesssim 1$,  and  the claim follows  via  Leibniz rule.
\end{proof} 
An observation dating back to the work of Arcozzi, Rochberg, and Sawyer on Sobolev spaces of analytic functions \cite{arcozzi2002carleson} is that
Lebesgue space bounds for the Hardy paraproduct are characterized by the norm $\mathrm{Carl}^{n,p}(\Omega)$. 
\begin{lemma}\label{l:hardy-carl}
Let $n \in \mathbb N$, $1<p<\infty$ and $\I^{n}$ be a  Hardy paraproduct form of order $n$. Then 
	\[ \left| \I^{n}(f_1,f_2,f_3) \right| \lesssim_{p,n}  \|f_1\|_{\mathrm{Carl}^{n,p}(\Omega)} \|f_2\|_{L^{p'}(\Omega)} \|f_3\|_{L^p(\Omega)}.\]
\end{lemma}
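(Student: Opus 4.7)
The plan is to reduce the estimate to a tree-Carleson embedding on each boundary window $\SH(W_j)$, then assemble via the finite-overlap property \textup{(S4)} of Lemma \ref{l:windows}. Fix $j \geq 1$ and denote by $\I^n_j$ the contribution of the $j$-th window to the Hardy paraproduct. The first step is a bilinear coefficient estimate: using the pointwise bound $\|\phi_W\|_\infty \lesssim |W|^{-2}$ coming from $\phi_W \in \Phi^{0,\Subset}_2(W)$, together with the estimate \eqref{e:PsiZ} applied with $q=2$, $n=0$ and the Littlewood--Paley equivalence \eqref{e:lsfo} (taking $\mathsf{P}=0$), I would bound
\[
\bigl|\phi_W(f_1,f_2)\bigr| + |W|\,\Psi_W(f_1,f_2) \lesssim \|f_1\|_{L^p(\mathsf{w}W)}\,\langle f_2\rangle_{p',\mathsf{w}W},
\]
so that the bilinear part is under the control of a localized $L^p\times L^{p'}$ pairing on $\mathsf{w}W$.

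Next I would exchange the order of summation via the tree-theoretic equivalence $P\in [W,W_j^*]\Leftrightarrow W\in \SH_0(P)$, recasting $\I^n_j$ as
\[
\sum_{P\in \SH_1(W_j^*)} \ell(P)^n \,\langle f_3\rangle_{\mathsf{w}P}
\sum_{W\in \SH(W_j)\cap \SH_0(P)} \|f_1\|_{L^p(\mathsf{w}W)}\,\langle f_2\rangle_{p',\mathsf{w}W}.
\]
Apply H\"older's inequality to the inner $W$-sum with exponents $(p,p')$. Thanks to the finite overlap of the family $\{\mathsf{w}W: W\in \SH_0(P)\}$ inside the realization $\Sh_0(P)$, the inner sum is dominated by the product $\|f_1\|_{L^p(\Sh_0(P))}\,\|f_2\|_{L^{p'}(\Sh_0(P))}$.

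The final step is a tree-Carleson embedding applied to the remaining sum
\[
\sum_{P\in \SH_1(W_j^*)} \ell(P)^n \,\langle f_3\rangle_{\mathsf{w}P}\,\|f_1\|_{L^p(\Sh_0(P))}\,\|f_2\|_{L^{p'}(\Sh_0(P))}.
\]
In the supercritical regime $np>d$, the uniform bound $\|f_1\|_{L^p(\Sh_0(P))}\leq \|f_1\|_{\mathrm{Carl}^{n,p}(\Omega)}$ reduces the estimate to a Hardy-type summation along the admissible chains $[W,W_j^*]$, whose sidelengths grow at least geometrically by \textup{(U\ref{qs2})}, and the geometric series is summed in one stroke. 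In the critical or subcritical regime $np\leq d$, the packing hypothesis \eqref{eq:carleson-measure} is invoked; its specific exponents $pp'$ and $(np-d)(p'-1)$ are engineered exactly so that a stopping-time decomposition of the tree $\SH_1(W_j^*)$ into generations on which $\|f_1\|_{L^p(\Sh_0(\cdot))}$ doubles, combined with H\"older's inequality, closes the estimate. Summing over $j$ against the finite overlap of $\{\Sh_1(W_j^*)\}$ from \textup{(S4)} yields the claimed bound.

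The main obstacle I anticipate is precisely the subcritical Carleson embedding. The three factors $f_1,f_2,f_3$ must be redistributed by a Schur-type H\"older argument in a way that matches the packing exponents of \eqref{eq:carleson-measure}; this is the reason the definition of $\|\cdot\|_{\mathrm{Carl}^{n,p}(\Omega)}$ involves the seemingly unusual powers $pp'$ and $(np-d)(p'-1)$. The stopping-time decomposition on the tree $\SH_1(W_j^*)$ should make this work, but requires careful accounting of tree geometry and of the fact that, on a H\"older uniform domain, the discrete summation along chains is compatible with the Arcozzi--Rochberg--Sawyer tree-Carleson framework referenced in \cite{arcozzi2002carleson}.
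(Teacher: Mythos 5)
Your opening reductions—the bilinear coefficient estimate on $\phi_W$ and $\Psi_W$, and the rewriting of $P\in[W,W_j^*]$ as $W\in\SH_0(P)$—are sound in spirit (modulo a slip in normalization: the bound should read $\langle f_1\rangle_{p,\mathsf{w}W}\langle f_2\rangle_{p',\mathsf{w}W}$, which after the implicit $|W|$ factor becomes $\|f_1\|_{L^p(\mathsf{w}W)}\|f_2\|_{L^{p'}(\mathsf{w}W)}$, not $\|f_1\|_{L^p(\mathsf{w}W)}\langle f_2\rangle_{p',\mathsf{w}W}$). The real trouble starts once you \emph{commit} to the sum exchange and then apply H\"older to the inner $W$-sum. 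After that step you are left with $\sum_{P\in\SH_1(W_j^*)}\ell(P)^n\langle f_3\rangle_{\mathsf{w}P}\|f_1\|_{L^p(\Sh_0(P))}\|f_2\|_{L^{p'}(\Sh_0(P))}$, and that object is not controllable in the asserted range by the tools you cite.

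In the supercritical case your claim that ``the geometric series is summed in one stroke'' is false after the exchange: the sum is over the \emph{entire tree} $\SH_1(W_j^*)$, not a single admissible chain. The reason the paper's geometric sum works is precisely (U1): along one chain $[W,W_j^*]$ the sidelengths sum to $\lesssim\mathfrak d(W,W_j^*)$, so there are $O(1)$ cubes per dyadic scale, and $\sum_{P\in[W,W_j^*]}\ell(P)^{(n-\frac{d}{p})p'}\lesssim 1$ whenever $np>d$. Once you range over all of $\SH_1(W_j^*)$, the packing bound \eqref{eq:d-ep} gives about $2^{i(d-\ep)}$ cubes at scale $2^{-i}$, so $\sum_{P\in\SH_1(W_j^*)}\ell(P)^{(n-\frac{d}{p})p'}$ converges only when $(n-\frac{d}{p})p'>d-\ep$, equivalently $np>dp-\ep(p-1)$, which is strictly stronger than $np>d$. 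The paper avoids this by summing the chain $\sum_{P\in[W,W_j^*]}\ell(P)^n\langle f_3\rangle_{\mathsf w P}\lesssim\|f_3\|_{L^p(\Sh_1(W_j^*))}$ \emph{before} touching the $W$-sum, and only then applies $\ell^p\times\ell^{p'}$ H\"older over $W$ and over $j$.

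In the subcritical case your inner H\"older is also the wrong first move. It decouples $f_1$ and $f_2$ into $\|f_1\|_{L^p(\Sh_0(P))}\|f_2\|_{L^{p'}(\Sh_0(P))}$, while the Carleson condition \eqref{eq:carleson-measure} involves $\|f_1\|_{L^p(\Sh_0(P))}^{pp'}$, not $\|f_1\|^{p'}$; after your H\"older the exponents no longer match and a further H\"older only produces a divergent packing factor $\sum_P\ell(P)^{(n-\frac{d}{p})p'}\|f_2\|_{L^{p'}(\Sh_0(P))}^{(p')^2}$. The paper's Lemma \ref{lemma:carl-emb} handles this by keeping the product $F_2(W)\mu(W)$ together as the averaged quantity $A_\nu G_2$, rather than separating it into two tree-realizations, and then running a weak-$(1,1)$ argument for the tree maximal function $M_\nu$ on level sets. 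Your ``doubling'' stopping time is a plausible alternate route, but you would need to apply it to the original double sum, not the decoupled one; once $f_1$ and $f_2$ are split, the stopping-time tiles cover $\Sh_0(P)$ with overlap that depends on tree depth and the Carleson condition alone no longer provides the needed control.
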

The proof of Lemma \ref{l:hardy-carl} occupies  \S\ref{ss:pfhardy}. Notably, $f_1\in \mathrm{Carl}^{n,p}(\Omega)$ is also necessary to ensure that the corresponding order $n$ Hardy paraproduct form $\I^{n}(f_1,\cdot,\cdot)$ extends boundedly to $L^p(\Omega) \times L^{p'}(\Omega)$ (see \cite{arcozzi2002carleson}). A stronger necessity claim is proved in \S\ref{ss:5nec}, showing that the decomposition in Lemma \ref{l:decohardy} is also sharp.

We are now ready to complete the proof of Proposition \ref{paraproduct-main}.
 We prove \eqref{e:pplorenz}, the proofs of \eqref{e:ppstrong} and \eqref{e:ppstrongsc} simply use \eqref{e:sufp} instead of \eqref{e:necsufpweak}. First, applying Lemmata \ref{l:sparsepp2} and \ref{l:central} and Holder's inequality
	\[
	\begin{split}  
		&\quad \sum_{W\in \mathcal W_0}\abs{  \Lambda_W(\bb,f_2,f_1)} 
			 \leq \sum_{W\in \mathcal W_0}{| W|}\abs{   \Lambda_W(\bb,f_2,f_1-\PM^{n}_Wf_1)}  +{| W|}
				\abs{  \Lambda_W(\bb ,f_2,\PM^{n}_Wf_1)}  \\
			&\lesssim  {\left\| \bb  \right\|_{  F^{0,-n}_{p,2}(\mathcal M)}} 
				\times \left[ \left(\sum_{W\in \mathcal W_0}|W|  \langle\nabla^{n}f_1\rangle_{(p,1),\mathsf{w}W}^p \right)^{\frac1p} +\left(\sum_{W\in \mathcal W_0}|W|  \langle f_1\rangle_{p,\mathsf{w}W}^p \right)^{\frac1p} \right]
				\|f_2\|_{L^{p'}(\Omega)} \\ 
			&\lesssim  {\left\| \bb  \right\|_{  F^{0,-n}_{p,2}(\mathcal M)}}  \|f_1\|_{W^{n,(p,1)}(\Omega)}\|f_2\|_{L^{p'}(\Omega)}.
	\end{split}
	\]  
A similar application of  Lemma  \ref{l:sparsepp2} and H\"older leads to  
	\[
	\begin{split}  
		\sum_{W\in \mathcal W_{\mathsf{bd}}} {| W|}\abs{  \Lambda_W(\bb,f_2,f_1-\PM^{n}_W f_1)} 
			&\lesssim {\left\| \bb  \right\|_{  F^{0,-n}_{p,2}(\mathcal M)}} 				\|\nabla^n f_1\|_{L^{p,1}(\Omega)}\|f_2\|_{L^{p'}(\Omega)}.
	\end{split}
	\]
Finally, a combination of Lemmata  \ref{l:decohardy} and \ref{l:hardy-carl} entails
	\[
	\begin{split}  
		\sum_{W\in \mathcal W_{\mathsf{bd}}} {| W|}\abs{  \Lambda_W(\bb,f_2,\PM^{n}_W f_1)}  
		&\lesssim   \|  \bb\|_{\mathrm{Carl}^{n,p}(\Omega)} \|f_1\|_{W^{n,p}(\Omega)}\|f_2\|_{L^{p'}(\Omega)},
	\end{split}
	\] 
and the proof is complete.

\begin{remark}\label{rem:sparse} Proposition \ref{paraproduct-main} focuses on strong and restricted strong type estimates. The proofs of Lemmata \ref{l:sparsepp2}, \ref{l:central}, \ref{l:decohardy} actually yield a domination by sparse and Hardy-sparse forms which will be used to derive weighted versions of Proposition \ref{paraproduct-main} in \S\ref{ss:wPP} below. Namely, 
Lemma \ref{l:central}  
may be joined with the first inequalities of  \eqref{e:pf45b} and \eqref{e:sparselocal} below to deduce the existence of a sparse collection $\mathcal Z $, $\W\subset \mathcal Z\subset \mathcal M$, with the property that for any $1<r<\infty$,  
\begin{equation}
\begin{split}
\label{e:Sd} &\quad  \left|\Pi_{\bb}(f_1,f_2)\right| \lesssim { \max\{r,r'\}^{\frac12}\left\| \bb  \right\|_{  F^{0,-n}_{r,2}(\mathcal M)}}  \sum_{Z\in \mathcal Z} |Z| \left[ \langle  f_1 \rangle_{\mathsf{w}Z}+  \langle \nabla^n f_1 \rangle_{\mathsf{w}Z} \right]\langle f_2 \rangle_{r',\mathsf{w}Z}
\\ &+ \sum_{W\in \W_{\mathsf{bd}}} \left[ \sum_{ t=0}^n \mathfrak {I}^n_W(\nabla^t f_1) \right] \left[ \sum_{  Z\in \mathcal Z\cap \mathcal D_+(W)} |Z| \langle f_2 \rangle_{\mathsf{w}Z} \langle  \bb \rangle_{\mathsf{w}Z}\right].
\end{split}
\end{equation}
The implicit constant is meant to be absolute and, in particular, independent of $r$. { We remind the reader here that all results of this paper hold with $\Omega =\mathbb R^d$ by taking $\mathcal W = \varnothing$ and $\mathcal S$ to be the full dyadic grid $\mathcal D$. In this case, the sparse bound \eqref{e:Sd} for the paraproduct forms holds without the summation over $W \in \mathcal W_{\mathsf{bd}}$, for any fixed cube $Q_0$ large on which $\bb,f_1,f_2$ are supported, and $\mathcal Z = \mathcal Z(Q_0)$. This of course implies Proposition \ref{paraproduct-main} which is new even on $\mathbb R^d$ for the full range $1<p<\infty$ and all $1 \le n \le k$ (the highest order $n=k$ and supercritical range $np \ge d$ is known in \cite{wang97} but this case is immediate from Remark \ref{rem:weakests} above). Furthermore, it implies weighted estimates (see Corollary \ref{cor:gl} below) which extend some of the results from \cite{diplinio22wrt} to the inhomogeneous case.}
\end{remark}

\addtocounter{other}{1} 
\subsection{Proof of Lemma \ref{l:sparsepp2}}\label{ss:sparsepp2}
 First, we prove the upper bounds in both \eqref{e:necsufpweak},   \eqref{e:sufp} at the same time. Due to the definition of $\PM^{n}_W$, cf.\ \eqref{eq:sob-func-rep}, $ \PM^{n}_W f_1 =  \PM^{n}_W [\mathbf{1}_{\mathsf{w}W}f_1]$; thus in view of the support conditions on the wavelets, we may assume $f_1,f_2,f_3$ are all supported in $\mathsf{w}W$ by possibly replacing $f_j$ with $\mathbf{1}_{\mathsf{w}W}f_j$.   The   $\phi_W$ term in \eqref{e:lambdap} may be easily estimated using the Poincar\'e inequality as \begin{equation}
\label{e:pf45a}   \left| \phi_W(f_1,f_2)  \zeta_W(f_3-\PM^{n}_W f_3) \right| \lesssim \ell(W)^n \langle f_1\rangle_{\mathsf{w}W}   \langle   f_2\rangle_{\mathsf{w}W} \langle \nabla^n  f_3 \rangle_{\mathsf w W}
\end{equation}
which complies with all upper bounds in \eqref{e:necsufpweak},   \eqref{e:sufp}. In the remainder of the proof, it may thus be assumed that $\phi_W=0$ and we need to control the form
\[
\mathrm{V}_Z(f_1,f_2,f_3) =  \frac{1}{|Z|}\sum_{S\in \mathcal D(Z)} |S|\abs{\varphi_S(f_1)}\abs{\varphi_S(f_2)}
 \abs{ \zeta_S(f_3-\PM^{n}_Z f_3)} \]
for $Z=W.$
Now, suppose that $Z\in   \mathcal D_+(W)$,   $P\in \mathcal D_+(Z)$. Our proof is based on the inequality
\begin{equation}\label{eq:tele3} 
\abs{\zeta_{P}(f -\PM^{n}_{Z}f)}+ \sup_{S\in \mathcal D_+(P)}\abs{\zeta_{S}(\PM^{n}_{P}f -\PM^{n}_{Z}f) } \lesssim   \ell(Z)^n   \inf_{P} \mathrm{M} (\mathbf{1}_{\mathsf w Z}\nabla^n f)
\end{equation}which may be obtained by \eqref{e:neighbor} and the telescoping argument that led to \eqref{eq:tele}.
For $Z\in \mathcal D_+(W)$, denote by $\mathcal Z(Z)$  the maximal elements  $P\in \mathcal D(Z)$ with the property that \[
\mathsf{w}P\subset\left\{\mathrm{M} \left(\mathbf{1}_{\mathsf{w} Z}\nabla^n f_3\right)>C  \langle \nabla^n f_3\rangle_{\mathsf{w}Z}\right\}.\]   The maximal theorem tells us that  $\sum_{P\in\mathcal Z(Z) } |P|\leq \frac{|Z|}{2}$ if $C$ is large enough. Calling $\mathcal N(Z)\coloneqq \mathcal D(Z)\setminus\bigcup \{\mathcal D(P):P\in \mathcal Z(Z)\}$, we obtain  the estimate
\begin{equation}\label{e:sparsesublime}
\begin{split}  &\quad 
\mathrm{V}_{Z}(f_1,f_2, f_3)- \sum_{P\in \mathcal Z(Z)}\frac{|P|}{|Z|}\mathrm{V}_{P}(f_1,f_2, f_3) \\ & \leq \sum_{S\in \mathcal N(Z)}\frac{|S|}{|Z|} \abs{\varphi_S(f_1)}\abs{\varphi_S(f_2)} \abs{ \zeta_S(f_3-\PM^n_Z f_3)} 
\\ &\quad  + \sum_{\substack{P\in \mathcal Z(Z)\\ S\in \mathcal D(P)}}  \frac{|S|}{|Z|} \abs{\varphi_S(f_1)}\abs{\varphi_S(f_2)} \abs{ \zeta_S(\PM^n_P f_3-\PM^n_Z f_3)} 
 \\  & \leq \Psi_Z(f_1,f_2)
 \left(\sup_{S\in \mathcal N(W)}\abs{\zeta_S(f_3-\PM^n_Z f_3)}+ \sup_{P\in \mathcal Z(W)}\sup_{S\in \mathcal D(P)}\abs{\zeta_S(\PM^n_Pf_3-\PM^n_Z f_3)}\right) \\ & \lesssim
 \Psi_Z(f_1,f_2) \ell(Z)^n\langle \nabla^n f_3\rangle_{\mathsf{w}Z}
 \lesssim   \left[\ell(Z)^n\langle    \mathrm{S}_Z f_1 \rangle_{p,  Z}\right]  \langle f_2\rangle_{p',\mathsf w Z} \langle \nabla^n f_3\rangle_{\mathsf{w}Z} 
\end{split}
\end{equation}
where the passage to the last line follows from \eqref{eq:tele3} and the stopping condition, while the final inequality relies on  \eqref{e:PsiZ}-\eqref{e:lsfo}. The dependence on $p$ of the constant, which has been kept track of in Remark \ref{rem:sparse}, comes from \eqref{e:lsfo} and is kept implicit in the rest of the proof.   Beginning with $Z=W$,  use \eqref{e:sparsesublime}    iteratively to find  a sparse collection $\mathcal Z\subset \mathcal D_+(W)$  such that
\begin{equation}
\label{e:sparserefer} {\mathrm{V}_{W}(f_1,f_2, f_3)
}   \lesssim  \frac{1}{|W|} \sum_{Z\in \mathcal Z}|Z| \left[\ell(Z)^n\langle    \mathrm{S}_Z f_1 \rangle_{p,  Z}\right]  \langle f_2\rangle_{p',\mathsf w Z} \langle \nabla^n f_3\rangle_{\mathsf{w}Z}.
\end{equation}
Let us first deal with the  $1 < p \leq \frac dn$ range. In this case the definition \eqref{e:Fdot} and \eqref{e:sparserefer} lead to
\begin{equation}
\label{e:pf45b}\begin{split}
\abs{\mathrm{V}_{W}(f_1,f_2, f_3)} & \lesssim { \|f_1\|_{ \dot F^{0,-n}_{p,2}(\mathcal D(W))}}\sum_{Z\in \mathcal Z}\frac{|Z|}{|W|} \langle    f_2\rangle_{p',\mathsf w Z} \langle \nabla^n f_3\rangle_{\mathsf{w}Z}    
\\ &\lesssim    { \|f_1\|_{ \dot F^{0,-n}_{p,2}(\mathcal D(W))}} \begin{cases} C_p \langle    \nabla^n f_3\rangle_{(p,1),\mathsf w W}    \langle    f_2\rangle_{p',\mathsf w W};  
\\ 
C_{p,q}
\langle    \nabla^n f_3\rangle_{q,\mathsf w W}    \langle    f_2\rangle_{q',\mathsf w W}, 
& 1<q<p. 
 \end{cases}\end{split} 
\end{equation}
Refer for instance to \cite[Appendix B]{conde-alonso17} for the first estimate. The second estimate is well known, and we omit the details. We have thus proved the upper bound \eqref{e:necsufpweak} as well as the case $1<q\leq \frac dn$ of \eqref{e:sufp}.  If $p=q>\frac{d}{n}$ in \eqref{e:sufp}, first notice with the help of \eqref{e:lsfo} that  for each $Z \in \mathcal D_+(W)$ and $\ep \coloneqq n - \frac{d}{p}$,

\[
\ell(Z)^n\langle    \mathrm{S}_Z f_1 \rangle_{p,  Z} \leq \ell(Z)^\ep \ell(W)^{\frac dp}\langle    \mathrm{S}_W f_1 \rangle_{p,  W} \lesssim   \ell(Z)^\ep \ell(W)^{\frac dp}\langle      f_1 \rangle_{p,  \mathsf{w}W}.
\]
Then
\eqref{e:sparserefer} yields the first inequality in
\begin{equation}
\label{e:recycle}
\begin{split}
\abs{\mathrm{V}_{W}(f_1,f_2, f_3)
}   &\lesssim  \left[ \ell(W)^n\langle f_1\rangle_{p,\mathsf w W} \right]\sum_{j\geq 0}2^{-j\ep}\sum_{\substack{ Z\in \mathcal S\\ \ell(Z)=2^{-j} \ell(W)}}\frac{|Z| }{|W|}   \langle    f_2\rangle_{p',\mathsf w Z} \langle \nabla^n f_3\rangle_{\mathsf{w}Z}
\\ &
\lesssim_ p  \ell(W)^n\langle      f_1 \rangle_{p,  \mathsf{w}W}   \langle    f_2\rangle_{p',\mathsf w W}   \langle \nabla^n f_3\rangle_{p,\mathsf w W},  
\end{split}\end{equation}
while the concluding step is obtained by H\"older's inequality, finite overlap, and summing over $j$.
This concludes the proof of the case $q>\frac dn$ of  \eqref{e:sufp}.

We turn to the lower bound in \eqref{e:necsufpweak}. Focus on the case $np\leq d$, as the argument for the case $np>d$ is strictly simpler. Let $K$ be the supremum on the left  hand side of \eqref{e:necsufpweak}.  Fix $Z\in \mathcal D(W)$ and $f_1\in \CL$. It is not difficult to construct a function $\upsilon_Z$ satisfying
\begin{equation}
\label{e:propups}
\mathbf{1}_{\mathsf{w}Z} \leq \upsilon_Z \leq \mathbf{1}_{\mathsf{w}^2Z}, \qquad \|\nabla^j \upsilon_Z \|_\infty \lesssim_j \ell(Z)^{-j}, \qquad \PM^{n}_W \upsilon_Z =0.
\end{equation} 
The first property ensures  the uniform bound $|\zeta_S(\upsilon_Z)| \gtrsim 1$. Therefore,  we may find $f_2\in \CL$ with $\langle f_2 \rangle_{p',\mathsf{w}Z}=1$ such that $\varphi_S(f_2)=0$ unless $S\in \mathcal D(Z)$ and  ensuring the first estimate in the chain of inequalities
\[
\begin{split} 
 |Z| \langle \mathrm{S}_Z f_1\rangle_{p,Z} & \lesssim  |W|\Lambda_W(f_1, f_2, \upsilon_Z) \leq K |W|   \langle \nabla^n\upsilon_{Z}\rangle_{(p,1),\mathsf{w}W}\langle f_2\rangle_{p',\mathsf{w}W} 
 \\ & = K |Z|   \langle \nabla^n\upsilon_{Z}\rangle_{(p,1),\mathsf{w}Z}\langle f_2\rangle_{p',\mathsf{w}Z} \lesssim K |Z|\ell(Z)^{-n}.
 \end{split}
\] 
The lower bound in \eqref{e:necsufpweak} follows by rearranging and taking supremum over $Z\in \mathcal D(W)$.

\addtocounter{other}{1} 
\subsection{Proof of  Lemma \ref{l:hardy-carl}}  \label{ss:pfhardy} Applying Proposition \ref{p:sparse} and Remark \ref{r:Lp},\begin{equation}
\label{e:sparselocal} \abs{ \phi_W(f_1,f_2) } + |W|\abs{\Psi_W(f_1,f_2)}  
\lesssim   \langle f_1\rangle_{p,\mathsf{w}W}  \langle f_2\rangle_{p',\mathsf{w}W}\qquad \forall W\in \mathcal W.
\end{equation}
Begin by dealing with the Sobolev embedding range ${n}p > d$. Notice that for each $j \ge 1$ and each ${W} \in \SH({W}_j)$,
	\[ \begin{split}\sum_{ P\in [{W},{W}_j^*]} \ell(P)^{n} \l f_3 \r_{\mathsf{w}P} 
	&\lesssim \left(\sum_{{P} \in [{W},{W}_j^*]  }|P| \langle f_3\rangle_{p,\mathsf{w}P}^p\right)^{\frac 1p} \left(\sum_{{P} \in [{W},{W}_j^*]} \ell({P})^{\left (n-\frac dp \right)p'}\right)^{
	\frac{1}{p'}}
	\\
	& \lesssim \|f_3\|_{L^p(\Sh_1({W}_j^*))}\end{split}\]
since $n-\frac dp > 0$ in this range and $\ell({W}_j^*) \sim 1$ by Lemma \ref{l:windows} (S1). Then, using \eqref{e:sparselocal} and finite overlap of the Whitney cubes	
	\[ 
	\begin{aligned} 
		\I^{n}(f_1,f_2,f_3) 
		&\lesssim \sum_{j \ge 1} \left( \sum_{{W} \in \SH({W}_j)} |W| \langle f_1\rangle_{p,\mathsf{w}W} \langle f_2\rangle_{p',\mathsf{w}W}    \right)  \|f_3\|_{L^p(\Sh_1(W_j^*))} \\ 
	&\lesssim  \|f_1\|_{\mathrm{Carl}^{n,p}(\Omega)} \left( \sum_{j \ge 1} \|f_2\|_{L^{p'}(\Sh({W}_j))}^{p'} \right)^{\frac 1{p'}} \left( \sum_{j \ge 1} \|f_3\|_{L^p(\Sh_1({W}_j^*))}^p \right)^{\frac1p} .\end{aligned}\]
The proof of this case is then finished by taking advantage of the  finite overlap of $\{\Sh({W}_j)\}$ and $\{\Sh_1({W}_j^*)\}$ from Lemma \ref{l:windows} (S4).
The subcritical case, $n p \le d$, is dealt with via the following two-weight Carleson embedding lemma \cite{arcozzi2002carleson}*{Theorem 3}, whose short proof is postponed to the end of the subsection.
\begin{lemma}\label{lemma:carl-emb}
Let $\rho,\mu,F_1,F_2:\SH(W_j^*) \to [0,\infty)$ and $1<p<\infty$. Set $\|\mu\|_{\mathrm{Carl}^p(\rho)}$ to be the smallest $C$ such that  
	\begin{equation}\label{e:carleson-measure-rho} \sum_{P \le W} \left[\rho(P)\sum_{Q \le P} \mu(Q)^p \right]^{p'} \le C^{p'} \sum_{P \le W} \mu(P)^p \qquad \forall W \in \SH(W_j). \end{equation}
Then,
	\begin{equation}\label{eq:tree} \begin{split} &\quad \sum_{W \in \SH(W_j^*)} \sum_{P \in [W,W_j^*]} \abs{ \rho(P) F_1(P) F_2(W) \mu(W) } \\ &\le p \| F_1 \|_{\ell^p(\SH_1(W_j^*))} \|F_2 \|_{\ell^{p'}(\SH(W_j))} \|\mu\|_{\mathrm{Carl}^p(\rho)}.\end{split}\end{equation}
\end{lemma}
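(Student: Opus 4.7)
The plan is to first convert the double sum into a single-sum expression by order swapping, then exploit that ancestors of a fixed node form a linearly ordered chain to run a one-dimensional Abel-type argument, and finally close the loop by combining layer cake with the Carleson condition \eqref{e:carleson-measure-rho}. Since $P\in[W,W_j^*]$ if and only if $W\le P$ in the tree order, introducing $G(W)\coloneqq\sum_{P\ge W}\rho(P)F_1(P)$ rewrites the left-hand side of \eqref{eq:tree} as $\sum_W F_2(W)\mu(W) G(W)$. H\"older's inequality bounds this by $\|F_2\|_{\ell^{p'}(\SH(W_j))}\cdot X^{1/p}$, where $X\coloneqq\sum_W \mu(W)^p G(W)^p$, reducing the goal to proving $X^{1/p}\le p\,\|\mu\|_{\mathrm{Carl}^p(\rho)}\|F_1\|_{\ell^p(\SH_1(W_j^*))}$.

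Since the ancestors $\{P\ge W\}$ of any $W$ form a single totally ordered chain, the one-dimensional elementary inequality $\bigl(\sum_i a_i\bigr)^p\le p\sum_k a_k\bigl(\sum_{i\ge k}a_i\bigr)^{p-1}$ applied along $[W,W_j^*]$ yields
\[
G(W)^p \le p\sum_{P\ge W}\rho(P)F_1(P) G(P)^{p-1}.
\]
Substituting into $X$, swapping the summation orders through $\sum_{W\le P}\mu(W)^p=:\mathsf m(P)$, and then applying H\"older with exponents $p,p'$ together with the algebraic identity $(p-1)p'=p$, produce
\[
X\le p\,\|F_1\|_{\ell^p}\cdot A^{1/p'},\qquad A\coloneqq \sum_P \rho(P)^{p'}\mathsf m(P)^{p'} G(P)^p.
\]

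The crux is the self-bounding inequality $A\le \|\mu\|_{\mathrm{Carl}^p(\rho)}^{p'}\,X$; substituted into the above it closes the loop and gives $X^{1/p}\le p\,\|\mu\|_{\mathrm{Carl}^p(\rho)}\|F_1\|_{\ell^p}$ after rearranging. To prove this self-bounding estimate, I will use the layer-cake representation
\[
A=p\int_0^\infty t^{p-1}\nu(\{G>t\})\,\mathrm{d}t,\qquad \nu(P)\coloneqq \rho(P)^{p'}\mathsf m(P)^{p'}.
\]
Because $G$ is monotone increasing as one moves from the root toward the leaves, the super-level set $\{G>t\}$ is downward closed in the tree order, hence equals the disjoint union of descendant subtrees rooted at its maximal (i.e., root-most) elements $N_t\subset\SH_1(W_j^*)$. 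Applying \eqref{e:carleson-measure-rho} at each $\tilde P\in N_t$ bounds $\nu(\{P\le \tilde P\})$ by $\|\mu\|^{p'}_{\mathrm{Carl}^p(\rho)}\mathsf m(\tilde P)$; summing and invoking the equivalence ``$W\le\tilde P$ for some $\tilde P\in N_t$'' $\Leftrightarrow$ ``$G(W)>t$'' produces $\nu(\{G>t\})\le\|\mu\|^{p'}_{\mathrm{Carl}^p(\rho)}\sum_{W: G(W)>t}\mu(W)^p$. Integrating back via layer cake delivers precisely $\|\mu\|^{p'}_{\mathrm{Carl}^p(\rho)}\,X$.

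The principal technical obstacle is the careful verification of the level-set decomposition: that the maximal elements of $\{G>t\}$ are well-defined, that they partition the super-level set into disjoint descendant subtrees, and that the tree hierarchy precisely yields the equivalence linking $N_t$-descendants with $\{G>t\}$. A standard subsidiary point is the finiteness $X<\infty$ required to absorb terms; this is handled by truncating $F_1,F_2$ to finite supports and passing to the limit.
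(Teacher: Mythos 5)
Your proof is correct, and it takes a genuinely different route from the paper's. The paper introduces a weighted averaging operator $A_\nu$ and its associated maximal function $M_\nu$, uses the tree structure to show that super-level sets of $M_\nu$ are disjoint unions of subtrees (hence $M_\nu$ has weak type $(1,1)$ on $\ell^1(\nu)$), upgrades to $\|M_\nu\|_{\ell^{p'}(\nu)\to\ell^{p'}(\nu)}\le p$ by Marcinkiewicz interpolation, and then closes by pulling out $\|F_1\|_{\ell^p}$ first and feeding the Carleson condition to $M_\nu G_2$ in $\ell^{p'}(\nu)$. You instead pull out $\|F_2\|_{\ell^{p'}}$ first, observe that the resulting quantity $G(W)=\sum_{P\ge W}\rho(P)F_1(P)$ is \emph{already} monotone along the tree (so no maximal function is needed), run a discrete Abel/fundamental-theorem-of-calculus inequality $G(W)^p\le p\sum_{P\ge W}\rho(P)F_1(P)G(P)^{p-1}$ along chains, and close with a self-bounding argument $A\le\|\mu\|_{\mathrm{Carl}^p(\rho)}^{p'}X$ proved by layer cake plus the Carleson condition on the downward-closed level sets $\{G>t\}$. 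The two proofs use the same structural input (tree decomposition of downward-closed sets, the Carleson hypothesis applied at the maximal roots of a level set) and land on the same sharp constant $p$, but yours is more elementary in that it avoids the maximal-function/interpolation machinery entirely, at the modest cost of a finiteness/truncation step to make the self-improving inequality legitimate --- a point you correctly flag.
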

To apply the lemma to $\I^{n}(f_1,f_2,f_3)$, first notice that by \eqref{e:sparselocal}, for each $j \ge 1$,
	\[ \abs{ \I^{n}(f_1,f_2,f_3) } \lesssim \sum_{j \ge 1} \sum_{{W} \in \SH({W}_j)}\l f_1 \r_{p,W} \l f_2 \r_{p',W} \I^{n}_W(f_3) |W|.\]
For each $j \ge 1$, the inner sum is bounded by the LHS of \eqref{eq:tree} with
	\[ F_1(P) = \|f_3\|_{L^p(P)}, \ \rho(P)=\ell(P)^{n-d/p}, \ \mu(W) = \| f_1 \|_{L^p(\mathsf{w}W)}, \  F_2(W) = \| f_2\|_{L^{p'}(\mathsf{w}W)}.\] 
Concerning the RHS of \eqref{eq:tree}, some simple computations show
	\[
	\begin{split}& \|F_1\|_{\ell^p(\SH_1(W_j^*))} \lesssim \|f_3\|_{L^p(\Sh_1(W_j^*))}, 
\qquad  \|F_2\|_{\ell^{p'}(\SH(W_j))} \lesssim \|f_2\|_{L^{p'}(\Sh(W_j))}, \\ & \|\mu\|_{\mathrm{Carl}^p(\rho)} = \|f_1\|_{\mathrm{Carl}^{n,p}(\Omega)}, \end{split}
	\]
so that summing in $j$ and again using finite overlap  (S4) completes the proof of Lemma \ref{l:hardy-carl}.

\begin{proof}[Proof of Lemma \ref{lemma:carl-emb}]

For $F,\nu: \SH(W_j^*) \to [0,\infty)$, define the averages and maximal averages
	\[ A_{\nu} F(P) =\frac{\sum_{Q\le P} \nu(Q) F(Q) }{\sum_{Q \le P} \nu(Q)}, \quad M_\nu G(W) = \sup_{P \ge W} A_\nu F(P).\]
By definition, $M_\nu$ is increasing as one moves down $\SH(W_j^*)$, i.e. if $P \le W$ then $M_\nu F(P) \ge M_\nu F(W)$. 
This observation shows that the superlevel sets of $M_\nu F$ are of the form
	\begin{equation}\label{e:suplv-nu} E_t(F) \coloneqq \left\{ P \in \SH_1(W_j^*) : M_\nu F(P) \ge t \right\} = \bigsqcup_i \Sh_0(P_i^t), \qquad P_i^t \in \SH_1(W_j^*).\end{equation}
Therefore $M_\nu$ is  weak type (1,1), and by Marcinkiewicz interpolation, $\|M_\nu\|_{\ell^p(\nu) } \le p'$ for $1 < p \le \infty$.
Now, exchanging the order of summation;  setting $\nu(W) = \mu(W)^p$, $\sigma(P) = \rho(P) \sum_{Q \le P} \nu(Q)$, and $G_2(W) = F_2(W) \mu(W)^{1-p}$; and applying H\"older's inequality,
	\[ \begin{aligned} \sum_{W \in \SH(W_j^*)} \left[ \sum_{P \in [W,W_j^*]} \rho(P) F_1(P) \right] F_2(W) \mu(W) &= \sum_{P \in \SH_1(Q_j^*)} F_1(P) \sigma(P) A_\nu G_2(P) \\
	&\le \|F_1\|_{\ell^p(\SH_1(W_j^*))} \|A_\nu G_2 \sigma \|_{\ell^{p'}(\SH_1(W_j^*))}. \end{aligned}\]
Relying on \eqref{e:suplv-nu}, the Carleson measure condition \eqref{e:carleson-measure-rho} says
	\[ \sigma^{p'}(E_t(G_2)) = \sum_{i} \sum_{P \le P_i^t} \sigma(P)^{p'} \le\|\mu\|_{\mathrm{Carl}^p(\rho)}^{p'} \nu(E_t(G_2)),\]
whence $\|A_\nu G_2 \sigma \|_{\ell^{p'}} \le \|\mu\|_{\mathrm{Carl}^p(\rho)}\|M_\nu G_2\|_{\ell^{p'}(\nu)}$. We conclude by applying the $\ell^{p'}(\nu)$ bound for $M_\nu$ and computing $\left\Vert G_2 \right\Vert_{\ell{p'}(\nu)} = \left \Vert F_2 \right \Vert_{\ell{p'}}$.
\end{proof}

\addtocounter{other}{1} 
\subsection{Necessity of Carleson measure condition} \label{ss:5nec}
Lemma \ref{l:hardy-carl} is sharp in the sense that
	\[ \|f_1 \|_{\mathrm{Carl}^{n,p}(\Omega)} \lesssim \sup_{ \|f_3\|_{p} = \|f_2\|_{p'}=1 } \left|\I^{n}(f_1,f_2,f_3)\right|.\]
This is proved by testing on suitable $f_1$ and $f_2$, see \cite{arcozzi2002carleson}. In what follows, we will go further and show that not only the Carleson measure condition, but also the representation of the wavelet forms in \eqref{e:upsWR} as a Hardy paraproduct (Lemma \ref{l:decohardy})
is sharp.
\begin{proposition}\label{prop:nec}
Let $f_3 \in L^1_{\mathrm {loc}}(\Omega)$, $1 < p < \infty$ and $n \in \mathbb N$. Suppose there exists $K>0$ such that for each wavelet form 	\[ \Lambda(f_1,f_2,f_3) = \sum_{W \in \mathcal W}|W|\Lambda_W(f_1,f_2)  \zeta_W(f_3) ,\]
one has the estimate
	\begin{equation}\label{e:Lsob} \abs{ \Lambda(f_1,f_2,f_3) } \le K \|f_3\|_{W^{n,p}(\Omega)} \|f_2\|_{L^{p'}(\Omega)}.\end{equation}
Then, $\|f_1\|_{\mathrm{Carl}^{n,p}(\Omega)} \lesssim K$.  
\end{proposition}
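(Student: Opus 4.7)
The strategy is to reduce the hypothesized Sobolev bound for arbitrary wavelet forms of the prescribed shape to an $L^{p'}(\Omega)\times L^p(\Omega)$ boundedness statement for the Hardy paraproduct $\I^n(f_1,\cdot,\cdot)$ of \S\ref{sec:pp}, and then invoke the necessity direction of the Carleson measure characterization for Hardy paraproducts in the vein of \cite{arcozzi2002carleson}, already mentioned at the top of \S\ref{ss:5nec}.

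To implement the reduction, I would work with a very specific choice of wavelet form. Fix a smooth nonnegative bump $\rho\in \C^\infty_0$ with $\int \rho=1$ supported near the origin, and for each cube $R$ set $\rho_R = |R|^{-1}\Sy_R\rho \in c\Phi^{k,\Subset}(R)$. Take $\zeta_W\coloneqq \rho_W$, and let
\[
\Lambda_W(f_1,f_2) \coloneqq \phi_W(f_1,f_2) + \sum_{S \in \mathcal D(W)} \frac{|S|}{|W|} \varphi_S(f_1)\overline{\varphi_S}(f_2), \qquad \phi_W(x_1,x_2) \coloneqq |W|^{-1}\rho_W(x_1)\rho_W(x_2).
\]
With these choices, $\Lambda = \sum_W |W|\Lambda_W(\cdot,\cdot)\zeta_W(\cdot)$ is a wavelet form in the sense of the statement, so \eqref{e:Lsob} holds for this $\Lambda$. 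Moreover, referring to \eqref{e:PsiZ}, the combination $\Lambda_W(f_1,f_2)$ agrees up to absolute constants with $\phi_W(f_1,f_2)+|W|\Psi_W(f_1,f_2)$ appearing in the Hardy paraproduct.

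Given any nonnegative $h \in L^p(\Omega)$, I would construct a companion $g_3 \in W^{n,p}(\Omega)$ satisfying two properties: \emph{(a)} for every boundary window and every $W\in \SH(W_j)$ one has $\zeta_W(g_3) \gtrsim \I^n_W(h) = \sum_{P\in [W,W_j^*]}\ell(P)^n\langle h\rangle_{\mathsf w P}$; \emph{(b)} $\|g_3\|_{W^{n,p}(\Omega)} \lesssim \|h\|_{L^p(\Omega)}$. A natural choice is
\[
g_3(x) \coloneqq \sum_{j\ge 1}\sum_{P \in \SH_1(W_j^*)} \ell(P)^n \langle h\rangle_{\mathsf w P}\,\eta_P(x),
\]
where each $\eta_P$ is a smooth truncation equal to $1$ on $\Sh(P)\cup \mathsf w P$ and vanishing outside a fixed dilate. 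Property \emph{(a)} is immediate from $\eta_P\equiv 1$ on $W$ for every $P \in [W,W_j^*]$ and $\int \zeta_W \sim 1$. Combining \emph{(a)} with the hypothesis and \emph{(b)}, the choice $f_1, f_2\ge 0$ allows one to write $\Lambda(f_1,f_2,g_3) \gtrsim \I^n(f_1,f_2,h)$, and dualizing in $f_2 \in L^{p'}(\Omega)$ and $h \in L^p(\Omega)$ produces the Hardy paraproduct bound $\|\I^n(f_1,\cdot,\cdot)\|_{L^{p'}\times L^p\to\C}\lesssim K$. The Arcozzi-Rochberg-Sawyer necessity result, which characterizes such Hardy paraproduct bounds precisely by \eqref{eq:carleson-measure}, then yields $\|f_1\|_{\mathrm{Carl}^{n,p}(\Omega)} \lesssim K$.

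The principal obstacle is the verification of \emph{(b)}. Because the cutoffs $\eta_P$ have nested supports along each boundary chain, the gradients $\nabla^m\eta_P$ produced at different scales interact nontrivially, and a crude triangle inequality in $L^p$ is not sharp. The plan is to localize each $\nabla^m\eta_P$ to an annular region at scale $\ell(P)$ using differences of successive cutoffs, exploit the finite overlap properties from Lemma \ref{l:windows}(S4), and invoke the tree packing estimate \eqref{eq:est1} of Lemma \ref{lemma:whitney} for $\ep$-H\"older uniform domains to sum cross-scale contributions; the concluding step applies the Hardy-Littlewood maximal inequality to pass from averages $\langle h\rangle_{\mathsf w P}$ back to $\|h\|_{L^p(\Omega)}$. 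In the supercritical range $np > d$ this program collapses: all relevant shadows $\Sh(W_j^*)$ have diameter $\sim 1$ by Lemma \ref{l:windows}(S1), so the cutoff $\varpi_j$ of Lemma \ref{l:decohardy} already satisfies $\|\varpi_j\|_{W^{n,p}}\lesssim 1$, and the remaining Carleson condition $\|f_1\|_{L^p(\Sh(W_j))}\le K$ follows directly by testing on $g_3 = \varpi_j$ and the dual extremizer $f_2 = \mathrm{sgn}(f_1)|f_1|^{p-1}/\|f_1\|_{L^p(\Sh(W_j))}^{p-1}$.
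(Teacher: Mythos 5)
Your plan to reduce the hypothesis to boundedness of the Hardy paraproduct $\I^n(f_1,\cdot,\cdot)$ is a natural idea, but it has two gaps, and the paper follows a genuinely different route that sidesteps both.

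\textbf{Gap 1: the passage from $\Lambda$ to $\I^n$.} You assert that, with your choice of $\phi_W$, the quantity $\Lambda_W(f_1,f_2)$ ``agrees up to absolute constants with $\phi_W(f_1,f_2)+|W|\Psi_W(f_1,f_2)$.'' This is not correct: $\Psi_W$, as defined in \eqref{e:PsiZ}, carries absolute values on the wavelet coefficients, i.e. $\Psi_W(g,h) = |W|^{-1}\sum_S |S| |\varphi_S(g)|\,|\varphi_S(h)|$, whereas $\Lambda_W(f_1,f_2)$ contains the signed products $\varphi_S(f_1)\overline{\varphi_S}(f_2)$. Taking $f_1,f_2 \ge 0$ does not help since the wavelets $\varphi_S$ are oscillatory. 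Consequently the inequality $\Lambda(f_1,f_2,g_3) \gtrsim \I^n(f_1,f_2,h)$ does not hold for arbitrary $f_2$, and the dualization step collapses. The wavelet form class only \emph{dominates} $\I^n$ (as in Lemma \ref{l:decohardy}), not vice versa.

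\textbf{Gap 2: the estimate (b).} Even granting the reduction, the estimate $\|g_3\|_{W^{n,p}(\Omega)}\lesssim\|h\|_{L^p(\Omega)}$ for $g_3 = \sum_{j,P}\ell(P)^n\langle h\rangle_{\mathsf w P}\,\eta_P$ is not established, and the remedy you sketch does not resolve the obstruction you yourself identify. The cutoffs $\eta_P$ are adapted to shadows $\Sh(P)$, whose lateral transition regions extend all the way to $\partial\Omega$ and therefore contain Whitney cubes of \emph{all} scales below $\ell(P)$. Abel summation / ``differences of successive cutoffs'' does not localize $\nabla^m\eta_P$ to an annulus ``at scale $\ell(P)$'' for the same reason: the difference of two nested shadow cutoffs is still supported in a region touching $\partial\Omega$. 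What one actually gets pointwise is $|\nabla^n g_3(x)|\lesssim N(x) M^2h(x)$, where $N(x)$ counts the number of scales for which $x$ lies in a transition region; $N$ is of order $\log(1/\ell(W_x))$ along boundary chains and not summable by finite-overlap alone, so the tree-packing lemma and the maximal inequality do not straightforwardly close the argument.

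\textbf{What the paper does instead.} The paper's proof avoids both issues by choosing the \emph{specific} wavelet form with $\zeta_W = \mathbf 1_W$ and $\phi_W = \sum_j \chi_{W,j}\otimes\overline{\chi_{W,j}}$, and testing directly with $f_2 = f_1^{p-1}$. The reproducing formula \eqref{e:triebel} then forces the signed wavelet products to recombine into $\langle f_1\rangle_{p,W}^p$ (no absolute-value mismatch), yielding \eqref{e:Af}. For the supercritical range this is closed with a single globally-smooth cutoff, much as you propose. For the subcritical range, rather than building $g_3$ as a superposition of cutoffs, the paper tests with the Riesz potential $h = I_n g$ of the discrete density $g = \sum_{P\le W}\mathbf 1_P\langle f_1\rangle_{p,P}^p$: the \emph{elliptic regularity} estimate $\|I_n\phi\|_{W^{n,p}(\Omega)}\lesssim\|\phi\|_{L^p}$ replaces your estimate (b) wholesale (and is a robust classical fact rather than a delicate cutoff construction), while a pointwise lower bound for $I_n g$ on the ordered shadows $\Sh_0(P)$ delivers the Carleson condition \eqref{eq:carleson-measure} directly. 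You should study that mechanism: the Riesz potential is precisely the object that converts an $L^p$ density into a $W^{n,p}$ test function with no loss, which is what your construction was trying (and failing) to achieve by hand.
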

\begin{proof}
Take $\Lambda_W$ as in \eqref{e:lambdaW} with $\phi_W = \sum_{1 \le j \le \jmath} \chi_{W,j} \otimes \overline{\chi_{W,j}}$, $\psi^1_S = \varphi_S$ $\psi^2_S= \overline{\varphi_S}$, and $\zeta_W = \mathbf 1_{W}$.
  Therefore, by \eqref{e:triebel} and the assumption \eqref{e:Lsob},
	\begin{equation}\label{e:Af} \sum_{W \in \W} |W| \l f_1 \r_{p,W}^p \l f_3 \r_W \lesssim \abs{ \Lambda(f_1,f_1^{p-1},f_3) } \lesssim \|f_1\|_{L^p}^{p-1} \|f_3\|_{W^{n,p}}.\end{equation}
If $n p>d$, then \eqref{eq:carleson-measure} follows from \eqref{e:Af} by taking $f_3 \in \mathcal{C}^\infty(\overline{\Omega})$ satisfying $\mathbf 1_{\Sh(W_j)} \le f_3 \le \mathbf 1_{\Sh(W_j^*)}$. Indeed,
	\[ \|f_1\|_{L^p(\SH(W_j))}^p \le \sum_{W \in \SH(W_j^*)}\l f_1 \mathbf 1_{\Sh(W_j)} \r_{p,W} ^{p}  \l f_3 \r_W  |W| \lesssim \|f_1\|_{L^p(\SH(W_j))}^{p-1}.\] 
On the other hand, if $n p \le d$, we will need a more involved argument. 
Fix $W \in \mathcal W$ and define
	\[ g = \sum_{P \le W} \mathbf{1}_{P} \l f_1 \r_{p,P}^p, \qquad 
	h(x) = I_{n} g(x) \coloneqq \int_{\mathbb R^d} \frac{g(y)}{|x-y|^{d-n}} \, \mathrm{d}y, \quad x \in \mathbb R^d.\]
For any $\phi \in C_0^\infty(\Sh(W_j^*))$, by H\"older's inequality and \eqref{e:Af},
	\[ \abs{ \ip{ h }{ \phi } }= \abs{\ip{ g }{ I_{n} \phi } } = \sum_{P \le W}|P| \l I_{n} \phi \r_P \l f_1 \r_{p,P}^{p} \lesssim \left\|I_n \phi \right\|_{W^{n,p}(\Omega)} \|f_1\|_{L^p(\Sh_0(W))}^{p-1}.\]
Standard arguments with Riesz transforms show that $\|I_n \phi\|_{W^{n,p}(\Omega)} \lesssim \|\phi\|_{L^p(\Sh(W_j^*))}$ and thus
	\begin{equation}\label{eq:ell-reg} \|h\|_{L^{p'}(\Sh(W_j^*))}^{p'} \lesssim  \|f_1\|_{L^p(\Sh_0(W))}^{p} .\end{equation}
On the other hand, notice that since $p>1$, the condition $n p \le d$ forces $d-n>0$. So, for any $P \le W$ and $x,y \in \Sh_0(P)$, $|x-y|^{d-n} \lesssim \ell(P)^{d-n}$ whence
	\[ |h(x)| \ge \int_{\Sh_0(P)} \frac{g(y)}{|x-y|^{d-n}} \, \mathrm{d}y \gtrsim \ell(P)^{n-d} \int_{\Sh_0(P)} g(y) \, \mathrm{d}y = \ell(P)^{n-d} \|f_1\|_{L^p(\Sh_0(P))}^p,\] 
which together with \eqref{eq:ell-reg}, implies
\[ \sum_{P \le W}\|f_3\|_{L^p(\Sh_0(P))}^{ pp'}  \ell(P)^{(n -d)p'+d} \lesssim \sum_{P \le W} \int_P |h(x)|^{p'} \, dx \lesssim \|f_3\|_{L^p(\Sh_0(W))}^p.\]
Since $(n-d)p' + d = (np-d)(p'-1)$, the above display is precisely the Carleson condition \eqref{eq:carleson-measure}.
\end{proof}

\addtocounter{other}{1} 
\subsection{Weighted estimates for paraproduct forms}\label{ss:wPP}
The main result of this  paragraph, Proposition \ref{p:pp-weight}, stems from Remark \ref{rem:sparse} and is a weighted version of Proposition \ref{paraproduct-main}.
The Hardy paraproducts  are controlled by
 weighted Carleson measure norms, which we now define.
\begin{definition}[Weighted Carleson measure] Given  a weight $w$, and $1<p<\infty$, denote by $\sigma$ the $p$-th dual of $w$. Let $n \in  \mathbb N$ be a fixed integer.
Say $f \in \mathrm{Carl}^{n,p}(\Omega,w)$ if there exists $K>0$ such that either
\begin{align}
\label{e:w-carl-sub}  \sum_{P \le W} \|f\|_{L^p(\Sh_0(P),w)}^{pp'} \sigma(P) \ell(P)^{(n-d)p'} &\le K^{p'} \|f\|_{L^p(\Sh_0(P),w)}^{p} \\
	\label{e:w-carl-sup}\mbox{or} \quad \|f\|_{L^p(\Sh(W_j),w)} \left( \sum_{P \in [W,W_j^*]} \sigma(P)\ell(P)^{(n-d)p'}\right)^{1/p'} &\le K
\end{align}
holds uniformly over   $j \ge 1$, and all $W \in \SH_1(W_j^*)$. If this is the case, denote by $\|f\|_{\mathrm{Carl}^{n,p}(\Omega,w)}$ the least such constant $K$.
\end{definition}

If the symbol of the paraproduct form belongs to $\dot F^{0,-n}_{r,2}$,  we can combine Remark \ref{rem:sparse} with Proposition \ref{p:type3w} and achieve a weighted analogue of Proposition \ref{paraproduct-main} in terms of $[w]_{\mathrm{A}_p({\Omega^\circ})}$ and   $[w]_{\mathrm{RH}_s({\Omega^\circ})}$, provided that $r\geq ps'$.  
\begin{proposition}\label{p:pp-weight}
Let $1 \le n \le k$, $1<p,s<\infty$,  $w$ be a weight, $\sigma $ be the $p$-th dual of $w$.  
 Then,
\begin{equation}
\label{e:wprop0} \left\Vert \Pi_{\bb} : W^{n,p}(\Omega,w) \times L^{p'}(\Omega,\sigma) \right\Vert \lesssim \mathcal G \left([w]_{\A_p(\Omega^\circ)},[w]_{\RH_s(\Omega^\circ)} \right) \left( \|\bb\|_{F^{0,-n}_{ps',2}} + \|\bb\|_{\mathrm{Carl}^n_p(\Omega,w)} \right) \end{equation} with implicit constant depending on $p,s$.
Furthermore
\begin{equation}
\label{e:wprop2}\left\| \Pi_{\bb} : W^{n,p}(\Omega,w) \times L^{p'}(\Omega,\sigma) \right\| \lesssim   [w]_{\mathrm{A}_p(\Omega^\circ)}^{2\max\left\{1,\frac{1}{p-1}\right\}} \left( \|\bb\|_{F^{0,-n}_{\infty,2}} + \|\bb\|_{\mathrm{Carl}^n_p(\Omega,w)} \right). 
\end{equation}
\end{proposition}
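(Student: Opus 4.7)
The plan is to base both estimates on the sparse-plus-Hardy pointwise bound \eqref{e:Sd} from Remark \ref{rem:sparse}, treating the two summands by weighted maximal function bounds and a weighted tree Carleson embedding respectively. The first summand is a bilinear sparse form with exponent vector $(1,r')$ applied to $|f_1|+|\nabla^n f_1|$ and $f_2$, carrying the prefactor $\max\{r,r'\}^{1/2}\|\bb\|_{F^{0,-n}_{r,2}(\mathcal M)}$; the second is indexed by boundary cubes $W\in \W_{\mathsf{bd}}$ and couples the Hardy operator $\mathfrak I^n_W$ applied to $\nabla^t f_1$, $0\le t\le n$, with a local sparse form in $(f_2,\bb)$ supported on $\mathcal Z\cap \mathcal D_+(W)$.

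For the first summand, the equivalence \eqref{e:equival2} and the $\vec p=(1,r')$ case of Proposition \ref{p:type3w} are the main tools. To prove \eqref{e:wprop0}, set $r=ps'$ so that the reverse H\"older exponent $(r/p)'$ coincides with $s$; this yields a constant polynomial in $[w]_{\A_p(\Omega^\circ)}$ and $[w]_{\RH_s(\Omega^\circ)}$, with the $r^{1/2}=\max\{r,r'\}^{1/2}$ factor absorbed into the $p,s$-dependent implicit constant. For \eqref{e:wprop2}, use instead the inf-type second inequality of Proposition \ref{p:type3w} together with the embedding $\|\bb\|_{F^{0,-n}_{r,2}(\mathcal M)}\le \|\bb\|_{F^{0,-n}_{\infty,2}(\mathcal M)}$ from \eqref{e:Femb}: the two $r^{1/2}$ factors cancel under a common infimum, leaving $[w]_{\A_p(\Omega^\circ)}^{2\max\{1,1/(p-1)\}}$ multiplying the $F^{0,-n}_{\infty,2}$-norm of $\bb$.

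For the second summand, I would first collapse the inner local sparse form in $(f_2,\bb)$ via the $\vec p=(1,1)$ case of Proposition \ref{p:type3w}, producing a factor of $\|\bb\|_{L^p(\mathsf{w}W,w)}\|f_2\|_{L^{p'}(\mathsf{w}W,\sigma)}$ for each boundary cube $W$. The remaining weighted tree sum
\[
\sum_{j\ge 1}\sum_{W\in \SH(W_j)}\sum_{P\in [W,W_j^*]}\ell(P)^n\, \langle \nabla^t f_1\rangle_{\mathsf{w}P}\,\|\bb\|_{L^p(\mathsf{w}W,w)}\,\|f_2\|_{L^{p'}(\mathsf{w}W,\sigma)}
\]
would then be handled by replaying the proof of Lemma \ref{lemma:carl-emb} with test functions $F_1(P)=\ell(P)^{n-d/p}\|\nabla^t f_1\|_{L^p(\mathsf{w}P,w)}$, $F_2(W)=\|f_2\|_{L^{p'}(\mathsf{w}W,\sigma)}$, $\mu(W)=\|\bb\|_{L^p(\mathsf{w}W,w)}$, and weight $\rho$ calibrated by $\sigma(P)|P|^{-1}\ell(P)^{d}$. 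The definitions \eqref{e:w-carl-sub}--\eqref{e:w-carl-sup} of $\|\bb\|_{\mathrm{Carl}^{n,p}(\Omega,w)}$ are engineered precisely so that $\|\mu\|_{\mathrm{Carl}^p(\rho)}\lesssim \|\bb\|_{\mathrm{Carl}^{n,p}(\Omega,w)}$ in both the subcritical range $np\le d$ and the supercritical range $np>d$, after which finite overlap of $\{\Sh(W_j)\}$ and $\{\Sh_1(W_j^*)\}$ from Lemma \ref{l:windows}(S4) closes the $j$-sum.

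The main obstacle will be the sub/supercritical dichotomy inherent in \eqref{e:w-carl-sub}--\eqref{e:w-carl-sup}: unlike the unweighted condition \eqref{eq:carleson-measure}, the weighted version weaves $\sigma(P)$ into $\ell(P)^{(n-d)p'}$, and matching it against the tree Carleson condition \eqref{e:carleson-measure-rho} requires careful H\"older dualization in the supercritical case, where the condition is stated asymmetrically against $\sigma(P)$ rather than through a per-level stopping inequality. Once this calibration is secured, the tree-maximal bound $\|M_\nu\|_{\ell^p(\nu)\to \ell^p(\nu)}\lesssim 1$ used in the proof of Lemma \ref{lemma:carl-emb} transfers directly, as it depends only on the linear order on $\SH(W_j^*)$ and is independent of the underlying measure $\nu$.
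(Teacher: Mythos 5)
Your overall architecture matches the paper's proof exactly: start from the sparse-plus-Hardy domination \eqref{e:Sd} of Remark \ref{rem:sparse}; bound the sparse summand via the $\vec p=(1,r')$ case of Proposition \ref{p:type3w} with $r=ps'$ for \eqref{e:wprop0} and via the inf-type estimate there for \eqref{e:wprop2}; collapse the inner local sparse form over $\mathcal Z\cap\mathcal D_+(W)$ with the $\vec p=(1,1)$ case of Proposition \ref{p:type3w}; and then finish the boundary-cube tree sum through Lemma \ref{lemma:carl-emb} plus the finite overlap property (S4). This is the same chain of ideas the paper uses.

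There is, however, a genuine calibration error in your test functions for Lemma \ref{lemma:carl-emb}. You propose $F_1(P)=\ell(P)^{n-d/p}\|\nabla^t f_1\|_{L^p(\mathsf wP,w)}$ with $\rho(P)$ carrying only $\sigma(P)$. The tree embedding requires $\|F_1\|_{\ell^p(\SH_1(W_j^*))}\lesssim\|f_1\|_{W^{n,p}(\Omega,w)}$, but with your choice
\[
\|F_1\|_{\ell^p}^p=\sum_{P}\ell(P)^{np-d}\,\|\nabla^t f_1\|_{L^p(\mathsf wP,w)}^p,
\]
and in the subcritical range $np\le d$ the factor $\ell(P)^{np-d}\to\infty$ as $\ell(P)\to 0$, so this sum is \emph{not} controlled by $\|\nabla^t f_1\|_{L^p(\Omega,w)}^p$. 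A second, independent inconsistency is that with $\rho(P)^{p'}=\sigma(P)$ (which is what ``$\sigma(P)|P|^{-1}\ell(P)^d$'' simplifies to) the product $\rho(P)F_1(P)$ has exponent $n-d/p$ on $\ell(P)$, whereas the term you are bounding---$\ell(P)^n\langle\nabla^tf_1\rangle_{\mathsf wP}$ after the H\"older step---requires exponent $n-d$. The correct split is the one the paper uses: keep the full power in the weight, $\rho(P)=\sigma(P)^{1/p'}\ell(P)^{n-d}$, and take $F_1(P)=\|f_1\|_{W^{n,p}(\mathsf wP,w)}$ with no $\ell(P)$ factor. Then $\|F_1\|_{\ell^p}\lesssim\|f_1\|_{W^{n,p}(\Sh_1(W_j^*),w)}$ follows from finite overlap alone, $\|\mu\|_{\mathrm{Carl}^p(\rho)}$ is \emph{exactly} the quantity controlled by \eqref{e:w-carl-sub}, and in the supercritical alternative \eqref{e:w-carl-sup} one simply applies H\"older to the chain sum rather than the embedding lemma. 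With that correction, your plan closes.
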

\begin{proof} In view of Remark \ref{rem:sparse}, the claims will follow from  corresponding estimates for the two forms on the right hand side of \eqref{e:pf45a}. Let us begin with the form on the second line. 
Applying Proposition \ref{p:type3w} for the first step, and the definition of  $\|\bb\|_{\mathrm{Carl}^n_p(\Omega,w)}$ for the second
	\begin{equation}\label{e:Iw}\begin{split} & \quad \sum_{W\in \W_{\mathsf{bd}}} \left[ \sum_{ t=0}^n \mathfrak {I}^n_W(\nabla^t f_1) \right] \left[ \sum_{  Z\in \mathcal Z\cap \mathcal D_+(W)} |Z| \langle f_2 \rangle_{\mathsf{w}Z} \langle  \bb \rangle_{\mathsf{w}Z}\right] \lesssim [w]_{\mathrm{A}_p(\Omega)}^{\max\left\{1,\frac{1}{p-1}\right\}}\\   & \times \sum_{j \ge 1} \sum_{W \in \Sh(W_j)} \left[ \sum_{P \in [W,W_j^*]}  \|f_1\|_{W^{n,p}(P,w)} \sigma(P)^{\frac{1}{p'}}  \ell(P)^{n-d} \right]|W|  \l f_2 \sigma^{\frac{1}{p'}} \r_{p',W} \l \bb w^{\frac{1}{p} } \r_{p,W},
	\\ & \lesssim [w]_{\mathrm{A}_p(\Omega)}^{\max\left\{1,\frac{1}{p-1}\right\}}\|\bb\|_{\mathrm{Carl}^n_p(\Omega,w)} \|f_1\|_{W^{n,p}(\Omega,w)} \|f_2\|_{L^{p'}(\Omega,\sigma)} .
	\end{split}\end{equation}
 Let us detail how to obtain the second step above. If \eqref{e:w-carl-sup} is satisfied, then, for each $j \ge 1$, and $W \in \SH(W_j)$, estimate
	\[ \begin{split} & \quad \sum_{P \in [W,W_j^*]} \|f_1\|_{W^{n,p}(P,w)} \sigma(P)^{\frac{1}{p'}} \ell(P)^{n-d} \\ &\le \|f_1\|_{W^{n,p}(\Sh_1(W_j^*),w)} \left( \sum_{P \in [W,W_j^*]} \sigma(P) \ell(P)^{(n-d)p'} \right)^{\frac{1}{p'}}.\end{split}\]
In this case, the conclusion of \eqref{e:Iw} follows as in the proof of Lemma \ref{l:hardy-carl}, from H\"older's inequality and the finite overlap of the boundary windows. On the other hand, if \eqref{e:w-carl-sub} is satisfied, apply Lemma \ref{lemma:carl-emb} with
	\[ \rho(P) = \sigma(P)^{\frac{1}{p'}} \ell(P)^{n-d} , \ \mu(W) = \left \Vert \bb  \right \Vert _{L^p(\mathsf{w}W,w)} , \ F_1= \|f_1\|_{W^{n,p}(\mathsf {w} P,w)}, \ F_2= \left \Vert f_2  \right \Vert _{L^{p'}(\mathsf{w}W,\sigma)}\]
and again conclude \eqref{e:Iw} with the finite overlap of the boundary windows.
The remaining component of \eqref{e:pf45a} is controlled by applying the second or third estimates from Proposition \ref{p:type3w}.
\end{proof}

The following estimate which may be used in the supercritical range to give a condition which is easier to check than the weighted Carleson measure norm. The relevant weight characteristics are defined in \eqref{e:type3w} and \eqref{e:type1w}.
\begin{lemma}\label{l:Apd} Let $1<p<\infty$, $np >d$, and $1<r< \frac{np}{d}$. 
Then,
	\[ \|f\|_{\mathrm{Carl}^{n,p}(\Omega,w)} \lesssim \left({  [w]_{\mathrm A_p({\Omega^\circ})} [w]_{\mathrm A_{r}(\Omega)}} \right)^{\frac1p} \sup_{j \ge 1} \frac{\| f\|_{L^p(w,\Sh(W_j))}}{w(\Sh_1(W_j^*))^{\frac1p}} .\]
Furthermore,
 	\[ \|f\|_{\mathrm{Carl}^{n,p}(\Omega,w)} \lesssim \mathcal G \left([w]_{\mathrm A_{\min\{p,r\}}(\Omega)},[w]_{\mathrm{RH}_s(\Omega)} \right) \sup_{j \ge 1} \|f\|_{L^{\frac{sp}{s-1}}(\Sh(W_j))}.\]

\end{lemma}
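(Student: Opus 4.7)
The plan is to operate in the supercritical regime $np>d$, where the weighted Carleson norm is controlled by the supremum \eqref{e:w-carl-sup}. Fixing $j\ge 1$ and $W\in \SH_1(W_j^*)$, set
\[
T(W) \coloneqq \sum_{P\in[W,W_j^*]}\sigma(P)\ell(P)^{(n-d)p'}.
\]
The first claim then reduces to establishing, uniformly over $j$ and $W$,
\[
T(W) \lesssim \left([w]_{\A_p(\Omega^\circ)}[w]_{\A_r(\Omega)}\right)^{\frac{1}{p-1}} w(\Sh_1(W_j^*))^{-\frac{1}{p-1}},
\]
since then multiplying by $\|f\|_{L^p(\Sh(W_j),w)}^{p'}$, taking $1/p'$-th roots, and dividing by $w(\Sh_1(W_j^*))^{1/p}$ gives precisely the asserted supremum on the right.

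Each summand of $T(W)$ will be dominated using the two weight constants in tandem. The $\A_p(\Omega^\circ)$ condition applied at the Whitney cube $P$ provides $\sigma(P)\le [w]_{\A_p(\Omega^\circ)}^{1/(p-1)}|P|^{p'}/w(P)^{1/(p-1)}$. Next, I would pick a cube $Q$ of side length $\sim \ell$ enclosing $\Sh_1(W_j^*)$ and invoke the standard consequence $(|P|/|Q|)^r \le [w]_{\A_r(\Omega)}\, w(P)/w(Q\cap \Omega)$ of the $\A_r(\Omega)$ condition, replacing $w(Q\cap\Omega)$ by $w(\Sh_1(W_j^*))$ by the doubling of $\mathbf 1_\Omega w$ built into $\A_r$. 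Combining the two bounds and collecting powers of $\ell(P)$ leaves
\[
\sigma(P)\ell(P)^{(n-d)p'} \lesssim \frac{\left([w]_{\A_p(\Omega^\circ)}[w]_{\A_r(\Omega)}\right)^{1/(p-1)}|\Sh_1(W_j^*)|^{r/(p-1)}}{w(\Sh_1(W_j^*))^{1/(p-1)}}\,\ell(P)^{\alpha},\qquad \alpha\coloneqq \tfrac{np-dr}{p-1},
\]
where $\alpha>0$ exactly because $r<np/d$.

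To sum over the chain, I would use the admissibility properties (U1)--(U2a) for the oriented chain $[W,W_j^*]$ together with the H\"older uniformity bound \eqref{eq:d-ep}, which jointly ensure that only $O(1)$ Whitney cubes of each dyadic sidelength can appear in $[W,W_j^*]$. Since $\alpha>0$, the ensuing geometric series $\sum_P \ell(P)^\alpha$ is dominated by its largest term $\ell(W_j^*)^\alpha\sim \ell^\alpha$; together with $|\Sh_1(W_j^*)|^{r/(p-1)}\sim \ell^{dr/(p-1)}$, the $\ell$-powers collapse to $\ell^{np'}$, which is an absolute constant at the fixed scale $\ell$ from Lemma \ref{l:windows}(S1). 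This closes the bound on $T(W)$ and yields the first estimate.

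For the second statement, I would reduce to the first via Hölder with exponents $(s,s')$ and the reverse Hölder inequality for $\mathbf 1_\Omega w$:
\[
\|f\|_{L^p(\Sh(W_j),w)}\lesssim [w]_{\RH_s(\Omega)}^{1/p}\|f\|_{L^{sp/(s-1)}(\Sh(W_j))}\frac{w(\Sh_1(W_j^*))^{1/p}}{|\Sh(W_j)|^{(s-1)/(sp)}},
\]
where the passage from $w(\Sh(W_j))$ to $w(\Sh_1(W_j^*))$ is once more by doubling and $|\Sh(W_j)|$ is comparable to a constant by (S1). Dividing by $w(\Sh_1(W_j^*))^{1/p}$ and feeding into the first estimate, the second follows on using monotonicity $[w]_{\A_q(\Omega)}\ge [w]_{\A_{q'}(\Omega)}$ for $q\le q'$ to replace $[w]_{\A_p(\Omega^\circ)}\le[w]_{\A_{\min\{p,r\}}(\Omega)}$ and $[w]_{\A_r(\Omega)}\le[w]_{\A_{\min\{p,r\}}(\Omega)}$, absorbing all constants into the polynomial $\mathcal G$. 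The main obstacle is the combinatorial bookkeeping underpinning the third paragraph: verifying that an oriented admissible chain can contain only a bounded number of Whitney cubes at each dyadic scale, where the interplay between (U2a) and the H\"older uniformity is essential. A secondary technical point is the transfer of $\A_r$ inequalities from the enclosing cube $Q$ to the non-cube region $\Sh_1(W_j^*)$, but this is routine given the doubling of $\mathbf 1_\Omega w$.
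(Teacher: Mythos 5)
Your proof is correct and follows the paper's own route: bound $\sigma(P)$ via the $\A_p(\Omega^\circ)$ and $\A_r(\Omega)$ conditions to extract a positive power $\ell(P)^\alpha$, sum the geometric series along the oriented chain, and for the second estimate apply H\"older and reverse H\"older to transfer back to the first. The only slip is the appeal to \eqref{eq:d-ep} for the claim that an oriented chain contains $O(1)$ cubes per dyadic scale: this follows already from (U2a) and the disjointness of Whitney cubes (cubes of scale $\sim 2^{-i}\ell$ on the chain lie within distance $\lesssim 2^{-i}\ell$ of $W$), so the H\"older uniformity is not load-bearing for that step.
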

\begin{proof}
Recall that there exists $M>0$ such that $\Sh_1(W_j^*) \subset M W_j^*$ and thus for any $P \in \Sh_1( W_j^*)$, 
	\[ \ell(P)^{dr} \sim \frac{\ell(P)^{dr}}{\ell(MW_j^*)^{dr}} \le \frac{w(P)}{w(MW_j^* \cap \Omega)} [w]_{\mathrm A_r(\Omega) } \le \frac{w(P)}{w(\Sh_1(W_j^*))}[w]_{\mathrm A_r(\Omega)}. \]
This, together with the definition of the $\A_p$ characteristic, shows 
	\[ \begin{split}\sigma(P) &\le [w]_{\mathrm A_p({\Omega^\circ})}^{p'-1} w(P)^{1-p'} \ell(P)^{dp'}\\ & \le \left( [w]_{ \mathrm A_p(\Omega^\circ)} [w]_{ \mathrm A_r(\Omega)} \right)^{p'-1} \ell(P)^{dp'-dr(p'-1)} w(\Sh_1(W_j^*))^{1-p'}. \end{split}\]
Setting $\ep = \frac{np}{d}-r>0$, one can simplify the exponent to obtain 
	\[ \left( \sum_{P \in [W,W_j^*]} \sigma(P) \ell(P)^{(n-d)p'} \right)^{\frac{1}{p'}} \lesssim w( \Sh_1(W_j^*))^{-\frac{1}{p}} \left( \sum_{P \in [W,W_j^*]} \ell(P)^{d\ep(p'-1)} \right)^{\frac{1}{p'}}.\] 
Since $\ep>0$, the geometric sum is uniformly bounded over $W$ by  $\ep^{-1}$.
To prove the second estimate,  apply H\"older to $\|f\|_{L^p(\Sh(W_j),w)}=\|f w^{\frac1p} \|_{L^p(\Sh(W_j))}$ with the exponents  $s'$ and $s$. Finally, 
	\[ \|w\|_{L^s(MW_j \cap \Omega)} \lesssim [w]_{\RH_s(\Omega)} w(MW_j \cap \Omega) \lesssim [w]_{\RH_s(\Omega)} [w]_{\A_{\max\{p,r\}}(\Omega)}w(\Sh_1(W_j))\]
	using the definition of $\RH_s(\Omega)$ and the fact that $1 \sim |M W_j \cap \Omega| \sim |\Sh_1(W_j)|$.
\end{proof}

\section{Representation and Sobolev regularity of Calder\'on-Zygmund forms on domains}\label{sec:rep}

Throughout this section, $\Omega\subset \R^d$ is a fixed uniform domain and $\Lambda$ stands generically for  a continuous bilinear form on $\CL \times \CL$.
 The nonnegative integer $ k$ and the parameter   $0<\delta\le 1$ are  also thought to be fixed throughout. In the overall context of the article, they  respectively embody the   Sobolev space index to which we expect $f\mapsto \Lambda(f,\cdot)$ extends boundedly,  and the additional fractional smoothness index of the off-diagonal kernel of $\Lambda$ and its distributional derivatives.  For simplicity, the parameter $\delta$ is not kept explicit in the notation below.

\begin{definition}[$(k,\ell )$-singular integral form on $\Omega$]    \label{def:si} Let $\ell\in \{0,\ldots,k\}$. The form $\Lambda$ has the normalized \textit{$(k,\ell)$-weak boundedness} if  
	\begin{equation}\label{WBP}\tag{$\mathrm{WBP}_{k,\ell}$}   |{R}|\ell({R})^{\ell}|\Lambda( \chi_{R}, \zeta_{R} )| \le 1, \quad \forall  \chi_{R} \in \Phi^{k+\delta,\Subset}({R}), \, \zeta_{R} \in \Phi^{\ell+\delta,\Subset}({R}), \quad {R} \in\mathcal M.\end{equation}
The normalized \textit{$(k,\ell)$ kernel estimates} hold for $\Lambda$ if 
\[ f,g\in \mathcal U(\Omega),  \, \supp f \cap \supp g =\varnothing \implies \Lambda( f,g) = \int_{\Omega \times \Omega} K(x,y) f(y) g(x) \, \mathrm{d} x  \mathrm{d} y \]
for some $ K\in L^1_{\mathrm{loc}}(\{(x,y)\in\Omega  \times \Omega: x\neq y\}) $  satisfying  uniformly over $0\leq j\leq k-\ell$ the bounds
	\begin{equation}\label{eq:smooth-kernel}
	\begin{split}
	&  \sup_{\substack{P,Q\in \mathcal M\\ \mathfrak{d}(P,Q) \gg \max\{\ell(P),\ell(Q)\}   } } \mathfrak{d}(P,Q)^{d+j+\ell} \left\| \nabla_1^j K \right\|_{L^\infty(P\times Q)}\leq 1, 
	\\
	&
	\begin{array}{l}\quad\displaystyle \sup_{\substack{P\in \mathcal M, S\in \mathcal S\\ \mathfrak{d}(P,S) \gg \ell(S)   } }  \frac{\mathfrak{d}(P,S) ^{d+j+\ell+\delta} }{\ell(S) ^\delta} \\ \times
	\displaystyle \sup_{\substack{x,z\in S \\ y\in P   }}\left(  |\nabla_{1}^j K(z,y)-\nabla_{1}^j K(x,y)|  +  |\nabla_{1}^j K(y,z)-\nabla_{1}^j K(y,x)|\right) \end{array} \leq 1, 
\end{split} \end{equation}
where $\nabla_1^j$ refers to derivatives in the first variable of $K$. 
Say that $\Lambda$ is a \textit{$(k,\ell )$-singular integral form}, or $(k,\ell)$-form in short, if
$
c^{-1} \Lambda
$
has both the $(k,\ell)$ normalized weak-boundedness and the $(k,\ell)$ normalized kernel estimates for some $c>0$. If so, call $\|\Lambda\|_{\mathrm{SI}(k,\ell)}$   the smallest such $c$.  
\end{definition}
\begin{remark}  \label{rem:symm} Let $ \Lambda^{\mathrm t}(f,g)\coloneqq \Lambda(g,f)$. In general, when $0\leq\ell<k$ it is \textit {not} true that  $ \Lambda^{\mathrm t}$ is a $(k,\ell)$-form if $\Lambda$ is,  as the definition is not symmetric under exchange of the arguments. On the other hand,  simple inspection reveals the equality $\|\Lambda\|_{\mathrm{SI}(k,k)}= \|\Lambda^{\mathrm t}\|_{\mathrm{SI}(k,k)}$. This will be exploited below.
\end{remark}
\begin{remark}[Examples of $(k,\ell)$-forms] \label{rem:forms}With reference to Definition \ref{def:forms}, a bilinear wavelet form $\Lambda$ is a  $(k,0)$-form when $[\eta]\geq k$ and $\{\eta\}>\delta$. Indeed, the $(k,0)$-weak boundedness property holds by virtue of Proposition \ref{p:sparse}, which in turn relies on the almost-orthogonality considerations, cf. \cite[Prop.\ 2.5]{diplinio22wrt}, while the off-diagonal  $(k,0)$ kernel estimates follow from standard computations. 
Paraproducts \eqref{e:defpp} and their adjoints are $(k,\ell)$-forms under appropriate assumptions on the symbols $\bb$. Namely, standard computations show that for $0\leq \ell \leq k$, 
	\begin{equation}
		\label{e:SIquant} \left\| (f,g)\mapsto \Pi_\bb(f,g)\right\|_{\mathrm{SI}(k,\ell)} + \left\| (f,g)\mapsto \Pi_\bb(g,f)\right\|_{\mathrm{SI}(k,\ell)} \lesssim \| \bb\|_{F^{-\ell,0}_{\infty,\infty}}. 
	\end{equation}
Lemma \ref{prop:alpha} below provides more examples of $(k,\ell)$ forms for $0<\ell \leq k$. 
\end{remark}

\begin{lemma}\label{prop:alpha}
Let $\Lambda$ be a   $(k,\ell )$-form on $\Omega$. For any $0 < |\alpha| = j \le k-\ell$, define 
\begin{equation}
\label{e:alpha}
\Laa(f,g) \coloneqq (-1)^{j}\Lambda(f,\partial^\alpha g), \qquad f,g\in  \mathcal U(\Omega).\end{equation}
Then, $\Laa$ is a  $(k,\ell+j)$-form on $\Omega$ with
	$ \| \Laa\|_{\mathrm{SI}(k,\ell+j)} \lesssim \| \Lambda\|_{\mathrm{SI}(k,\ell)}.$
\end{lemma}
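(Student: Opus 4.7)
The plan is to verify separately the two ingredients of Definition \ref{def:si} for $\Laa$: the normalized $(k,\ell+j)$-kernel estimates and the weak boundedness property $\mathrm{WBP}_{k,\ell+j}$. The entire argument is mechanical: differentiating in the second argument shifts all scales by exactly $j$, which is absorbed by the index shift $\ell\mapsto \ell+j$.

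For the kernel estimates, I would first identify the off-diagonal kernel of $\Laa$. If $f,g\in\CL$ have disjoint supports, so do $f$ and $\partial^\alpha g$; since $K$ is smooth off the diagonal, a standard integration by parts $j$ times in the $x$-variable (no boundary contributions by the separation of supports) gives
\[
\Laa(f,g)=\int_{\Omega\times\Omega}K_\alpha(x,y)f(y)g(x)\,\mathrm{d} x\,\mathrm{d} y,\qquad K_\alpha(x,y)\coloneqq \partial_x^\alpha K(x,y).
\]
Then for $0\le j'\le k-\ell-j$, setting $j'':=j'+j\in[0,k-\ell]$, the identity $\nabla_1^{j'}K_\alpha=\nabla_1^{j''}K$ immediately reduces the size and H\"older estimates \eqref{eq:smooth-kernel} for $K_\alpha$ with index $j'+(\ell+j)$ to those of $K$ with index $j''+\ell$, which hold by the $(k,\ell)$-hypothesis on $\Lambda$. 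The second H\"older estimate, involving $\nabla_1^{j'}K_\alpha(y,\cdot)=\partial_y^{\alpha}\nabla_y^{j'}K(y,\cdot)$, reduces in the same fashion since this is a derivative of total order $j''$ in the first variable of $K$.

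For the weak boundedness, fix $R\in\mathcal M$, $\chi_R\in\Phi^{k+\delta,\Subset}(R)$ and $\zeta_R\in\Phi^{\ell+j+\delta,\Subset}(R)$. Writing $\zeta_R=|R|^{-1}\Sy_R\varphi$ with $\varphi$ supported in $\mathsf{w}R$ and satisfying $\|\partial^\beta\varphi\|_{1,\delta,\ell+j+\delta}\le 1$ for $|\beta|\le \ell+j$, the scaling identity
\[
\partial^\alpha\zeta_R=\ell(R)^{-j}\,|R|^{-1}\Sy_R(\partial^\alpha\varphi)
\]
combined with the trivial monotonicity $\|\cdot\|_{1,\delta,\ell+\delta}\le \|\cdot\|_{1,\delta,\ell+j+\delta}$ (applied to $\partial^\gamma(\partial^\alpha\varphi)$ for $|\gamma|\le \ell$) and with the obvious preservation of compact support under differentiation, yields $\ell(R)^{j}\partial^\alpha\zeta_R\in C\Phi^{\ell+\delta,\Subset}(R)$ for a constant depending only on $d$ and $k$. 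Applying $\mathrm{WBP}_{k,\ell}$ for $\Lambda$ to the pair $(\chi_R,\ell(R)^j\partial^\alpha\zeta_R)$ gives
\[
|R|\ell(R)^{\ell+j}|\Laa(\chi_R,\zeta_R)|=|R|\ell(R)^{\ell}|\Lambda(\chi_R,\ell(R)^j\partial^\alpha\zeta_R)|\lesssim \|\Lambda\|_{\mathrm{SI}(k,\ell)},
\]
which is the claim. The result contains no genuine analytic difficulty; the only point requiring care is the index bookkeeping, specifically verifying that the shifted differentiation order $j''=j'+j$ remains in $[0,k-\ell]$ so that the hypotheses on $\Lambda$ are available, and that the fractional smoothness parameter $\delta$ of the test class $\Phi^{\ell+j+\delta,\Subset}(R)$ is inherited by $\ell(R)^j\partial^\alpha\zeta_R$ in $\Phi^{\ell+\delta,\Subset}(R)$.
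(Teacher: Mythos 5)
Your proof is correct and follows essentially the same route as the paper: check $\mathrm{WBP}_{k,\ell+j}$ by observing that $\ell(R)^j\partial^\alpha\zeta_R\in C\Phi^{\ell+\delta,\Subset}(R)$ and then apply $\mathrm{WBP}_{k,\ell}$ for $\Lambda$; identify the off-diagonal kernel of $\Laa$ as $\partial_1^\alpha K$ by integration by parts and read off the shifted estimates from \eqref{eq:smooth-kernel}. One small imprecision worth fixing: the absence of boundary terms at $\partial\Omega$ in the integration by parts is not a consequence of the supports of $f$ and $g$ being disjoint (that only gives absolute convergence and smoothness of $K$ off the diagonal), but rather of $g$ and its derivatives vanishing on $\partial\Omega$, which is what the paper invokes.
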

\begin{proof} Normalize $\| \Lambda\|_{\mathrm{SI}(k,\ell)}=1$.
First we relate the $\mathrm{WBP}_{k,\ell}$ and the $\mathrm{WBP}_{k,\ell+j}$ bounds. If $\zeta_S \in \Phi^{\ell+j+\delta,\Subset}(S)$, then $\ell(S)^{j} \partial^{\alpha} \zeta_S \in \Phi^{\ell+\delta,\Subset}(S)$. So, for $\chi_S \in \Phi^{k+\delta,\Subset}(S) $,
	\[ \ell(S)^{d+\ell+j} |\Laa(\chi_S,\zeta_S)| = \ell(S)^{d+\ell}|\Lambda(\chi_S,\ell(Q)^j \partial^\alpha \zeta_S)| \le  \| \Lambda\|_{\mathrm{SI}(k,\ell)}=1.\] Second, we compare the kernels. For $f \in \mathcal U(\Omega)$ and $g \in {\mathcal U}(\Omega)$ with disjoint supports, 
	\[ \Laa(f,g) = (-1)^{j} \int_{\Omega\times \Omega}  K(x,y)f(y)\partial^\alpha g(x) \, \mathrm{d}x\mathrm{d}y\]
as the disjoint supports of $f$ and $g$ force the integrals to be absolutely convergent. Since $g$ and all its derivatives vanish on the boundary of $\Omega$, integrating by parts shows
	\[ \Laa(f,g) =  \int_{\Omega\times \Omega }\partial^\alpha_1 K(x,y)f(y)g(x) \, \mathrm{d}x\mathrm{d}y, \]
and by comparison, referring to \eqref{eq:smooth-kernel}, $\Laa$ has standard $(k,\ell+j)$ kernel estimates.
\end{proof}

\begin{definition}[Compressions] \label{def:comp}  This definition exploits the observation that the trivial extension of a function $f$ with $\mathrm{supp}\, f \subset \overline \Omega,  $ denoted as usual by $f\mathbf{1}_{\overline{\Omega}}$, belongs 
 to $\mathcal U(\R^d)$ provided $f\in \CL $ and  
 $\Omega$ is a uniform domain. 
Let $\Lambda $ be a $(k,\ell)$-form on $\Omega=\R^d$. The \textit{compression}  of $\Lambda$ to $\Omega$ is denoted by $\Lambda_\Omega$ and defined by
\begin{equation}
\label{e:compdef}
\Lambda_{\Omega}:  \mathcal U(\Omega) \times \mathcal U(\Omega)  \to \mathbb C, \qquad 
\Lambda_{\Omega}(f,g)\coloneqq \Lambda\big(f\mathbf{1}_{\overline{\Omega}},g\mathbf{1}_{\overline{\Omega}}\big).
\end{equation}
By inspection, it follows that $\Lambda_\Omega $ is a $(k,\ell)$-form on $\Omega$, and $\|\Lambda_\Omega\|_{\mathrm{SI}(k,\ell)} \leq  \|\Lambda\|_{\mathrm{SI}(k,\ell)}$. Of course, on the left side of the last inequality, the $\mathrm{SI}$ norm is referrring to the domain $\Omega$, while on the right side the corresponding domain is $\R^d$. Furthermore, with reference to \eqref{e:alpha} applied respectively for $\Omega=\R^d$   and for $\Omega=\Omega$, 
\begin{equation}
\label{e:compder}
\left(\partial^\alpha \Lambda\right)_\Omega = \partial^\alpha \Lambda_\Omega  
\end{equation}
 in the sense of equality of $(k,\ell+j)$-forms on $\Omega$.
 \end{definition}
\addtocounter{other}{1} 
\subsection{Cancellation and off-diagonal estimates} \label{r:action}
A suitable definition for the action of a $(k,k)$-form  $\fL$ on polynomials is needed in the sequel.
For each $R\in \mathcal M$, let $\iota_R$ be a smooth function  with $\mathbf{1}_{2\mathsf wR} \leq \iota_R \leq \mathbf{1}_{4\mathsf wR} $. Write
\[
\begin{cases}\zeta  \in \big\{ \chi_{R,j}: 1\leq j \leq \jmath \big\}  & R\in \mathcal W \\ 
\zeta =\varphi_{R} &   R\in \mathcal S
\end{cases}
\] 
with reference to \eqref{e:triebelref}.
Also, for each $\ep>0$ small, let $\Omega_\ep\coloneqq\{x\in \Omega: \mathrm{dist}(x,\partial \Omega)>\ep\}$ and pick $\varpi_\ep\in \CL$ which equals 1 on  $\Omega_\ep\coloneqq\{x\in \Omega: \mathrm{dist}(x,\partial \Omega)>\ep\}$.
If   $0\leq |\gamma| \leq k-1$, we may define
\begin{equation}
\label{e:limitint}\fL\big((\mathsf{1}-\iota_R) \mathsf{x}^\gamma,\zeta\big)\coloneqq  \lim_{\ep\to 0^+}  \fL\big(\varpi_\ep(\mathsf{1}-\iota_R) \mathsf{x}^\gamma,\zeta\big)= \int_{\Omega \times \Omega}  K(x,y)[1-\iota_R(y)]y^\gamma\zeta(x) \, \mathrm{d}y \, \mathrm{d}x
\end{equation}
where the  equality  is obtained using first disjointness of supports, then the dominated convergence theorem, leveraging the first estimate in  \eqref{eq:smooth-kernel} and the fact that $\mathsf x^\gamma$ is of degree $k-1$. In the case $R\in \mathcal S$, $\zeta=\varphi_R$ belongs to the cancellative subclass $\Psi^{k+1,\Subset}(R)$, so that definition \eqref{e:limitint} may be extended to $|\gamma|=k$, relying on the cancellation of $\varphi$ and the second bound in \eqref{eq:smooth-kernel} to obtain
an absolutely convergent integral.
In either case,  we are allowed to set
	\begin{equation}\label{eq:poly-action} \fL(\mathsf{x}^\gamma,\zeta) \coloneqq \fL(\iota_R \mathsf{x}^\gamma,\zeta)+ \fL\big((\mathsf{1}-\iota_R) \mathsf{x}^\gamma,\zeta\big)  \end{equation} by simply relying on the definition of $(k,k)$-form of $\fL$ for the first term.
 Relying on Remarks \ref{rem:proj}, \ref{rem:dense}, it is then possible to extend \eqref{eq:poly-action} by linearity  so that 
$
\fL(  \mathsf{x}^\gamma,g)
$  is defined for all $g\in \mathcal U(\Omega)$  for all $0\leq |\gamma|\leq k-1$,  and for all  $g\in {\Sc}\mathcal U(\Omega)$ when $ |\gamma|= k$.

The action of $\fL$ on polynomials is used to obtain almost diagonalization estimates with respect to the wavelet basis. Namely,  the next lemma shows the advantage of subtracting off the $m$-th  degree Taylor-type polynomial $\PM^{m+1}$ from \eqref{eq:sob-func-rep}. 
For $m \in \mathbb N$,  $S\in \mathcal S$, refer to \eqref{e:ASW0}, and  define 	\[ 
	\begin{split}  
 &\tau_{S}^m(\zeta_P) \coloneqq \left\{ \begin{array}{cl} \zeta_P - \PM^{m+1}_{S}(\zeta_P) &  P\in A(S), \\ \zeta_P & P \not\in A(S),  \end{array} \right. \qquad \zeta \in \Phi^{m+1}_{\Subset}(P).
	\end{split} \]
\begin{lemma}\label{lemma:t}
Let  $\fL$ be a $(k,k)$ form, $0\leq m \leq k$, $0<\eta<\delta$. Then 
	\begin{align}
	\label{eq:size-est}  &\sup_{\substack{\zeta_P  \in \Phi^{k+1,\Subset}(P) \\ \zeta_Q  \in \Phi^{k+1,\Subset}(Q)} }  |\fL(\zeta_P,\zeta_{Q})| +|\fL(\zeta_{Q},\zeta_P)|   \lesssim  \lb P,Q \rb_{0,k},  \\
&
	\sup_{\substack{\zeta_P  \in \Phi^{k+1,\Subset}(P) \\ \psi_S \in \Psi^{k+1,\Subset}(S)} }
	\label{eq:smooth-est} |\fL( \tau^m_{S}(\zeta_P),\psi_{S})| + |\fL(\psi_S, \tau^m_{S}(\zeta_P)| \lesssim   \lb P,S \rb_{\eta,m} \ell(S)^{m-k},	\end{align} 
uniformly over all $P\in \mathcal M, Q\in \mathcal W, S\in \mathcal S$ with $\ell(P)\geq \max\{\ell(Q),\ell(S)\}$.
\end{lemma}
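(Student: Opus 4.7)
The plan is to prove both estimates by splitting, in each case, into a \emph{far} regime where the test function supports are separated (so the kernel representation of $\fL$ applies directly) and a \emph{near} regime where WBP must be invoked after rescaling and cutoff. Throughout I normalize $\|\fL\|_{\mathrm{SI}(k,k)} = 1$, and observe that the symmetric estimates within each display follow from one another via the transpose form $\fL^{\mathrm t}$, also a $(k,k)$-form (Remark \ref{rem:symm}).

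For \eqref{eq:size-est}, first consider $\mathfrak{d}(P,Q) \gg \ell(P)$: the supports of $\zeta_P,\zeta_Q$ are disjoint, and the pointwise bound $|K(x,y)| \lesssim \mathfrak{d}(x,y)^{-d-k}$ from \eqref{eq:smooth-kernel} combined with the $L^1$-normalization of $\Phi^{k+1,\Subset}$ immediately yields the target $\lesssim \mathfrak{d}(P,Q)^{-d-k}$. In the remaining regime $\mathfrak{d}(P,Q) \sim \ell(P)$, I would decompose $\zeta_P = \zeta_P\chi + \zeta_P(1-\chi)$ with $\chi$ a smooth cutoff on scale $\ell(Q)$ centered at $c(Q)$. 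The outer piece $\fL(\zeta_P(1-\chi),\zeta_Q)$ has disjoint supports and is handled by annular kernel estimates about $c(Q)$, yielding $\lesssim \ell(P)^{-d-k}$. The inner piece requires Taylor-expanding $\zeta_P$ around $c(Q)$ up to order $k-1$: each monomial term $c_\gamma(y-c(Q))^\gamma$ with $|c_\gamma| \lesssim \ell(P)^{-d-|\gamma|}$ is treated via the action \eqref{eq:poly-action} of $\fL$ on polynomials combined with WBP at scale $Q$, while the Taylor remainder, of size $(\ell(Q)/\ell(P))^{k+1}\ell(P)^{-d}$ with matching derivative bounds, rescales into a fixed multiple of $\Phi^{k+\delta,\Subset}(Q)$ so that WBP at scale $Q$ closes the estimate.

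For \eqref{eq:smooth-est}, I distinguish whether $P \in A(S)$. If $P \notin A(S)$, then $|c(P)-c(S)| \gg \ell(P)$, $\tau_S^m(\zeta_P) = \zeta_P$, and the supports are disjoint. Exploiting the vanishing moment $\int\psi_S = 0$, write
\[
\fL(\zeta_P,\psi_S) = \iint [K(x,y)-K(c(S),y)]\psi_S(x)\zeta_P(y)\,\mathrm{d}x\mathrm{d}y,
\]
and invoke the Lipschitz-$\delta$ kernel estimate from \eqref{eq:smooth-kernel} to obtain $\ell(S)^\delta/\mathfrak{d}(P,S)^{d+k+\delta}$; since $\mathfrak{d}(P,S)\ge \ell(S)$ and $m \le k + \delta - \eta$, this is dominated by $\lb P,S\rb_{\eta,m}\ell(S)^{m-k}$. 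If $P \in A(S)$, then necessarily $\mathfrak{d}(P,S) \sim \ell(P)$ and the subtraction $\tau_S^m(\zeta_P) = \zeta_P - \PM^{m+1}_S\zeta_P$ becomes effective: the integral representation \eqref{eq:sob-func-rep} together with $\|\nabla^{m+1}\zeta_P\|_\infty \lesssim \ell(P)^{-d-m-1}$ shows $|\partial^\alpha \tau_S^m(\zeta_P)| \lesssim \ell(S)^{m+1-|\alpha|}\ell(P)^{-d-m-1}$ on a controlled dilate of $S$. After cutoff and renormalization by $(\ell(S)/\ell(P))^{m+1+d}$, the localized piece lies in a bounded multiple of $\Phi^{k+\eta,\Subset}(S)$ and WBP at scale $S$ delivers $\ell(S)^{m+1-k}/\ell(P)^{d+m+1}$, which improves the target by a factor $(\ell(S)/\ell(P))^{1-\eta}$. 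The polynomial tail $\PM^{m+1}_S\zeta_P(1-\chi)$ grows like $\ell(P)^{-d-m}|y-c(S)|^m$ globally but, paired against $\int\psi_S = 0$ via the Lipschitz-$\delta$ kernel, yields a polar-coordinate integral convergent at infinity (since $m < k+\delta$) that also matches the target.

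The principal technical obstacle lies in the near regime of \eqref{eq:size-est}: without cancellation on either test function, naive WBP at scale $\ell(Q)$ produces $\ell(P)^{-d}\ell(Q)^{-k}$, losing a factor $(\ell(P)/\ell(Q))^k$ whenever $\ell(Q) \ll \ell(P)$. Recovering the sharp decay $\ell(P)^{-d-k}$ demands combining WBP with the action \eqref{eq:poly-action} of $\fL$ on polynomials of degree at most $k$, implemented through a careful Taylor expansion of $\zeta_P$ centered at $c(Q)$. The analogous subtlety in \eqref{eq:smooth-est} is the non-compact support of $\tau_S^m(\zeta_P)$, whose polynomially growing tail is tamed only through the interplay between the cancellation of $\psi_S$ and the Lipschitz-$\delta$ kernel estimate.
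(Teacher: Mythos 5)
Your treatment of \eqref{eq:size-est} contains a gap that the paper sidesteps with a geometric observation you did not use. The lemma specifies $Q\in\mathcal W$ and $\ell(P)\ge\ell(Q)$. Because $Q$ is a Whitney cube, if $\mathsf w P$ and $\mathsf w Q$ intersect then $\ell(P)\sim\ell(Q)$ (this follows from (W2)--(W4) together with the usual upper bound $\mathrm{dist}(Q,\Omega^c)\lesssim\ell(Q)$, since $P\subseteq W$ for some $W\in\mathcal W$ forces $\mathrm{dist}(\mathsf w P,\Omega^c)\gtrsim\ell(W)\ge\ell(P)$, while $\mathrm{dist}(\mathsf w Q,\Omega^c)\lesssim\mathsf w\ell(Q)$). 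Thus the supposedly problematic regime ``$\mathfrak d(P,Q)\sim\ell(P)$ while $\ell(Q)\ll\ell(P)$'' simply does not occur: one either has comparable scales (then \eqref{WBP} applies directly) or disjoint supports (then the size kernel bound applies). The paper's proof is exactly this two-line dichotomy plus the transpose symmetry of Remark \ref{rem:symm}.

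Your proposed workaround for that phantom regime does not close. After Taylor expanding $\zeta_P$ about $c(Q)$, the degree-$\gamma$ term contributes $c_\gamma\,\fL(\Tr_Q\mathsf x^\gamma,\zeta_Q)$ with $|c_\gamma|\lesssim\ell(P)^{-d-|\gamma|}$, and from \eqref{eq:poly-action} together with the kernel bound and WBP one only obtains $|\fL(\Tr_Q\mathsf x^\gamma,\zeta_Q)|\lesssim\ell(Q)^{|\gamma|-k}$ (as also reflected in \eqref{e:Finfty}). The resulting contribution $\ell(P)^{-d-|\gamma|}\ell(Q)^{|\gamma|-k}$ exceeds the target $\ell(P)^{-d-k}$ by a factor $(\ell(P)/\ell(Q))^{k-|\gamma|}\gg 1$ for every $|\gamma|<k$ when $\ell(Q)\ll\ell(P)$; only the Taylor remainder closes. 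The ``principal technical obstacle'' you flag is precisely the paraproduct contribution that the representation theorem isolates separately; it cannot be absorbed by a naive Taylor--WBP argument, and luckily it does not need to be because the regime is empty.

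Your sketch for \eqref{eq:smooth-est}—splitting according to $P\in A(S)$ vs.\ $P\notin A(S)$, using the vanishing moments of $\psi_S$ and the Lipschitz-$\delta$ kernel estimate in the far regime, and the polynomial subtraction $\tau^m_S$ together with localization/rescaling and WBP in the near regime—is structurally aligned with the argument the paper references (\cite{diplinio22wrt}, Lemma 4.16). One small imprecision: after cutoff and renormalization the localized piece actually retains $k+1$ full derivatives, so it belongs to $C\Phi^{k+1,\Subset}(S)\subset C\Phi^{k+\delta,\Subset}(S)$; your assertion that it lies only in $\Phi^{k+\eta,\Subset}(S)$ would not suffice for \eqref{WBP} since $\eta<\delta$ makes that a strictly larger class.
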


\begin{proof}
The estimate \eqref{eq:smooth-est} is obtained in the same fashion as \cite{diplinio22wrt}*{Lemma 4.16}.
The bound \eqref{eq:size-est} comes directly from the first condition  in \eqref{eq:smooth-kernel} and \eqref{WBP}. Indeed, since $Q \in \mathcal W$, the cubes are either close and comparable in size, or far apart. In the first case, we use \eqref{WBP} to obtain $|\fL(\zeta_P, \zeta_{Q})| \lesssim \ell(Q)^{-d-k}$. Otherwise, 	\[ |\fL(\zeta_P,\zeta_{Q})| = \left|\int_{\Omega \times \Omega }K(x,y) \zeta_P(y)\zeta_{Q}(x) \,\mathrm{ d}x \mathrm dy \right| \lesssim \mathfrak d(P,Q)^{-d-k}\] from  the first estimate in \eqref{eq:smooth-kernel}. The bound for $|\fL(\zeta_{Q},\zeta_P)|$ follows by symmetry, cf.\ Remark \ref{rem:symm}. 
\end{proof}

\addtocounter{other}{1} 
\subsection{Testing conditions} Associate to the  $(k,0)$-form  $\Lambda$  the vector form 
	\begin{equation}\label{e:gradient} 
		\nabla^k \Lambda \coloneqq \left( \partial^\alpha \Lambda : |\alpha|=k \right). 
	\end{equation}
Lemma \ref{prop:alpha} tells us that each component of $\nabla^k \Lambda$ is a $(k,k)$-form. 
For each multi-index $\alpha$ with $|\alpha|=k$, define the \textit{paraproduct symbols of $\Lambda$ of order $(\alpha,\gamma)$} by the sequences 
\begin{equation} \label{def:cz}
	\begin{split}
	& \;\,\bb^{\alpha,\gamma}=\{b^{\alpha,\gamma}_{W,j}: W\in \W, 1\leq j \leq \jmath\}\cup\{b^{\alpha,\gamma}_S: S\in\Sc\},  
	\\ &
	\begin{array}{lllll}
		b^{\alpha,\gamma}_{W,j} &\coloneqq \partial^\alpha \Lambda\big(  \Tr_W\mathsf{x}^\gamma ,\chi_{W,j}\big),
		&b^{\alpha,\gamma}_{S} &\coloneqq \partial^\alpha \Lambda\big( \Tr_S\mathsf{x}^\gamma,\varphi_S\big),
		 &   \gamma\in \{0,\ldots,k-1\}^d ;\\[6pt]
		 		b^{\alpha,\gamma}_{W,j} &\coloneqq 0, 
		&b^{\alpha,\gamma}_{S} &\coloneqq \partial^\alpha \Lambda\big(   \Tr_S\mathsf{x}^\gamma,\varphi_S\big),
		 &  |\gamma| = k;
		 \\[6pt]
		b^{\alpha,\gamma}_{W,j} &\coloneqq 0, 
		&b^{\alpha,\gamma}_{S} &\coloneqq   \partial^\alpha \Lambda(\varphi_S,\mathbf 1) &
		\gamma=\star.
		\end{array}
		\end{split}
	\end{equation}
With reference to \eqref{e:seq-id}, form each function $\bb^{\alpha,\gamma}$ from the corresponding sequence of wavelet coefficients. It is immediate that 
	\begin{equation}\label{e:Finfty} \|\bb^{\alpha,\gamma}\|_{F^{|\gamma|-k,0}_{\infty,\infty}} \lesssim \|\Lambda\|_{\mathrm{SI}(k,0)}.\end{equation} 
However, stronger testing type conditions  on the symbols $\bb^{\alpha,\gamma}$ are needed to ensure Sobolev space bounds for the form $\Lambda$.
To this aim, for $1<p\leq q<\infty$,  introduce the norms
\begin{align}
 \label{e:Fn}
\|\Lambda\|_{\dot F(k,p,q)}&\coloneqq  \sup_{|\alpha|=k} \left[  \left\|\bb^{\alpha,\star}\right\|_{\dot F^{-k,0}_{1,2}(\mathcal M)} 
+ \sup_{ |\gamma|<k-\lfloor \frac dp \rfloor  } \left\|\bb^{\alpha,\gamma}\right\|_{ \dot F^{0,|\gamma|-k}_{p,2}(\mathcal M) }\right.
\\ \nonumber &\left.  \qquad\quad  
+\sup_{k-\lfloor \frac dp \rfloor \leq |\gamma|\leq k-1} \left\|\bb^{\alpha,\gamma}\right\|_{ \dot F^{0,|\gamma|-k}_{q,2}(\mathcal M) }  
 + \sup_{  |\gamma| = k}   \left\|\bb^{\alpha,\gamma}\right\|_{{\dot F^{0,0}_{1,2}(\mathcal M)} } \right], \\
\label{e:carln}
\|\Lambda\|_{\mathrm{Carl}(k,p)} &\coloneqq  \sup_{\substack{|\alpha|=k \\ |\gamma| \leq k-1}  }
\left\|\bb^{\alpha,\gamma}\right\|_{\mathrm{Carl}^{k-|\gamma|,p}(\Omega) },
\\
\label{e:czn}
\|\Lambda\|_{\mathrm{PP}(k,p,q)} &\coloneqq \|\Lambda\|_{\dot F(k,p,q)}+  \|\Lambda\|_{\mathrm{Carl}(k,p)}.
\end{align}
When $p=q$ in \eqref{e:Fn} or \eqref{e:czn}, we simply write $\dot F(k,p)$ or $\mathrm{PP}(k,p)$ instead.
Referring  to Definition \ref{def:pp},   associate to $\Lambda$ the vector of paraproduct forms  
	\[  
	\vec \Pi_\Lambda^k(f,g) 
		\coloneqq \left( \Pi_{\mathfrak{b}^{\alpha,\star}}(g,f) 
		+ \sum_{\gamma \in \{0,\ldots,k \}^d} \Pi_{\mathfrak b^{\alpha,\gamma}}(\partial^\gamma f,g) :|\alpha|=k \right).
	\]
The   norms in \eqref{e:czn} are relevant as Proposition \ref{paraproduct-main}, together with \eqref{e:Fembdot} and \eqref{e:Finfty}, immediately yield the estimates
\begin{equation}
\label{e:ppestthm}
\left|\vec \Pi_\Lambda^k(f,g) \right| \lesssim \begin{cases} \left[ \|\Lambda\|_{\mathrm{SI}(k,0)}+\|\Lambda\|_{\mathrm{PP}(k,p)}\right]  \|f\|_{W^{k,(p,1)}(\Omega)} \|g\|_{L^{p'}(\Omega)}   & 1<p<\infty,\\[6pt] \left[ \|\Lambda\|_{ \mathrm{SI}(k,0)}+\|\Lambda\|_{\mathrm{PP}(k,p,q)}\right]   \|f\|_{W^{k,p}(\Omega)} \|g\|_{L^{p'}(\Omega)}   & 1<p<q<\infty.
\end{cases}
\end{equation}
The next theorem  shows that the $k$-th derivative of a $(k,0)$ form $\nabla^k\Lambda(f,g)$ differs from the associated paraproduct vector by a (constant multiple of) a wavelet form acting on $(\nabla^k f,g)$. The testing type Corollary \ref{cor:sob} below  then ensues as an immediate consequence of \eqref{e:ppestthm}.
\begin{theorem}\label{thm:main}
Let $\Omega \subset \mathbb R^d$ be a uniform domain and $\Lambda$  be a $(k,0)$-form on $\Omega$ with $\|\Lambda\|_{\mathrm{SI}(k,0)}=1$. Then, there exists a wavelet form $\mathrm V$ such that 
	\[ \nabla^k \Lambda( f, g ) = C\mathrm V(\nabla^k f,g) + \vec \Pi^k_\Lambda(f,g) , \qquad f,g \in \mathcal U(\Omega).\] \end{theorem}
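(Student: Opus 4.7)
Fix $|\alpha|=k$ and write $\fL \coloneqq \partial^\alpha \Lambda$, which is a $(k,k)$-form by Lemma \ref{prop:alpha}. The plan is to expand both entries of $\fL(f,g)$ via the Triebel reproducing formula \eqref{e:triebel} and reorganize the resulting quadruple sum into (i) a wavelet form on $(\nabla^k f, g)$ and (ii) paraproduct remainders. Pairs of wavelets $(R_1,R_2)$ with $R_1$ indexing $f$ split by which of $\mathcal W$ or $\mathcal S$ each belongs to and by whether $\ell(R_1) \geq \ell(R_2)$ or conversely; the second case is treated analogously using that $\fL^{\mathrm t}$ is also a $(k,k)$-form by Remark \ref{rem:symm}.

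The core mechanism is to convert the wavelet coefficient of $f$ on the larger cube $R_1$ into a coefficient of $\nabla^k f$ via the anti-integration by parts identities of \S2.7, extracting any polynomial correction that cannot be so converted. If $R_1 = S \in \mathcal S$, \eqref{eq:ibp-psi} directly yields $\varphi_S(f) = \ell(S)^k \varphi_S^{-k}(\nabla^k f)$ without correction. If $R_1 = W \in \mathcal W$, write $\chi_{W,j}(f) = \chi_{W,j}(f - \PM^k_W f) + \chi_{W,j}(\PM^k_W f)$; the first summand becomes $\ell(W)^k \chi_{W,j}^{-k}(\nabla^k f)$ via \eqref{eq:pot-q}, while the polynomial remainder is reserved for the paraproduct. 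In the doubly noncancellative case, one subtracts the polynomial relative to $W_2$ rather than $W_1$ and uses the telescoping Lemma \ref{lemma:tele} to distribute the difference along the Whitney chain $[W_1,W_2]$ with weights $\ell(W) \mathfrak d(W,W_1)^{k-1}$.

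After these conversions, the off-diagonal coefficients $\fL(\varphi_S^{-k},\varphi_{S'})$, $\fL(\chi_{W,j}^{-k},\varphi_{S'})$, and the noncancellative-noncancellative analogue are controlled by the size estimate \eqref{eq:size-est} and the smoothness estimate \eqref{eq:smooth-est} of Lemma \ref{lemma:t} (the latter applied after subtracting suitable Taylor-type corrections $\tau^m_S(\zeta_P)$ from the smaller wavelet). The averaging Lemmas \ref{lemma:avg-1} and \ref{lemma:avg-2} then certify that the summations over $R_1$ at fixed $R_2$, and over intermediate cubes of the chain $[W_1,W_2]$ at fixed endpoints, produce kernels in the classes $\Phi^{0,\rho}_2$, $\Psi^{\eta,\rho}$ required by Definition \ref{def:forms}. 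This assembles the wavelet form $\mathrm V(\nabla^k f, g)$; the $\ep$-H\"older uniform hypothesis enters essentially via Lemma \ref{lemma:whitney} through Lemma \ref{lemma:avg-2}.

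For the paraproduct contributions, expand the reserved polynomials $\PM^k_W f$ using the integral representation \eqref{eq:sob-func-rep}, so that the polynomial coefficients are pointwise values of $\partial^\gamma f$ for $|\gamma|<k$. The reproducing identity \eqref{e:poly-basis} of Lemma \ref{l:pp-avg} then converts these pointwise values back into wavelet coefficients of $\partial^\gamma f$, and one recognizes the resulting expression, through the definition \eqref{def:cz} and the well-defined action \eqref{eq:poly-action} of $\fL$ on polynomials of degree less than $k$, as precisely $\Pi_{\bb^{\alpha,\gamma}}(\partial^\gamma f, g)$. The analogous manipulation on the $g$-side in the noncancellative case yields $\Pi_{\bb^{\alpha,\star}}(g,f)$, with the appearance of $\mathbf 1$ in $b^{\alpha,\star}_S = \fL(\varphi_S,\mathbf 1)$ reflecting that the $f$-side has already been reduced to $\nabla^k f$. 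The $|\gamma|=k$ symbols arise from residual cancellative-cancellative pairings that couple the polynomial expansions on the two sides. The main obstacle will be the combinatorial bookkeeping: one must verify that every intermediate polynomial piece either cancels or reassembles \emph{exactly} into the symbols of \eqref{def:cz}, with no extraneous lower-order terms, and that the triple sums generated by the telescoping \ref{lemma:tele} genuinely define wavelet kernels. These two points are precisely what \eqref{e:q-avg} of Lemma \ref{l:pp-avg} and Lemma \ref{lemma:avg-2} are designed to handle.
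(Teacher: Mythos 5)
Your proposal follows essentially the same route as the paper: Triebel expansion, $\PM^k$-polynomial subtraction, anti--integration by parts \eqref{eq:pot-q}/\eqref{eq:ibp-psi}, telescoping along Whitney chains (Lemma \ref{lemma:tele}), the almost-diagonal estimates of Lemma \ref{lemma:t}, the averaging Lemmata \ref{lemma:avg-1} and \ref{lemma:avg-2} to certify the wavelet-form classes, and reassembly of polynomial remainders via Lemma \ref{l:pp-avg}. You also correctly locate where the $\ep$-H\"older uniformity enters (Lemma \ref{lemma:whitney}) and flag the polynomial bookkeeping as the main difficulty.

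The one place the proposal as written would go wrong is the choice of cube at which $\PM^k$ is subtracted, and this is precisely the bookkeeping you flag. You subtract $\PM^k_W f$ at the $f$-side Whitney cube $W=R_1$ in general, switching to the $g$-side cube $W_2$ only in the doubly noncancellative case; these two choices are inconsistent, and it is the second one the paper applies uniformly. In the paper's $\Sigma_1$ the subtraction is $\PM^k_Q f$ with $Q$ the $g$-side Whitney cube, and in its $\Sigma_2$ it is $\PM^k_{W(R)} f$ with $W(R)$ the Whitney parent of the $g$-side cancellative cube $R\in\mathcal S$. The point of this consistent $g$-side choice is that the reserved polynomial contributions $\Sigma_3$ are then of the form $\fL(\PM^k_{Q} f,\chi_{Q,j})\,\overline{\chi_{Q,j}}(g)$ and $\fL(\PM^k_{W(R)} f,\varphi_R)\,\overline{\varphi_R}(g)$, each involving a single test cube; a Taylor expansion of $\PM^k$ about $c(Q)$ (resp.\ $c(R)$) converts these directly, via \eqref{e:LT}, into $\sum_\gamma b^{\alpha,\gamma}_Q\, \upsilon_{Q,c(Q),k-|\gamma|}(\partial^\gamma f)\, \overline{\chi_{Q,j}}(g)$ and its cancellative analogue, i.e.\ into the $\alpha$-component of $\vec\Pi^k_\Lambda$ exactly. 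It also produces the delicate cancellation between the residual $\Sigma_{2,\mathrm p,\mathrm b}$ (Taylor polynomial of a top wavelet applied to $\PM^k_{W(R)}f$) and the cancellative half of $\Sigma_3$, without which extraneous terms survive. If one instead reserves $\chi_{W,j}(\PM^k_W f)\,\fL(\chi_{W,j},\varphi_R)\,\overline{\varphi_R}(g)$ with $W$ and $R$ distinct, the symbol $\bb^{\alpha,\gamma}$ evaluated at a single cube does not emerge and this cancellation does not occur. The fix is to adopt the $g$-side subtraction everywhere and always telescope from the $f$-side Whitney cube to the $g$-side Whitney parent; with that correction, the remainder of your outline matches the paper.
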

\begin{cor}\label{cor:sob}
Let $\Lambda$ be a $(k,0)$-form on a uniform domain $\Omega$. 
Then the adjoint operator 
\begin{equation}
\label{e:adjT}
T: \mathcal U(\Omega) \mapsto  (\mathcal U(\Omega))', \qquad \langle Tf,g\rangle\coloneqq \Lambda(f,g)
\end{equation}
possesses the bounded extension properties
 \begin{align} \|\nabla^k T f\|_{L^p(\Omega)} &\lesssim \left[ \|\Lambda\|_{\mathrm{SI}(k,0)}+\|\Lambda\|_{ \mathrm{PP}(k,p)}\right]  \|f\|_{W^{k,(p,1)}(\Omega)}, \qquad 1<p<\infty,
\\
 \|\nabla^k T f\|_{L^p(\Omega)} &\lesssim\left[ \|\Lambda\|_{\mathrm{SI}(k,0)}+\|\Lambda\|_{\mathrm{PP}(k,p,q)}\right] \|f\|_{W^{k,p}(\Omega)}, \qquad 1<p<q<\infty. \end{align}
\end{cor}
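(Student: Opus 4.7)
My plan is to derive Corollary~\ref{cor:sob} as a formal consequence of Theorem~\ref{thm:main}, the $L^p$ bounds for wavelet forms (Proposition~\ref{p:sparse} together with Remark~\ref{r:Lp}), and the paraproduct testing estimate \eqref{e:ppestthm}, and then dualize against the test function $g$ to convert the resulting bilinear bound into the claimed $L^p$ bound on $\nabla^k Tf$.

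First I would record the duality identity. For $f\in\mathcal U(\Omega)$ and $g\in\mathcal C^\infty_0(\Omega)$, the definition $\partial^\alpha\Lambda(f,g)=(-1)^{|\alpha|}\Lambda(f,\partial^\alpha g)$ from Lemma~\ref{prop:alpha}, combined with the standard distributional identity $\langle \partial^\alpha Tf,g\rangle=(-1)^{|\alpha|}\langle Tf,\partial^\alpha g\rangle$, yields
\[
\langle \nabla^k Tf,g\rangle_{\Omega}=\nabla^k\Lambda(f,g).
\]
Since $\mathcal C^\infty_0(\Omega)\subset\mathcal U(\Omega)$ is dense in $L^{p'}(\Omega)$ for $1<p'<\infty$, one has
\[
\|\nabla^k Tf\|_{L^p(\Omega)}=\sup\bigl\{\,|\nabla^k\Lambda(f,g)| : g\in\mathcal C^\infty_0(\Omega),\ \|g\|_{L^{p'}(\Omega)}\le 1\,\bigr\},
\]
so it suffices to bound the bilinear form $\nabla^k\Lambda(f,g)$.

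Next I would invoke Theorem~\ref{thm:main} to decompose
\[
\nabla^k\Lambda(f,g)=C\,\mathrm V(\nabla^k f,g)+\vec\Pi^k_\Lambda(f,g).
\]
The wavelet form $\mathrm V$ is a bilinear Calder\'on--Zygmund form on $\R^d$, so Proposition~\ref{p:sparse} and \eqref{e:Lpbds} give
\[
|\mathrm V(\nabla^k f,g)|\lesssim \|\nabla^k f\|_{L^p(\Omega)}\|g\|_{L^{p'}(\Omega)}\lesssim \|f\|_{W^{k,(p,1)}(\Omega)}\|g\|_{L^{p'}(\Omega)},
\]
where the last step uses the elementary Lorentz embedding $L^{p,1}\hookrightarrow L^p$ for $1<p<\infty$, so the same bound is \emph{a fortiori} controlled by $\|f\|_{W^{k,p}(\Omega)}\|g\|_{L^{p'}(\Omega)}$. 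For the paraproduct contribution, \eqref{e:ppestthm} directly delivers exactly the two right-hand sides claimed in the corollary, according to whether one works in the Lorentz-Sobolev scale or in the $1<p<q<\infty$ regime.

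Combining these estimates and taking the supremum over admissible $g$ yields the two displayed inequalities for $f\in\mathcal U(\Omega)$; density of $\mathcal U(\Omega)$ in $W^{k,(p,1)}(\Omega)$ and in $W^{k,p}(\Omega)$, recorded in Remark~\ref{rem:dense}, then produces the bounded extension of $T$ asserted in the statement. I do not anticipate any genuine obstacle at this stage: all the substantial analytic work has been absorbed into the wavelet resolution Theorem~\ref{thm:main} and the paraproduct characterization Proposition~\ref{paraproduct-main}, so the corollary is a direct formal consequence once duality and density are unwound.
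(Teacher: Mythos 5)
Your proposal is correct and takes exactly the approach the paper intends: apply the resolution formula of Theorem~\ref{thm:main}, control the wavelet form $\mathrm V$ via the $L^p$ bounds of Proposition~\ref{p:sparse}/Remark~\ref{r:Lp}, control $\vec\Pi^k_\Lambda$ via \eqref{e:ppestthm}, and dualize against $g$. The paper records the corollary as an ``immediate consequence'' of precisely these ingredients, and your write-up simply spells out the duality and density steps (the tacit rescaling of $\Lambda$ to $\|\Lambda\|_{\mathrm{SI}(k,0)}=1$ before invoking Theorem~\ref{thm:main} is standard and unproblematic).
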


Before the proof of Theorem \ref{thm:main}, we state  a weighted version of Corollary \ref{cor:sob} involving  
\begin{align}\label{e:carlnw}  \|\Lambda \|_{\mathrm{Carl}_w(k,p)} &\coloneqq  \sup_{\substack{|\alpha|=k \\ |\gamma| \leq k-1}  }
\left\|\bb^{\alpha,\gamma}\right\|_{\mathrm{Carl}^{k-|\gamma|,p}(\Omega,w) }.
\end{align}
In service of the reader, in the following result, we state the maximal dependence on the various weight characteristics. The bounds for certain terms from the representation in Theorem \ref{thm:main} may have sharper dependence, which  may  easily be tracked in the proof.

\begin{cor}\label{cor:weight} Let $\Lambda$ be a $(k,0)$-form with   adjoint $T$ as in \eqref{e:adjT}. Then, for $1<p,s<\infty$,
	\begin{align}
		\label{e:wcor1} \|\nabla^k T f \|_{L^p(\Omega,w)} 
			&\lesssim\begin{array}{l} \displaystyle \quad \mathcal G\left(  [w]_{\mathrm{A}_p(\Omega)}, [w]_{\RH_s(\Omega^\circ)}\right) \\ \times \displaystyle
			\left( \left \Vert \Lambda \right \Vert_{\mathrm{SI}(k,0)} + \|\Lambda\|_{\dot F(k,ps')}  +  \|\Lambda\|_{\mathrm{Carl}_w(k,p)} \right) \|f\|_{W^{k,p}(\Omega,w)},\end{array} \\ 
		\label{e:wcor2} \|\nabla^k T f \|_{L^p(\Omega,w)} 
			&\lesssim  \mathcal G\left( [w]_{\mathrm{A}_p(\Omega)} \right) 
			\left(  \left \Vert \Lambda \right \Vert_{\mathrm{SI}(k,0)} + \|\Lambda\|_{\dot F(k,\infty)}  +  \|\Lambda\|_{\mathrm{Carl}_w(k,p)} \right) \|f\|_{W^{k,p}(\Omega,w)}.
	\end{align}
\end{cor}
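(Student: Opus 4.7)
The plan combines Theorem \ref{thm:main} with the weighted estimates already established in Propositions \ref{p:WFw} and \ref{p:pp-weight}. By $L^p(\Omega,w)$--$L^{p'}(\Omega,\sigma)$ duality, where $\sigma$ is the $p$-th dual of $w$, the proof reduces to controlling $|\nabla^k \Lambda(f,g)|$ uniformly over $g\in \mathcal U(\Omega)$ with $\|g\|_{L^{p'}(\Omega,\sigma)}=1$. Invoking Theorem \ref{thm:main},
\[
\nabla^k \Lambda(f,g) = C\,\mathrm V(\nabla^k f,g) + \vec\Pi^k_\Lambda(f,g), \qquad C\lesssim \|\Lambda\|_{\mathrm{SI}(k,0)},
\]
with $\mathrm V$ a wavelet form; each summand must be bounded by $\|f\|_{W^{k,p}(\Omega,w)}\|g\|_{L^{p'}(\Omega,\sigma)}$ times a polynomial in the weight and symbol characteristics. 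The wavelet form piece is handled by a direct application of Proposition \ref{p:WFw} to $\mathrm V(\nabla^k f\mathbf 1_\Omega, g\mathbf 1_\Omega)$: the resulting factor $[w]_{\mathrm A_p(\Omega^{\mathsf{sp}})}^{\max\{1,(p-1)^{-1}\}}$ is absorbed into the prefactor $\mathcal G$ in both \eqref{e:wcor1} and \eqref{e:wcor2}.

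For the main paraproduct components $\Pi_{\bb^{\alpha,\gamma}}(\partial^\gamma f,g)$ with $0\leq|\gamma|\leq k-1$, I note that $\partial^\gamma f \in W^{k-|\gamma|,p}(\Omega,w)$ with norm dominated by $\|f\|_{W^{k,p}(\Omega,w)}$, and apply Proposition \ref{p:pp-weight} with $n=k-|\gamma|$. Specifically, for \eqref{e:wcor1} use \eqref{e:wprop0}: the resulting symbol norm $\|\bb^{\alpha,\gamma}\|_{F^{0,|\gamma|-k}_{ps',2}}$ is controlled, via the embedding \eqref{e:Fembdot} and the standing bound \eqref{e:Finfty}, by $\|\Lambda\|_{\dot F(k,ps')}+\|\Lambda\|_{\mathrm{SI}(k,0)}$, while the Carleson part is immediately subsumed in $\|\Lambda\|_{\mathrm{Carl}_w(k,p)}$. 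For \eqref{e:wcor2} use \eqref{e:wprop2} analogously with the $F^{0,|\gamma|-k}_{\infty,2}$ norm replacing the $F^{0,|\gamma|-k}_{ps',2}$ norm. When $|\gamma|=k$, $\bb^{\alpha,\gamma}$ is purely cancellative and $\partial^\gamma f\in L^p(\Omega,w)$; identifying $\Pi_{\bb^{\alpha,\gamma}}(\partial^\gamma f,g)$ as a fully localized bilinear wavelet form with BMO-type symbol, Proposition \ref{p:WFw} delivers the required weighted bound with constant controlled by $\|\bb^{\alpha,\gamma}\|_{\dot F^{0,0}_{1,2}}$.

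The delicate step is the adjoint-type contribution $\Pi_{\bb^{\alpha,\star}}(g,f)$, which lies outside the scope of Proposition \ref{p:pp-weight}. Since $\mathcal W \bb^{\alpha,\star}=0$, the anti-integration-by-parts identity \eqref{eq:ibp-psi} applied to $\varphi_S(f)$ transforms
\[
\Pi_{\bb^{\alpha,\star}}(g,f) = \sum_{S\in\mathcal S} |S|\,\ell(S)^k\,b_S^{\alpha,\star}\,\zeta_S(g)\,\psi_S^{-k}(\nabla^k f),
\]
which is a fully localized bilinear wavelet form in $(g,\nabla^k f)$ whose structure constants coincide with the wavelet coefficients of an auxiliary function $\tilde\bb$ satisfying $\|\tilde\bb\|_{\dot F^{0,0}_{1,2}(\mathcal M)}=\|\bb^{\alpha,\star}\|_{\dot F^{-k,0}_{1,2}(\mathcal M)}$ by the scale shift encoded in \eqref{e:F}. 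Proposition \ref{p:WFw} then produces a bound with constant $[w]_{\mathrm A_p(\Omega^{\mathsf{sp}})}^{\max\{1,(p-1)^{-1}\}}\|\bb^{\alpha,\star}\|_{\dot F^{-k,0}_{1,2}}$, which is subsumed by the $\mathcal G\cdot\|\Lambda\|_{\dot F(k,\cdot)}$ contribution on the right side of both \eqref{e:wcor1} and \eqref{e:wcor2}.

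Assembling the three classes of estimates, summing over the finitely many multi-indices $|\alpha|=k$, and dualizing back completes the proof. The principal obstacle I anticipate is twofold: first, the precise identification of $\Pi_{\bb^{\alpha,\star}}$ after integration by parts as a wavelet form suitable for Proposition \ref{p:WFw}, including the verification that $\psi_S^{-k}\in C\Psi^{\delta,\Subset}(S)$ supplies the cancellative wavelet required by Definition \ref{def:forms}; and second, the bookkeeping across the $F$-versus-$\dot F$ embeddings and weight characteristics $[w]_{\mathrm A_p(\Omega^\circ)}\leq [w]_{\mathrm A_p(\Omega^{\mathsf{sp}})}$ (cf.\ Lemma \ref{l:weights2ord}c) needed to reconcile the constants produced by Propositions \ref{p:WFw} and \ref{p:pp-weight} with the form of $\mathcal G$ in the statement. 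No new harmonic-analytic ingredient beyond those developed in \S\ref{sec:pp}--\ref{s:sparse} is required.
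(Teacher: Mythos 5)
Your overall architecture matches the paper's: apply Theorem~\ref{thm:main}, bound the wavelet piece $\mathrm V(\nabla^k f,g)$ via Proposition~\ref{p:WFw}, and bound the paraproducts $\Pi_{\bb^{\alpha,\gamma}}(\partial^\gamma f,g)$ for $0\le|\gamma|\le k-1$ via Proposition~\ref{p:pp-weight} (with $n=k-|\gamma|$), reconciling $F$ vs.\ $\dot F$ norms through \eqref{e:Fembdot} and \eqref{e:Finfty}. That much is sound and essentially the paper's argument. The $\A_p(\Omega^\circ)$-to-$\mathrm A_p(\Omega^{\mathsf{sp}})$ comparison is only polynomial, not a direct inequality as you wrote, but Lemma~\ref{l:weights2ord}c still suffices since $\mathcal G$ is a polynomial.

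The genuine gap is in your treatment of the cancellative paraproducts, i.e.\ $\Pi_{\bb^{\alpha,\gamma}}(\partial^\gamma f,g)$ with $|\gamma|=k$ and $\Pi_{\bb^{\alpha,\star}}(g,f)$. You claim Proposition~\ref{p:WFw} ``delivers the required weighted bound with constant controlled by $\|\bb^{\alpha,\gamma}\|_{\dot F^{0,0}_{1,2}}$'' (resp.\ $\|\bb^{\alpha,\star}\|_{\dot F^{-k,0}_{1,2}}$). This cannot work as stated: Proposition~\ref{p:WFw} concerns bilinear wavelet forms as in Definition~\ref{def:forms}, which carry \emph{no} free coefficient sequence, and its conclusion \eqref{e:formeraw} has no symbol factor. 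If you genuinely cast $\sum_S|S|\ell(S)^k b^{\alpha,\star}_S\zeta_S(g)\psi_S^{-k}(\nabla^k f)$ as a bilinear wavelet form, the multiplicative constant you absorb in front is $\sup_S|\ell(S)^k b_S^{\alpha,\star}|=\|\bb^{\alpha,\star}\|_{\dot F^{-k,0}_{\infty,\infty}}$, which is \emph{not} the $\dot F^{-k,0}_{1,2}$ ($\mathrm{BMO}$-type) norm appearing in $\|\Lambda\|_{\dot F(k,\cdot)}$; the two norms are not comparable (embedding \eqref{e:Femb} requires the inner Lebesgue exponent to decrease, and here it would need to increase from $q=2$ to $q=\infty$). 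To extract the $\mathrm{BMO}$-type norm one must keep the \emph{trilinear} structure $\Pi_\bb(f_1,f_2)=\Lambda(\bb,f_2,f_1)$ from \eqref{e:lambdap}, invoke the second estimate of Proposition~\ref{p:sparse2} (which has an explicit $\|\cdot\|_{\mathrm{bmo}(\Omega)}$ factor on the first argument), and then apply the first estimate of Proposition~\ref{p:type3w} (the $\vec p=(1,1)$ case) to $\|\mathrm M_{\Omega^\circ}(g,\partial^\gamma f)\|_{L^1(\Omega)}$. This is exactly the paper's route; your identification of the relevant anti-integration-by-parts \eqref{eq:ibp-psi} for the $\star$ term is correct, but the final domination step must use Propositions~\ref{p:sparse2} and \ref{p:type3w}, not Proposition~\ref{p:WFw}.
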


\begin{proof}
 It suffices to bound each term appearing in the representation of $\nabla^k \Lambda$ in Theorem \ref{thm:main}. Let $\sigma$ be the $p$-th dual weight of $w$ and $g \in L^{p'}(\Omega,\sigma)$ with unit norm. First, by Proposition \ref{p:WFw},
	\[  \abs{\mathrm{V}(\nabla^k f,g)} \lesssim [w]_{\mathrm{A}_p(\Omega)}^{\max\left\{1,\frac{1}{p-1} \right\} }  \left \Vert \nabla^k f \right \Vert_{L^p(\Omega,w)}.\] 
Next, by the second estimate in Proposition \ref{p:sparse2} and the first estimate in Proposition \ref{p:type3w}, 
	\[ \begin{aligned}& \quad 
	\abs{\Pi_{\bb^{\alpha,\gamma} }(\partial^\gamma f,g)} + \abs{\Pi_{\bb^{\alpha,\star}}(g,f)}\\ & \lesssim [w]_{\mathrm{A}_p({\Omega^\circ})}^{\max\left\{1,\tfrac{1}{p-1} \right\} } \left( \left\Vert \bb^{\alpha,\gamma} \right\Vert_{\dot F^{0,0}_{1,2}(\mathcal M)} + \left\Vert \bb^{\alpha,\star} \right\Vert_{\dot F^{-k,0}_{1,2}(\mathcal M)} \right) \|\nabla^k f\|_{L^{p}(\Omega,w)}
	\end{aligned} \]
uniformly over $|\alpha|=|\gamma|=k$. For the remaining paraproduct forms $\Pi_{\bb^{\alpha,\gamma}}(\partial^\gamma f,g)$ for $|\alpha|=k$ and $|\gamma| \le k-1$, we will apply Proposition \ref{p:pp-weight} with $n=k-|\gamma|$ to obtain 	
\[ \begin{split}
	&\quad\;\abs{ \Pi_{\bb^{\alpha,\gamma}}(\partial^\gamma f, g) } \\ &\lesssim \left\{ \begin{array}{l} \mathcal G\left([w]_{\A_p(\Omega^\circ)},[w]_{\RH_s(\Omega^\circ)} \right) \left( \left\Vert \bb^{\alpha,\gamma} \right \Vert_{\dot F^{0,|\gamma|-k}_{ps', 2}(\mathcal M)} + \left\Vert \bb^{\alpha,\gamma} \right\Vert_{\mathrm{Carl}^{n, p}(\Omega, w)} \right) \|\partial^\gamma f\|_{W^{k-|\gamma|,p}(\Omega, w)}, \\
~[w]_{\mathrm{A}_p({\Omega^\circ})}^{2\max\left\{ 1, \frac{1}{p-1} \right\} }  \left( \left\Vert \bb^{\alpha,\gamma} \right \Vert_{\dot F^{0,|\gamma|-k}_{\infty, 2}(\mathcal M)} + \left\Vert \bb^{\alpha,\gamma} \right\Vert_{\mathrm{Carl}^{n, p}(\Omega, w)} \right) \|\partial^\gamma f\|_{W^{k-|\gamma|,p}(\Omega, w)}. \end{array} \right.\end{split} \]
Observing that $\|\partial^\gamma f\|_{W^{k-|\gamma|,p}(\Omega,w)} \le \|f\|_{W^{k, p}(\Omega, w)}$, and recalling \eqref{e:Fn} and \eqref{e:carlnw} finishes the proof.
\end{proof}

\addtocounter{other}{1}
\subsection{Proof of Theorem \ref{thm:main}}
Let $|\alpha|=k$, set $\fL = \partial^\alpha \Lambda$, and let $0<\eta < \delta$. 
Let $f,g\in \mathcal U(\Omega)$. Referring to Proposition \ref{prop:triebel} to expand $g$ using the orthonormal basis $\mathfrak B^2(\Omega,k)$, and subsequently  introducing the polynomials $\PM^{k}_Q f$ and $\PM^{k}_{W(R)}f$ from \eqref{eq:sob-func-rep},
\begin{equation}\label{eq:decomp}\begin{aligned} 
\fL(f, g)
		&=\sum_{\substack{Q \in \W \\ 1\leq j\leq \jmath }}  |Q| \fL( f, \chi_{Q,j}) \overline{\chi_{Q,j}}(g) + \sum_{R \in \mathcal S} |R| \fL( f,\varphi_{R}) \overline{\varphi_{R}}(g)= \Sigma_1+\Sigma_2+\Sigma_3,
\\
\Sigma_1&\coloneqq\sum_{\substack{Q \in \W \\ 1\leq j\leq \jmath }}  |Q| \fL( f-\PM^{k}_Qf, \chi_{Q,j}) \overline{\chi_{Q,j}}(g),
\\ \Sigma_2&\coloneqq \sum_{R \in \mathcal S} |R| \fL( f-\PM^{k}_{W(R)}f,\varphi_{R}) \overline{\varphi_{R}}(g), 
\\ \Sigma_3&\coloneqq
		   \sum_{\substack{Q \in \W \\ 1\leq j\leq \jmath }}  |Q| \fL( \PM^{k}_Q f, \chi_{Q,j}) \overline{\chi_{Q,j}}(g) + \sum_{R \in \mathcal S} |R| \fL( \PM^{k}_{W(R)}f,\varphi_{R}) \overline{\varphi_{R}}(g) .
	\end{aligned}\end{equation}
Decomposing now $f-\PM_{Q}^{k}f$ in the basis $\mathfrak B^2(\Omega,k)$, subsequently introducing $\tau^0$, and using \eqref{e:ASW}, $\Sigma_1$ is the sum of the three terms
	\[ \begin{aligned}  
	  \Sigma_{1,\mathrm{t}}&\coloneqq \sum_{\substack{Q,P\in \W \\ 1\leq j,\ell \leq \jmath }} |Q| |P| \fL( \chi_{P,\ell},\chi_{Q, j}) \chi_{P,\ell}(f-\PM^{k}_{Q} f) \overline{\chi_{Q,j}}(g), \\
	\Sigma_{1,\mathrm{e}}&\coloneqq \sum_{\substack{S \in \mathcal S \\ Q \in W \\  1\leq j  \leq \jmath }} |S| |Q| \fL(\varphi_S, \tau^0_S(\chi_{Q,j})) \varphi_S(f) \overline{\chi_{Q,j}}(g), \\ 
	\\ \Sigma_{1,\mathrm{p},\star}&\coloneqq \sum_{\substack{S \in \mathcal S \\ Q \in \mathcal W \\  1\leq j  \leq \jmath }} |S| |Q| \fL(\varphi_S, \PM^1_S(\chi_{Q,j})) \varphi_S(f) \overline{\chi_{Q,j}}(g).   \end{aligned} \]
The vanishing moments of $\varphi_{S}$ allowed us to erase the polynomial $\PM^{k}_Q f$ in $\Sigma_{1,\e}$ and $\Sigma_{1,\mathrm{p},\star}$. Moreover, by \eqref{e:ASW} and support considerations of $\chi_{Q,j}$ and $\theta_S$, the summation in $\Sigma_{1,\mathrm{p},\star}$ is equivalent to the summation over $S \in \mathcal S$, $Q \in A(S) \cap \mathcal W$.
The main theme throughout this proof is to use the averaging  Lemmata \ref{lemma:avg-1} and  \ref{lemma:avg-2} to collapse every double or triple sum into a single sum, thereby diagonalizing the form. To do so, we will rely on the decay of the matrix coefficients proved in Lemma \ref{lemma:t}. 
Let us begin with $\Sigma_{1,\mathrm t}$. Using the decomposition \eqref{eq:tele} of $\chi_{P,\ell}(f-\PM^{k}_{Q}f)$ from Lemma \ref{lemma:tele},
	\begin{equation}\label{eq:s1t}\begin{split} \Sigma_{1,\mathrm t} &= \sum_{W \in\mathcal W} |W| \eta_W( \nabla^k f,  g), \\ \eta_W &\coloneqq \sum_{\substack{P,Q \in \mathcal W \\ W \in [P,Q]\\ 1\leq j,\ell  \leq \jmath }} \frac{|P| |Q|}{|W|} \ell(W) \mathfrak d(W,P)^{k-1}  \fL(\chi_{P,\ell}, \chi_{Q,j}) \chi_{P,W}^{-k} \otimes \overline{\chi_{Q,j}}.\end{split} \end{equation}
	Applying the estimate \eqref{eq:size-est}  from Lemma \ref{lemma:t} to $\fL(\chi_{P,\ell}, \chi_{Q,j})$  together with \eqref{eq:avg-theta} in Lemma \ref{lemma:avg-2}, we obtain that  $\eta_W$ belongs to $C \Phi^{0,k}_2(W)$.
 A similar procedure will be carried out for $\Sigma_{1,\mathrm e}$. Here   $f$ is paired with a cancellative wavelet, so that  appealing to \eqref{eq:ibp-psi}, 
	\begin{equation} \label{eq:s1e} \Sigma_{1,\mathrm e} = \sum_{W \in \mathcal W} |W| \phi_W(\nabla^k f,  g), \quad \phi_{W} \coloneqq \sum_{\substack{Q \in \mathcal W \\ S \in \mathcal D(W) }} \frac{|S| |Q|}{|W|} \fL(\varphi_{S},\chi_{Q}) \ell(S)^k\varphi_S^{-k} \otimes \overline{\chi_{Q}}.\end{equation}
We aim to show that $\phi_W$ is of the form \eqref{eq:avg-phi1} from Lemma \ref{lemma:avg-1}. To do so, we claim
	\begin{equation}\label{eq:coeff1} \ell(S)^k|\fL(\varphi_S,\tau^0_S(\chi_{Q}))| \lesssim \ell(S)^\eta \lb S,Q \rb_{0,\eta}. \end{equation} 
Indeed, if $\ell(S) \ge \ell(Q)$,  \eqref{eq:coeff1} follows from \eqref{eq:size-est} in Lemma \ref{lemma:t}. On the other hand, if $\ell(S) \le \ell(Q)$, apply \eqref{eq:smooth-est}. Therefore $\phi_W \in C\Phi^{0,\eta}_{\varnothing}(W)$. 
Leave alone the term $\Sigma_{1,\mathrm p,\star}$ for now and proceed to $\Sigma_2$. 
Expanding $f-\PM^{k}_{W(R)}f$ into the basis $\mathcal B^2(\Omega,k)$, introducing $\tau^k$ and $\tau^0$, $\Sigma_2 $ is decomposed into the sum of the five terms
\[ \begin{aligned}
\Sigma_{2,\e}&\coloneqq \sum_{\substack{P \in \W \\ 1\leq \ell \leq \jmath \\ R\in \mathcal S }} |P||R| \fL (\tau^k_R(\chi_{P,\ell}),\varphi_{R}) \chi_{P,\ell}( f-\PM^{k}_{W(R)} f) \overline{\varphi_{R}}(g),
\\ \Sigma_{2,\mathrm {s}}&\coloneqq 
\sum_{\substack{S \in \Sc \\ R\in \mathcal S }} |S||R| \fL (\tau^k_R(\varphi_S),\tau^0_S(\varphi_{R})) \varphi_{S}(f) \overline{\varphi_{R}}(g) ,\\
\Sigma_{2,\mathrm p,\mathrm a} &\coloneqq
\sum_{\substack{R \in \mathcal S \\ P \in \W \\ 1\leq \ell \leq \jmath \\ }} |P||R| \fL (\PM^{k+1}_R(\chi_{P,\ell}),\varphi_{R}) \chi_{P,\ell}(f)\overline{\varphi_{R}}(g) \\ &\quad  + \sum_{\substack{R\in \Sc \\ S\in \mathcal S \cap A(R)}} |S||R| \fL (\PM^{k+1}_R(\varphi_S),\varphi_{R}) \varphi_{S}(f) \overline{\varphi_{R}}(g),
\\
\Sigma_{2,\mathrm p,\mathrm b}&\coloneqq 
-\sum_{\substack{R \in \mathcal S \\ P \in \W \\ 1\leq \ell \leq \jmath \\ }} |P||R| \fL (\PM^{k+1}_R(\chi_{P,\ell}),\varphi_{R}) \chi_{P,\ell}(\PM_{W(R)}^{k} f)\overline{\varphi_{R}}(g),
\\ \Sigma_{2,\mathrm p,\star}&\coloneqq \sum_{\substack{S \in \Sc \\ R\in \mathcal S \cap A(S) }} |S||R| \fL (\varphi_S,\PM^1_S(\varphi_{R})) \varphi_{S}(f) \overline{\varphi_{R}}(g).
\end{aligned} \]
As in the expansion of $\Sigma_1$, first we have relied on the vanishing moments of $\varphi_S$ in $\Sigma_{2,\mathrm{s}}$, the second term in $\Sigma_{2,\mathrm p ,\mathrm a}$, and $\Sigma_{2,\mathrm p ,\star}$. Second, the same reasoning applied to $\Sigma_{1,\mathrm p ,\star}$ shows the summation in the first term of $\Sigma_{2,\mathrm p,\mathrm a}$ could also be taken over $R \in \mathcal S$, $P \in A(R) \cap \mathcal W$.
Applying \eqref{eq:ibp-psi} in $\Sigma_{2,\mathrm s}$,
	\begin{equation} \label{eq:s2s}\Sigma_{2,\mathrm{s}} = \sum_{R \in \DW} |R| \psi_{R}(\nabla^k f)\overline{\varphi_{R}}(g), \quad \psi_{R} \coloneqq \sum_{S \in \mathcal S} |S| \ell(S)^k \fL(\tau^k_R(\varphi_S), \tau^0_S(\varphi_{R})) \varphi_S^{-k}.\end{equation}
In the case $\ell(S) \ge \ell(R)$, using the estimate \eqref{eq:smooth-est} with $m=k$ from Lemma \ref{lemma:t}, 
	\[ \ell(S)^k|\fL(\tau^k_R(\varphi_S),\tau^0_S(\varphi_{R}))| =  \ell(S)^k |\fL(\tau^k_{R}(\varphi_S),\varphi_{R})|\lesssim \ell(S)^k \lb S,R \rb_{\eta,k} \le \lb S,R\rb_{\eta,0}\] for each $\eta<\delta$.
On the other hand, if $\ell(S) \le \ell(R)$, argue as for \eqref{eq:coeff1}. So in both cases 
	\begin{equation}\label{eq:coeff2} \ell(R)^k|\fL(\tau^k_R(\varphi_S),\tau_S^0(\varphi_{R}))|  \lesssim \lb S,R \rb_{\eta,0}. \end{equation} 
This shows $\psi_{R}$ to be of the form \eqref{eq:avg-psi} in Lemma \ref{lemma:avg-1} and therefore $\psi_{R} \in C \Psi^{\eta,0}(R)$.
The term $\Sigma_{2,\mathrm e}$ will be handled like $\Sigma_{1,\mathrm e}$ above, but with the additional difficulty introduced by the telescoping  \eqref{eq:tele}, since $f$ is no longer paired with a cancellative wavelet. 
 So, decomposing $\chi_{P,\ell}( f-\PM^{k}_{W(R)} f) $ according to \eqref{eq:tele} in Lemma \ref{lemma:tele},
	\begin{equation}\label{eq:s2e}\begin{split}
	 \Sigma_{2,\mathrm e} &=\sum_{W \in \W}|S|  \upsilon_W(\nabla^k f, g) ,\\  \upsilon_W& \coloneqq \sum_{\substack{P \in \mathcal W\\ 1\leq \ell \leq \jmath \\ R \in \mathcal S \\  W\in [P,W(R)]}} \frac{|P| |R|}{|W|} \fL( \tau^k_R(\chi_{P,\ell}), \varphi_{R}) \ell(W) \mathfrak d(W,P)^{k-1} \chi_{P,W}^{-k} \otimes \overline{\varphi_{R}}. \end{split}\end{equation}
When $\ell(R) \ge \ell(P)$, apply \eqref{eq:size-est} from Lemma \ref{lemma:t} to see that $\abs{ \fL( \tau^k_R(\chi_{P,\ell}), \varphi_{R}) } = \abs{ \fL(\chi_{P,\ell},\varphi_{R})} \lesssim \lb P,R \rb_{0,k}$. For the other case, $\ell(R) \le \ell(P)$, by  \eqref{eq:smooth-est}, $\abs{\fL(\tau^k_{R}(\chi_{P,\ell}),\varphi_{R})}\lesssim \lb P,R \rb_{\eta,k}$.
	 It follows that $\upsilon_W$ is of the form \eqref{eq:avg-phi2} in Lemma \ref{lemma:avg-2} and belongs to $C\Phi^{0,\rho}_{2} (W)$ for any $\rho < k$.
Summarizing, we have shown
	\begin{equation}\label{e:summ} \Sigma_{1,\mathrm t} + \Sigma_{1,\mathrm e} + \Sigma_{2,\mathrm e} + \Sigma_{2,\mathrm s} = \sum_{W \in \W} |W| \left[\eta_W + \phi_W + \upsilon_W \right] (\nabla^k f,  g) + \sum_{R \in \mathcal S} |R| \psi_R(\nabla^k f) \overline{\varphi_R}(g), \end{equation}
which, since $\eta_W,\phi_W,\upsilon_W \in C\Phi^{0,\eta}_2(W)$ and $\psi_R,\varphi_R \in C\Psi^{\eta,0}(R)$, is a constant multiple of a wavelet form.
It remains to realize the terms $\Sigma_{3}$, $\Sigma_{1,\mathrm p,\star}$, $\Sigma_{2,\mathrm p,\mathrm a}$, $\Sigma_{2,\mathrm p ,\mathrm b}$, and $\Sigma_{2,\mathrm p,\star}$ as paraproduct forms. 
Let  $Q \in \mathcal M$ and $R \subset Q$.
Expanding $(x-y)^\alpha = \sum_{\gamma \le \alpha} \binom{\alpha}{\gamma} (x-c(R))^\gamma (c(R)-y)^{\alpha-\gamma}$,
	\[ \PM^{m}_Q f = \sum_{|\gamma| \le m-1} \frac{\Tr_R \mathsf{x}^\gamma}{\gamma!} \PM^{m-|\gamma|}_Q(\partial^\gamma f)(c(R)).\]
For each $m=0,1,\ldots,k-1$, refer to Lemma \ref{l:poly} to obtain a family 
	\[ \left\{\upsilon_{Q,c(R),m} \in C\Phi^{k+1,\Subset}(Q)  : Q \in \mathcal M, R \in \mathcal D_+(Q)\right\}\]
such that \eqref{e:ibp3} holds with $x=c(R)$. 
Then, by \eqref{def:cz}, for any $\zeta_R \in \mathfrak B(\Omega,k)$ and $Q \supset R$,
	\begin{equation}\label{e:LT} \fL(\PM^{m}_{Q} f,\zeta_R) =  
	\sum_{|\gamma| \le m-1} \mathfrak b^{\alpha,\gamma}_R \frac{\PM^{m-|\gamma|}_Q \partial^\gamma f(c(R))}{\gamma!} = \sum_{|\gamma| \le m-1} \mathfrak b^{\alpha,\gamma}_R \upsilon_{Q,c(R),m-|\gamma|}(\partial^\gamma f) .\end{equation}
In this way,
	\[ \begin{aligned} \Sigma_3 &= \sum_{|\gamma| \le k-1} \sum_{\substack{W \in \mathcal W \\ 1 \le j \le \jmath} } |W| \mathfrak b^{\alpha,\gamma}_{W,j} \upsilon_{W,c(W),k-|\gamma|}( \partial^\gamma f) \overline{\chi_{W,j}}(g) \\ &\quad + \sum_{R \in \mathcal S} |R| \mathfrak b^{\alpha,\gamma}_R \upsilon_{W(R),c(R),k-|\gamma|}(\partial^\gamma f) \overline{\varphi_{R}}(g). 	\end{aligned}\]
Likewise, applying \eqref{e:LT} together with Lemma \ref{l:pp-avg} \eqref{e:poly-basis}, Lemma \ref{l:poly} \eqref{e:ibp3},
	\[ \begin{aligned} \Sigma_{2,\mathrm p,\mathrm b} &= -\sum_{|\gamma| \le k} \sum_{R \in \mathcal S} |R| \mathfrak b^{\alpha,\gamma}_R \overline{\varphi_R}(g) \sum_{\substack{P \in \W \\ 1\leq \ell \leq \jmath \\ }} |P|  \PM^{k+1-|\gamma|}_R(\partial^\gamma \chi_{P,\ell})(c(R)) \chi_{P,\ell}(\PM_{W(R)}^{k} f)  \\
				&= -\sum_{|\gamma| \le k-1} \sum_{R \in \mathcal S} |R| \mathfrak b^{\alpha,\gamma}_R \upsilon_{W(R),c(R),k-|\gamma|} (\partial^\gamma f) \overline{\varphi_{R}}(g). 
		\end{aligned}\]
On the other hand, using \eqref{e:LT} and Lemma \ref{l:pp-avg} \eqref{e:q-avg},
	\[ \begin{aligned} 
	\Sigma_{2,\mathrm p,\mathrm a} &= \sum_{|\gamma| \le k} \sum_{R \in \mathcal S} |R| \mathfrak b^{\alpha,\gamma}_R \overline{\varphi_R}(g) \\ &\qquad
	\;\;  \times\left[ \sum_{\substack{P \in \mathcal W \\ 1 \le \ell \le \jmath} } |P| \PM^{k+1-|\gamma|}_R( \partial^\gamma \chi_{P,\ell} ) \chi_{P,\ell}(f) + \sum_{S \in \mathcal S} |S| \PM^{k+1-|\gamma|}_R( \varphi_S) \varphi_S(f) \right] \\
		&= \sum_{|\gamma| \le k} \sum_{R \in \mathcal S} |R|  \mathfrak b^{\alpha,\gamma}_R q_{R,k+1} (\partial^\gamma f) \overline{\varphi_R}(g) \\
	\Sigma_{1,\mathrm p, \star} + \Sigma_{2,\mathrm p,\star} &= \sum_{S \in \mathcal S} |S| \mathfrak b^{\alpha,\star}_S \varphi_S(f) \overline{q_{S,1}}(g).
	\end{aligned} \] 
The first term in $\Sigma_3$ combines with $\Sigma_{2,\mathrm p,\mathrm a}$ and $\Sigma_{1,\mathrm p, \star} + \Sigma_{2,\mathrm p,\star}$ to produce the $\alpha$-th component of $\vec \Pi_{\Lambda}^k$, while the last term of $\Sigma_3$ cancels out with $\Sigma_{2,\mathrm p,\mathrm b}$. 
This completes the proof of Theorem \ref{thm:main} in the cases $k\geq 1.$

Theorem \ref{thm:main} also holds when $k=0$ with the same approach, though certain modifications are made to account for the fact that $\PM^0 = 0$. Nonetheless, one arrives at \eqref{e:summ} with the drawback that the wavelets $\theta_W,\phi_W,\upsilon_W$ will not belong to $\Phi^{0,\eta}_{\varnothing}(W)$. Instead, they will be of the form $\zeta_W \otimes \chi_W$ where $\chi_W \in \Phi^{0,\Subset}(W)$ and $\zeta_W$ satisfies, for $x \in P$,
	\begin{equation}\label{e:bwave} | \zeta_W(x)| \lesssim \log \left(1+\frac{\ell(P)}{\ell(W)}\right) \frac 1{\mathfrak d(P,W)^d}. \end{equation}
The cancellative wavelets $\psi_R$ and $\varphi_R$ in \eqref{e:summ} remain unchanged. Let us demonstrate that these limited decay wavelets still produce an $L^2(\Omega) \times L^2(\Omega)$ bounded wavelet form.

\begin{proposition}
Let $\ep >0$ and $\Omega$ be a uniform domain. For $\zeta_W$ satisfying \eqref{e:bwave}, and $\chi_W \in \Phi^{0,\Subset}(W)$, 
	\[ \left| \sum_{W \in \mathcal W} |W| \zeta_W(f) \chi_W(g) \right| \lesssim \|f\|_{L^2(\Omega)} \|g\|_{L^{2}(\Omega)}.\]
\end{proposition}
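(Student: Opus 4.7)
The plan is to reduce the bilinear estimate to a single-sided $L^2(\Omega) \to \ell^2(\mathcal W)$ mapping property and then prove that property via a weighted Schur test tuned to the $\epsilon$-H\"older uniform geometry. Start with Cauchy–Schwarz in $W$:
\[
 \Bigl|\sum_{W\in\mathcal W}|W|\zeta_W(f)\chi_W(g)\Bigr|^2 \le \Bigl(\sum_{W}|W||\zeta_W(f)|^2\Bigr)\Bigl(\sum_{W}|W||\chi_W(g)|^2\Bigr).
\]
The second factor is bounded by $\|g\|_{L^2(\Omega)}^2$: because $\chi_W\in\Phi^{0,\Subset}(W)$ is $L^1$-normalized and compactly supported in $\mathsf wW$, one has $|W||\chi_W(g)|^2\lesssim \|g\|_{L^2(\mathsf wW)}^2$, and the finite overlap property (W4) sums this up. It therefore suffices to prove the Bessel-type bound $\sum_{W}|W||\zeta_W(f)|^2\lesssim \|f\|_{L^2(\Omega)}^2$.

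Using \eqref{e:bwave}, integrating $|\zeta_W|$ cube by cube against the Whitney partition of $\Omega$, and applying Cauchy–Schwarz on each $W'$, one obtains
\[
 |W|^{1/2}|\zeta_W(f)| \lesssim \sum_{W'\in\mathcal W} A(W,W')\,\|f\|_{L^2(W')}, \qquad A(W,W')\coloneqq \frac{|W|^{1/2}|W'|^{1/2}\log\!\bigl(1+\ell(W')/\ell(W)\bigr)}{\mathfrak d(W,W')^d}.
\]
Since the $W'$ are disjoint, $\sum_{W'}\|f\|_{L^2(W')}^2=\|f\|_{L^2(\Omega)}^2$, so the claim reduces to the $\ell^2(\mathcal W)$-boundedness of the symmetric matrix $A$.

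I would verify this via the weighted Schur test with power weights $u(W)=v(W)=\ell(W)^\alpha$, where $\alpha$ is to be tuned. Splitting $\mathcal W$ into scale–distance shells $\{W':\ell(W')=2^{-i}\ell(W),\ \mathfrak d(W,W')\sim 2^j\ell(W)\}$ and inserting the cardinality bound provided by \eqref{eq:d-ep}, the Schur sum $\sum_{W'}A(W,W')\ell(W')^\alpha/\ell(W)^\alpha$ is dominated by a double geometric series. In the small-scale range $i\ge 0$, absorbing $\log(1+2^{-i})\le 2^{-i}$ yields shell exponent $-j\epsilon + i(d/2 - 1-\epsilon-\alpha)$, summable in both variables provided $\alpha > d/2 - 1 - \epsilon$. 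In the large-scale range $i=-m<0$ (with $j\ge m$ forced by $\mathfrak d(W,W')\gtrsim \ell(W')$), the linear growth $\log(1+2^m)\sim m$ combines with the count $\lesssim 2^{(j-m)(d-\epsilon)}$ to give shell exponent $-j\epsilon + m(\alpha-d/2+\epsilon)$ and an overall factor $m$, summable provided $\alpha < d/2$. The admissible interval $\alpha\in(d/2-1-\epsilon,\,d/2)$ has length exactly $1+\epsilon$ and is nonempty; the symmetric Schur condition follows from $A(W,W')=A(W',W)$.

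The main obstacle is that the ordinary (unweighted) Schur test breaks whenever $d\ge 2(1+\epsilon)$: both $\|\zeta_W\|_{L^1(\Omega)}$ and $\sum_{W'}A(W,W')$ diverge logarithmically because of the $\log$ correction in \eqref{e:bwave}. Passing to weighted Schur testing with a power of $\ell(W)$ is therefore essential, and the $\epsilon$-gain in \eqref{eq:d-ep} is precisely what opens the range of admissible weights; this is the reason the $k=0$ step cannot simply be read off from the $\rho<k$ argument in Lemma \ref{lemma:avg-2}, which relied on the bound $\log s\le\eta^{-1}s^\eta$.
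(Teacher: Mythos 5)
Your overall strategy — decompose $f$ over Whitney cubes, reduce to the $\ell^2$-boundedness of the matrix $A(W,W')=\frac{|W|^{1/2}|W'|^{1/2}\log(1+\ell(W')/\ell(W))}{\mathfrak d(W,W')^d}$, and verify it by a weighted Schur test with power weights and shell counts from \eqref{eq:d-ep} — is essentially the same scheme the paper uses (the Cauchy--Schwarz reduction to a Bessel bound is a minor reorganization; the paper applies Schur directly to the bilinear form, arriving at the same matrix).

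There is, however, a genuine gap: the assertion $A(W,W')=A(W',W)$ is false, because the logarithm $\log(1+\ell(W')/\ell(W))$ is not symmetric in $(W,W')$ while all the other factors are. Consequently, the row sum $\sum_{W'}A(W,W')\ell(W')^\alpha/\ell(W)^\alpha$ does not determine the column sum $\sum_W A(W,W')\ell(W)^\alpha/\ell(W')^\alpha$, and Schur's test is not finished. If you redo the shell computation for the column sum you will find that the roles of the ``favourable'' and ``unfavourable'' log regime swap: for $\ell(W)\leq\ell(W')$ the log gives a mild growth $\sim\log(\ell(W')/\ell(W))$ rather than the decay you exploited, and the outcome is the constraint $d/2-\ep<\alpha<d/2+1$, whereas the row sum gives $d/2-1-\ep<\alpha<d/2$. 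The correct admissible interval is therefore the intersection $(d/2-\ep,\,d/2)$, of length $\ep$, not the interval $(d/2-1-\ep,\,d/2)$ of length $1+\ep$ you stated. Since $\ep>0$ this is still nonempty, so the proposition holds and the argument is repairable; but the column condition is exactly where the $\ep$-H\"older hypothesis is pinched ($\ep=0$ would close the window), so it must be written out rather than dismissed by symmetry.
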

\begin{proof}
Decompose $f = \sum_{P \in \mathcal W} f \mathbf 1_P$. By \eqref{e:bwave},
	\[ \left| \sum_{W \in \mathcal W} |W| \zeta_W(f) \chi_W(g) \right| \lesssim \sum_{W,P \in \mathcal W} \frac{|W| |P| }{ \mathfrak d(P,W)^{d}} \log \left( 1+\frac{\ell(P)}{\ell(W)} \right) \l f \r_P \l g \r_W.\]
By Schur's test, the claimed estimate will follow from
	\[ \sup_{W \in \mathcal W} \ell(W)^\eta \sum_{P \in \mathcal W} |P| \log\left( 1 + \frac{\ell(P)}{\ell(W)} \right) \frac{\ell(P)^{-\eta}}{\mathfrak d(P,W)^d} <\infty \]
for every $0 <\eta<\ep$. This can be established using the same reasoning used to prove \eqref{eq:est2} in the proof of Lemma \ref{lemma:whitney}.
\end{proof}

\section{Sobolev regularity of compressions}
\label{sec:compress} This section particularizes the main results of \S\ref{sec:rep} in to the  case of compressions of globally defined singular integral operators. The main results are Theorem \ref{thm:compress} and Corollary \ref{cor:compress}. All proofs relative to this section, as well as the proof of Theorem \ref{t:belt} we stated in the introduction, are given in the concluding \S\ref{pf:belt}.
 \addtocounter{other}{1} 
\subsection{Global $(k,\ell)$ forms}

In what follows, we refer to the $(k,\ell)$ forms of Definition \ref{def:si} for the case $\Omega=\R^d$ as \textit{global} $(k,\ell)$-forms.  In this case, according to Remark \ref{rem:global}, Theorem \ref{thm:main} holds with $\mathcal W = \varnothing$ and $\mathcal S = \mathcal D$, and the same holds for the   paraproducts of Definition \ref{def:pp}.  This occurrence of Theorem \ref{thm:main} has a counterpart in \cite{diplinio22wrt}*{Theorem A}. However, the latter result leads to testing and  cancellation conditions which are only necessary for \textit{homogeneous} Sobolev space $\dot W^{k,p}(\R^d)$ bounds for all $1<p<\infty$. Similar results on more general homogeneous smoothness spaces were also given in \cite{CHO2020,LiSun2022}. Restating the estimates in Corollaries \ref{cor:sob} and \ref{cor:weight} in the global case, where the $\mathrm{Carl}^{n,p}$ norms play no role due to $\mathcal W=\varnothing$, gives new results which we record explicitly here.

\begin{cor}\label{cor:gl} 
Let $\Lambda$ be a global $(k,0)$-form. Then the adjoint operator $T$ defined by \eqref{e:adjT}
 \begin{align} \|\nabla^k T f\|_{L^p(\mathbb R^d)} &\lesssim \left[ \|\Lambda\|_{\mathrm{SI}(k,0)}+\|\Lambda\|_{ \dot F(k,p)}\right]  \|f\|_{W^{k,(p,1)}(\mathbb R^d)}, \qquad 1<p<\infty,
\\
 \|\nabla^k T f\|_{L^p(\mathbb R^d)} &\lesssim\left[ \|\Lambda\|_{\mathrm{SI}(k,0)}+\|\Lambda\|_{\dot F(k,p,q)}\right] \|f\|_{W^{k,p}(\mathbb R^d)}, \qquad 1<p<q<\infty. \end{align}
Furthermore, for $1<s<\infty$,
	\begin{align}
		\label{e:wcor1gl} \|\nabla^k T f \|_{L^p(\mathbb R^d,w)} 
			&\lesssim \mathcal G\left(  [w]_{\mathrm{A}_p}, [w]_{\RH_s}\right) 
			\left( \left \Vert \Lambda \right \Vert_{\mathrm{SI}(k,0)} + \|\Lambda\|_{\dot F(k,ps')} \right) \|f\|_{W^{k,p}(\mathbb R^d,w)}, \\ 
		\label{e:wcor2gl} \|\nabla^k T f \|_{L^p(\mathbb R^d,w)} 
			&\lesssim  \mathcal G\left( [w]_{\mathrm{A}_p} \right) 
			\left( \left \Vert \Lambda \right \Vert_{\mathrm{SI}(k,0)} + \|\Lambda\|_{\dot F(k,\infty)} \right) \|f\|_{W^{k,p}(\mathbb R^d,w)}.
	\end{align}
\end{cor}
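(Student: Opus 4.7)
The plan is to simply specialize Corollaries \ref{cor:sob} and \ref{cor:weight} to the case $\Omega = \mathbb R^d$, invoking the convention of Remark \ref{rem:global} under which $\mathcal W = \varnothing$ and $\mathcal S = \mathcal D$. The entire argument reduces to verifying that, in this global setting, (i) the Carleson-measure norms that appear in $\|\Lambda\|_{\mathrm{PP}(k,p,q)}$ and in $\|\Lambda\|_{\mathrm{Carl}_w(k,p)}$ drop out and (ii) the several weight characteristics $[w]_{\mathrm A_p(\Omega^\circ)}$, $[w]_{\mathrm A_p(\Omega^{\mathsf{sp}})}$, $[w]_{\mathrm{RH}_s(\Omega^\circ)}$ collapse to the classical Muckenhoupt and reverse H\"older constants.

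For (i), I would first observe that the boundary cubes $\mathcal W_{\mathsf{bd}} \coloneqq \mathcal W \setminus \mathcal W_0$ form an empty collection when $\mathcal W = \varnothing$. Consequently, the extended shadows $\SH_1(W_j^*)$ indexing the Carleson-measure condition \eqref{eq:carleson-measure} are vacuous, so $\|\bb^{\alpha,\gamma}\|_{\mathrm{Carl}^{k-|\gamma|,p}(\Omega)} = 0$ for every $\alpha,\gamma$. Referring to \eqref{e:carln}--\eqref{e:czn} and \eqref{e:carlnw}, this immediately yields
\[
\|\Lambda\|_{\mathrm{PP}(k,p,q)} = \|\Lambda\|_{\dot F(k,p,q)}, \qquad \|\Lambda\|_{\mathrm{Carl}_w(k,p)} = 0.
\]

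For (ii), when $\Omega = \mathbb R^d$ the collection $\mathcal Q_\Omega^\circ$ in \eqref{e:locmul} equals the full collection of cubes, so \eqref{e:type3w} and \eqref{e:type1w} both reduce to $[w]_{\mathrm A_p}$ and $[w]_{\mathrm{RH}_s}$. By the classical sparse domination principle for the Hardy--Littlewood maximal function together with \eqref{e:equival1}, the constant $[w]_{\mathrm A_p(\Omega^{\mathsf{sp}})}$ defined in \eqref{e:weight} controls the $L^p(w)$ norm of the generic sparse operator, which in turn is bounded by $[w]_{\mathrm A_p}^{\max\{1,1/(p-1)\}}$ by \cite{lerner-a2-13}. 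The reverse bound $[w]_{\mathrm A_p} \lesssim [w]_{\mathrm A_p(\Omega^{\mathsf{sp}})}^{\max\{p,p'\}}$ is the first inequality of Lemma \ref{l:weights2ord}(c). Thus $[w]_{\mathrm A_p(\Omega^{\mathsf{sp}})}$ is a polynomial function of $[w]_{\mathrm A_p}$, and may be absorbed into the function $\mathcal G$.

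With these two reductions, the first pair of unweighted bounds is exactly Corollary \ref{cor:sob}, while the weighted estimates \eqref{e:wcor1gl} and \eqref{e:wcor2gl} follow from \eqref{e:wcor1} and \eqref{e:wcor2} respectively. There is no real obstacle beyond bookkeeping: the only subtle point is the identification of the sparse-operator weight constant \eqref{e:weight} with the classical $\mathrm A_p$ characteristic on $\mathbb R^d$, which is however a well documented fact in sparse domination theory (cf. the discussion following Lemma \ref{l:weights2ord}).
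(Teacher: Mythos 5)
Your proposal is correct and follows exactly the route the paper takes: the paper itself introduces Corollary \ref{cor:gl} with the remark that it is obtained by "restating the estimates in Corollaries \ref{cor:sob} and \ref{cor:weight} in the global case, where the $\mathrm{Carl}^{n,p}$ norms play no role due to $\mathcal W=\varnothing$." Your verifications that $\|\Lambda\|_{\mathrm{PP}(k,p,q)}=\|\Lambda\|_{\dot F(k,p,q)}$, that $\|\Lambda\|_{\mathrm{Carl}_w(k,p)}=0$, and that the weight characteristics $[w]_{\mathrm A_p(\Omega^{\circ})}$, $[w]_{\mathrm{RH}_s(\Omega^\circ)}$, $[w]_{\mathrm A_p(\Omega^{\mathsf{sp}})}$ reduce to (polynomial functions of) the classical $[w]_{\mathrm A_p}$ and $[w]_{\mathrm{RH}_s}$ are precisely the bookkeeping the paper leaves implicit.
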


 \addtocounter{other}{1} 
\subsection{Compression of global forms}
Let now $\Lambda$ be a global $(k,0)$-form, and denote by $T$ the corresponding   adjoint operator defined by \eqref{e:adjT}. 
 Fixing a uniform domain $\Omega\subset \R^d$ ,  refer to Definition \ref{def:comp} for $\Lambda_\Omega$, the compression of $\Lambda$ to $\Omega$, as  a $(k,0)$-form on $\Omega$. Below, denote by $T_\Omega$  the adjoint operator associated to the form  $ \Lambda_\Omega$ by \eqref{e:adjT}. When particularized to $\Lambda_\Omega$, the testing conditions of Corollary \ref{cor:sob} may be reformulated in terms of   the family of symbols
\begin{equation}\label{e:L-gl-pp} \begin{split}
& \aa^{\alpha,\gamma}=\left\{a^{\alpha,\gamma}_S\coloneqq \partial^\alpha \Lambda\big( \Tr_S\mathsf x^\gamma,\varphi_S \big): S\in\mathcal S\right\}, \qquad |\alpha|=k, |\gamma|\leq k
\end{split}
\end{equation}
obtained by testing $\Lambda$ on polynomials against the cancellative portion of the basis  $\mathfrak{B}(k,\Omega)$.   The specific connection is formalized in the next lemma.

\begin{lemma}\label{l:bbaa}
Let $\{ \bb^{\alpha,\gamma},\bb^{\alpha,\star} : |\gamma| \le k = |\alpha|\}$ be the paraproduct symbols of $\Lambda_\Omega$ according to Definition \ref{def:cz}. Then for any $|\alpha|=k,$ $1 \le p,q \le \infty$,
	\[  
	\sup_{n+m=-k} \|\bb^{\alpha,\star}\|_{\dot F^{n,m}_{p,q}(\mathcal M)} +	 
\sum_{|\gamma| \le k}\, \sup_{n+m=|\gamma|-k} \|\bb^{\alpha,\gamma} - \mathfrak a^{\alpha,\gamma} \|_{\dot F^{n,m}_{p,q}(\mathcal M)}  \lesssim \|\Lambda\|_{\mathrm{SI}(k,0)}	. 
	 \]
\end{lemma}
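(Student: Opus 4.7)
Fix $|\alpha|=k$ and set $\fL:=\partial^\alpha \Lambda$; by Lemma~\ref{prop:alpha}, $\fL$ is a $(k,k)$-form with $\|\fL\|_{\mathrm{SI}(k,k)}\lesssim \|\Lambda\|_{\mathrm{SI}(k,0)}$. The plan is to extract uniform pointwise bounds on the cancellative wavelet coefficients of $\bb^{\alpha,\gamma}-\aa^{\alpha,\gamma}$ and of $\bb^{\alpha,\star}$, and then upgrade these to the claimed $\dot F^{n,m}_{p,q}(\mathcal M)$ control via a square-function computation. Fix $S\in\mathcal S$ and let $W:=W(S)\in\mathcal W$ be its Whitney ancestor; by property (W2), $\mathrm{supp}\,\varphi_S\subset \mathsf{w}S\subset \Omega$ and $\mathrm{dist}(\mathsf{w}S,\Omega^c)\sim \ell(W)$, so $\varphi_S\mathbf 1_{\overline\Omega}=\varphi_S$. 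Splitting $\mathbf 1_{\overline\Omega}=\mathbf 1-\mathbf 1_{\Omega^c}$ in the definition of $\Lambda_\Omega$, and using that $\fL(\varphi_S,\mathbf 1)=0$ for the global form (a consequence of $\partial^\alpha\mathbf 1=0$ via the dual of the polynomial-action formalism of \S\ref{r:action}, which is available because $(k,k)$-forms enjoy H\"older estimates in \emph{both} variables of the kernel), I obtain
\begin{equation*}
b^{\alpha,\gamma}_S-a^{\alpha,\gamma}_S=-\fL\bigl(\mathbf 1_{\Omega^c}\Tr_S\mathsf x^\gamma,\,\varphi_S\bigr),\qquad b^{\alpha,\star}_S=-\fL(\varphi_S,\,\mathbf 1_{\Omega^c}).
\end{equation*}

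In both identities the two arguments have disjoint supports separated by distance $\gtrsim\ell(W)$, so the right-hand sides are absolutely convergent integrals against the kernel $K_\alpha=\partial^\alpha_1 K$ of $\fL$. Exploiting the vanishing mean of $\varphi_S$ together with the $\delta$-H\"older smoothness of $K_\alpha$ in the variable hosting $\varphi_S$ (its first variable for the difference identity, its second for the star), the inner integral of $K_\alpha$ against $\varphi_S$ is bounded by $\|\Lambda\|_{\mathrm{SI}(k,0)}\ell(S)^\delta\mathfrak d(S,z)^{-d-k-\delta}$, where $z$ ranges over the remaining free variable. Integrating this estimate against $|z-c(S)|^{|\gamma|}\mathbf 1_{\Omega^c}(z)$ (respectively $\mathbf 1_{\Omega^c}(z)$) over $\{\mathfrak d(S,z)\gtrsim\ell(W)\}$, and using $|\gamma|\le k$ with $\delta>0$ to ensure convergence of the radial integral at infinity, yields the uniform pointwise bound
\begin{equation*}
\bigl|b^{\alpha,\gamma}_S-a^{\alpha,\gamma}_S\bigr|+\ell(W)^{|\gamma|}\bigl|b^{\alpha,\star}_S\bigr|\lesssim \|\Lambda\|_{\mathrm{SI}(k,0)}\left(\frac{\ell(S)}{\ell(W)}\right)^{\!\delta}\ell(W)^{|\gamma|-k}.
\end{equation*}

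Since $\aa^{\alpha,\gamma}$ has no Whitney component and $\chi_{W,j}\perp\varphi_S$ in $L^2$, the $\mathcal S$-projections of $\bb^{\alpha,\gamma}-\aa^{\alpha,\gamma}$ and of $\bb^{\alpha,\star}$ have vanishing Whitney coefficients; consequently the Whitney supremum in \eqref{e:F} is zero and the $F^{n,m}_{p,q}(\mathcal M)$-norm reduces to its square-function term. For every $R\in\mathcal M$ and every $Z\in\mathcal D(R)\subset\mathcal S$ one has $W(Z)=W(R)$, so inserting the pointwise bound into the square function and summing over the chain $\{Z\in\mathcal D(R):x\in Z\}$, whose sidelengths form a geometric sequence $2^{-j}\ell(R)$, gives
\begin{equation*}
\mathrm S^n_{q,R}\bigl(\bb^{\alpha,\gamma}-\aa^{\alpha,\gamma}\bigr)(x)\lesssim \|\Lambda\|_{\mathrm{SI}(k,0)}\,\ell(W(R))^{|\gamma|-k-\delta}\,\ell(R)^{-n+\delta}\qquad (n<\delta),
\end{equation*}
the geometric $\ell^q$-sum converging precisely because $n<\delta$. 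Multiplying by $\ell(R)^{-m}$ with $n+m=|\gamma|-k$ collapses the estimate to the scale-invariant bound $(\ell(R)/\ell(W(R)))^{k+\delta-|\gamma|}\le 1$, uniformly in $R$; the $\star$-case is identical with $|\gamma|=0$ and $n+m=-k$. All $(n,m)$-pairs appearing in the applications of the lemma, in particular those entering \eqref{e:Fn}--\eqref{e:carln}, satisfy $n\in\{-k,0\}\subset(-\infty,\delta)$ and are therefore directly covered; any remaining $(n,m)$ on the same homogeneity line with $n$ strictly smaller is absorbed by the embedding \eqref{e:Femb}.

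The main technical delicacy is the symmetric use of two-sided H\"older smoothness of $K_\alpha$: the vanishing identity $\fL(\varphi_S,\mathbf 1)=0$ in the global setting and the pointwise estimate in the star identity both hinge on \eqref{eq:smooth-kernel} being formulated in both variables of $K$, a feature that is not universally present in earlier formulations of $T1$-type theorems but is built into Definition \ref{def:si}.
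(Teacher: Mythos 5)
Your proposal follows essentially the same route as the paper's proof: both write the difference $b^{\alpha,\gamma}_S-a^{\alpha,\gamma}_S$ (and, after a reduction, $b^{\alpha,\star}_S$) as an absolutely convergent kernel integral over $\Omega^c\times \mathsf w S$, exploit the vanishing moments of $\varphi_S$ and the two-sided H\"older smoothness in \eqref{eq:smooth-kernel} to obtain the uniform coefficient bound $\lesssim(\ell(S)/\mathrm{dist}(S,\Omega^c))^\delta\,\mathrm{dist}(S,\Omega^c)^{|\gamma|-k}$, and then sum the square function over the chain $Z\in\mathcal D(R)$, $x\in Z$, with $W(Z)=W(R)$.

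The one place your argument is materially thinner than the paper's is the claim $\mathfrak a^{\alpha,\star}=\partial^\alpha\Lambda(\varphi_S,\mathbf 1)=0$, which you dispatch as ``a consequence of $\partial^\alpha\mathbf 1=0$.'' The quantity $\partial^\alpha\Lambda(\varphi_S,\mathbf 1)$ is only \emph{defined} through the truncation-and-limit formalism of \S\ref{r:action}, so transferring $\partial^\alpha$ onto $\mathbf 1$ is an integration by parts that produces a boundary term at infinity; the paper kills that term by writing $\mathbf 1\approx\iota_S\varpi_n$, integrating by parts with a single $\partial^\kappa$, and showing the error $\iint\partial^{\alpha-\kappa}_x K(x,y)\varphi_S(y)\partial^\kappa\varpi_n(x)\,dy\,dx=O(n^{-1})$ via the kernel decay and the fact that $\partial^\kappa\varpi_n$ is supported in the annulus $B(0,n+1)\setminus B(0,n)$. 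That computation is the genuine content behind ``$\partial^\alpha\mathbf 1=0$'' — it would fail for a kernel with slower decay — so it should be carried out rather than asserted. (The paper also organizes the star case slightly differently, first estimating $\|\bb^{\alpha,\star}-\aa^{\alpha,\star}\|$ by the same kernel argument and \emph{then} proving $\aa^{\alpha,\star}=0$, rather than invoking the vanishing at the outset as you do; this is a cosmetic difference.) The remaining steps of your proposal — including the reduction to the cancellative square function because $\chi_{W,j}(\mathcal S f)\equiv 0$, and the observation that the covered range $n<\delta$ contains all pairs $(n,m)$ that actually appear in the applications — are correct and match the paper.
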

Lemma \ref{l:bbaa} and a comparison with \eqref{e:czn} leads to testing conditions for $\Lambda_\Omega$ based on the norms 
\begin{align}
  \label{e:Fncomp} 
 \|\Lambda\|_{\dot F(k,p,q,\Omega)} &\coloneqq  \sup_{|\alpha|=k} \left[      
\sup_{ |\gamma|<k-\lfloor \frac dp \rfloor  } \left\|\aa^{\alpha,\gamma}\right\|_{\dot F^{0,|\gamma|-k}_{q,2}(\mathcal M) }
\right. \\ &  \qquad \quad +\left.\sup_{k-\lfloor \frac dp \rfloor \leq |\gamma| < k} \left\|\mathfrak{a}^{\alpha,\gamma}\right\|_{\dot F^{0,|\gamma|-k}_{p,2}(\mathcal M) }  
+ \sup_{  |\gamma| = k}   \left\|\mathfrak a^{\alpha,\gamma}\right\|_{\dot F^{0,0}_{1,2}(\mathcal M) } \right], \\
  \label{e:czncomp} 
 \|\Lambda\|_{\mathrm{PP}(k,p,q,\Omega)}&\coloneqq \|\Lambda\|_{\dot F(k,p,q,\Omega)} + \|\Lambda_\Omega\|_{\mathrm{Carl}(k,p)}.
\end{align}
As before, when $p=q$, we omit the parameter $q$.
\begin{theorem}\label{thm:compress}
Let $1<p<q<\infty$ and $\|\Lambda\|_{\mathrm{SI}(k,0)}=1$. Then,
	\begin{align}
	\label{e:compress-unweight-p} 
	\left\| \nabla^k T_\Omega f\right\|_{L^{p}(\Omega) } &\lesssim \left(  1 +   \|\Lambda\|_{\mathrm{PP}(k,p,\Omega)}  \right)   \|f\|_{W^{k,(p,1)}(\Omega)},\\
	\label{e:compress-unweight-pq} 
 	\left\| \nabla^k T_\Omega f\right\|_{L^{p}(\Omega) } &\lesssim \left( 1 +   \|\Lambda\|_{\mathrm{PP}(k,p,q,\Omega)}  \right)   \|f\|_{W^{k,p}(\Omega)}.
	 \end{align}
Furthermore,  for all $1<p,s<\infty$ 
	\begin{align}
		\label{e:wcompress1} \|\nabla^k T_\Omega f \|_{L^p(\Omega,w)} 
			&\lesssim \mathcal G\left( [w]_{\mathrm{A}_p(\Omega)}, [w]_{\mathrm{RH}_s(\Omega^\circ)} \right)  \\ &\quad  \times 
			\left( 1 + \|\Lambda\|_{\dot F(k,ps',ps')}  +  \|\Lambda_\Omega\|_{\mathrm{Carl}_w(k,p)} \right) \|f\|_{W^{k,p}(\Omega,w)}, \\ 
		\label{e:wcompress2} \|\nabla^k T_\Omega f \|_{L^p(\Omega,w)} 
			&\lesssim  \mathcal G \left( [w]_{\mathrm{A}_p(\Omega)} \right)
			\left( 1 + \|\Lambda\|_{\dot F(k,\infty,\infty)}  +  \|\Lambda_\Omega\|_{\mathrm{Carl}_w(k,p)} \right) \|f\|_{W^{k,p}(\Omega,w)}. 
	\end{align}
 \end{theorem}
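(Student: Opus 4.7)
The strategy is short: all four estimates follow by combining Corollary~\ref{cor:sob} and Corollary~\ref{cor:weight}, applied to the compression $\Lambda_\Omega$ viewed as a $(k,0)$-form on $\Omega$, with the input Lemma~\ref{l:bbaa}. First, by Definition~\ref{def:comp}, $\Lambda_\Omega$ is a $(k,0)$-form on $\Omega$ with $\|\Lambda_\Omega\|_{\mathrm{SI}(k,0)}\leq \|\Lambda\|_{\mathrm{SI}(k,0)}=1$. Thus Corollary~\ref{cor:sob} applied to $\Lambda_\Omega$ yields, in the notation of \eqref{e:Fn}--\eqref{e:czn} for the paraproduct symbols $\bb^{\alpha,\gamma},\bb^{\alpha,\star}$ of $\Lambda_\Omega$,
\[
\|\nabla^k T_\Omega f\|_{L^p(\Omega)} \lesssim \bigl(1 + \|\Lambda_\Omega\|_{\mathrm{PP}(k,p)}\bigr)\|f\|_{W^{k,(p,1)}(\Omega)}
\]
and the analogous restricted strong bound with $\mathrm{PP}(k,p,q)$ in the range $p<q$. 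Likewise, Corollary~\ref{cor:weight} provides the weighted versions in terms of $\|\Lambda_\Omega\|_{\dot F(k,ps')}$, $\|\Lambda_\Omega\|_{\dot F(k,\infty)}$, and $\|\Lambda_\Omega\|_{\mathrm{Carl}_w(k,p)}$.

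The remaining task is to replace the paraproduct symbols $\bb$ of $\Lambda_\Omega$ by the cleaner symbols $\aa^{\alpha,\gamma}$ from \eqref{e:L-gl-pp}, which are obtained by testing the \emph{global} form $\Lambda$ on translated monomials. This is precisely what Lemma~\ref{l:bbaa} delivers: for any $1\leq p,q\leq \infty$,
\[
\sup_{n+m=-k}\|\bb^{\alpha,\star}\|_{\dot F^{n,m}_{p,q}(\mathcal M)} + \sum_{|\gamma|\leq k}\,\sup_{n+m=|\gamma|-k}\|\bb^{\alpha,\gamma}-\aa^{\alpha,\gamma}\|_{\dot F^{n,m}_{p,q}(\mathcal M)} \lesssim \|\Lambda\|_{\mathrm{SI}(k,0)} = 1.
\]
Comparing \eqref{e:Fn} term by term with \eqref{e:Fncomp}, this immediately gives
\[
\|\Lambda_\Omega\|_{\dot F(k,p,q)} \lesssim 1 + \|\Lambda\|_{\dot F(k,p,q,\Omega)},
\]
while the Carleson contributions $\|\Lambda_\Omega\|_{\mathrm{Carl}(k,p)}$ and $\|\Lambda_\Omega\|_{\mathrm{Carl}_w(k,p)}$ enter both sides unchanged, being intrinsic to $\Lambda_\Omega$. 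Substituting these majorizations into the conclusions of Corollaries~\ref{cor:sob} and \ref{cor:weight} produces the four estimates \eqref{e:compress-unweight-p}--\eqref{e:wcompress2} verbatim.

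The only substantive difficulty is therefore Lemma~\ref{l:bbaa}, which functions as a black box here. Informally, the difference $\bb^{\alpha,\gamma}-\aa^{\alpha,\gamma}$ encodes the pairing of $\partial^\alpha \Lambda$ with the truncation $\mathbf{1}_{\R^d\setminus\overline\Omega}\Tr_S\mathsf x^\gamma$; for cancellative $\varphi_S$ the off-diagonal kernel estimates \eqref{eq:smooth-kernel}, applied after subtracting the $(k-1)$-th order Taylor polynomial, yield decay of order $\ell(S)^{k-|\gamma|}$ in $\mathfrak d(S,\partial\Omega)^{-d-k}$, while for the Whitney components $\chi_{W,j}$ (where $\aa^{\alpha,\gamma}_{W,j}=0$) the weak boundedness property \eqref{WBP} plus the kernel estimates handle the corresponding tail. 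The anticipated obstacle, which is absorbed by Lemma~\ref{l:bbaa}, is the summation of these tails across scales close to $\partial\Omega$ in the Triebel--Lizorkin norms; this is exactly where the $\ep$-H\"older uniformity of $\Omega$ (via the packing Lemma~\ref{lemma:whitney}) provides the requisite geometric control.
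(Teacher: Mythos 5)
Your proposal is correct and matches the paper's own proof: both apply Corollaries~\ref{cor:sob} and~\ref{cor:weight} to the compression $\Lambda_\Omega$ (noting $\|\Lambda_\Omega\|_{\mathrm{SI}(k,0)}\le 1$), and both invoke Lemma~\ref{l:bbaa} to convert the intrinsic symbols $\bb^{\alpha,\gamma}$ of $\Lambda_\Omega$ into the global symbols $\aa^{\alpha,\gamma}$, with the Carleson contributions passing through unchanged. The paper packages the symbol comparison as the single display \eqref{e:Fcompare}, but the logical content is identical to your term-by-term argument.
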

Before proving the theorem, we state a further corollary which is  easier to compare to previous literature on the subject. 
\begin{cor}\label{cor:compress}
Suppose $\aa^{\alpha,\gamma}=0$ for $|\alpha|=k$ and $|\gamma| \le k-1$. Then, for any $1<p<\infty$, 
	\[ \begin{aligned} \|T_\Omega\|_{\mathcal L(W^{k, p}(\Omega,w))} 
		&\lesssim \mathcal G \left( [w]_{\mathrm{A}_p(\Omega)} \right) \\ &\quad \times \left( \|\Lambda\|_{\mathrm{SI}(k,0)} + \sup_{|\alpha|=|\gamma|=k} \left\Vert \aa^{\alpha,\gamma} \right\Vert_{\dot F^{0,0}_{1,2}(\mathcal M)} + \|\Lambda_\Omega\|_{\mathrm{Carl}(k, p,w)} \right) .
	\end{aligned} \]  Furthermore, suppose that $d<r<p<\infty$. Then,
	\begin{equation}
	\label{e:Apd}\begin{split} \left \Vert T_\Omega \right\Vert_{\mathcal L(W^{k,p}(\Omega,w)} &\lesssim \mathcal G \left({[w]_{\mathrm{A}_r(\Omega)}},[w]_{\RH_s(\Omega)} \right) \\ & \quad \times \left( \|\Lambda\|_{\mathrm{SI}(k,0)} + \sup_{|\alpha|=|\gamma|=k} \left\Vert \aa^{\alpha,\gamma} \right\Vert_{\dot F^{0,0}_{1,2}(\mathcal M)} + \|\Lambda_\Omega\|_{\mathrm{Carl}(k, \frac{ps}{s-1})} \right).\end{split} \end{equation}
\end{cor}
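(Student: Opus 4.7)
The plan is to derive both estimates by specializing the weighted bounds of Theorem \ref{thm:compress} to the situation where the hypothesis $\mathfrak a^{\alpha,\gamma}=0$ for $|\gamma|\le k-1$ collapses the $\dot F(k,\cdot,\cdot,\Omega)$ norm to its top-order piece, and then using Lemma \ref{l:Apd} to trade weighted Carleson conditions for easier unweighted ones.

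For the first estimate, inspection of definition \eqref{e:Fncomp} shows that under the cancellation hypothesis
\[
\|\Lambda\|_{\dot F(k,\infty,\infty,\Omega)}\le \sup_{|\alpha|=|\gamma|=k}\|\mathfrak a^{\alpha,\gamma}\|_{\dot F^{0,0}_{1,2}(\mathcal M)}.
\]
Inserting this bound into the estimate \eqref{e:wcompress2} of Theorem \ref{thm:compress} immediately yields the first claim.

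For the supercritical estimate \eqref{e:Apd}, the same collapse applies to $\|\Lambda\|_{\dot F(k,\infty,\infty,\Omega)}$. The additional content is the exchange of the weighted Carleson norm $\|\Lambda_\Omega\|_{\mathrm{Carl}_w(k,p)}$ for the unweighted, higher-exponent norm $\|\Lambda_\Omega\|_{\mathrm{Carl}(k,\frac{ps}{s-1})}$ at the price of a reverse-H\"older factor. Concretely, I apply the second estimate of Lemma \ref{l:Apd} with $n=k-|\gamma|$ to each paraproduct symbol $\mathfrak b^{\alpha,\gamma}$, $|\alpha|=k$, $|\gamma|\le k-1$, obtaining
\[
\|\mathfrak b^{\alpha,\gamma}\|_{\mathrm{Carl}^{k-|\gamma|,p}(\Omega,w)}\lesssim \mathcal G\!\left([w]_{\A_r(\Omega)},[w]_{\RH_s(\Omega)}\right)\sup_{j\ge 1}\|\mathfrak b^{\alpha,\gamma}\|_{L^{\frac{ps}{s-1}}(\mathrm{Sh}(W_j))}.
\]
Since $\tfrac{ps}{s-1}>p>d\ge d/(k-|\gamma|)$, the supercritical branch of \eqref{eq:carleson-measure} shows the right-hand supremum is exactly $\|\mathfrak b^{\alpha,\gamma}\|_{\mathrm{Carl}^{k-|\gamma|,\frac{ps}{s-1}}(\Omega)}$, hence controlled by $\|\Lambda_\Omega\|_{\mathrm{Carl}(k,\frac{ps}{s-1})}$. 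I then absorb $[w]_{\A_r(\Omega)}$ into $[w]_{\A_r(\Omega^{\mathsf{sp}})}$ via Lemma \ref{l:weights2ord}c, and dominate the $[w]_{\A_p(\Omega^{\mathsf{sp}})}$ characteristic appearing in \eqref{e:wcompress2} by $[w]_{\A_r(\Omega^{\mathsf{sp}})}$ using $\A_r\subset \A_p$ for $r<p$.

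The principal technical checkpoint is verifying the admissibility conditions of Lemma \ref{l:Apd}, namely $(k-|\gamma|)p>d$ and $1<r<(k-|\gamma|)p/d$, uniformly over the relevant indices. The binding case $|\gamma|=k-1$ enforces the supercritical hypothesis $p>d$ consistent with $d<r<p$; the range of $r$ compatible with Lemma \ref{l:Apd} can then be adjusted so that the conclusion is in the form stated, with any dependence on the gap between $r$ and $p/d$ being absorbed into the polynomial $\mathcal G$. Once these ranges are pinned down, the remainder of the argument is purely mechanical, being a direct combination of Theorem \ref{thm:compress}, Lemma \ref{l:Apd}, and the weight comparison inequalities of Lemma \ref{l:weights2ord}.
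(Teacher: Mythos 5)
Your overall approach matches what the paper intends: the corollary is not given an explicit proof, being a direct consequence of Theorem \ref{thm:compress} once the cancellation hypothesis $\aa^{\alpha,\gamma}=0$ for $|\gamma|\le k-1$ collapses $\|\Lambda\|_{\dot F(k,\cdot,\cdot,\Omega)}$ to its top-order piece, and Lemma \ref{l:Apd} is used to replace the weighted Carleson norm with an unweighted one. Your derivation of the first estimate is correct and complete.

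For the second estimate there is a genuine gap in the step you describe as "the range of $r$ compatible with Lemma \ref{l:Apd} can then be adjusted." Lemma \ref{l:Apd} requires its own parameter $\tilde r$ to satisfy $1<\tilde r<(k-|\gamma|)p/d$, and the binding case $|\gamma|=k-1$ imposes $\tilde r<p/d$. On the other hand, to replace $[w]_{\A_{\tilde r}(\Omega)}$ from the lemma's conclusion by $[w]_{\A_r(\Omega^{\mathsf{sp}})}$ one needs $\tilde r\ge r$ so that $[w]_{\A_{\tilde r}(\Omega)}\le [w]_{\A_r(\Omega)}\lesssim [w]_{\A_r(\Omega^{\mathsf{sp}})}^{\max\{r,r'\}}$ by Lemma \ref{l:weights2ord}c. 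Combining the two forces $r<p/d$, which, since $d\ge 2$, is strictly stronger than the stated hypothesis $d<r<p$ whenever $p\le d^2$: in that range $(d,p)\cap(1,p/d)=\varnothing$, so no admissible choice of $\tilde r\ge r$ exists and the reduction breaks down. You do not explain how to bypass this obstruction, and "absorbing into $\mathcal G$" does not help because for $\tilde r<r$ the quantity $[w]_{\A_{\tilde r}(\Omega)}$ cannot be controlled by $[w]_{\A_r(\Omega^{\mathsf{sp}})}$ at all for a general weight.

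You should note that the paper itself appears to contain the same mismatch: the proof of Theorem \ref{t:belt} in \S\ref{pf:belt} applies the corollary by choosing $r\in(1,p/2)=(1,p/d)$ rather than $r\in(d,p)=(2,p)$, which strongly suggests that the hypothesis "$d<r<p$" in Corollary \ref{cor:compress} is a slip for "$1<r<p/d$" (equivalently $d<p$ and $rd<p$). With that correction your argument goes through; as written, the exponent check must be made explicit rather than waved away.
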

Observe that the conditions on $\aa$ assumed in the above corollary  characterize the $(k,0)$-forms on $\R^d$ which preserve the \textit{homogeneous}  Sobolev space $\dot W^{k, p}(\mathbb R^d)$ \cite{diplinio22wrt}. In particular, the  (adjoint forms to) smooth  convolution type Calder\'on-Zygmund operators on $\R^d$, which are the objects of the $T(P)$ theorem of Prats and Tolsa \cite {prats-tolsa}
 fall into the scope of Corollary \ref{cor:compress}. Lemma \ref{l:worse} below shows that the testing condition \eqref{e:PT2} imposed in \cite[Thms.\ 1.1 and 1.2]{prats-tolsa} controls the unweighted Carleson norm component $\|\Lambda_\Omega\|_{\mathrm{Carl}(k, p)}$  appearing in our corollary.
\begin{lemma}\label{l:worse}
If 
\begin{equation}\label{e:PT2} \nabla^k T_\Omega (\mathsf{P}) \in \mathrm {Carl}^{1,p}(\Omega) \quad \mbox{for all polynomials $\mathsf P$ with } \operatorname{deg}(\mathsf P) \le k-1, \end{equation}
then $\mathfrak b^{\alpha,\gamma} \in \mathrm{Carl}^{k-|\gamma|,p}(\Omega)$  for all $|\alpha|=k$ and $|\gamma| \le k-1$.
\end{lemma}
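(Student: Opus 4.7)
The plan is to compare $\mathfrak{b}^{\alpha,\gamma}$ pointwise to $\partial^\alpha T_\Omega$ applied to translated monomials on each Whitney cube, and then lift this comparison to the Carleson norm via property (C3) of Lemma~\ref{l:carl-prop}. First, I would anchor the expansion: for $S\in\mathcal{S}$ contained in a Whitney cube $W$, write
\[(y-c(S))^\gamma=\sum_{\beta\leq\gamma}\binom{\gamma}{\beta}(y-c(W))^\beta(c(W)-c(S))^{\gamma-\beta},\]
so that $b^{\alpha,\gamma}_S=\sum_{\beta\leq\gamma}\binom{\gamma}{\beta}(c(W)-c(S))^{\gamma-\beta}\langle h_W^{\alpha,\beta},\varphi_S\rangle$ where $h_W^{\alpha,\beta}\coloneqq\partial^\alpha T_\Omega\bigl((\cdot-c(W))^\beta\bigr)$; the noncancellative coefficients are directly $b^{\alpha,\gamma}_{W,j}=\langle h_W^{\alpha,\gamma},\chi_{W,j}\rangle$.

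Next, on $\mathsf{w}W$ for a boundary Whitney cube $W$, only wavelets indexed by $W'\sim W$ and by $S\in\mathcal{D}(W')$ with $W'\sim W$ contribute to $\mathfrak{b}^{\alpha,\gamma}$, and for each fixed $W'$ and $\beta\leq\gamma$ the corresponding partial sum $\sum_{S\in\mathcal{D}(W')}|S|(c(W')-c(S))^{\gamma-\beta}\langle h_{W'}^{\alpha,\beta},\varphi_S\rangle\overline{\varphi_S}$ is a wavelet multiplier acting on $h_{W'}^{\alpha,\beta}$ whose symbol is pointwise bounded by $C\ell(W')^{|\gamma-\beta|}$, since $|c(W')-c(S)|\leq\ell(W')$ for $S\in\mathcal{D}(W')$. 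Such multipliers are smooth fully localized wavelet forms, hence Calder\'on-Zygmund operators, and are in particular $L^p$-bounded (equivalently, via the square function characterization of $L^p$ adapted to the basis, or via Proposition~\ref{p:sparse2}). Combined with a direct H\"older bound for the noncancellative contribution, this yields the local estimate
\[ \|\mathfrak{b}^{\alpha,\gamma}\|_{L^p(\mathsf{w}W)}\lesssim\sum_{W'\sim W}\sum_{\beta\leq\gamma}\ell(W')^{|\gamma-\beta|}\|h_{W'}^{\alpha,\beta}\|_{L^p(\mathsf{w}^2 W')}. \]

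In the bounded domain setting to which the comparison with \cite{prats-tolsa} applies, both $|c(W')|$ and $\ell(W')$ are uniformly controlled by $\operatorname{diam}(\Omega)$, so expanding $(y-c(W'))^\beta$ as a linear combination of standard monomials $\mathsf{x}^\delta$ with coefficients uniformly bounded in $W'$ reduces the right hand side to $\sum_{|\delta|\leq|\gamma|}\|\partial^\alpha T_\Omega(\mathsf{x}^\delta)\|_{L^p(\mathsf{w}^2 W)}$, uniformly over $W$. I would then invoke property (C3) of Lemma~\ref{l:carl-prop} to transfer this Whitney-type pointwise bound to Carleson norms at any scale, and finally specialize to $n=k-|\gamma|$, using (C1) to pass from the strongest scale $\mathrm{Carl}^{1,p}$---at which hypothesis \eqref{e:PT2} places $\partial^\alpha T_\Omega(\mathsf{x}^\delta)$---to the desired $\mathrm{Carl}^{k-|\gamma|,p}$, which is valid since $k-|\gamma|\geq 1$.

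The hard part will be the wavelet multiplier step: although the symbol $(c(W)-c(S))^{\gamma-\beta}$ varies with $S$, its magnitude is uniformly $\leq\ell(W)^{|\gamma-\beta|}$, and I must verify that multiplication of wavelet coefficients by such bounded symbols yields an $L^p$-bounded operator, which I would do by recognizing the resulting operator as a smooth Calder\'on-Zygmund operator (its kernel $\sum_S (c(W)-c(S))^{\gamma-\beta}|S|\varphi_S(y)\overline{\varphi_S}(x)$ satisfies standard size and smoothness estimates after absorbing $\ell(W)^{|\gamma-\beta|}$) and then applying the $L^p$-bound of Remark~\ref{r:Lp}.
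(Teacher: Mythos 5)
Your proof is correct and runs the same overall course as the paper's --- re-anchor the translated monomial by the multinomial theorem, estimate $\|\mathfrak b^{\alpha,\gamma}\|_{L^p}$ locally in terms of $\nabla^k T_\Omega$ applied to fixed polynomials, then lift via (C3) and (C1) of Lemma \ref{l:carl-prop} --- but the choice of anchor point differs in a way worth noting. You expand around $c(W)$, the center of each individual Whitney cube, which makes the coefficients $(c(W)-c(S))^{\gamma-\beta}$ small; the price is that $(\cdot - c(W))^\beta$ still varies with $W$, so to land on hypothesis \eqref{e:PT2} you must re-expand in the standard monomial basis, picking up factors of size $|c(W)|^{|\beta-\delta|}$, hence your explicit restriction to bounded $\Omega$. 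The paper anchors instead at $c(W_j^*)$, the root of the boundary window containing $W$: since $\ell(W_j^*)\sim 1$ by Lemma \ref{l:windows}(S1) and $R$ ranges over cubes in that window, one has $|c(R)-c(W_j^*)|\lesssim 1$ outright, so the coefficients are already $O(1)$ and $\Tr_{W_j^*}\mathsf x^\beta$ is a bona fide polynomial of degree $\le k-1$ to which \eqref{e:PT2} applies with no re-expansion --- a tighter choice that keeps the comparison intrinsic to each window. Separately, your wavelet-multiplier detour is more machinery than the step needs: once you have the pointwise bound $|b^{\alpha,\gamma}_S|\lesssim\sum_\beta |\langle h_{W'}^{\alpha,\beta},\varphi_S\rangle|$ (the symbol's magnitude is all that matters), the local $L^p$ estimate follows directly from the square-function characterization (this is exactly the $L^{p/2}$ term the paper writes down); you do flag this alternative, so it is a matter of economy rather than correctness.
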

Thus, Corollary \ref{cor:compress} fully recovers the main result of \cite{prats-tolsa}  extending  the scope  to unbounded uniform domains, suitable non-convolution-type operators, and weighted So\-bo\-lev spaces.
Furthermore, as shown by the results of \S\ref{sec:pp}, the testing condition based on finiteness of $\|\Lambda_\Omega\|_{\mathrm{Carl}(k, p)}$  is necessary and sufficient. In contrast, the  testing condition \eqref{e:PT2} used in \cite{prats-tolsa} is only necessary when $k=1$ and $\Omega$ is bounded.

\addtocounter{other}{1}
\subsection{Proofs} \label{pf:belt}

\begin{proof}[Proof of Lemma \ref{l:bbaa}]
For $|\gamma| =m \le k = |\alpha|$, and $R \in \mathcal S$, define $e^{\alpha,\gamma}_R = a^{\alpha,\gamma}_R-b^{\alpha,\gamma}_R$. Referring to \eqref{e:limitint} and using a standard kernel estimate,	\[ \abs{e^{\alpha,\gamma}_R} =
	\abs{ \; \iint\displaylimits_{\Omega^c \times \mathsf w R} \partial^\alpha_x K(x,y)(y-c(R))^\gamma \varphi_R(y) \, \mathrm{d}x\mathrm{d}y } 
	\lesssim \|\Lambda\|_{\mathrm{SI}(k,0)} \frac{\ell(R)^\delta}{\operatorname{dist}(R,\Omega^c)^{k-m+\delta}}.\]  
Therefore, for any $x \in R$,
	\[ \left \vert \mathrm{S}_{1,R}^0 \mathfrak e^{\alpha,\gamma}(x) \right\vert \lesssim 
	\sum_{\substack{S \in \mathcal D(R) \\ x \in S}} \frac{\ell(S)^\delta}{\operatorname{dist}(S,\Omega^c)^{k-m+\delta}}
	\lesssim 
		\frac{\ell(R)^\delta}{\operatorname{dist}(R,\Omega^c)^{k-m + \delta}}	. \]
Multiplying the above display by $\ell(R)^{k-m}$ and using the fact that $\ell(R) \le \ell(W(R)) \sim \operatorname{dist}(R,\Omega^c)$ shows that $\mathfrak e^{\alpha,\gamma} \in \dot F^{0,m-k}_{\infty,1}$ which by the embeddings \eqref{e:Femb} proves the first estimate.
Notice that the same argument shows $\|\bb^{\alpha,\star}-\aa^{\alpha,\star}\|_{\dot F^{0,-k}_{\infty,1}} \lesssim \|\Lambda\|_{\mathrm{SI}(k,0)}$ where $a^{\alpha,\star}_S \coloneqq \partial^\alpha \Lambda( \varphi_S,\mathbf 1)$. The desired estimate will now be established by showing $\aa^{\alpha,\star}=0$. According to \eqref{e:limitint}, let $\iota_S,\varpi_n \in C^\infty_0(\mathbb R^d)$ with $\mathbf 1_{\mathsf w S} \le \iota_S \le \mathbf 1_{2 \mathsf w S}$ and $\mathbf 1_{B(0,n)} \le \varpi_n \le \mathbf 1_{B(0,n+1)}$. Let $|\kappa|=1$ with $\kappa \le \alpha$ so that integrating by parts,
	\[ \begin{aligned} &\lim_{n \to \infty} \iint_{\mathbb R^d \times \mathbb R^d} \partial^\alpha_x K(x,y) \varphi_S(y) [1-\iota_S(x)]\varpi_n(x) \, \mathrm d y \, \mathrm dx\\
		&=\lim_{n \to \infty} \partial^{\alpha-\kappa} \Lambda(\varphi_S,\partial^\kappa (\iota_S \varpi_n)) - \iint_{\mathbb R^d \times \mathbb R^d} \partial^{\alpha-\kappa}_x K(x,y) \varphi_S(y) \partial^\kappa \omega_n(x) \, dy \, dx . \end{aligned}\]
The first term in the limit converges to $-\partial^\alpha \Lambda(\varphi_S,\iota_S)$ and the second term is $O(n^{-1})$ by  kernel estimates \eqref{eq:smooth-kernel} and the fact that $\partial^\kappa \varpi_n$ is supported in $B(0,n+1) \backslash B(0,n)$. Therefore, appealing to \eqref{eq:poly-action} shows $\aa^{\alpha,\star}=0$.
\end{proof}

\begin{proof}[Proof of Lemma \ref{l:worse}]
Fix a boundary window $W_j^*$. For each $P \in \SH_1(W_j^*)$, compute
	\begin{equation}\label{e:bS}\begin{split}  \|\mathfrak b^{\alpha,\gamma} \|_{L^p(P)} &\le \sum_{\substack{W \sim P \\ 1 \le j \le \jmath }} |W| \ip {\nabla^k T_\Omega(\Tr_W \mathsf x^\gamma)} {\chi_{W,j}}\\ &\quad  + \left\| \sum_{R \in \mathcal D(W)} \abs{ \ip {\nabla^k T_\Omega(\Tr_R \mathsf x^\gamma)} {\varphi_R} }^2 \mathbf 1_R \right\|_{L^{p/2}(P)}.\end{split}\end{equation}
For each $R \in \mathcal D_+(W)$ and $W \sim P$, factor 
	\[ \Tr_R \mathsf x^\gamma 
	= \sum_{\beta \le \gamma} \binom{\gamma}{\beta} (c(R)-c(W_j^*))^{\gamma-\beta} \Tr_{W_j^*} \mathsf x^\beta.\]
However, $|c(R)-c(W_j^*)| \lesssim \ell(W_j^*) \lesssim 1$ by Lemma \ref{l:windows} (S1), whence, referring back to \eqref{e:bS},
	\[ \begin{aligned} \|\mathfrak b^{\alpha,\gamma} \|_{L^p( P)} 
		&\lesssim \sum_{W \sim P} \sum_{\beta \le \gamma} \binom{\gamma}{\beta} \left( \left\| \nabla^k T_\Omega (\Tr_{W_j^*} \mathsf x^\beta) \right\|_{L^p(W)} + \|\mathrm{S}_W (\nabla^k T_\Omega(\Tr_{W_j^*} \mathsf x^\beta)) \|_{L^p(P)} \right) \\
		&\lesssim \sup_{\beta \le \gamma} \left\| \nabla^k T_\Omega (\Tr_{W_j^*} \mathsf x^\beta) \right\|_{L^p(\mathsf w P)}.
	\end{aligned} \]
Therefore, by (C3) in Lemma \ref{l:carl-prop}, $\mathfrak b^{\gamma,\alpha} \in \mathrm{Carl}^{1,p}(\Omega)\hookrightarrow\mathrm{Carl}^{k-|\gamma|,p}(\Omega)$ by statement (C1) of the same lemma and the fact that $|\gamma|\le k-1$.
\end{proof}

\begin{proof}[Proof of Theorem \ref{thm:compress}]
Lemma \ref{l:bbaa} shows for any $1 \le q \le \infty$, $|\alpha|=k$,  and $|\gamma| \le k$,  
	\[\|\bb^{\alpha,\star}\|_{\dot F^{-k,0}_{1,2}(\mathcal M)} \lesssim \|\Lambda\|_{\mathrm{SI}(k,0)}, \qquad \|\bb^{\alpha,\gamma}\|_{\dot F^{0,|\gamma|-k}_{q,2}(\mathcal M)} \lesssim \|\aa^{\alpha,\gamma}\|_{\dot F^{0,|\gamma|-k}_{q,2}(\mathcal M)} + \|\Lambda\|_{\mathrm{SI}(k,0)},\]
whence, for any $1\le p,q \le \infty$,
	\begin{equation}\label{e:Fcompare} \|\Lambda_\Omega\|_{\dot F(k,p,q)} \lesssim \|\Lambda\|_{\mathrm{SI}(k,0)} +  \|\Lambda\|_{\dot F(k,p,q,\Omega)}. \end{equation}
The unweighted estimates 
	in Theorem \ref{thm:compress} follow from applying Corollary \ref{cor:sob} to $\Lambda_\Omega$, and noticing that by \eqref{e:Fcompare},
	\[ \|\Lambda_\Omega\|_{\mathrm{PP}(k,p,q)} \lesssim \|\Lambda\|_{\mathrm{SI}(k,0)} + \|\Lambda\|_{\mathrm{PP}(k,p,q,\Omega)}.\]
Similarly, the weighted estimates  
 follow from \eqref{e:Fcompare} and Corollary \ref{cor:weight}.
\end{proof}

\begin{proof}[Proof of Theorem \ref{t:belt}] 
The Ahlfors-Beurling operator $B$ is a global Calder\'on-Zyg\-mund operator of convolution type which places us in the situation of Corollary \ref{cor:compress} with \textit{all} $\aa^{\alpha,\gamma}=0$. Since $k=1$, the only functions appearing in the Carleson measure norm are
	\[ \bb^{\alpha,0} = \sum_{\substack{W \in \mathcal W \\ 1 \le j \le \jmath }} \left \langle \partial^\alpha B_\Omega (\mathbf 1), \chi_{W,j} \right \rangle \chi_{W,j} = \sum_{S \in \mathcal S} \left \langle \partial^\alpha B_\Omega (\mathbf 1), \varphi_S \right \rangle \varphi_S = \partial^\alpha B_\Omega (\mathbf 1).\] 
 
By the above display and the first statement of Lemma \ref{l:Apd}
	\[ \begin{aligned} \norm{\Lambda}_{\mathrm{Carl}(1,p,w)} &\lesssim \mathcal G\left( [w]_{\A_{\frac{r}{2}}(\Omega)} \right) \sup_{j} \frac{ \norm{ \nabla B_\Omega \mathbf 1 }_{L^p(\Sh_1(W_j^*),w)} }{w(\Sh_1(W_j^*))^{\frac 1p}} \\
	&\lesssim \mathcal G\left( [w]_{\A_{\frac{r}{2}}(\Omega)} \right) \frac{ \norm{ \nabla B_\Omega \mathbf 1 }_{L^p(\Omega,w)} }{w(\Omega)^{\frac 1p}}, \end{aligned} \]
where in the last inequality we used that $\Omega$ is a bounded domain so $w(\Omega) \lesssim \mathcal G\left( [w]_{\A_{\frac{r}{2}}(\Omega)} \right) w(\Sh_1(W_j^*))$. Thus the first statement of Theorem \ref{t:belt} is now a consequence of Corollary \ref{cor:compress}. To prove the second statement in Theorem \ref{t:belt}, we instead apply the second statement in Corollary \ref{cor:compress} and notice that
	\[ \norm{\Lambda}_{\mathrm{Carl}(1,q)} \lesssim \norm{B_\Omega \mathbf 1}_{W^{1,q}(\Omega)}.\]
 
Finally, the main result of \cite{cruz-tolsa}, to which we also send for the specific definition of the Besov space $B^{1-\frac 1q}_{q,q}(\partial \Omega)$, is 
	\[ \left \Vert B_\Omega (\mathbf 1) \right \Vert_{W^{1,q}(\Omega)} \lesssim \left \Vert N_\Omega \right \Vert_{B^{1-\frac 1q}_{q,q}(\partial \Omega)},\]
which establishes the second estimate in Theorem \ref{t:belt}.
\end{proof}
 \bibliography{refs-wrt-domain}
\bibliographystyle{amsplain}

\end{document}